\documentclass[11pt]{article}
\usepackage[margin=1in]{geometry}
\usepackage{amsfonts,amsmath,amssymb,amsthm}
\usepackage{aliascnt}
\usepackage{comment}
\usepackage{mathtools,bm,mathrsfs,mleftright}
\usepackage{xcolor,enumitem,autobreak}
\usepackage{needspace}
\allowdisplaybreaks[4]
\usepackage{graphicx,longtable}
\usepackage{placeins}
\usepackage[authoryear,round]{natbib}
\setcitestyle{authoryear,round}
\usepackage[colorlinks,citecolor=blue,linkcolor=blue,urlcolor=blue]{hyperref}
\usepackage{xr-hyper}
\usepackage[nameinlink,noabbrev]{cleveref}
\crefname{theorem}{Theorem}{Theorems}
\Crefname{theorem}{Theorem}{Theorems}
\crefname{lemma}{Lemma}{Lemmas}
\Crefname{lemma}{Lemma}{Lemmas}
\crefname{corollary}{Corollary}{Corollaries}
\Crefname{corollary}{Corollary}{Corollaries}
\crefname{proposition}{Proposition}{Propositions}
\Crefname{proposition}{Proposition}{Propositions}
\crefname{definition}{Definition}{Definitions}
\Crefname{definition}{Definition}{Definitions}
\crefname{remark}{Remark}{Remarks}
\Crefname{remark}{Remark}{Remarks}
\crefname{example}{Example}{Examples}
\Crefname{example}{Example}{Examples}
\crefname{assumption}{Assumption}{Assumptions}
\Crefname{assumption}{Assumption}{Assumptions}
\crefname{condition}{Condition}{Conditions}
\Crefname{condition}{Condition}{Conditions}
\crefname{section}{Section}{Sections}
\Crefname{section}{Section}{Sections}
\crefname{subsection}{Subsection}{Subsections}
\Crefname{subsection}{Subsection}{Subsections}
\crefname{subsubsection}{Subsection}{Subsections}
\Crefname{subsubsection}{Subsection}{Subsections}

\crefformat{equation}{(#2#1#3)}
\crefrangeformat{equation}{(#3#1#4)--(#5#2#6)}
\crefmultiformat{equation}{(#2#1#3)}{ and (#2#1#3)}{, (#2#1#3)}{, and (#2#1#3)}

\newcommand{\ep}{\varepsilon}

\newcommand{\R}{\mathbb{R}}
\newcommand{\E}{\mathbb{E}}
\newcommand{\bbS}{\mathbb{S}}

\newcommand{\bbP}{\mathbb{P}}

\newcommand{\caH}{\mathcal{H}}
\newcommand{\caI}{\mathcal{I}}
\newcommand{\caJ}{\mathcal{J}}
\newcommand{\caK}{\mathcal{K}}
\newcommand{\caT}{\mathcal{T}}
\newcommand{\caU}{\mathcal{U}}
\newcommand{\caV}{\mathcal{V}}
\newcommand{\caX}{\mathcal{X}}
\newcommand{\caZ}{\mathcal{Z}}
\newcommand{\caL}{\mathcal{L}}

\renewcommand{\Pr}{\bbP}

\DeclareMathOperator*{\argmin}{arg\,min}
\DeclareMathOperator*{\esssup}{ess\,sup}
\DeclareMathOperator*{\supp}{Supp}
\DeclareMathOperator{\cl}{cl}
\DeclareMathOperator{\conv}{conv}

\DeclareMathOperator{\ran}{ran}
\DeclareMathOperator{\sign}{sign}

\DeclareMathOperator{\Tr}{Tr}
\DeclareMathOperator{\Var}{Var}
\providecommand{\TV}{\mathrm{TV}}

\newcommand{\mfC}{\mathfrak{C}}
\newcommand{\uC}{\mfC^{(\beta)}}
\newcommand{\uCt}{\uC_{\theta_0}}

\newcommand{\xk}[1]{\left(#1\right)}
\newcommand{\zk}[1]{\left[#1\right]}
\newcommand{\dk}[1]{\left\{#1\right\}}
\newcommand{\xkx}[1]{(#1)}
\newcommand{\zkx}[1]{[#1]}
\newcommand{\dkx}[1]{\{#1\}}
\providecommand{\ang}[1]{\left\langle{#1}\right\rangle}

\providecommand{\abs}[1]{\left\lvert{#1}\right\rvert}
\providecommand{\norm}[1]{\left\lVert{#1}\right\rVert}

\providecommand{\normx}[1]{\lVert{#1}\rVert}

\newcommand{\xkm}[1]{\mleft(#1\mright)}

\newcommand{\dkm}[1]{\mleft\{#1\mright\}}

\providecommand{\ind}[1]{\mathbf{1}\{#1\}}

\providecommand{\diam}{\operatorname{diam}}
\providecommand{\HS}{\operatorname{HS}}

\theoremstyle{plain}
\newtheorem{theorem}{Theorem}[section]
\newaliascnt{lemma}{theorem}
\newtheorem{lemma}[lemma]{Lemma}
\aliascntresetthe{lemma}
\newaliascnt{corollary}{theorem}
\newtheorem{corollary}[corollary]{Corollary}
\aliascntresetthe{corollary}

\theoremstyle{definition}
\newaliascnt{proposition}{theorem}
\newtheorem{proposition}[proposition]{Proposition}
\aliascntresetthe{proposition}
\newaliascnt{definition}{theorem}
\newtheorem{definition}[definition]{Definition}
\aliascntresetthe{definition}
\newaliascnt{remark}{theorem}
\newtheorem{remark}[remark]{Remark}
\aliascntresetthe{remark}
\newtheorem{example}{Example}[section]
\newtheorem{assumption}{Assumption}
 \title{Exact Asymptotic Efficiency under zCDP: Diameter-Constrained Information Geometry}
\author{T. Tony Cai\thanks{Department of Statistics and Data Science, The Wharton School, University of Pennsylvania. Email: \texttt{tcai@wharton.upenn.edu}.}
\and Yicheng Li\thanks{KLATASDS-MOE, School of Statistics, East China Normal University, Shanghai, China. Email: \texttt{ycli@sfs.ecnu.edu.cn}.}}
\date{\today}
\newenvironment{frontmatter}{}{}
\begin{document}
\begin{frontmatter}
\maketitle
\begin{abstract}
      Protecting individual privacy has become a central and urgent concern in modern data analysis, given the vast quantities of data now generated and processed. In this paper,
we develop a systematic theory of exact asymptotic efficiency for regular parametric estimation under central zero-concentrated differential privacy.
In the privacy regime, the governing object is a diameter-constrained information region: the set of information matrices generated by statistics with diameter at most one.
For weighted quadratic loss, we show that the exact local minimax risk is an inverse information variational functional over this region.

More generally, a mixed information region yields a unified efficiency constant across different regimes,
covering the privacy regime and classical Fisher efficiency.
A matching estimator releases the empirical mean of a nearly optimal bounded statistic with Gaussian noise and locally inverts its population moment map.
Our theory differs from classical efficiency theory in its set-valued information geometry and loss-dependent efficient estimator.

As examples, we provide closed-form constants and optimal procedures for various concrete models,
including one-dimensional regular families, Gaussian means, categorical probability vectors, and regression models among others.
The theory also transfers to Gaussian differential privacy through an exact parameter rescaling.
     \end{abstract}
\noindent\textbf{MSC2020:} Primary 62C20; secondary 62F12, 62B10.

\noindent\textbf{Keywords:} Differential privacy; zero-concentrated differential privacy; local minimax risk; Fisher information; Gaussian mechanism; Gaussian differential privacy.

\begingroup\renewcommand{\thefootnote}{}\footnotetext{The authors are listed in alphabetical order.}\endgroup
\end{frontmatter}

  \section{Introduction}
\label{sec:introduction}

Differential privacy (DP) provides a principled framework for statistical analysis while limiting the disclosure of individual records \citep{dwork2006_CalibratingNoise,dwork2014_AlgorithmicFoundations}.
Since the introduction of DP,
numerous private methods have been developed for a broad range of estimation problems.
In parallel, a large body of work has been devoted to establishing minimax optimal rates under privacy for specific problems, including mean estimation, regression, and distribution estimation \citep{duchi2018_MinimaxOptimal,acharya2020_DifferentiallyPrivate,cai2021_CostPrivacy}.
These results characterize the order of the privacy cost, but these rate bounds do not determine the exact leading constant.

A central statistical question remains unresolved: how to characterize \emph{efficiency} under DP, namely the \emph{minimax optimal constant}.
The key challenge is that privacy imposes a global constraint on the estimation procedure, not the statistical model itself, so the classical efficiency theory based on Fisher information does not simply apply.
This leads to our central questions: what information geometry under privacy governs efficiency, what exact local minimax constant does it imply, and can that constant be attained by an explicit private estimator?

We answer this question by developing a general efficiency theory for private parametric estimation.
Consider \(n\) independent observations from a regular parametric model indexed by \(\theta\in\Theta\subset\R^p\).
In this paper, we focus on central record-level zero-concentrated differential privacy (zCDP)
~\citep{dwork2016_ConcentratedDifferential,bun2016_ConcentratedDifferential} with privacy budget \(\rho_n\),
while our results also extend directly to Gaussian DP~\citep{dong2022_GaussianDifferential}.
Beginning with the high privacy regime in which the privacy error dominates,
we identify the exact asymptotic minimax constant over the entire class of private mechanisms and construct optimal estimators that attain it.

We find that the information geometry governing private estimation is fundamentally different from ordinary Fisher information.
For example, consider scalar Gaussian mean estimation, \(N(\theta,\sigma^2)\), whose ordinary Fisher information is
\(\E_\theta s_\theta(X)^2 = \sigma^{-2}\), where \(s_\theta(X)=(X-\theta)/\sigma^2\) is the score function.
Under zCDP, the optimal private mechanism is not a Gaussian release of the sample mean, and the corresponding local minimax risk is not determined by the Fisher information.
Instead, the optimal private mechanism is a noisy threshold count:
we first count \( C_n = \sum_{i=1}^n \ind{X_i \ge\theta_0} \) and privatize it as \( \widetilde{C}_n=C_n +Z_n\) for \(Z_n \sim N(0,1/(2\rho_n))\);
then project \(\widetilde{C}_n/n\) onto a fixed interval inside the image neighborhood of the mean map \(g_{\theta_0}(\theta)=P_\theta(X\ge\theta_0)\) and apply its local inverse.
It can be shown that the resulting private risk is governed by the squared \(L^1\) score quantity \(\xk{\E_{\theta_0} \abs{s_{\theta_0}(X)}/2}^2=(2\pi\sigma^2)^{-1}\), and this risk is in fact the optimal local zCDP constant for this model; see the scalar specialization of \cref{prop:gauss-mean}.
This example already shows that privacy changes the underlying local information geometry, not merely the effective sample size.

The preceding construction generalizes to a simple but fundamental class of private estimators,
which we call \emph{private moment-map inverse estimators}.
Let \(\Phi:\caX\to\R^p\) be a statistic with diameter at most one, that is, \(\diam(\Phi) \coloneqq \sup_{x,x'\in\caX} \norm{\Phi(x)-\Phi(x')}_2 \le1\).
Its empirical mean has replacement sensitivity at most \(1/n\), so the Gaussian mechanism \citep{dwork2016_ConcentratedDifferential,bun2016_ConcentratedDifferential} gives a \(\rho_n\)-zCDP release
\[
  Y_{\Phi,n}
  =
  \frac{1}{n}\sum_{i=1}^n \Phi(X_i)+G_n,
  \quad
  G_n \sim N\xkm{0,(2n^2 \rho_n)^{-1} I_p},
\]
If the population moment map \(g_\Phi(\theta)= \E_\theta \Phi(X)\) is continuously differentiable with nonsingular derivative at \(\theta_0\), the inverse function theorem gives a local inverse \(\Psi_\Phi\).
Fix a closed Euclidean ball \(\mathcal{B}_\Phi\) centered at \(g_\Phi(\theta_0)\) and compactly contained in the domain of \(\Psi_\Phi\), and let \(\Pi_{\mathcal{B}_\Phi}\) denote Euclidean projection onto it.
We estimate \(\theta\) by the projected local inversion
\[
  \widehat{\theta}_{\Phi,n}
  =
  \Psi_\Phi \left(\Pi_{\mathcal{B}_\Phi} Y_{\Phi,n} \right).
\]
We call \(\widehat{\theta}_{\Phi,n}\) the private moment-map inverse estimator based on \(\Phi\).
Consequently, the private noise \(G_n\) contributes to the error through the inverse of the relevant Jacobian
\[
  A_{\Phi,\theta_0}
  \coloneqq
  Dg_\Phi(\theta_0)
  =
  \E_{\theta_0} \zk{\Phi(X)s_{\theta_0}(X)^\top},
\]
or more precisely, through the induced positive semidefinite information matrix \(A_{\Phi,\theta_0}^*A_{\Phi,\theta_0}\),
where \(A_{\Phi,\theta_0}^*\) is the adjoint of \(A_{\Phi,\theta_0}\).

Motivated by this construction, we introduce the \emph{diameter-constrained information region}
\[
  \caJ_{\theta_0}
  =
  \cl
  \dkm{A_{\Phi,\theta_0}^*A_{\Phi,\theta_0}:
  \diam(\Phi)\le1},
\]
where \(\Phi\) ranges over diameter-one statistics.
We refer to \cref{def:diameter-score} for a precise definition.
Rather than selecting a single maximal information matrix, \(\caJ_{\theta_0}\) reflects the local information matrices generated by all such statistics.
Our minimax lower bound shows that the same region constrains the information of every zCDP output experiment, and hence the first-order risk of every zCDP estimator.
Its matrix-valued nature is essential in several dimensions, where the optimal statistic depends on the loss and on how information is allocated across parameter directions.

For a quadratic loss matrix \(W\succeq0\), define
\[
  \mfC_{\theta_0}(W)
  =
  \frac{1}{2}\inf_{J\in\caJ_{\theta_0}}\caI_W(J),
\]
where \(\caI_W(J)=\Tr(WJ^{-1})\) for nonsingular \(J\); the precise definition for all \(J\succeq0\) is given in \cref{def:diameter-score-constant}.
Under the privacy regime and the regularity conditions, our main theorem establishes the exact local minimax risk:
\begin{equation}
  \label{eq:intro-constant}
  \lim_{n\to\infty}
  n^2 \rho_n
  \inf_{\widehat{\theta} : \rho_n \text{-zCDP}}
  \sup_{\theta\in\Theta_n}
  \E_\theta\normx{\widehat{\theta}-\theta}_W^2
  =
  \mfC_{\theta_0}(W),
\end{equation}
where \( \Theta_n \) is a shrinking neighborhood of \( \theta_0 \) and the infimum is over all zCDP estimators.
The optimal constant is attained by private moment-map inverse estimators based on nearly optimal \( \Phi \) from the information region.
In addition, we show the asymptotic normality of the attaining estimator.
For \(W\succ0\), let \(J_W\) be the unique optimizing information matrix, and an attaining sequence can be chosen so that the law of \(\tau_n(\widehat{\theta}_n-\theta)\) converges uniformly over the local neighborhood to \(N(0,J_W^{-1}/2)\).

A distinctive feature of the private efficiency bound is that the optimizing information matrix, and hence the choice of an efficient diameter-one statistic, can depend on the loss matrix \(W\).
If \(J_1 \succeq J_2\), then \(\caI_W(J_1)\leq\caI_W(J_2)\) for every \(W\succeq0\).
However, \(\caJ_{\theta_0}\) need not possess a greatest element in the Loewner order, so there need not be a single feasible information matrix, or a single statistic \(\Phi\), that is optimal for every loss.
\Cref{prop:common-efficiency} shows that no single zCDP estimator sequence can attain both loss-specific constants in this case.
This contrasts with classical regular efficiency, where the same Fisher information matrix governs every quadratic loss and a single asymptotically efficient estimator---such as the maximum likelihood estimator---attains the efficiency bound for every fixed \(W\).

Moreover, our theory also characterizes efficiency across the different asymptotic regimes, rather than only the privacy-dominated regime.
\Cref{tab:intro-asymp-regimes} summarizes these regimes and their risk behavior.
When \(n^2\rho_n\to\infty\), a unified scale and mixed information region yield exact efficiency constants, retaining both the sampling and privacy covariance of the same statistic;
see \cref{thm:local-minimax,thm:compact-minimax}.
In particular, when the privacy budget is large, the efficiency constant is just the ordinary Fisher constant.
When the privacy cost is comparable to the statistical cost,
we characterize efficiency through a joint sampling--privacy variational constant in \cref{def:unified-info}.
On the other hand, in the strong privacy regime where \(n^2\rho_n=O(1)\), consistency is impossible;
see \cref{prop:strong-privacy-lower}.

\begin{table}[t]
  \caption{Asymptotic regimes and corresponding results under zCDP.}
  \label{tab:intro-asymp-regimes}
  \centering
  \scriptsize
  \setlength{\tabcolsep}{2pt}
  \renewcommand{\arraystretch}{1.7}
  \begin{tabular}{p{0.1\textwidth}p{0.17\textwidth}p{0.23\textwidth}p{0.2\textwidth}p{0.2\textwidth}}
    \hline
    & Strong privacy & {Privacy dominated} & Transition & {Sampling dominated} \\
    \hline
    Relation & \(n^2\rho_n=O(1)\) & \(n^2\rho_n\to\infty,\ n\rho_n\to0\) &
    \(n\rho_n\to\lambda\in(0,\infty)\),\par
    \(  \beta \coloneqq \lambda/(1+\lambda) \)
    & \(n\rho_n\to\infty\) \\
    Error
      & \(\geq c > 0\)
      & \( (n^2 \rho_n)^{-1} \mfC_{\theta_0}(W)\)
      & \(n^{-1}  \uCt(W)/\beta \)
      & \(n^{-1}\Tr\zkx{W I(\theta_0)^{-1}}\) \\
    \hline
  \end{tabular}
\end{table}

Our efficiency theory applies to arbitrary fixed-dimensional regular parametric models under zCDP and weighted quadratic losses.
As applications, we obtain exact constants in closed form and construct matching private estimators for several important classes of models.
These include one-dimensional regular families, Gaussian and Poisson means, categorical probability vectors, bounded mean estimation, and generalized linear regression under Gaussian design.
\Cref{appendix-sec:model-class-reductions,appendix-sec:concrete-model-constants} give a complete catalogue of further exact constants, including classical location and scale families, spherically symmetric location models, and additional regression settings.
These examples illustrate the broad applicability of our theory and the diversity of information geometries that arise under privacy.

Finally, our efficiency results extend exactly to \(\mu_n\)-Gaussian differential privacy (GDP) \citep{dong2022_GaussianDifferential}.
Upon setting \(\rho_n=\mu_n^2/2\), the same effective scale and optimal constants apply, since every \(\mu_n\)-GDP mechanism is \(\rho_n\)-zCDP, while the Gaussian releases attaining the zCDP bounds are themselves \(\mu_n\)-GDP.

\subsection{Related work}
\label{subsec:related-work}

The local minimax formulation follows the classical strategy of deriving an information bound and constructing a procedure that attains it.
Our lower bound uses a matrix van Trees inequality, following the multivariate Bayesian Cram\'er--Rao formulation of \citet{gill1995_ApplicationsVan}.
The new issue created by privacy is not the van Trees inequality itself, but the characterization of the Fisher information matrices attainable by a private output experiment.

\subsubsection{Efficiency and exact optimality under local DP}
\label{subsubsec:optimality-local-dp}

Local DP (LDP) requires each observation to be privatized before aggregation, so its mechanisms have a simpler structure than central DP mechanisms, which can process the entire sample jointly.
For regular parametric models under a fixed LDP level, \citet{steinberger2024_EfficiencyLocal} establishes local asymptotic mixed normality for arbitrary sequentially interactive mechanisms and derives convolution and local asymptotic minimax results.
For scalar parameters, the paper identifies the minimal asymptotic variance as the reciprocal of the maximal Fisher information over marginal LDP channels and shows that a two-step private MLE attains it.
For multidimensional parameters, the convolution and local asymptotic minimax bounds are stated in terms of the Fisher information of the privatized experiment, without identifying an optimal information matrix or optimal constant.

Within this scalar LDP framework, \citet{nikita2025_EfficientEstimation} solve the channel optimization in closed form for the unit-variance Gaussian location model when \(\epsilon\le 1.04\): randomized response applied to the sign of \(X-\theta\) yields Fisher information \(\frac{2}{\pi}\left(\frac{e^\epsilon-1}{e^\epsilon+1}\right)^2\), and a two-stage estimator attains the reciprocal variance.
However, their results are restricted to the Fisher information for the Gaussian model.
Recent work further studies extremal and staircase representations for Fisher information and privacy--utility optimization under LDP \citep{amorino2025_FactorizationExtremal,amorino2026_SymmetryStaircase}.

For a distribution on a fixed \(k\)-point alphabet under noninteractive \(\epsilon\)-LDP, \citet{ye2018_OptimalLocally} show that, for every \(1\le p\le 2\), the worst-case \(\ell_p^p\) risk of their scheme and estimator divided by the minimax risk converges to one as the sample size grows.
This exact result is confined to global worst-case risk for a fixed finite alphabet model with fixed \(\epsilon\) and noninteractive observation-wise privatization.

\subsubsection{Optimal rates and information contraction under privacy}
\label{subsubsec:optimal-rates-privacy}

A substantial literature studies how privacy changes minimax rates and develops lower-bound methods for establishing those rates.
Under local privacy, \citet{duchi2018_MinimaxOptimal} derive private divergence bounds and rate-optimal procedures for several estimation problems, while \citet{rohde2020_GeometrizingRates} characterize private rates through moduli of continuity and construct rate-optimal channels.
At the instance-specific level, \citet{duchi2024_RightComplexity} show that LDP local minimax risk is governed by a total-variation modulus and define, for one-dimensional regular models, the \(L^1\)-information \(J_\theta=\E_\theta|s_\theta(X)|\), equal to \(2a_\theta\) in our scalar notation.
Their lower bound covers sequentially interactive locally R\'enyi-private channels, and matching constructions recover this local risk scale in several settings, but the comparison is only up to universal numerical constants rather than an exact leading constant.
Under central DP, \citet{smith2011_PrivacyPreserving} constructs private versions of a broad class of estimators that retain their classical asymptotic distributions, and \citet{awan2024_OneStepEfficient} preserve efficient summary statistics in differentially private synthetic data.
For minimax risk under central DP, \citet{cai2021_CostPrivacy} establish matching rates for mean estimation and linear regression in low- and high-dimensional settings, and \citet{acharya2020_DifferentiallyPrivate} develop private analogues of Le Cam, Fano, and Assouad methods.
Building on tracing attacks, \citet{cai2023_ScoreAttack} use model scores to construct a general attack statistic and obtain privacy-constrained minimax lower bounds and optimal rates up to logarithmic factors for several parametric and nonparametric models.
These results characterize the order of the privacy cost, or provide general tools for doing so, without by themselves identifying a loss-specific leading local constant.

Fisher information contraction offers a more local route to private lower bounds.
\citet{barnes2019_FisherInformation} study Fisher information under blackboard communication protocols, and \citet{barnes2020_FisherInformation} derive Fisher information bounds under local differential privacy.
Under central zCDP, \citet{cai2026_MinimaxAdaptive} prove a trace contraction for the Fisher information of any \(\rho\)-zCDP statistic and combine it with a multivariate van Trees inequality.
However, their contraction does not yield a sharp constant and fails to capture how the optimal constant depends on the loss.

\subsubsection{Optimal constants and fixed-query mechanisms}
\label{subsubsec:optimal-constants-privacy}

A related line of work fixes a query and optimizes its release mechanism.
For count queries, \citet{ghosh2009_UniversallyUtilitymaximizing} establish the universal optimality of the geometric mechanism for their class of users and losses.
For real-valued queries with bounded sensitivity under pure differential privacy, \citet{geng2014_OptimalMechanism} characterize optimal staircase noise.
\citet{geng2019_OptimalNoiseAdding} study optimal noise addition under additive differential privacy, and \citet{kulesza2025_GeneralStaircasea} develop general staircase mechanisms.
Under \(f\)-DP, \citet{awan2024_OptimizingNoise} optimize additive noise through anti-concentration and stochastic dominance.
These formulations optimize a release channel after the query and its sensitivity geometry have been specified.
A population estimation problem additionally requires choosing which statistic should encode the model score and proving a lower bound over all admissible mechanisms.

\begingroup
\emergencystretch=1em
Recent work has used staircase mechanism arguments to claim exact constants for one-dimensional mean estimation and for variance and covariance estimation \citep{kulesza2024_MeanEstimationa,takakura2025_OptimalVariance} under high privacy.
However, fixed-query staircase characterizations do not by themselves yield lower bounds over arbitrary central DP mechanisms.
\par\endgroup

\subsection{Organization}
\label{subsec:organization}

\Cref{sec:efficiency-under-zcdp} formalizes the local zCDP experiment, introduces the diameter-constrained and mixed information regions, and states the privacy-dominated and unified local and compact efficiency theorems.
\Cref{sec:attain-bound-gauss-releases} constructs estimators that attain the bound, and \cref{sec:lower-bounds-all-zcdp-mechanisms} proves the matching lower bound over all zCDP mechanisms by combining uniform information contraction with a matrix local minimax argument.
\Cref{sec:examples} develops closed-form constants for several model classes.
\Cref{sec:extensions} treats Gaussian differential privacy and differentiable targets, and \cref{sec:conclusion} summarizes the implications, limitations, and remaining questions.
The appendices contain the complete technical proofs and further example calculations.

Write \(\bbS^p\) for the space of real symmetric \(p\times p\) matrices and \(\bbS_+^p\) for its positive semidefinite cone.
Use \(\preceq\) and \( \prec \) to denote the Loewner order on \(\bbS^p\).
Denote the \(k\times k\) identity matrix by \(I_k\).
Write \(A^\top\) for the transpose of a matrix and \(A^*\) for the adjoint of a linear operator.
For any matrix \(A\), \(A^\dagger\) denotes its Moore--Penrose inverse.
We use \(\Tr\) for the trace and \(\norm{\cdot}_2\) for the Euclidean norm, with other norms indicated by their subscripts.
For \(W\succeq0\), write \(\norm{v}_W=(v^\top Wv)^{1/2}\) for the induced seminorm.
   \section{Efficiency under zCDP}
\label{sec:efficiency-under-zcdp}

\subsection{Setup}
\label{subsec:setup}

Let \(\{P_\theta:\theta\in\Theta\}\) be a parametric model on a standard Borel sample space \(\caX\),
where \(\Theta\subset\R^p\) is open and \(p\) is fixed.
Suppose that the model is dominated by a common \(\sigma\)-finite measure \(\mu\), with densities \(p_\theta\).
We have independent observations \(X_1,\ldots,X_n\) from \(P_\theta\).

Throughout this paper, we adopt the notion of \(\rho\)-zCDP from \citet{bun2016_ConcentratedDifferential}, which is a convenient relaxation of \((\epsilon,\delta)\)-DP.
We use the Rényi divergence of order \(\alpha>1\) to quantify the distinguishability of two distributions \(P\) and \(Q\):
\( D_\alpha(P\|Q) = \frac{1}{\alpha-1} \log \int \left(\frac{dP}{dQ}\right)^\alpha dQ \).
We say that two data sets \(S,S'\in\caX^n\) are adjacent if they differ in one coordinate, i.e., if there exists an index \(i\) such that \(S_j=S'_j\) for all \(j\ne i\) and \(S_i \ne S'_i\).
The following definition is from \citet{bun2016_ConcentratedDifferential,dwork2016_ConcentratedDifferential}.

\begin{definition}[\(\rho\)-zCDP]
  \label{def:zcdp}
  A randomized algorithm \(M:\caX^n \to\caZ\) is \(\rho\)-zCDP if, for every adjacent pair \(S,S'\in\caX^n\),
  \[
    D_\alpha(M(S)\|M(S'))\le \alpha\rho,\quad \forall \alpha > 1.
  \]
\end{definition}

The divergence formulation of zCDP is more convenient and elegant for our efficiency analysis than the probability formulation of
 \((\epsilon,\delta)\)-DP, though the latter is more common in the literature.
Moreover, it is well known that \(\rho\)-zCDP implies \((\epsilon,\delta)\)-DP for any \(\delta\in(0,1)\) with \(\epsilon=\rho+2\sqrt{\rho\log(1/\delta)}\), while \((\epsilon,0)\)-DP implies \((\epsilon^2/2)\)-zCDP~\citep{bun2016_ConcentratedDifferential}.
However, these translation formulas may not be tight, and the exact efficiency constants under \((\epsilon,\delta)\)-DP may differ from those under zCDP.
Gaussian differential privacy admits an exact transfer because every \(\mu\)-GDP mechanism is \((\mu^2/2)\)-zCDP and the Gaussian mechanisms used here are themselves \(\mu\)-GDP; see \cref{subsec:gauss-differential-privacy}.
We will discuss possible extensions of our results to \((\epsilon,\delta)\)-DP in \cref{sec:conclusion}.

We impose two basic regularity conditions on the statistical model.
The first is classical regularity assumption~\citep{vaart1998_AsymptoticStatistics} ensuring the existence of score functions and Fisher information.
The second ensures that the score functions are well-behaved in \(L^1\) so that they can be used to define our diameter-constrained information for private estimation.

\begin{assumption}[DQM regularity]
  \label{ass:regular}
  The model is differentiable in quadratic mean (DQM) at every \(\theta\in\Theta\) with score \(s_\theta\) for one observation satisfying
\[
    \E_\theta s_\theta(X)=0, \qquad \E_\theta \norm{s_\theta(X)}_2^2<\infty .
  \]
\end{assumption}

\begin{assumption}[\(L^1\) smoothness]
  \label{ass:score-density-l1}
  The density map \(\theta\mapsto p_\theta\) is continuously differentiable as an \(L^1(\mu)\)-valued map on \(\Theta\), with derivative
  \begin{math}
    \dot{p}_\theta(x)=p_\theta(x)s_\theta(x).
  \end{math}
\end{assumption}

\cref{ass:regular,ass:score-density-l1} are mild regularity conditions that hold for many parametric models.
In particular, both assumptions hold for regular natural exponential families with common support and an open natural parameter space.
Common examples include the Gaussian location family, Bernoulli family, and Cauchy location family among others.

\subsection{Diameter-constrained private information}
\label{subsec:diameter-constrained-info}

The classical theory of asymptotic efficiency is based on the Fisher information matrix
\[
  I(\theta) \coloneqq \E_\theta \zk{s_\theta(X)s_\theta(X)^\top},
\]
which defines the local information metric of the statistical model and quantifies the infinitesimal distinguishability of nearby distributions under full observation.
Under privacy, however, \(I(\theta)\) alone does not characterize the information attainable by the output experiment, because privacy restricts the class of statistics that can be released.
More concretely, if we use the Gaussian mechanism (see \cref{lem:gauss-composition}),
then we can only transmit information through a statistic with bounded diameter.
This motivates the following diameter-constrained information region.

Suppose \cref{ass:regular} holds, and fix \(\theta\in\Theta\).
For a real Hilbert space \(\caH\) and a statistic \(\Phi:\caX\to\caH\) satisfying \(\E_\theta \norm{\Phi(X)}_{\caH}^2<\infty\),
the diameter of \(\Phi\) is
\begin{equation}
  \label{eq:def-diameter}
  \diam(\Phi)\coloneqq
  \sup_{x,x'\in\caX} \norm{\Phi(x)-\Phi(x')}_{\caH}.
\end{equation}
We introduce the score--statistic covariance operator
\begin{equation}
  \label{eq:score-stat-operator}
  A_{\Phi,\theta}:\R^p \to\caH,
  \qquad
  A_{\Phi,\theta} h
  =\E_\theta \zk{\Phi(X)\ang{s_\theta(X),h}}
  \qquad h\in\R^p.
\end{equation}
The adjoint of \(A_{\Phi,\theta}\) is \(A_{\Phi,\theta}^*:\caH\to\R^p\), given by
\begin{math}
  A_{\Phi,\theta}^*v
  =
  \E_\theta \zk{s_\theta(X)\ang{\Phi(X),v}_{\caH}},~
   v\in\caH.
\end{math}

\begin{definition}
  \label{def:diameter-score}
  The diameter-constrained information region at \(\theta\) is
  \begin{equation}
    \label{eq:diameter-score-info-set}
    \caJ_\theta
    \coloneqq
    \cl\dk{A_{\Phi,\theta}^*A_{\Phi,\theta}:
    d<\infty,\ \Phi: \caX\to\R^d,\ \diam(\Phi)\le1
    }
    \subset\bbS_+^p,
  \end{equation}
  where the closure is taken in \(\bbS^p\).
  We identify the positive semidefinite cone \(\bbS_+^p\) with the set of self-adjoint positive semidefinite operators on \(\R^p\).
\end{definition}

We emphasize that \(\caJ_\theta\) is a set of positive semidefinite matrices, not a single number or matrix.
In comparison with the Fisher information, the geometry under privacy could be more complicated, and the information region may contain multiple extreme points, so a single matrix may not be sufficient.
The set \(\caJ_\theta\) characterizes the information attainable by any statistic with diameter at most one, and it is the fundamental object that governs the local minimax risk under zCDP.
We may reduce it by taking the trace to obtain a scalar quantity:
\begin{equation}
  \label{eq:diameter-score-coefficient}
  \Gamma_\theta
  \coloneqq
  \sup_{J\in\caJ_\theta} \Tr J
  =
  \sup_{d<\infty,\ \Phi:\caX\to\R^d,\ \diam(\Phi)\le1}
  \norm{A_{\Phi,\theta}}_{\HS}^2.
\end{equation}
We next provide some basic geometric properties of \(\caJ_\theta\).
See also \Cref{fig:info-region}.

\begin{figure}[tbp]
  \centering
  \includegraphics[width=0.9\textwidth]{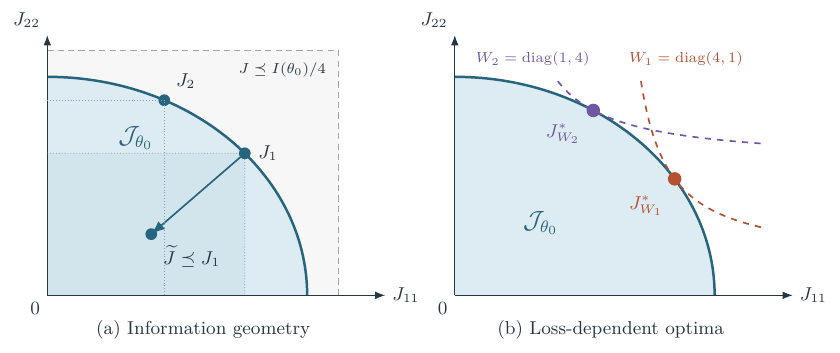}
  \caption{Illustration of the information region \( \caJ_{\theta_0} \).}
  \label{fig:info-region}
\end{figure}

\begin{proposition}
  \label{prop:dc-info-prop}
  Suppose \cref{ass:regular} holds, and fix \(\theta\in\Theta\).
  Then the following statements hold:
  \begin{enumerate}[label=(\roman*)]
    \item For every real Hilbert space \(\caH\), every statistic \(\Phi:\caX\to\caH\) with finite second moment, and every \(c\in\caH\),
    \begin{math}
      \diam(\Phi+c)=\diam(\Phi),
      ~
      A_{\Phi+c,\theta}=A_{\Phi,\theta}.
    \end{math}
    \item \(\caJ_\theta\) is a nonempty compact convex subset of \(\bbS_+^p\).
    \item The region \(\caJ_\theta\) is downward closed in the Loewner order: if \(J\in\caJ_\theta\) and \(0\preceq\widetilde{J}\preceq J\), then \(\widetilde{J}\in\caJ_\theta\).
    \item The region \(\caJ_\theta\) contains a positive definite matrix if and only if \(I(\theta)\succ0\).
  \end{enumerate}
\end{proposition}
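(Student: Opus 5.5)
The four parts are handled in order; the common tool is the identity \(h^\top A_{\Phi,\theta}^*A_{\Phi,\theta}h=\norm{A_{\Phi,\theta}h}^2\), together with the observation that \(\caJ_\theta\) is closed under positive scalings by \(t\in[0,1]\), direct sums and linear contractions of statistics.

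For (i), \(\diam(\Phi+c)=\diam(\Phi)\) is immediate from the definition. Also \(A_{\Phi+c,\theta}h=A_{\Phi,\theta}h+c\,\E_\theta\ang{s_\theta(X),h}=A_{\Phi,\theta}h\), because \(\E_\theta s_\theta=0\) by \cref{ass:regular}. The expectations are finite by Cauchy--Schwarz, since both \(\Phi\) and \(s_\theta\) are square integrable.

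For (ii), nonemptiness follows from \(\Phi\equiv0\), which gives \(0\in\caJ_\theta\). For boundedness, take \(\diam(\Phi)\le1\); by (i) we may center \(\Phi\) at some \(\Phi(x_0)\), so \(\norm{\Phi}\le1\). Then \(\norm{A_{\Phi,\theta}h}\le\E_\theta\abs{\ang{s_\theta,h}}\le\norm{h}\,\E_\theta\norm{s_\theta}\). Hence \(\Tr A^*A\le p(\E_\theta\norm{s_\theta}_2)^2<\infty\), and closedness gives compactness. For convexity, take \(\Phi_1:\caX\to\R^{d_1}\) and \(\Phi_2:\caX\to\R^{d_2}\) and \(t\in[0,1]\), and set \(\Phi=(\sqrt t\,\Phi_1,\sqrt{1-t}\,\Phi_2)\in\R^{d_1+d_2}\). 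Then
\[
\norm{\Phi(x)-\Phi(x')}^2=t\norm{\Phi_1(x)-\Phi_1(x')}^2+(1-t)\norm{\Phi_2(x)-\Phi_2(x')}^2\le1,
\]
and \(A_\Phi^*A_\Phi=tA_{\Phi_1}^*A_{\Phi_1}+(1-t)A_{\Phi_2}^*A_{\Phi_2}\). So the generating set is convex, and the closure of a convex set is convex.

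For (iii), the main point is to realize \(\widetilde J\) from a single \(\Phi\) with \(A_\Phi^*A_\Phi=J\); the case of general \(J\) then follows by approximation. If \(0\preceq\widetilde J\preceq J=A^*A\), there is a contraction \(C\) on \(\R^d\) with \(C^*C\) restricted appropriately, i.e.\ \(A^*C^*CA=\widetilde J\). To construct it, write \(\widetilde J=A^*MA\) with \(0\preceq M\preceq I\): on \(\ran A\) define \(M=(A^\dagger)^*\widetilde J A^\dagger\), and set \(M=0\) on the orthogonal complement. This \(M\) satisfies \(M\preceq I\) because \(\widetilde J\preceq J\), and the kernel of \(J\) lies in the kernel of \(\widetilde J\). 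Take \(C=M^{1/2}\). Then \(C\Phi\) has diameter at most \(\norm{C}_{\mathrm{op}}\le1\) and \(A_{C\Phi}=CA_\Phi\), so \(\widetilde J\) lies in the generating set.

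For a general \(J\in\caJ_\theta\), take \(J_k=A_k^*A_k\to J\). The step I expect to need care is approximating \(\widetilde J\) from below by matrices dominated by \(J_k\). I would handle it by first shrinking: set \(\widetilde J_\epsilon=(1-\epsilon)\widetilde J\). If \(J\) is singular this need not yet satisfy \(\widetilde J_\epsilon\preceq J_k\), so instead use \(\widetilde J_{k}=J_k^{1/2}J^{\dagger 1/2}\widetilde J J^{\dagger1/2}J_k^{1/2}\) restricted suitably. Simpler still, use \(\widetilde J_k=S_k\widetilde J S_k\), where \(S_k=J_k^{1/2}(J+\delta I)^{-1/2}\), and then let \(k\to\infty\) followed by \(\delta\to0\). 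In either form one checks \(\widetilde J_k\preceq J_k\) and \(\widetilde J_k\to\widetilde J\), and closedness finishes the argument.

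For (iv), one direction uses
\[
\norm{A_\Phi h}^2=\sup_{\norm v\le1}\bigl(\E_\theta[\ang{\Phi,v}\ang{s_\theta,h}]\bigr)^2 .
\]
If \(I(\theta)h=0\), then \(\ang{s_\theta,h}=0\) a.s., so \(h^\top Jh=0\) for every \(J\) in the generating set, and hence in the closure; no element of \(\caJ_\theta\) is then positive definite. Conversely, suppose \(I(\theta)\succ0\). For each unit vector \(h\) the function \(\ang{s_\theta,h}\) is not a.s.\ zero, and taking \(\Phi_h=\tfrac12\sign\ang{s_\theta,h}\) gives \(h^\top A_{\Phi_h}^*A_{\Phi_h}h=(\E\abs{\ang{s_\theta,h}}/2)^2>0\). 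Applying this to the coordinate directions \(h=e_1,\dots,e_p\) alone does not suffice, because a sum of rank-one matrices \(A_{\Phi_{e_j}}^*A_{\Phi_{e_j}}\) can be singular. The fix is to note that the null space \(N\) of \(\frac1p\sum_j A_{\Phi_j}^*A_{\Phi_j}\) shrinks when further directions are added. If \(N\ne0\), pick a unit \(h\in N\) and add \(\Phi_h\); this strictly reduces \(N\). Since the dimension is finite, after finitely many steps, and using the convexity from (ii), we obtain an average that is positive definite.
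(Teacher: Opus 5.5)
Your proof is correct. Parts (i)--(iii) follow essentially the paper's route. For (i) and boundedness you translate and use $\E_\theta s_\theta(X)=0$, centering to get $\norm{\Phi}\le1$. For convexity you use the direct sum $(\sqrt t\,\Phi_1,\sqrt{1-t}\,\Phi_2)$. For downward closure you apply a positive contraction to the statistic: your $M^{1/2}$ with $M=(A^\dagger)^\top\widetilde J A^\dagger$ plays the role of the paper's $T^{1/2}$, where $T=UDU^*$ comes from the polar decomposition $A=UJ^{1/2}$.

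In the closure step you need not hedge. Your first candidate $J_k^{1/2}DJ_k^{1/2}$, with $D=(J^\dagger)^{1/2}\widetilde J(J^\dagger)^{1/2}$, is exactly the paper's choice and works without any restriction. Indeed $0\preceq D\preceq I$, and $J^{1/2}DJ^{1/2}=\widetilde J$ because $\ker J\subseteq\ker\widetilde J$. Your regularized variant also works, provided you write $S_k\widetilde J S_k^\top$ rather than $S_k\widetilde J S_k$.

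Part (iv) is where you genuinely differ from the paper.

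For the forward direction, the paper appeals to the envelope $J\preceq I(\theta)/4$ of \cref{eq:info-region-fisher-bound}. Your kernel argument is self-contained: $I(\theta)h=0$ forces $\ang{s_\theta(X),h}=0$ almost surely, hence $A_{\Phi,\theta}h=0$ for every generator and $h^\top Jh=0$ on the closure. It needs only \cref{ass:regular} and does not route through \cref{prop:directional-envelope-tv}.

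For the converse, the paper takes a single truncated-score statistic $\Phi_M=s_\theta\ind{\norm{s_\theta}_2\le M}/(2M)$. Its derivative $\E_\theta[s_\theta s_\theta^\top\ind{\norm{s_\theta}_2\le M}]/(2M)$ is nonsingular for large $M$ by dominated convergence. A positive-definite generator therefore appears in one shot, already as an $\R^p$-valued statistic with square invertible derivative, which is the form reused later.

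You instead assemble scalar sign statistics $\tfrac12\sign\ang{s_\theta,h}$ through the convexity of (ii). Each new direction is chosen in the current null space, which cuts its dimension by at least one. This avoids truncation and dominated convergence and ties the construction to the directional envelope, at the cost of an iterative step.
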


The definition in \cref{eq:diameter-score-info-set} is characterized over all finite-dimensional statistics \( \Phi \),
but actually allowing arbitrary real Hilbert-valued statistics does not enlarge \(\caJ_\theta\).
Projection onto the range of \(A_{\Phi,\theta}\), which has dimension at most \(p\), preserves \(A_{\Phi,\theta}^*A_{\Phi,\theta}\) exactly while not increasing the diameter.

Moreover, the diameter-constrained information region is closely related to the local total variation distance between nearby distributions.
For probability measures \(P,Q\) dominated by \(\mu\), with densities \(p,q\), respectively, write
\( \TV(P,Q)
\coloneqq
\frac{1}{2}\int\abs{p-q}\,d\mu. \)

\begin{proposition}
  \label{prop:directional-envelope-tv}
  Suppose \cref{ass:regular,ass:score-density-l1} hold, and fix \(\theta\in\Theta\).
  Then, for every \(h\in\R^p\),
  \begin{equation}
    \label{eq:directional-info-envelope}
    \sup_{J\in\caJ_\theta} h^\top Jh
    =
    \frac{1}{4}
    \xk{
      \E_\theta \abs{\ang{s_\theta(X),h}}
    }^2.
  \end{equation}
  Moreover, as \(t\to0\),
  \begin{equation}
    \label{eq:local-tv-info}
    \TV(P_{\theta+th},P_\theta)
    =
    \frac{|t|}{2}
    \E_\theta \abs{\ang{s_\theta(X),h}}
    +o(|t|)
    =
    |t|
    \sqrt{\sup_{J\in\caJ_\theta} h^\top Jh}
    +o(|t|).
  \end{equation}
\end{proposition}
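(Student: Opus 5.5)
The proof reduces the directional supremum to a scalar optimization and then evaluates it as a dual $L^\infty$--$L^1$ problem.

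\emph{Reduction to scalar statistics.} For $J=A_{\Phi,\theta}^*A_{\Phi,\theta}$ we have $h^\top Jh=\norm{A_{\Phi,\theta}h}_2^2$. Writing $v=A_{\Phi,\theta}h/\norm{A_{\Phi,\theta}h}_2$ (when nonzero), this equals $\bigl(\E_\theta[\ang{\Phi(X),v}\ang{s_\theta(X),h}]\bigr)^2$. The scalar statistic $\phi=\ang{\Phi,v}$ satisfies $\diam(\phi)\le\diam(\Phi)\le1$. Conversely, any scalar $\phi$ with diameter at most one is an admissible $\Phi$ with $d=1$.

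Since the map $J\mapsto h^\top Jh$ is continuous, taking the closure does not change the supremum. Hence
\[
\sup_{J\in\caJ_\theta}h^\top Jh=\sup_{\phi:\caX\to\R,\ \diam(\phi)\le1}\bigl(\E_\theta[\phi(X)\ang{s_\theta(X),h}]\bigr)^2 .
\]

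\emph{Evaluating the scalar supremum.} By \cref{prop:dc-info-prop}(i) together with $\E_\theta s_\theta=0$, we may shift $\phi$ by a constant. A real function of diameter at most one takes values in an interval of length one, so we may assume $\phi$ takes values in $[-1/2,1/2]$. Then $\abs{\E_\theta[\phi\ang{s_\theta,h}]}\le\frac12\E_\theta\abs{\ang{s_\theta,h}}$.

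This bound is attained by $\phi=\frac12\sign\ang{s_\theta,h}$, which has diameter at most one. Squaring gives \eqref{eq:directional-info-envelope}.

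\emph{The TV expansion.} By \cref{ass:score-density-l1}, $t\mapsto p_{\theta+th}$ is $C^1$ into $L^1(\mu)$ with derivative $\dot p_\theta h=p_\theta\ang{s_\theta,h}$. Hence
\[
\norm{p_{\theta+th}-p_\theta-t\,p_\theta\ang{s_\theta,h}}_{L^1(\mu)}=o(|t|).
\]
The reverse triangle inequality then gives
\[
\int\abs{p_{\theta+th}-p_\theta}\,d\mu=|t|\,\E_\theta\abs{\ang{s_\theta,h}}+o(|t|).
\]
Dividing by two yields the first equality in \eqref{eq:local-tv-info}. The second equality follows from \eqref{eq:directional-info-envelope} by taking square roots.

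No step is hard. The only care points are that the closure and the passage from vector to scalar statistics do not change the supremum, and that the centering at the midpoint of the range is justified via \cref{prop:dc-info-prop}(i). The score needs to be only in $L^1$ for the sign statistic to be admissible, and this holds by \cref{ass:regular}.
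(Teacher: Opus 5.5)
Your proposal is correct and follows essentially the same route as the paper: project $\Phi$ onto a unit vector to reduce to scalar statistics, shift to the midpoint of the range (using $\E_\theta s_\theta=0$) to obtain the bound $\frac12\E_\theta\abs{\ang{s_\theta(X),h}}$, attain it with $\frac12\sign\ang{s_\theta,h}$, and derive the TV expansion from the $L^1$ derivative via the reverse triangle inequality. You also state explicitly that taking the closure does not change the supremum, by continuity of $J\mapsto h^\top Jh$, which the paper leaves implicit.
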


By \cref{eq:directional-info-envelope} and the Cauchy--Schwarz inequality,
\begin{equation}
  \label{eq:info-region-fisher-bound}
  J\preceq\frac{1}{4}I(\theta)
  \quad\text{for every }J\in\caJ_\theta,
  \qquad
  \Gamma_\theta \le\frac{1}{4}\Tr I(\theta).
\end{equation}
Thus the Fisher information provides a universal quadratic upper bound, whereas the exact directional envelope is governed by the \(L^1\) norm of the directional score and need not itself be a quadratic form.
In particular, different directions may require different score-sign statistics, which explains why the multivariate private information is naturally represented by a region rather than by a single maximal matrix.

When \( p=1 \), we have the following simple characterization of the information region.
\cref{ex:1d-info-region} shows that the diameter-constrained information region is related to the \(L^1\) norm of the score, rather than by the Fisher information \(\E_\theta s_\theta(X)^2\).
This shows that our diameter-constrained information is substantially different from the Fisher information.

\begin{example}[One-dimensional information region]
  \label{ex:1d-info-region}
  Under \(p=1\), we have
  \[
    \caJ_\theta=[0,\Gamma_\theta],
    \qquad
    \Gamma_\theta
    =
    \left(\frac{1}{2}\E_\theta \abs{s_\theta(X)}\right)^2,
  \]
  where the equality is attained by  \(\phi_*(x)=\frac{1}{2}\sign\zk{s_\theta(x)}\).
\end{example}

\subsection{Privacy-dominated local efficiency}
\label{subsec:privacy-dominated-local-efficiency}

Our first result focuses on the privacy-dominated regime, where the cost of privacy dominates the statistical error.
For a sequence of privacy budgets \(\rho_n\), write \(\tau_n \coloneqq n\sqrt{\rho_n}\).
For the privacy-dominated results, we work in the asymptotic regime where \(n\to\infty\) and
\begin{equation}
  \label{eq:privacy-dominated-regime}
  \rho_n \to0,\qquad \tau_n \to\infty,\qquad n\rho_n \to0.
\end{equation}
We recall that the optimal rates~\citep{cai2021_CostPrivacy} of convergence under zCDP are often \( n^{-1} + \tau_n^{-2} \).
Therefore,
the condition \(\rho_n \to0\) describes an increasingly stringent privacy constraint;
\(\tau_n \to\infty\) ensures consistency of the private estimator;
while \(n\rho_n \to0\) makes the statistical error negligible relative to the privacy noise.

We shall first consider a local estimation problem around a fixed center \(\theta_0 \in\Theta\).
Given any sequence \(R_n \to\infty\) with \(R_n/\tau_n \to0\), define
\begin{equation}
  \label{eq:local-param-set}
  \Theta_n = \left\{ \theta_0+\frac{u}{\tau_n}: \norm{u}_2 \le R_n \right\}.
\end{equation}
Because \(\Theta\) is open, \(\Theta_n \subset\Theta\) for all sufficiently large \(n\).

To state the theorem, we introduce a trace functional to accommodate singular matrices.
For \(W\succeq0\) and \(J\succeq0\), define
\begin{math}
   \caI_W(J) = \lim_{t\downarrow0} \Tr[W(J+tI_p)^{-1}].
\end{math}
If \(J\) is nonsingular, then \(\caI_W(J)=\Tr(WJ^{-1})\).
More generally, if \(\R^p=\ran(J)\oplus\ker(J)\), then \(\caI_W(J)<\infty\) precisely when \(W\) annihilates \(\ker(J)\), and \( \caI_W(J) \) is the trace of \(W\) against the inverse of \(J\) on \(\ran(J)\).
For each \(t>0\), the map \(J\mapsto\Tr[W(J+tI_p)^{-1}]\) is continuous on \(\bbS_+^p\), and these functions increase pointwise as \(t\downarrow0\).
Hence \(\caI_W\) is lower semicontinuous.

We can now define the exact efficiency constant.

\begin{definition}[Privacy constant]
  \label{def:diameter-score-constant}
  Define
  \begin{equation}
    \mfC_\theta(W) = \frac{1}{2}\inf_{J\in\caJ_\theta} \caI_W(J).
  \end{equation}
\end{definition}

By the compactness of \(\caJ_\theta\) and the lower semicontinuity of \(\caI_W\), the infimum is attained.
When \(I(\theta)\succ0\), it is finite because \(\caJ_\theta\) contains a positive definite matrix.
The following theorem identifies \(\mfC_{\theta_0}(W)\) as the exact efficiency bound for the private local experiment.

\begin{theorem}[Privacy-dominated constant]
  \label{thm:private-minimax}
  Fix \(W\succeq0\).
  Under \cref{ass:regular,ass:score-density-l1}, suppose \(I(\theta_0)\succ0\).
  Let \((\rho_n)\) satisfy the privacy-dominated regime in \cref{eq:privacy-dominated-regime}.
  Then, for every \(R_n \to\infty\) with \(R_n/\tau_n \to0\),
  \begin{equation}
    \label{eq:private-minimax}
    \lim_{n\to\infty} \tau_n^2 \inf_{\widehat{\theta}:\rho_n \text{-zCDP}} \sup_{\theta\in\Theta_n} \E_\theta \normx{\widehat{\theta}-\theta}_W^2 = \mfC_{\theta_0}(W),
  \end{equation}
  where \(\Theta_n\) is defined in \cref{eq:local-param-set}.
\end{theorem}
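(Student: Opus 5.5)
The proof splits into an achievability bound, $\limsup\le\mfC_{\theta_0}(W)$, and a converse, $\liminf\ge\mfC_{\theta_0}(W)$.

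\emph{Upper bound.} Fix $\eta>0$. By the definition of $\caJ_{\theta_0}$ and the downward closedness in \cref{prop:dc-info-prop}(iii), I will pick a finite-dimensional $\Phi:\caX\to\R^d$ with $\diam(\Phi)\le1$ such that $J_\Phi=A_{\Phi,\theta_0}^*A_{\Phi,\theta_0}\succ0$ and $\frac12\caI_W(J_\Phi)\le\mfC_{\theta_0}(W)+\eta$. To get $J_\Phi\succ0$, mix an optimizer with a positive definite element of the region, which exists by \cref{prop:dc-info-prop}(iv), using convexity. I will then project $\Phi$ onto $\ran A_{\Phi,\theta_0}$ so that $d=p$ and $A_{\Phi,\theta_0}$ is invertible, and use the private moment-map inverse estimator $\widehat\theta_{\Phi,n}$ from the introduction. \cref{ass:score-density-l1} together with boundedness of $\Phi$ makes $g_\Phi(\theta)=\int\Phi p_\theta\,d\mu$ continuously differentiable with $Dg_\Phi(\theta_0)=A_{\Phi,\theta_0}$, so the local inverse $\Psi_\Phi$ exists. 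Uniformly over $\theta\in\Theta_n$ I will decompose
\[
Y_{\Phi,n}-g_\Phi(\theta)=\bar\Phi_n-g_\Phi(\theta)+G_n .
\]
The sampling term has second moment $O(1/n)=o(\tau_n^{-2})$ because $n\rho_n\to0$. The noise term $\tau_nG_n\sim N(0,I_p/2)$. The projection onto $\mathcal{B}_\Phi$ is inactive except on an event of exponentially small probability, and on that event the error is bounded because $\mathcal{B}_\Phi$ is compact. A first-order Taylor expansion of $\Psi_\Phi$, whose derivative converges uniformly to $A_{\Phi,\theta_0}^{-1}$ since $\theta\to\theta_0$, then gives
\[
\tau_n^2\E_\theta\normx{\widehat\theta-\theta}_W^2\to\tfrac12\Tr\bigl(WA^{-1}A^{-\top}\bigr)=\tfrac12\caI_W(J_\Phi).
\]
Dominated convergence works here because the error is bounded. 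Letting $\eta\downarrow0$ gives the upper bound.

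\emph{Lower bound.} This is the main obstacle. The core is an information contraction lemma: for any $\rho$-zCDP mechanism $M$ with output $Z$, the Fisher information of the output experiment $\theta\mapsto\mathcal L_\theta(Z)$ at any $\theta$ satisfies
\[
I_Z(\theta)\preceq n^2\rho\,(2J+o(1))\quad\text{for some }J\in\caJ_{\theta},
\]
uniformly near $\theta_0$. The factor of $2$ is calibrated so the final constant matches the Gaussian mechanism, which has Fisher information $2\tau_n^2 J_\Phi$.

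To prove the lemma I would write the output score as $\E[\sum_i s_\theta(X_i)\mid Z]$. For a direction $h$ and a test vector $v$, a Hoeffding/ANOVA-style projection reduces the relevant quantity to $n\,\E[\ang{s_\theta(X_1),h}\,\psi(X_1)]$, where $\psi(x)=\E[f(Z)\mid X_1=x]$ for bounded-variance $f$. zCDP with $\alpha\to1$ bounds the KL, and hence the $\chi^2$-type variation, of $\mathcal L(Z\mid X_1=x)$ across $x$. This gives $\diam(\psi)\lesssim\sqrt{2\rho}\,\norm{f}_{L^2}$ asymptotically, with a matching conditional variance bound, so vectorizing over $f$ produces an element of $\caJ_\theta$. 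The delicate part is making this exact, not just up to constants, and controlling the remainder uniformly. For that I would use the small-$\rho$ expansion of Rényi divergence and a Hilbert-valued $\Phi$, which is allowed by the remark after \cref{prop:dc-info-prop}.

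With the lemma in hand, I will apply the matrix van Trees inequality of \citet{gill1995_ApplicationsVan} on the ball $\Theta_n$, using a smooth prior on $u$ rescaled to radius $R_n$, whose prior Fisher information is $O(R_n^{-2})\to0$. This gives
\[
\tau_n^2\sup_{\Theta_n}\E\normx{\widehat\theta-\theta}_W^2\ge\Tr\bigl[W(2\bar J+o(1))^{-1}\bigr],
\]
where $\bar J$ is the prior average of the contracted bounds. By convexity and compactness of $\caJ_{\theta_0}$ together with continuity of the region in $\theta$, $\bar J$ lies asymptotically in $\caJ_{\theta_0}$. The limit of these bounds is then at least $\frac12\inf_{J\in\caJ_{\theta_0}}\caI_W(J)=\mfC_{\theta_0}(W)$. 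Singular $W$ is handled by adding $tI_p$ and using lower semicontinuity of $\caI_W$.
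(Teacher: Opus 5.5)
Your overall architecture is the same as the paper's. The upper bound is a Gaussian release of a near-optimal diameter-one statistic (optimizer mixed toward a positive definite element, approximated by a generator, compressed to $\R^p$) followed by projected local inversion. The lower bound is an information-region contraction fed into a matrix van Trees inequality with a prior of radius $R_n$ on the $u$-scale, closed by convexity, compactness and local stability of $\caJ_\theta$. The upper half is fine up to one slip: $\tau_n^2\normx{\widehat\theta-\theta}_W^2$ is not bounded. The limit should instead come from the global Lipschitz property of $\Psi_\Phi\circ\Pi_{\mathcal B_\Phi}$ together with a uniform fourth-moment bound on $\tau_n(Y_{\Phi,n}-g_\Phi(\theta))$, i.e.\ uniform integrability. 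The genuine gap is in the lower bound. The contraction lemma is its entire content, and the route you sketch for it does not deliver the exact factor $2$.

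Write $Q$ for the output law under $P_\theta^{\otimes n}$, $Q_{i,x}$ for the output law with record $i$ fixed at $x$, and $L_{i,x}=dQ_{i,x}/dQ$. Your $\psi$ is $\psi_f(x)=\ang{f,L_{1,x}}_{L^2(Q)}$, with $f$ normalized in $L^2(Q)$ because $I_Z(\theta)$ lives in $L^2(Q)$. So what you must prove is $\sup_{x,x'}\norm{L_{i,x}-L_{i,x'}}_{L^2(Q)}^2\le 2\rho\,(1+o_\rho(1))$, uniformly over mechanisms.

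Your suggested ingredients do not give this. A KL bound (your $\alpha\to1$) does not control $\chi^2$ at all. Using $\alpha=2$, plus joint convexity of $e^{(\alpha-1)D_\alpha}$ to pass from neighbouring deterministic databases to the mixtures $Q_{i,x},Q_{i,x'}$, gives $\chi^2(Q_{i,x}\,\|\,Q_{i,x'})\le e^{2\rho}-1$. But that is the same squared distance computed with $Q_{i,x'}$, not the barycenter $Q=\int Q_{i,u}\,P_\theta(du)$, as reference measure. zCDP gives no pointwise control of $dQ_{i,x'}/dQ$: the privacy loss is unbounded, as it is for the Gaussian mechanism itself. A naive change of reference therefore costs a factor that is not $1+o(1)$, and any error of order $\rho$ in the squared diameter changes the constant.

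The missing idea is the paper's barycentric estimate (\cref{lem:barycentric}). The privacy loss is subgaussian at scale $\sqrt\rho$, so $\E\abs{L}^m$ and $\E(e^{\abs{L}}-1)^m$ are $O(\rho^{m/2})$ uniformly. Write $\norm{L_{i,t}-L_{i,s}}_{L^2(Q)}^2=\E_{Q_{i,s}}[(Y-1)^2/Z]$ with $Y=dQ_{i,t}/dQ_{i,s}$ and $Z=dQ/dQ_{i,s}$. Jensen gives $Z\ge\exp(\int\log(dQ_{i,u}/dQ_{i,s})\,P_\theta(du))$, and Cauchy--Schwarz then shows the change of reference costs only $O(\rho^{3/2})$.

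With this in hand, the exact tower-property identity applies: the output score operator $h\mapsto h^\top s_M$ equals $\sum_i A_{L_{i,\cdot},\theta}$, with $x\mapsto L_{i,x}$ viewed as an $L^2(Q)$-valued statistic. Averaging over $i$ handles non-symmetric mechanisms, so no ANOVA projection or extra conditional-variance bound is needed. The result is $I_Z(\theta)=2\rho n^2\delta_\rho^2J$ with $J\in\caJ_\theta$ and $\delta_\rho=1+o_\rho(1)$. Because this is an equality rather than an inequality with some unspecified $J$, $\theta\mapsto J$ is automatically measurable for your prior averaging.
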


\subsection{Unified efficiency across privacy regimes}
\label{subsec:unified-local-efficiency}

The preceding theorem concerns the privacy-dominated regime, in which the sampling fluctuation is asymptotically negligible.
In the following, we establish a general efficiency theorem that covers different asymptotic regimes and unifies the efficiency constant.
Recall that the statistic error and privacy error are of scale \( n^{-1} \) and \( \tau_n^{-2} \) respectively.
For any positive sequence \((\rho_n)\), define the information ratio \(\lambda_n\), the positive local scale \(\sigma_n\), and the sampling weight \(\beta_n\) by
\begin{equation}
  \label{eq:unified-effective-info-scale}
  \lambda_n \coloneqq n\rho_n = \frac{\tau_n^2}{n},\quad
  \sigma_n^2 \coloneqq \frac1n+\frac1{\tau_n^2},\quad
  \beta_n \coloneqq \frac{\lambda_n}{1+\lambda_n}.
\end{equation}
Then, \(\sigma_n^{-2}\) is the effective information obtained by reciprocal addition of the sampling information \(n\) and the privacy information \(\tau_n^2=n^2\rho_n\).
The weights \(\beta_n\) and \(1-\beta_n\) are respectively the sampling and privacy shares of the common scale \(\sigma_n^2\).

For a statistic \(\Phi:\caX\to\R^d\), write \( V_{\Phi,\theta}=\Var_\theta(\Phi(X))  \).
For \(\beta\in[0,1]\), define
\begin{equation}
  \label{eq:weighted-variance}
  \Sigma_{\Phi,\theta,\beta}
  =
  \beta V_{\Phi,\theta}
  +
  \frac{1-\beta}{2}I_d.
\end{equation}
When \(\beta<1\), this matrix is positive definite.
The normalization in \cref{eq:unified-effective-info-scale} is chosen so that the sampling and Gaussian privacy covariances satisfy the exact identity
\( \sigma_n^2 \Sigma_{\Phi,\theta,\beta_n} =  V_{\Phi,\theta}/n + I_d/ (2\tau_n^2)\).

\begin{definition}[Unified information region and constant]
  \label{def:unified-info}
  For \(\theta\in\Theta\) and \(\beta\in[0,1]\), let
  \begin{equation}
    \label{eq:unified-info-region}
    \caU_{\theta,\beta}
    =
    \cl\dk{
      A_{\Phi,\theta}^*
      \Sigma_{\Phi,\theta,\beta}^{\dagger}
      A_{\Phi,\theta}:
      d<\infty,
      \ \Phi:\caX\to\R^d,
      \ \diam(\Phi)\le1
    }.
  \end{equation}
  The closure is taken in \(\bbS^p\).
  For \(W\succeq0\), define
  \begin{equation}
    \label{eq:unified-constant}
    \uC_\theta(W)
    =
    \inf_{H\in\caU_{\theta,\beta}}\caI_W(H).
  \end{equation}
\end{definition}

In the privacy and sampling regimes, respectively, the unified region recovers the private information region and Fisher information:
\begin{equation}
  \label{eq:unified-regime-identities}
  \caU_{\theta,0}=2\caJ_\theta,
  \qquad
  \mfC_\theta^{(0)}(W)=\mfC_\theta(W),
  \qquad
  \mfC_\theta^{(1)}(W)=\Tr\xkm{WI(\theta)^{-1}}.
\end{equation}
For \(0<\beta<1\), the covariance \(\Sigma_{\Phi,\theta,\beta}\) keeps the sampling and privacy contributions attached to the same statistic, so the constant remains a joint variational problem.

Define the unified local parameter set \( \widetilde\Theta_n \) for a sequence \( (R_n) \) with
\begin{equation}
  \label{eq:unified-local-param-set}
  \widetilde\Theta_n
  =
  \dk{
    \theta_0+\sigma_n u:
    \norm{u}_2\le R_n
  },
  \qquad
  R_n\to\infty,
  \quad
  R_n\sigma_n\to0.
\end{equation}

\begin{theorem}[Unified efficiency]
  \label{thm:local-minimax}
  Fix \(W\succeq0\).
  Under \cref{ass:regular,ass:score-density-l1}, suppose \(I(\theta_0)\succ0\).
  Let \((\rho_n)\) be a positive sequence such that
  \begin{equation}
    \label{eq:unified-asymp-regime}
    n^2\rho_n\to\infty,
    \qquad
    \beta_n\to\beta\in[0,1],
  \end{equation}
  where \(\beta_n\) is defined in \cref{eq:unified-effective-info-scale}.
  With \(\widetilde\Theta_n\) defined in
  \cref{eq:unified-local-param-set}, we have
  \begin{equation}
    \label{eq:unified-local-minimax}
    \lim_{n\to\infty}
    \sigma_n^{-2}
    \inf_{\widehat\theta:\rho_n\text{-zCDP}}
    \sup_{\theta\in\widetilde\Theta_n}
    \E_\theta
    \normx{\widehat\theta-\theta}_W^2
    =
    \uCt(W).
  \end{equation}
\end{theorem}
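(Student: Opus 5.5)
We prove \cref{thm:local-minimax} by matching an upper bound, obtained from an explicit Gaussian-release estimator, with a lower bound valid for every \(\rho_n\)-zCDP mechanism. Both bounds are organised around the matrices \(H=A_{\Phi,\theta_0}^*\Sigma_{\Phi,\theta_0,\beta}^{\dagger}A_{\Phi,\theta_0}\) that make up \(\caU_{\theta_0,\beta}\).

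\emph{Upper bound.} Fix \(\ep>0\) and choose \(\Phi:\caX\to\R^d\) with \(\diam(\Phi)\le1\) such that \(\caI_W(H_\Phi)\le\uCt(W)+\ep\).
\begin{itemize}
\item[(a)] We first arrange that \(A_{\Phi,\theta_0}\) has full column rank and that the moment map \(g_\Phi\) is \(C^1\) near \(\theta_0\). To do so, append a small multiple \(\eta\,\phi_*\), where \(\phi_*\) is a bounded statistic with \(A_{\phi_*,\theta_0}\) injective (possible since \(I(\theta_0)\succ0\), e.g.\ truncated score directions), and rescale so the diameter stays at most one. This changes \(H\) only by \(o(1)\) as \(\eta\to0\). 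Continuity of \(g_\Phi\) follows from \cref{ass:score-density-l1} because \(\Phi\) is bounded.
\item[(b)] Use the moment-map inverse estimator \(\widehat\theta_{\Phi,n}=\Psi_\Phi(\Pi_{\mathcal B_\Phi}Y_{\Phi,n})\). It is \(\rho_n\)-zCDP by the Gaussian mechanism (\cref{lem:gauss-composition}). A full-rank \(A_{\Phi,\theta_0}\) with \(d>p\) is not invertible, so \(Y_{\Phi,n}\) is first reduced to \(p\) coordinates. This is done by the GLS-type linear map \(L=(A^\top\Sigma^{-1}A)^{-1}A^\top\Sigma^{-1}\), with \(A=A_{\Phi,\theta_0}\) and \(\Sigma=\Sigma_{\Phi,\theta_0,\beta}\), applied before inversion. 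When \(\beta=1\), \(\Sigma\) may be singular; there we use the pseudoinverse and a small ridge.
\item[(c)] The local error \(\sigma_n^{-1}(\widehat\theta-\theta)\) is linearised. It is asymptotically normal with covariance \((A^\top\Sigma^{-1}A)^{-1}\), uniformly over \(\widetilde\Theta_n\). This uses \(\sigma_n^2\Sigma_{\Phi,\theta,\beta_n}=V/n+I/(2\tau_n^2)\), a uniform CLT for bounded \(\Phi\), and continuity of \(\theta\mapsto A_{\Phi,\theta}\) and \(V_{\Phi,\theta}\).
\item[(d)] The projection keeps the estimator bounded. Together with exponential tails of the event that \(Y_{\Phi,n}\) leaves \(\mathcal B_\Phi\), this gives uniform integrability, so the normal covariance converts into the risk bound \(\caI_W(H)+o(1)\). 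Letting \(\ep\to0\) gives the upper bound.
\end{itemize}

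\emph{Lower bound.}
\begin{itemize}
\item[(a)] Put a smooth prior \(\pi\) on the ball of radius \(R\) in the local parameter \(u\), with \(\theta=\theta_0+\sigma_n u\). Apply the matrix van Trees inequality \citep{gill1995_ApplicationsVan} to the output experiment \(u\mapsto \mathrm{Law}(M(X_{1:n}))\). This reduces the Bayes risk to \(\caI_W\) of \(\E_\pi[\sigma_n^2\,\mathcal F_n(\theta)]+\tilde J(\pi)/R^2\), where \(\mathcal F_n\) is the Fisher information of the mechanism output.
\item[(b)] The key step is an information contraction: every \(\rho_n\)-zCDP output satisfies \(\sigma_n^2\mathcal F_n(\theta)\in\caU_{\theta,\beta_n}+o(1)\) uniformly for \(\theta\) near \(\theta_0\). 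The plan is to write the output score as \(\E[\text{score}\mid Z]=\sum_i\E[s_\theta(X_i)\mid Z]\) and to test against directions \(h\). By duality, the relevant per-coordinate statistics are \(\Phi_i(x)=\E[\,\cdot\mid X_i=x,Z]\)-type conditional quantities. The zCDP constraint bounds their variation in \(x\) by an \(L^2\) version of diameter: the \(\chi^2\)-type bound \(\E[(\ell(Z;S)-\ell(Z;S'))^2]\le 2\rho\) from the \(\alpha\to1\) limit of the R\'enyi condition.
\item[(c)] We then combine this bound with an efron--stein/Hoeffding decomposition. The sampling part gives the \(\beta V\) term, and the privacy part gives the \(\frac{1-\beta}{2}I\) term. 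A Schur complement / Cauchy--Schwarz argument yields \(h^\top\mathcal F h\le h^\top A^*\Sigma^{\dagger}A h\) for an admissible \(\Phi\) built by averaging over coordinates.
\item[(d)] Convexity and closure of \(\caU_{\theta,\beta}\) are used to absorb the averaging over \(i\) and over the prior. Convexity is obtained as in \cref{prop:dc-info-prop} by concatenating scaled statistics.
\item[(e)] Finally, upper semicontinuity of \(\theta\mapsto\caU_{\theta,\beta}\) at \(\theta_0\) follows from \cref{ass:score-density-l1}, since \(A_{\Phi,\theta}\) is continuous uniformly over \(\diam(\Phi)\le1\). Continuity in \(\beta\) is also needed. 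Together with lower semicontinuity of \(\caI_W\) and letting \(R\to\infty\), this gives \(\liminf\ge\inf_{H\in\caU_{\theta_0,\beta}}\caI_W(H)\).
\end{itemize}

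The main obstacle is step (b)--(c) of the lower bound: proving the sharp contraction with the exact constant and the exact mixing of sampling variance and privacy noise for an arbitrary mechanism. This is a matrix statement, not merely a trace bound as in \citet{cai2026_MinimaxAdaptive}. I would first prove it for \(\beta=0\), where only the total-variation/diameter structure from \cref{prop:directional-envelope-tv} enters, and then treat \(\beta>0\) by a joint quadratic-form bound for each direction \(h\).
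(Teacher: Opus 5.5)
Your upper bound is sound and is essentially the paper's construction: Gaussian release of a nearly optimal bounded statistic, projected local inversion, and a diagonal choice. Reducing the $d$-dimensional release by $L=(A^\top\Sigma^{-1}A)^{-1}A^\top\Sigma^{-1}$ is an acceptable substitute for the paper's compression of $\Phi$ to an $\R^p$-valued statistic with the same $H$. For $\beta=0$, your lower-bound sketch also follows the paper: the likelihood-ratio statistic $x\mapsto dQ_{i,x}/dQ$ averaged over $i$, then van Trees.

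\textbf{Main gap: steps (b)--(c) for $0<\beta<1$.} This is the step you flag yourself, and the outline you give cannot close it. Write
$h^\top I_M h=\sup_g \bigl(\E[g(Z)\sum_i\ang{s_\theta(X_i),h}]\bigr)^2/\E g(Z)^2$,
with $g$ centered, $\phi_i(x)=\E[g(Z)\mid X_i=x]$ and $\bar\phi=n^{-1}\sum_i\phi_i$. Efron--Stein gives $\E g^2\ge\sum_i\Var\phi_i(X_i)\ge n\Var\bar\phi(X)$. The zCDP (barycentric) bound gives $\diam(\bar\phi)^2\le 2\rho(1+o(1))\E g^2$. These are two separate lower bounds on the same quantity, so they yield only
\[
\E g(Z)^2\ \ge\ \max\Bigl\{n\Var\bar\phi(X),\ \frac{\diam(\bar\phi)^2}{2\rho}\Bigr\}.
\]
The target $h^\top A^*\Sigma^{\dagger}_{\Phi,\theta,\beta_n}Ah$, via $\sigma_n^2\Sigma_{\Phi,\theta,\beta_n}=V_{\Phi,\theta}/n+I/(2\tau_n^2)$, needs the \emph{sum} of the two terms. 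Nothing in these two facts rules out equality in the max. Hence no Schur-complement or Cauchy--Schwarz manipulation of them gives the sum. You would lose up to a factor $2$, exactly where the two terms are comparable, i.e.\ in the transition regime.

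What is missing is an orthogonality between the sampling and privacy contributions for one and the same statistic. The paper obtains it only on a finite alphabet, in these steps:
\begin{enumerate}
\item Symmetrize $M_n$ into a channel $m\mapsto Q_m$ of the multinomial counts.
\item Write $n^{-1}I_{M_n}=V_q^{-1}-V_q^{-1}\E(RR^\top)V_q^{-1}/n$, where $R$ is the residual of the counts given the output.
\item Lower-bound $\E(RR^\top)$ by projecting $R$ onto the count-shift scores $D_a=r_aL_a-1$. Their cross moment is $I_d+o(1)$ by an exact change of variables on the count lattice.
\item Use $\E_{Q_m}(L_a-1)=0$ to split $\E(DD^\top)$ exactly into $V_q^{-1}/n$ plus $2\rho_nJ_{n,q}$, where $J_{n,q}$ is a diameter-one configuration Gram matrix.
\item Woodbury rewrites $V_q^{-1}-V_q^{-1}(V_q^{-1}+2\lambda J)^{-1}V_q^{-1}$ as the Gaussian information $U^*(UV_qU^*+I/(2\lambda))^{-1}U$ of a single categorical statistic.
\item A frozen-partition argument based on \cref{ass:score-density-l1} transfers this bound to the general model.
\end{enumerate}
Without an argument of this kind, your contraction $\sigma_n^2\mathcal F_n(\theta)\in\caU_{\theta,\beta_n}+o(1)$ remains unproved for $0<\beta<1$.

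\textbf{Two further problems.} First, the needed $L^2$ control is not the $\alpha\to1$ limit of the R\'enyi condition, which only gives a KL bound. What you need is $\norm{dQ_{i,x}/dQ-dQ_{i,x'}/dQ}_{L^2(Q)}^2\le2\rho(1+o(1))$ relative to the barycenter $Q$. This uses $\alpha=2$ together with subgaussian privacy-loss moments to change the reference measure; see \cref{lem:barycentric}.

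Second, your route to $\beta=1$ through continuity of $\theta\mapsto\caU_{\theta,\beta}$ and $\beta\mapsto\caU_{\theta,\beta}$ breaks down as $\beta\uparrow1$:
\begin{itemize}
\item The uniform-in-$\Phi$ modulus in $\theta$ relies on $\Sigma\succeq(1-\beta)I/2$.
\item The regions need not converge to $\caU_{\theta_0,1}$; see \cref{appendix-rem:unified-sampling-regime-geometry}.
\item Near $\beta=1$, the only usable bound $\sigma_n^2\mathcal F_n(\theta)\preceq(1+o(1))I(\theta)$, averaged under a van Trees prior, would require continuity of $\theta\mapsto I(\theta)$. That is not a hypothesis of this theorem; the paper imposes it separately for the compact result.
\end{itemize}
The paper handles $\beta=1$ by the classical local asymptotic minimax theorem, which needs only DQM at $\theta_0$.
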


In particular, if the privacy constraint is weak, then we can achieve the optimal statistical error for free.

\begin{corollary}[Sampling-dominated regime]
  \label{cor:sampling-dominated-local-minimax}
  Under the conditions of \cref{thm:local-minimax}, suppose that
  \(n\rho_n\to\infty\).
  Then
  \begin{equation}
    \label{eq:sampling-dominated-local-minimax}
    \lim_{n\to\infty}
    n
    \inf_{\widehat\theta:\rho_n\text{-zCDP}}
    \sup_{\theta\in\widetilde\Theta_n}
    \E_\theta
    \normx{\widehat\theta-\theta}_W^2
    =
    \Tr\xkm{WI(\theta_0)^{-1}}.
  \end{equation}
\end{corollary}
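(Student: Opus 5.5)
The plan is to deduce the corollary from \cref{thm:local-minimax} by identifying the limiting scale and the limiting constant. If \(n\rho_n\to\infty\), then \(\lambda_n\to\infty\), so \(\beta_n=\lambda_n/(1+\lambda_n)\to1\). Also \(n^2\rho_n=n\cdot n\rho_n\to\infty\), so the hypotheses \cref{eq:unified-asymp-regime} hold with \(\beta=1\). \Cref{thm:local-minimax} then gives
\[
\lim_{n\to\infty}\sigma_n^{-2}\inf_{\widehat\theta}\sup_{\theta\in\widetilde\Theta_n}\E_\theta\normx{\widehat\theta-\theta}_W^2=\mfC^{(1)}_{\theta_0}(W).
\]
Next, since \(\sigma_n^2=n^{-1}(1+\lambda_n^{-1})\), we have \(n\sigma_n^2\to1\), so replacing \(\sigma_n^{-2}\) by \(n\) does not change the limit. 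The same convergence shows that \(R_n\sigma_n\to0\) is equivalent to \(R_n/\sqrt n\to0\), so the local sets \(\widetilde\Theta_n\) are the ones in the statement.

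It remains to verify the identity \(\mfC^{(1)}_{\theta_0}(W)=\Tr(WI(\theta_0)^{-1})\) from \cref{eq:unified-regime-identities}. If this is taken as established earlier, the proof is complete. Otherwise I would argue it directly.

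\emph{Upper bound on the region.} For \(\beta=1\), \(\Sigma=V_{\Phi,\theta}\). By the matrix Cauchy--Schwarz inequality, using \(A_{\Phi,\theta}=\mathrm{Cov}_\theta(\Phi,s_\theta)\) since \(\E s_\theta=0\),
\[
A^*V^\dagger A\preceq I(\theta).
\]
Hence every element of \(\caU_{\theta,1}\) is \(\preceq I(\theta)\). Because \(\caI_W\) is Loewner-antitone, \(\mfC^{(1)}\ge\Tr(WI^{-1})\).

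\emph{Reverse inequality.} Take truncated, rescaled score statistics \(\Phi_M=c_M\,s_\theta\ind{\norm{s_\theta}\le M}\), with \(c_M\) chosen so that \(\diam(\Phi_M)\le1\). Since \(A^*V^\dagger A\) is invariant under nonzero rescaling of \(\Phi\), it equals \(C_M^\top V_M^{-1}C_M\), where \(C_M\) and \(V_M\) are the covariances of the truncated score. This converges to \(I(\theta)\) as \(M\to\infty\) by dominated convergence, using \(\E\norm{s_\theta}^2<\infty\) and \(I\succ0\). So \(I(\theta)\in\caU_{\theta,1}\) by closure, and continuity of \(\caI_W\) at \(I\succ0\) gives the reverse inequality.

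The only delicate point is this last identification: the invariance under rescaling, and the convergence of the truncated moment matrices with \(V_M\) eventually invertible. These are routine given \(I(\theta_0)\succ0\).
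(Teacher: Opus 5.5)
Your proposal is correct and follows the paper's route. The corollary is \cref{thm:local-minimax} at \(\beta=1\) combined with \(n\sigma_n^2=1+\lambda_n^{-1}\to1\), and the identification \(\mfC^{(1)}_{\theta_0}(W)=\Tr\{WI(\theta_0)^{-1}\}\) is exactly part~(ii) of \cref{appendix-lem:unified-info-identification}, which the paper proves with the same information inequality \(A^\top V^\dagger A\preceq I(\theta_0)\) and the same rescaled truncated-score statistics \(s_{\theta_0}\ind{\norm{s_{\theta_0}}_2\le B}/(2B)\). Your remark comparing \(R_n\sigma_n\to0\) with \(R_n/\sqrt n\to0\) is unnecessary, since the statement uses the same sets \(\widetilde\Theta_n\).
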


For the intermediate case when the privacy error is of the same order as the statistical error,
the optimal constant is characterized by the variational constant \cref{eq:unified-constant} that reflects the delicate combination of two error sources,
which is not a simple weighted average in general.

\begin{corollary}[Transition regime]
  \label{cor:trans-local-minimax}
  Under the conditions of \cref{thm:local-minimax}, suppose
  \(n\rho_n\to\lambda\in(0,\infty)\), and put
  \(\beta=\lambda/(1+\lambda)\).
  Then
  \begin{equation}
    \lim_{n\to\infty}
    n
    \inf_{\widehat\theta:\rho_n\text{-zCDP}}
    \sup_{\theta\in\widetilde\Theta_n}
    \E_\theta
    \normx{\widehat\theta-\theta}_W^2
    =
    \frac{1}{\beta}
    \uCt(W).
  \end{equation}
\end{corollary}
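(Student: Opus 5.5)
This follows directly from \cref{thm:local-minimax}; the only work is to rescale. With \(\lambda_n=n\rho_n\to\lambda\in(0,\infty)\) we have \(\tau_n^2=n\lambda_n\to\infty\), so \(n^2\rho_n\to\infty\). Also \(\beta_n=\lambda_n/(1+\lambda_n)\to\lambda/(1+\lambda)=\beta\in(0,1)\). Hence the hypotheses in \cref{eq:unified-asymp-regime} hold, and \cref{thm:local-minimax} gives
\[
  \lim_{n\to\infty}\sigma_n^{-2}\inf_{\widehat\theta:\rho_n\text{-zCDP}}\sup_{\theta\in\widetilde\Theta_n}\E_\theta\normx{\widehat\theta-\theta}_W^2=\uCt(W).
\]

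Next we convert the normalization \(\sigma_n^{-2}\) into \(n\). From \cref{eq:unified-effective-info-scale},
\[
  \sigma_n^2=\frac1n+\frac1{n\lambda_n}=\frac{1}{n}\cdot\frac{1+\lambda_n}{\lambda_n}=\frac{1}{n\beta_n}.
\]
Therefore \(n=\beta_n\sigma_n^{-2}\), and the minimax quantity in the corollary equals \(\beta_n^{-1}\) times the quantity controlled by \cref{thm:local-minimax}. Since \(\beta_n\to\beta>0\), the limit of the product is \(\beta^{-1}\uCt(W)\). This holds whether \(\uCt(W)\) is finite or \(+\infty\), although under \(I(\theta_0)\succ0\) it is finite.

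There is no real obstacle. The only points to check are the exact identity \(\sigma_n^2=1/(n\beta_n)\) and that \(\beta>0\), so that dividing by \(\beta_n\) is harmless in the limit. The local set \(\widetilde\Theta_n\) is the one from \cref{eq:unified-local-param-set}, with the same \(\sigma_n\), so no change of neighborhood is needed.
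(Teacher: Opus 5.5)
Your argument is correct and is how the paper gets the corollary: it is the \(0<\beta<1\) case of \cref{thm:local-minimax}, rescaled through \(\sigma_n^{-2}=n\beta_n\). The same identity \(\sigma_n^{-2}/n=\beta_n\to\beta\) appears in the paper's treatment of the transition case.

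There is one algebra slip. From \(\sigma_n^2=1/(n\beta_n)\) you get \(n=\beta_n^{-1}\sigma_n^{-2}\), not \(n=\beta_n\sigma_n^{-2}\). Your next sentence already uses the correct factor \(\beta_n^{-1}\), so the conclusion \(\beta^{-1}\uCt(W)\) is unaffected.
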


Finally, under strong privacy, the minimax quadratic risk over a fixed parameter set with nonempty interior remains bounded away from zero.
\begin{proposition}[Impossibility under strong privacy]
  \label{prop:strong-privacy-lower}
  Under \cref{ass:regular,ass:score-density-l1}, fix \(W\succeq0\) with \(W\ne0\), and suppose  \(\Theta\) has  nonempty interior.
  If \(n^2\rho_n=O(1)\), then there exists a constant \(c>0\) such that for all sufficiently large \(n\),
  \begin{equation}
    \inf_{\widehat{\theta}:\rho_n \text{-zCDP}}
    \sup_{\theta\in \Theta}
    \E_\theta
    \normx{\widehat{\theta}-\theta}_W^2
    \ge
    c.
  \end{equation}
\end{proposition}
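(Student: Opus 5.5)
We reduce the problem to a two-point testing bound, using the fact that zCDP with \(n^2\rho_n=O(1)\) makes output distributions on \emph{arbitrary} data sets close. Let \(M\) be any \(\rho_n\)-zCDP mechanism. Any two data sets \(S,S'\in\caX^n\) are joined by a chain of at most \(n\) adjacent replacements. The KL divergence is the limit of \(D_\alpha\) as \(\alpha\downarrow1\), so each step has \(\mathrm{KL}\le\rho_n\). Rather than sum KL along the chain, we use group privacy for zCDP \citep{bun2016_ConcentratedDifferential}: \(M\) is \(n^2\rho_n\)-zCDP for data sets differing in \(n\) coordinates. This gives \(\mathrm{KL}(M(S)\|M(S'))\le n^2\rho_n\le K\) for all \(S,S'\), where \(K\) is a constant with \(n^2\rho_n\le K\) for large \(n\). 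If we prefer not to cite group privacy, we can instead bound Hellinger or TV along the chain via the triangle inequality. Each step has \(\TV\le\sqrt{\rho_n/2}\) by Pinsker, so the total is \(\TV\le n\sqrt{\rho_n/2}\le\sqrt{K/2}\). This is bounded but possibly not below one, so the KL route is preferable.

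Next, we pass to the output experiments. For \(\theta\in\Theta\), let \(Q_\theta=\int M(S)\,dP_\theta^{\otimes n}(S)\) be the law of the output. By joint convexity of KL, applied to a coupling of \(P_{\theta}^{\otimes n}\) and \(P_{\theta'}^{\otimes n}\), for instance the product coupling, we get \(\mathrm{KL}(Q_\theta\|Q_{\theta'})\le\sup_{S,S'}\mathrm{KL}(M(S)\|M(S'))\le K\). Hence, by the Bretagnolle--Huber inequality, \(1-\TV(Q_\theta,Q_{\theta'})\ge\tfrac12 e^{-K}\) uniformly in \(n\), \(\theta\), \(\theta'\) and \(M\).

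Finally, we apply Le Cam's two-point method. Since \(W\ne0\), pick a unit vector \(v\) with \(v^\top Wv>0\). Since \(\Theta\) has nonempty interior, pick \(\theta_1\) and \(\theta_2=\theta_1+\delta v\) in \(\Theta\) with \(\delta>0\) fixed. Then \(\norm{\theta_1-\theta_2}_W^2=\delta^2v^\top Wv=:4\Delta^2>0\). For any estimator \(\widehat\theta\), which is a function of the output, the triangle inequality for the seminorm gives \(\normx{\widehat\theta-\theta_1}_W+\normx{\widehat\theta-\theta_2}_W\ge2\Delta\). The standard reduction then yields
\[
\max_{j=1,2}\E_{\theta_j}\normx{\widehat\theta-\theta_j}_W^2\ge \frac{\Delta^2}{2}\xk{1-\TV(Q_{\theta_1},Q_{\theta_2})}\ge\frac{\Delta^2}{4}e^{-K}=:c.
\]
Taking the supremum over \(\Theta\) and the infimum over mechanisms proves the claim. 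Assumptions~\ref{ass:regular} and~\ref{ass:score-density-l1} are not even needed.

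The main obstacle is the first step: we need divergence control between arbitrary data sets that stays uniformly bounded under \(n^2\rho_n=O(1)\). The group-privacy bound for zCDP, which scales as \(k^2\rho\) for \(k\) replacements, is exactly what makes the hypothesis \(n^2\rho_n=O(1)\) the right threshold. We would verify it through the Rényi triangle-type inequality, or cite it directly from \citet{bun2016_ConcentratedDifferential}.
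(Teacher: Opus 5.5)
Your argument is correct, but it takes a different route from the paper's proof of this proposition. The paper reuses the machinery of the privacy-dominated lower bound. It fixes a smooth prior \(\pi\) compactly supported in \(\Theta^\circ\) and applies the matrix van Trees inequality. It controls the averaged output information through the contraction theorem \(I_M(\theta)=2\rho_n n^2\delta_{\rho_n}^2 J\) with \(\Tr J\le\Gamma_\theta\), using local boundedness of \(\Gamma_\theta\) (this is where \cref{ass:score-density-l1} enters). It then concludes with the matrix Cauchy--Schwarz bound \(\{\Tr(W^{1/2})\}^2/\{\Tr(J_\pi)+2n^2\rho_n\delta_{\rho_n}^2\overline{\Gamma}_\pi\}\).

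Your chain of group privacy, joint convexity of KL, Bretagnolle--Huber and a two-point reduction is essentially the argument the paper gives for \cref{prop:unbounded-impossibility}, specialized to a fixed pair \(\theta_1,\theta_2\).

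What your route buys:
\begin{itemize}
\item It is elementary: it needs neither DQM nor \(L^1\) smoothness, nor the contraction theorem or \(\delta_\rho\to1\).
\item It needs only two points at positive \(W\)-distance, not an interior.
\item It gives an explicit constant \((\Delta^2/4)e^{-K}\). Your two-point step actually loses a factor of two, since the standard reduction gives \(\Delta^2(1-\TV)\).
\end{itemize}

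What the paper's route buys:
\begin{itemize}
\item Its lower bound decays only like \(1/(n^2\rho_n)\), so the same inequality passes continuously into the \(\tau_n^{-2}\) rate of the privacy-dominated regime. Your bound decays like \(e^{-n^2\rho_n}\) and is informative only while \(n^2\rho_n\) stays bounded.
\item It keeps the impossibility result inside the paper's information-geometric framework.
\end{itemize}
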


\subsection{Unified efficiency on compact parameter sets}
\label{subsec:unified-compact-efficiency}

The preceding local theorem treats the center \(\theta_0\) as public and known.
On a fixed compact parameter set, the same local constant can be attained without a public center by first releasing a private preliminary estimator and then selecting one of finitely many local Gaussian constructions.
Because \(\uC_\theta(W)\) may vary with \(\theta\), our uniform statement normalizes the risk pointwise by this local constant.

\begin{theorem}[Unified efficiency on a compact parameter set]
  \label{thm:compact-minimax}
  Fix \(W\succeq0\) with \(W\ne0\), and let \(K\subset\Theta\) be a nonempty compact set satisfying \(K=\overline{K^\circ}\).
  Under \cref{ass:regular,ass:score-density-l1}, suppose \(I(\theta)\succ0\) for every \(\theta\in K\) and  \(\theta\mapsto I(\theta)\) is continuous on \(K\).
  Assume the model is identifiable on \(K\), in the sense that \(P_{\theta_1} \ne P_{\theta_2}\) whenever \(\theta_1,\theta_2\in K\) and \(\theta_1\ne\theta_2\).
  Let \((\rho_n)\) be a positive sequence satisfying the asymptotic regime in
  \cref{eq:unified-asymp-regime},
  where \(\beta_n\) is defined in \cref{eq:unified-effective-info-scale}.
  Then \(\theta\mapsto\uC_\theta(W)\) is continuous and takes values in \((0,\infty)\) on \(K\), and
  \begin{equation}
    \label{eq:compact-relative-minimax}
    \lim_{n\to\infty}
    \inf_{\widehat{\theta}:\rho_n \text{-zCDP}}
    \sup_{\theta\in K}
    \frac{
      \sigma_n^{-2} \E_\theta\normx{\widehat{\theta}-\theta}_W^2
    }{
      \uC_\theta(W)
    }
    =1.
  \end{equation}
\end{theorem}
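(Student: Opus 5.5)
Two things have to be proved: continuity and positivity of $\theta\mapsto\uC_\theta(W)$ on $K$, and the matching lower and upper bounds in \cref{eq:compact-relative-minimax}.

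\emph{Continuity.} I will show that the set-valued map $\theta\mapsto\caU_{\theta,\beta}$ is Hausdorff-continuous on $K$. Fix a diameter-one $\Phi$ with values in $\R^d$. By \cref{prop:dc-info-prop}(i) I may centre $\Phi$, so $\Phi$ is bounded. Then $A_{\Phi,\theta}h=\int\Phi\,\dot p_\theta h\,d\mu$ and $V_{\Phi,\theta}$ are continuous in $\theta$, with a modulus controlled by $\norm{\dot p_\theta-\dot p_{\theta'}}_{L^1}$ and $\norm{p_\theta-p_{\theta'}}_{L^1}$. This modulus is uniform over all diameter-one $\Phi$, and on $K$ it is uniform in $\theta$ because \cref{ass:score-density-l1} gives uniform continuity on compacts.

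For $\beta<1$ we have $\Sigma_{\Phi,\theta,\beta}\succeq\frac{1-\beta}{2}I$, so the map $\Phi\mapsto A^*\Sigma^{-1}A$ is uniformly continuous in $\theta$. Together with the reduction to $d\le p$, this gives Hausdorff continuity of $\caU_{\theta,\beta}$. For $\beta=1$ I would instead use \cref{eq:unified-regime-identities}, which makes the constant equal to $\Tr(WI(\theta)^{-1})$; this is continuous by the assumed continuity of $I$.

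Next, $\caU_{\theta,\beta}$ contains a matrix $H_\theta\succeq c I$ with $c$ uniform on $K$. To get it, take a fixed bounded $\Phi$ built from truncated scores near each point and use a finite cover of $K$. Hence $\caI_W$ is finite and locally Lipschitz near the minimiser, which gives upper semicontinuity of the infimum. Lower semicontinuity follows from the compactness of the regions, their Hausdorff continuity, and the lower semicontinuity of $\caI_W$. Positivity comes from $H\preceq$ a constant times $I(\theta)$ (the analogue of \cref{eq:info-region-fisher-bound}) together with $W\neq0$.

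\emph{Lower bound.} For any $\theta_0\in K^\circ$, restricting the supremum to $\widetilde\Theta_n(\theta_0)\subset K$ and applying \cref{thm:local-minimax} gives a liminf of at least $1$ at that point. Here I use that $\uC_\theta(W)/\uC_{\theta_0}(W)\to1$ uniformly on the shrinking set by continuity, so the normalisation is harmless. Since $K=\overline{K^\circ}$, interior points exist, and this already yields $\ge1$.

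\emph{Upper bound.} This is the main obstacle: the local estimators need a public centre. I would split the budget as $\rho_n=\epsilon_n\rho_n+(1-\epsilon_n)\rho_n$ with $\epsilon_n\to0$ slowly, and use zCDP composition.

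Stage one releases a consistent preliminary estimator $\widetilde\theta_n$ using budget $\epsilon_n\rho_n$. One choice is a Gaussian release of the empirical means of finitely many bounded statistics that separate points of $K$; identifiability plus compactness lets me build such a finite family, which gives an injective continuous moment map. The inverse is then continuous on the compact image, so $\widetilde\theta_n\to\theta$ uniformly in probability at rate $o(1)$, since $\epsilon_n n^2\rho_n\to\infty$ can be arranged.

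Stage two fixes $\delta>0$ and a finite $\eta$-net $\{\theta_j\}$ of $K$. For each $j$ I pick a nearly optimal $\Phi_j$ for $\uC_{\theta_j}(W)$, within factor $1+\delta$. Given the nearest net point to $\widetilde\theta_n$, I run the private moment-map inverse estimator from \cref{sec:attain-bound-gauss-releases} centred at $\theta_j$ with the remaining budget. That estimator works uniformly for $\theta$ in a fixed small ball around $\theta_j$, where its risk is at most $(1+\delta)\sigma_n^2\uC_{\theta}(W)$ up to continuity errors.

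On the event $\norm{\widetilde\theta_n-\theta}\le\eta$ the chosen ball contains $\theta$. The complementary event has vanishing probability, and since the projection keeps the estimator bounded, it contributes $o(\sigma_n^2)$ provided this probability decays fast; I would get that from Gaussian tails of stage one, choosing $\epsilon_n$ so the tail is $o(\sigma_n^2)$. The extra factor $(1-\epsilon_n)^{-1}\to1$ from the budget split is harmless. Sending $\delta,\eta\to0$ along a diagonal sequence then gives limsup $\le1$.
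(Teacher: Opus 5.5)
Your continuity argument and lower bound follow the paper's route, and so does the two-stage shape of your upper bound. The gap is in the stage-two risk analysis.

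You split only the privacy budget and reuse the whole sample. So \(\widehat j\) is a function of \(n^{-1}\sum_{i=1}^n T(X_i)\) plus noise, while the second release uses \(n^{-1}\sum_{i=1}^n\Phi_{\widehat j}(X_i)\) computed from the same observations. Conditionally on the stage-one output the data are no longer iid \(P_\theta\), so you cannot plug in the fixed-chart risk. Unconditionally, the risk on the good event is \(\sum_j\E_\theta[\mathbf{1}\{\widehat j=j\}Y_j]\) with \(Y_j=\|\widehat\theta_{j,n}-\theta\|_W^2\). This is not controlled by \(\max_j\E_\theta Y_j\) once the indicators are correlated with the \(Y_j\). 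For example, suppose \(\widehat j=j_1\) exactly when a standard Gaussian \(\xi\) exceeds \(c\), with \(Y_{j_1}=\xi^2\) and \(Y_{j_2}\) independent of \(\xi\) with mean one. Then the selected risk is \(1+c\varphi(c)>1\), although both candidates have risk one. Parameter values near a boundary between selection regions, within the stage-one fluctuation scale, do enter the supremum over \(K\).

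This is harmless at \(\beta=0\), because the data part of the stage-two error is \(o(\sigma_n)\) and the stage-two noise is independent of the selection. For \(\beta>0\), however, the data part is first order and an argument is needed. In particular, when \(n\rho_n\to\infty\) and \(\epsilon_n\to0\) slowly, the selection is driven by the sampling fluctuation of \(\bar T\) rather than by its privacy noise.

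The paper avoids this by splitting the sample as well as the budget (\cref{appendix-lem:unified-compact-upper}). Localization uses the first \(n_1=\lfloor\eta_nn\rfloor\) observations with budget \(\eta_n\rho_n\), where \(\eta_n=\sigma_n^{1/2}\). The chart release uses the remaining \(n_2=n-n_1\) observations. Given the stage-one output, the chart is then fixed and the second sample is an independent iid sample. Since \(n_1/n\to0\) and \(\rho_1/\rho_n\to0\), one gets \(\sigma_{2,n}/\sigma_n\to1\) and \(\beta_{2,n}\to\beta\). Keeping your single-sample design would instead require an explicit decorrelation argument. One option is to force \(\epsilon_n n\rho_n\to0\) so that stage-one noise dominates the sampling fluctuation of \(\bar T\). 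Another, at \(\beta=1\), is to use the fact that nearly efficient influence functions are close in \(L^2\).

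There are three smaller points.

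First, you fix an \(\eta\)-net and only then choose a near-optimal \(\Phi_j\) at each \(\theta_j\). But the radius on which \(\Phi_j\)'s local inverse exists and its risk ratio stays below \(1+\delta\) is determined by \(\Phi_j\). It need not reach the \(2\eta\) that ``the chosen ball contains \(\theta\)'' requires, and shrinking \(\eta\) changes the net and hence the statistics. The paper reverses the order: it picks a near-optimal statistic with its own neighbourhood at every \(\theta\in K\), extracts a finite subcover, and selects the chart through a Lebesgue number of that cover (\cref{appendix-lem:unified-compact-charts}).

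Second, identifiability plus compactness does not by itself give a finite family with an injective mean map, since \(\{(\theta,\vartheta)\in K\times K:\theta\ne\vartheta\}\) is not compact. The paper handles nearby pairs with truncated-score statistics and the inverse function theorem, using \(I(\theta)\succ0\) and \cref{ass:score-density-l1}. For your purposes, separation at a fixed scale is enough, and that does follow from identifiability and compactness.

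Third, in the upper-semicontinuity step the minimiser can be singular when \(W\) is. You should first mix it with your positive definite \(H_\theta\), using convexity of \(\caU_{\theta,\beta}\) for \(\beta<1\), before invoking local Lipschitz continuity of \(\caI_W\).
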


\begin{corollary}
  \label{cor:compact-minimax}
  Under the assumptions of \cref{thm:compact-minimax},
  \[
    \lim_{n\to\infty}
    \sigma_n^{-2}
    \inf_{\widehat{\theta}:\rho_n \text{-zCDP}}
    \sup_{\theta\in K}
    \E_\theta\normx{\widehat{\theta}-\theta}_W^2
    =
    \max_{\theta\in K} \uC_\theta(W).
  \]
\end{corollary}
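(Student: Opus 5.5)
Corollary~\ref{cor:compact-minimax} follows from \cref{thm:compact-minimax} and the continuity of \(\theta\mapsto\uC_\theta(W)\) on \(K\). Write \(M=\max_{\theta\in K}\uC_\theta(W)\). The maximum is attained and lies in \((0,\infty)\), because \cref{thm:compact-minimax} makes this map continuous and positive-finite on the compact set \(K\). Set
\[
r_n(\widehat\theta,\theta)=\sigma_n^{-2}\E_\theta\normx{\widehat\theta-\theta}_W^2 .
\]
I prove matching upper and lower bounds on \(\inf_{\widehat\theta}\sup_{\theta\in K} r_n(\widehat\theta,\theta)\).

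\emph{Upper bound.} Fix \(\ep>0\). By \cref{eq:compact-relative-minimax}, for all large \(n\) there is a \(\rho_n\)-zCDP estimator \(\widehat\theta_n\) such that \(r_n(\widehat\theta_n,\theta)\le(1+\ep)\uC_\theta(W)\) for every \(\theta\in K\). Since \(\uC_\theta(W)\le M\) on \(K\), this gives \(\sup_{\theta\in K} r_n(\widehat\theta_n,\theta)\le(1+\ep)M\). Letting \(\ep\downarrow0\) yields \(\limsup_n \inf_{\widehat\theta}\sup_{\theta\in K} r_n\le M\).

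\emph{Lower bound.} I would avoid the ratio form here, because it only bounds a weighted supremum. Instead I apply the local statement, \cref{thm:local-minimax}, at a maximizer. Choose \(\theta^\star\in K\) with \(\uC_{\theta^\star}(W)=M\). If \(\theta^\star\) is interior, take \(\theta_0=\theta^\star\). Its shrinking neighbourhood \(\widetilde\Theta_n\) from \cref{eq:unified-local-param-set} lies in \(K\) for large \(n\), since \(R_n\sigma_n\to0\). Hence \(\sup_K r_n\ge\sup_{\widetilde\Theta_n} r_n\), and \cref{thm:local-minimax} gives \(\liminf_n\inf_{\widehat\theta}\sup_K r_n\ge M\).

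\emph{Boundary maximizer.} This is the one obstacle: \(\theta^\star\) may lie on \(\partial K\). The assumption \(K=\overline{K^\circ}\) handles it. Given \(\ep>0\), continuity of \(\uC_\theta(W)\) gives an interior point \(\theta_0\in K^\circ\) with \(\uC_{\theta_0}(W)\ge M-\ep\). The assumptions of \cref{thm:local-minimax} hold at \(\theta_0\), namely \(I(\theta_0)\succ0\) and the same regime \cref{eq:unified-asymp-regime}. The interior argument above then gives the lower bound \(M-\ep\), and letting \(\ep\downarrow0\) gives \(M\).

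\emph{Alternative lower bound.} One can also use \cref{eq:compact-relative-minimax} directly. For any estimator, \(\sup_K r_n\ge \min_K\uC_\theta(W)\cdot\sup_K r_n/\uC_\theta(W)\). This inequality only recovers \(\min_K\uC_\theta(W)\), not \(M\), which is why the lower bound goes through the local \cref{thm:local-minimax}.
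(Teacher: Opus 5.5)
Your proposal is correct and follows essentially the paper's own argument. The upper bound combines the relative compact upper bound (the upper half of \cref{thm:compact-minimax}) with $\uC_\theta(W)\le\max_{K}\uC_\theta(W)$, and the lower bound embeds a shrinking local set $\widetilde\Theta_n$ around interior points $\theta_0\in K^\circ$, applies \cref{thm:local-minimax}, and uses $K=\overline{K^\circ}$ together with continuity of $\theta\mapsto\uC_\theta(W)$ to reach the maximum; your observation that the ratio form alone only yields $\min_K\uC_\theta(W)$ correctly explains why the lower bound must go through the local theorem.
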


On the other hand, if the parameter space is unbounded, then the minimax lower bound is infinite under privacy.

\begin{proposition}[Impossibility for unbounded parameter]
  \label{prop:unbounded-impossibility}
  Fix \(W\succeq0\), and suppose
  \[
    \sup_{\theta,\vartheta\in\Theta}
    \norm{\theta-\vartheta}_W^2
    =\infty.
  \]
  Then, for every \(n\ge1\) and every finite \(\rho\ge0\),
  \[
    \inf_{\widehat{\theta}:\rho \text{-zCDP}}
    \sup_{\theta\in\Theta}
    \E_\theta
    \normx{\widehat{\theta}-\theta}_W^2
    =\infty.
  \]
\end{proposition}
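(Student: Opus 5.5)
The idea is a two-point (Le Cam) argument that uses only the group-privacy consequence of zCDP. No regularity of the model is needed, because the privacy constraint alone makes the output laws at any two data sets close in a divergence that grows with \(n\) but stays finite for fixed \(n\). Fix \(n\ge1\) and \(\rho<\infty\), and let \(M\) be any \(\rho\)-zCDP estimator.

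\textbf{Step 1 (group privacy).} Any two data sets \(S,S'\in\caX^n\) are joined by a chain of at most \(n\) adjacent replacements. The standard group-privacy property of zCDP \citep{bun2016_ConcentratedDifferential} then gives \(D_\alpha(M(S)\|M(S'))\le n^2\alpha\rho\). Specializing to the KL limit \(\alpha\downarrow1\), we get \(\mathrm{KL}(M(S)\|M(S'))\le n^2\rho\eqqcolon \kappa\). This bound is uniform over all pairs of data sets.

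\textbf{Step 2 (lift to the statistical experiment).} Let \(Q_\theta\) be the law of \(\widehat\theta=M(X_{1:n})\) when \(X_i\sim P_\theta\), so that \(Q_\theta=\int M(S)\,dP_\theta^{n}(S)\). For \(\theta,\vartheta\in\Theta\), couple \(S\sim P_\theta^n\) and \(S'\sim P_\vartheta^n\) independently. Joint convexity of KL then yields \(\mathrm{KL}(Q_\theta\|Q_\vartheta)\le\kappa\). By Pinsker's inequality, or more robustly the Bretagnolle--Huber bound \(1-\TV(Q_\theta,Q_\vartheta)\ge \tfrac12 e^{-\kappa}\), the affinity between \(Q_\theta\) and \(Q_\vartheta\) is at least \(c_\kappa\coloneqq\tfrac12 e^{-\kappa}>0\), uniformly in \(\theta,\vartheta\).

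\textbf{Step 3 (two-point bound).} Set \(\Delta=\norm{\theta-\vartheta}_W\). Since \(\norm{\cdot}_W\) is a seminorm, the triangle inequality gives
\[
\ind{\norm{\widehat\theta-\theta}_W<\Delta/2}+\ind{\norm{\widehat\theta-\vartheta}_W<\Delta/2}\le1 .
\]
The standard Le Cam reduction then gives
\[
\max\{Q_\theta(\norm{\widehat\theta-\theta}_W\ge\Delta/2),\,Q_\vartheta(\norm{\widehat\theta-\vartheta}_W\ge\Delta/2)\}\ge \tfrac12\bigl(1-\TV(Q_\theta,Q_\vartheta)\bigr)\ge c_\kappa/2 .
\]
By Markov's inequality, \(\max_{\eta\in\{\theta,\vartheta\}}\E_\eta\norm{\widehat\theta-\eta}_W^2\ge (c_\kappa/2)\Delta^2/4\). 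Taking the supremum over pairs with \(\Delta^2\to\infty\), which the hypothesis allows, shows that the worst-case risk of \(M\) is infinite. Since \(M\) was arbitrary, the infimum equals \(\infty\). The case \(\rho=0\) is included: then \(\kappa=0\), all the \(Q_\theta\) coincide, and the same argument applies.

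\textbf{Main obstacle.} The only delicate point is Step 1, namely making the group-privacy composition along the chain rigorous for Rényi divergences. The clean route is to invoke the known zCDP group-privacy result (\(k\)-distance data sets give \(k^2\rho\)-zCDP). If I wanted to avoid citing it, I would work with KL alone. KL does not satisfy a triangle inequality, but \(\sqrt{D_\alpha}\)-type bounds do. As a fallback that avoids the issue entirely, I would replace KL with Hellinger affinity. The Hellinger distance is a metric, and \(\rho\)-zCDP implies \(H^2(M(S),M(S'))\le 1-e^{-\rho/4}\) via \(D_{1/2}\), using \(D_{1/2}(P\|Q)\le D_1(P\|Q)\le\rho\) and the symmetry of \(D_{1/2}\). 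The triangle inequality along the chain then gives \(H(M(S),M(S'))\le n\sqrt{1-e^{-\rho/4}}\). Finally, joint convexity of \(H^2\) lifts this bound to the mixtures \(Q_\theta\), \(Q_\vartheta\).

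This still leaves a gap, because the resulting bound can exceed \(1\) and then yields no nontrivial affinity. So the Rényi group-privacy route is the one to use, and the Hellinger argument is only a fallback when \(n\sqrt{1-e^{-\rho/4}}<1\).
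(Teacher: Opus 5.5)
Your proposal is correct and follows the paper's proof: group privacy for zCDP together with joint convexity of KL gives \(D_{\mathrm{KL}}(Q_\theta\|Q_\vartheta)\le n^2\rho\). The two-point reduction with the Bretagnolle--Huber inequality then gives the paper's bound \(\max_{\eta\in\{\theta,\vartheta\}}\E_\eta\norm{\widehat\theta-\eta}_W^2\ge \tfrac{1}{16}\norm{\theta-\vartheta}_W^2e^{-n^2\rho}\), and letting \(\norm{\theta-\vartheta}_W\to\infty\) finishes it. The Hellinger fallback is unnecessary because the cited group-privacy result already settles Step 1; note also that \(D_{1/2}\le\rho\) as written only gives \(1-\int\sqrt{pq}\le 1-e^{-\rho/2}\), not \(1-e^{-\rho/4}\).
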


\subsection{Discussion}
\label{subsec:discussion}

We conclude this section with several implications of the information region and the unified local and compact efficiency theorems.

\subsubsection{Privacy regimes and optimal constants}

\cref{thm:private-minimax,thm:local-minimax,thm:compact-minimax} identify the exact minimax constant for private estimation under zCDP in their respective local and compact settings.
For every quadratic loss with \(W\succeq0\), the privacy-dominated local theorem shows that the risk, normalized by \(\tau_n^2\), converges to the variational constant \(\mfC_{\theta_0}(W)\).
The unified local theorem replaces \(\tau_n^2\) by \(\sigma_n^{-2}\) and connects the privacy-dominated, transition, and Fisher constants, while the compact theorem attains the corresponding regime-dependent local constant uniformly over a compact parameter set.
Because the minimization ranges over all \(\rho_n\)-zCDP mechanisms and all estimators based on their outputs, these results characterize a fundamental limit rather than the performance of a particular mechanism class.

The two compound scales in the theory play different roles (\Cref{tab:intro-asymp-regimes}).
The privacy-induced information scale is \(\tau_n^2=n^2\rho_n\) and the ratio \(\lambda_n=n\rho_n=\tau_n^2/n\) compares privacy and sampling information and determines their first-order balance.
Under strong privacy \(\tau_n^2 =O(1)\), \cref{prop:strong-privacy-lower} gives a nonvanishing lower bound.
When \(\tau_n^2\to\infty\) and \(\lambda_n\to0\) (\(\beta=0\)), estimation is consistent but privacy dominated.
When \(\lambda_n \to\infty\) (\( \beta=1 \)), privacy is negligible on the classical root-\(n\) scale and Fisher efficiency is recovered.
When \(\lambda_n\to\lambda\in(0,\infty)\) (\(\beta\in(0,1)\)), sampling and privacy contribute on the same scale, and their exact constant is a joint variational quantity.

Our results apply broadly across regular fixed-dimensional parametric models.
Nevertheless, the optimal variational constant \(\uC_\theta(W)\) may be difficult to compute in general because characterizing the region \(\caU_{\theta,\beta}\) requires optimization over bounded statistics.
In \cref{sec:examples}, we investigate several concrete models and give closed-form expressions for \(\uC_\theta(W)\).

\subsubsection{Set-valued private information geometry}

The variational expression for \(\mfC_\theta(W)\) also explains why the relevant private information object is a region rather than a single matrix.
Privacy does not simply reduce Fisher information uniformly across directions, but it creates trade-offs among the information attainable in different directions.
Indeed, whereas classical Fisher information describes all local directions through the single quadratic form \(h\mapsto h^\top I(\theta)h\), \cref{prop:directional-envelope-tv} identifies \(\sup_{J\in\caJ_\theta} h^\top Jh\) with the squared local total variation speed.
The squared directional speed need not be quadratic in \(h\), so no single feasible matrix need attain the supremum in every direction.
A maximizing score-sign statistic may consequently depend on \(h\), and statistics that are separately optimal in different directions need not be compatible under a common diameter constraint.
The region \(\caJ_\theta\) records precisely this compatibility by specifying which combinations of directional information can be attained jointly.

Our private information geometry is closely related to the \(L^1\)-information studied by \citet{duchi2024_RightComplexity} in local DP estimation.
Along the one-dimensional submodel \(t\mapsto\theta+th\), their \(L^1\)-information is \(\E_\theta \abs{\ang{s_\theta(X),h}}\), which is precisely the directional total variation speed in \cref{eq:local-tv-info}.
In particular, when \(p=1\), the region reduces to the interval in \cref{ex:1d-info-region}.
The distinction in several dimensions is that a local DP benchmark may treat directions separately, whereas our central zCDP problem requires a single mechanism to operate uniformly over a multidimensional local parameter set.
The region \(\caJ_\theta\) records which directional information levels are jointly attainable and thereby yields the exact minimax constant under a general quadratic loss.

The structural properties in \cref{prop:dc-info-prop} give this geometry an operational interpretation.
Combining feasible statistics in orthogonal components yields convexity, while discarding attainable information yields downward closure.
Consequently, only the upper Pareto boundary of \(\caJ_\theta\) can be relevant for efficiency.
Fisher information remains a useful outer envelope: \cref{eq:info-region-fisher-bound} gives \(J\preceq I(\theta)/4\) for every \(J\in\caJ_\theta\), and \(I(\theta)\succ0\) is equivalent to the existence of a positive definite feasible information matrix.
It determines whether nondegenerate private information is available and supplies a universal quadratic bound, but it does not generally determine the exact private efficiency constant.
In this sense, privacy changes the local information geometry rather than merely reducing the effective sample size.

The same set-valued interpretation extends to the unified information region \(\caU_{\theta,\beta}\).
Its elements record the information matrices jointly attainable when the same bounded statistic determines both the score--statistic covariance and the sampling covariance.
In the privacy and sampling regimes, \(\caU_{\theta,0}=2\caJ_\theta\), \(\mfC_\theta^{(0)}(W)=\mfC_\theta(W)\), and \(\mfC_\theta^{(1)}(W)=\Tr\xkm{WI(\theta)^{-1}}\), while for \(0<\beta<1\) the region retains both privacy and sampling contributions.
Thus, as the sampling weight changes, the unified theory changes the attainable information geometry itself rather than merely rescaling a fixed private region.

\subsubsection{Loss-dependent efficiency}

The efficiency constant in \cref{eq:unified-constant} is given by an optimization problem depending on \( W \),
and the minimizer (if it exists) need not be the same for every \( W \), particularly when \( \beta \in [0,1) \).
Consequently, there need not be a single statistic, and hence a single private estimator, that is always efficient.
This is in sharp contrast to the classical theory about efficiency, where a single MLE is efficient for every \( W \).
The following proposition formalizes this loss dependency and incompatibility.

\begin{proposition}[Loss-specific incompatibility]
  \label{prop:common-efficiency}
  Suppose \(I(\theta_0)\succ0\), fix \(\beta\in[0,1]\), and let \(W_1,W_2 \succeq0\).
  Write
  \begin{math}
    \mathcal{A}_{\theta_0,\beta}(W)
    =
    \argmin_{H\in\caU_{\theta_0,\beta}}\caI_W(H).
  \end{math}
  Then
  \begin{equation}
    \uCt(W_1+W_2)
    \ge
    \uCt(W_1)
    +
    \uCt(W_2),
  \end{equation}
  with equality if and only if
  \(\mathcal{A}_{\theta_0,\beta}(W_1)\cap\mathcal{A}_{\theta_0,\beta}(W_2)\ne\varnothing\).
  If these optimizer sets are disjoint, no single sequence of \(\rho_n\)-zCDP estimators can attain both local asymptotic minimax constants
  \(\uCt(W_1)\) and \(\uCt(W_2)\)
  along any sequence of privacy budgets satisfying
  \cref{eq:unified-asymp-regime}.
\end{proposition}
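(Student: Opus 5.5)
The plan is to read the inequality off the linearity of \(W\mapsto\caI_W(H)\). Then I will upgrade the estimator statement through the local minimax theorem applied to the combined loss \(W_1+W_2\).

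\emph{Superadditivity.} For fixed \(H\succeq0\) and \(t>0\), the map \(W\mapsto\Tr[W(H+tI_p)^{-1}]\) is linear. Letting \(t\downarrow0\) gives \(\caI_{W_1+W_2}(H)=\caI_{W_1}(H)+\caI_{W_2}(H)\) in \([0,\infty]\). Since \(\caU_{\theta_0,\beta}\) is compact and \(\caI_W\) is lower semicontinuous, each infimum is attained, so \(\mathcal{A}_{\theta_0,\beta}(W)\neq\varnothing\). Compactness should follow as in \cref{prop:dc-info-prop}: diameter at most one bounds \(A_{\Phi,\theta}\), and \(\Sigma^\dagger\) is bounded by \(2/(1-\beta)\) when \(\beta<1\); for \(\beta=1\), I will take it from the analysis behind \cref{eq:unified-regime-identities}. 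I will check this, or else use minimizing sequences. Finiteness of the constants follows from \(I(\theta_0)\succ0\).

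Take \(H_0\in\mathcal{A}_{\theta_0,\beta}(W_1+W_2)\). Then
\[
\uCt(W_1+W_2)=\caI_{W_1}(H_0)+\caI_{W_2}(H_0)\ge\uCt(W_1)+\uCt(W_2).
\]
Equality holds if and only if \(\caI_{W_i}(H_0)=\uCt(W_i)\) for \(i=1,2\), that is, \(H_0\in\mathcal{A}(W_1)\cap\mathcal{A}(W_2)\). Conversely, any \(H\) in the intersection satisfies \(\caI_{W_1+W_2}(H)=\uCt(W_1)+\uCt(W_2)\le\uCt(W_1+W_2)\), which forces equality. So equality is equivalent to a nonempty intersection.

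\emph{Incompatibility.} Suppose the optimizer sets are disjoint, so the inequality is strict. Suppose also that a single sequence \(\widehat\theta_n\) attains both constants:
\[
\limsup_n\sigma_n^{-2}\sup_{\widetilde\Theta_n}\E_\theta\normx{\widehat\theta_n-\theta}_{W_i}^2\le\uCt(W_i),\qquad i=1,2.
\]
Since \(\normx{\cdot}_{W_1+W_2}^2=\normx{\cdot}_{W_1}^2+\normx{\cdot}_{W_2}^2\) and a supremum of a sum is at most the sum of suprema,
\[
\limsup_n\sigma_n^{-2}\sup_{\widetilde\Theta_n}\E_\theta\normx{\widehat\theta_n-\theta}_{W_1+W_2}^2\le\uCt(W_1)+\uCt(W_2)<\uCt(W_1+W_2).
\]
This contradicts the lower bound in \cref{thm:local-minimax} applied to \(W_1+W_2\), which holds along any budget sequence satisfying \cref{eq:unified-asymp-regime}. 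If "attain" is meant with some other neighborhood sequence \(R_n\), the lower-bound half of the theorem holds for every admissible \(R_n\), so I will fix a common \(R_n\).

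\emph{Main obstacle.} The delicate point is rigor at the boundary: the value \(\infty\) in \(\caI_W\), and attainment of the infimum when \(\beta=1\). I expect no real difficulty, since \(I(\theta_0)\succ0\) makes all three constants finite.
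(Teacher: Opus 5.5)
Your proof is correct and follows the paper's argument essentially step for step. It uses additivity of \(W\mapsto\caI_W(H)\), a minimizer of the combined loss \(W_1+W_2\) (which exists by compactness of \(\caU_{\theta_0,\beta}\) for every \(\beta\in[0,1]\); this includes \(\beta=1\) via the envelope \(H\preceq I(\theta_0)/(2-\beta)\) in \cref{appendix-prop:unified-region-geometry}), and a contradiction between subadditivity of the worst-case risk and the lower bound of \cref{thm:local-minimax} for \(W_1+W_2\). You also invoke finiteness of the constants explicitly, which is what justifies that equality forces each summand to be minimal; the paper leaves this implicit.
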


\subsubsection{Attainability and universality}

The variational problem directly determines asymptotically attaining procedures.
For a fixed \(\beta\), a statistic \(\Phi\) of diameter one generates the unified information matrix
\begin{math}
  H_{\Phi,\theta_0,\beta}
  =
  A_{\Phi,\theta_0}^*
  \Sigma_{\Phi,\theta_0,\beta}^{\dagger}
  A_{\Phi,\theta_0}.
\end{math}
Releasing its empirical mean with Gaussian noise and applying projected local inversion yields the inverse-information risk associated with this matrix.
The blockwise construction in
\cref{thm:local-attain-seq,thm:compact-attain-seq}
therefore turns this variational approximation into a single
asymptotically efficient estimator sequence in the local and compact settings.
For the transition and Fisher regimes, the proofs use the same projected inversion with the sampling covariance retained in the local risk calculation.
\Cref{thm:gauss-attain-limit} gives a local attaining sequence with uniform Gaussian limit \(N(0,H_{W,\beta}^{-1})\) for \(W\succ0\), where \(H_{W,\beta}\) denotes the unique optimizing information matrix.

Our minimax characterization establishes the first-order universality of Gaussian mechanisms for efficiency problems considered here.
Across different models, the exact minimax constant is attained by a Gaussian release of a suitably chosen bounded statistic, followed by moment-map inversion.
Given the minimax lower bound, it is unnecessary to resort to other nonadditive, nonlinear, or otherwise unrestricted private mechanisms for improvement.
The essential choice is therefore not a more elaborate noise distribution, but the statistic to which Gaussian noise is added.

\subsubsection{From a public center to uniform efficiency}

The local theorem treats \(\theta_0\) as public because the optimal statistic and its inverse mean map may depend on the center.
Under the assumptions of \cref{thm:compact-minimax}, including global identifiability on \(K\), the uniform theorem removes this requirement without first-order loss.
The bounded statistic \(T\) serves only to localize the unknown parameter, whereas the statistics used in the second stage approximate the loss-specific information optimum on finitely many local charts.
The statistic used to identify the chart therefore need not itself be efficient for final estimation.

The first-stage localization uses asymptotically negligible shares of the sample and privacy budget, so the second stage preserves the pointwise leading constant \(\uC_\theta(W)\) uniformly over \(K\).
The normalized theorem states this pointwise efficiency, while \cref{cor:compact-minimax} shows that the unnormalized compact minimax risk is governed by the least favorable local constant \(\max_{\theta\in K} \uC_\theta(W)\).
   \section{Optimal Estimator via Gaussian Release and Moment-Map Inversion}
\label{sec:attain-bound-gauss-releases}

This section develops the Gaussian mechanisms that attain the upper bounds in
\cref{thm:local-minimax,thm:compact-minimax}.
We first construct a moment-map inverse estimator from a fixed bounded statistic, and then choose the statistic to approach the optimal variational constant.
Finally, we use a private preliminary estimator to remove the assumption that the local center is publicly known.
We first recall two standard properties of \(\rho\)-zCDP
\citep{dwork2016_ConcentratedDifferential,bun2016_ConcentratedDifferential}.

\begin{lemma}
  \label{lem:gauss-composition}
  We have the following properties of \(\rho\)-zCDP.
  \begin{itemize}[leftmargin=*]
    \item \emph{Gaussian mechanism.}
    If \(f:\caX^n\to\R^d\) has replacement \(\ell_2\)-sensitivity
    \[
      \Delta
      =
      \sup_{S,S'\text{ adjacent}}
      \norm{f(S)-f(S')}_2,
    \]
    then \(f(S)+Z\), where
    \(Z\sim N(0,\Delta^2I_d/(2\rho))\), is \(\rho\)-zCDP.
    \item \emph{Adaptive composition.}
    If, conditional on any preceding outputs, the \(j\)th mechanism is
    \(\rho_j\)-zCDP for \(j=1,\ldots,k\), then their adaptive composition is
    \((\sum_{j=1}^k\rho_j)\)-zCDP.
  \end{itemize}
\end{lemma}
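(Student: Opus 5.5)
The two claims can be proved directly from the Rényi-divergence form of \cref{def:zcdp}; no earlier result of the paper is needed.

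\emph{Gaussian mechanism.} Fix adjacent \(S,S'\) and put \(m=f(S)\), \(m'=f(S')\), so that \(\norm{m-m'}_2\le\Delta\). With \(\sigma^2=\Delta^2/(2\rho)\), I will compute \(D_\alpha(N(m,\sigma^2I_d)\|N(m',\sigma^2I_d))\) in closed form. Completing the square in the exponent of \(\int p_m^\alpha p_{m'}^{1-\alpha}\) gives
\[
  \log\int p_m^\alpha p_{m'}^{1-\alpha}\,dz=\frac{\alpha(\alpha-1)\norm{m-m'}_2^2}{2\sigma^2},
\]
and therefore
\[
  D_\alpha=\frac{\alpha\norm{m-m'}_2^2}{2\sigma^2}\le\frac{\alpha\Delta^2}{2\sigma^2}=\alpha\rho
\]
for every \(\alpha>1\). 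The only real computation is the completion of the square. To keep it short, I will reduce to \(d=1\) by rotating coordinates so that \(m-m'\) lies along the first axis; the remaining coordinates then have identical laws and contribute nothing to the divergence.

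\emph{Adaptive composition.} I will argue by induction on \(k\), so it suffices to treat \(k=2\). Let \(M_1(S)\) have density \(p_S(z_1)\), and let the second mechanism have conditional density \(q_S(z_2\mid z_1)\) given \(z_1\); I take all of these densities with respect to dominating measures, which exist on standard Borel spaces. The Rényi integrand factorizes:
\[
  \int\!\!\int p_S^\alpha p_{S'}^{1-\alpha}(z_1)\, q_S^\alpha q_{S'}^{1-\alpha}(z_2\mid z_1)\,dz_2\,dz_1 .
\]
By the conditional zCDP hypothesis, the inner integral is at most \(\exp((\alpha-1)\alpha\rho_2)\) uniformly in \(z_1\). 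Pulling this bound out, the outer integral is at most \(\exp((\alpha-1)\alpha\rho_1)\). Taking logarithms and dividing by \(\alpha-1\) gives
\[
  D_\alpha\le\alpha(\rho_1+\rho_2).
\]

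The main obstacle is measure-theoretic: making the conditional densities and the adaptive dependence rigorous in general output spaces. I will handle it by assuming Markov kernels between standard Borel spaces, which provides regular conditional distributions. Since both statements are standard, the proof can alternatively cite \citet{bun2016_ConcentratedDifferential}, Propositions 1.6 and 1.7 and Lemma 2.3, and note that their replacement-adjacency convention matches the one used here.
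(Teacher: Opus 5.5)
Your proposal is correct and matches the paper, which gives no proof of this lemma and simply recalls both facts from \citet{dwork2016_ConcentratedDifferential,bun2016_ConcentratedDifferential}. Your closed-form Gaussian R\'enyi computation \(D_\alpha=\alpha\norm{m-m'}_2^2/(2\sigma^2)\), and the factorization of the R\'enyi integral over the adaptive kernel with the conditional bound pulled out uniformly in \(z_1\), are exactly the standard arguments behind those citations; the only remaining care points are the measurable choice of conditional densities, which you already handle via standard Borel kernels, and the degenerate case \(\Delta=0\), where the output law is identical on adjacent inputs and the bound is trivial.
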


\subsection{Local upper bound for a bounded statistic}
\label{subsec:local-upper-bound-bounded-stat}

First, suppose we have a bounded statistic with a nonsingular mean derivative.
As introduced in \cref{sec:introduction}, this gives a private estimator by releasing its empirical mean and locally inverting its population moment map.
We recall the construction.
Fix \(\theta_0\in\Theta\), and let \(\Phi:\caX\to\R^p\) satisfy
\(\diam(\Phi)\le1\).
Suppose its mean map
\(g(\theta)=\E_\theta\Phi(X)\) is continuously differentiable near
\(\theta_0\), with nonsingular derivative
\(A_0=Dg(\theta_0)=A_{\Phi,\theta_0}\).
Let \(\Psi_0\) be the local \(C^1\) inverse of \(g\) on a small closed
Euclidean ball \(\mathcal B_0\) centered at \(g(\theta_0)\), and let
\(\Pi_{\mathcal B_0}\) denote Euclidean projection onto this ball.
Set \(F_0=\Psi_0\circ\Pi_{\mathcal B_0}\).
Define the Gaussian release and the corresponding moment-map inverse estimator by
\begin{equation}
  \label{eq:moment-map-inverse-estimator}
  \widehat\theta_{\Phi,n}
  =
  F_0(Y_n),
  \qquad
  Y_n
  =
  \frac1n\sum_{i=1}^n\Phi(X_i)+G_n,
  \qquad
  G_n
  \sim
  N\xkm{0,\frac{I_p}{2\tau_n^2}}.
\end{equation}

By \cref{lem:gauss-composition}, the estimator is
\(\rho_n\)-zCDP: the empirical mean has replacement sensitivity at most
\(1/n\), and the remaining operations are post-processing.
Its sampling covariance and Gaussian privacy covariance satisfy
\[
  \Var_\theta\xkm{Y_n-g(\theta)}
  =
  \frac{V_{\Phi,\theta}}n+\frac{I_p}{2\tau_n^2}
  =
  \sigma_n^2\Sigma_{\Phi,\theta,\beta_n}.
\]
Thus the covariance appearing in \cref{eq:weighted-variance} is
exactly the first-order covariance of the released moment on the
\(\sigma_n\)-scale.
The next theorem identifies the resulting local risk.

\begin{theorem}[Local upper bound]
  \label{thm:bounded-stat-upper}
  Under \cref{ass:regular,ass:score-density-l1}, suppose the statistic above
  has the stated continuously differentiable mean map and nonsingular
  derivative.
  Let \((\rho_n)\) satisfy the asymptotic regime in
  \cref{eq:unified-asymp-regime}.
  For every \((R_n)\) and \(\widetilde\Theta_n\) satisfying
  \cref{eq:unified-local-param-set},
  \(\widehat\theta_{\Phi,n}\) is \(\rho_n\)-zCDP and, for every
  \(W\succeq0\),
  \begin{equation}
    \label{eq:unified-fixed-stat-risk}
    \sup_{\theta\in\widetilde\Theta_n}
    \left|
      \sigma_n^{-2}
      \E_\theta
      \normx{\widehat\theta_{\Phi,n}-\theta}_W^2
      -
      \Tr\zk{
        WA_0^{-1}
        \Sigma_{\Phi,\theta_0,\beta}
        A_0^{-\top}
      }
    \right|
    \longrightarrow0.
  \end{equation}
\end{theorem}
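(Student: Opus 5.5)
Privacy is immediate from \cref{lem:gauss-composition}: replacing one record changes \(\frac1n\sum_i\Phi(X_i)\) by at most \(\diam(\Phi)/n\le 1/n\), so adding \(G_n\sim N(0,I_p/(2\tau_n^2))\) with \(\tau_n^2=n^2\rho_n\) is \(\rho_n\)-zCDP, and \(F_0\) is post-processing. For the risk, I will write \(Y_n=g(\theta)+\sigma_n Z_{n,\theta}\), where
\[
Z_{n,\theta}=\sigma_n^{-1}\Big(\tfrac1n\textstyle\sum_i(\Phi(X_i)-g(\theta))+G_n\Big).
\]
This variable has mean zero and covariance exactly \(\Sigma_{\Phi,\theta,\beta_n}\). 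Since \(\Phi\) is bounded and \(\sigma_n^{-2}/n\le1\), it also has uniformly bounded fourth moments over \(\theta\) and \(n\). The plan is a delta-method expansion of \(F_0\) around \(g(\theta)\), made uniform over \(\widetilde\Theta_n\), together with a truncation argument on the event where the projection is active.

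\textbf{Step 1: localization and continuity.} By \cref{ass:score-density-l1} and boundedness of \(\Phi\), the maps \(g\) and \(Dg(\theta)=A_{\Phi,\theta}\) are continuous. Hence \(\sup_{\theta\in\widetilde\Theta_n}\|g(\theta)-g(\theta_0)\|\to0\), because \(R_n\sigma_n\to0\). So for large \(n\), \(g(\theta)\) lies in a smaller concentric ball \(\mathcal B_0'\subset\mathcal B_0\) at positive distance \(\delta\) from \(\partial\mathcal B_0\), and \(\Psi_0(g(\theta))=\theta\). Similarly, \(V_{\Phi,\theta}\to V_{\Phi,\theta_0}\) uniformly on \(\widetilde\Theta_n\). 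This follows since \(\E_\theta\Phi\Phi^\top\) is continuous in \(\theta\), because \(p_\theta\) is \(L^1\)-continuous and \(\Phi\) is bounded. Therefore \(\Sigma_{\Phi,\theta,\beta_n}\to\Sigma_{\Phi,\theta_0,\beta}\) uniformly.

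\textbf{Step 2: decomposition.} Let \(E_n=\{\sigma_n\|Z_{n,\theta}\|\le\delta\}\). On \(E_n\) the projection is inactive. The mean value theorem, with \(D\Psi_0\) uniformly continuous on \(\mathcal B_0\), gives
\[
\widehat\theta-\theta=\sigma_n D\Psi_0(g(\theta))Z_{n,\theta}+\sigma_n r_n Z_{n,\theta},
\]
where \(\|r_n\|\le\omega(\sigma_n\|Z_{n,\theta}\|)\) and \(\omega\) is a modulus of continuity. Moreover \(D\Psi_0(g(\theta))=Dg(\theta)^{-1}\to A_0^{-1}\) uniformly on \(\widetilde\Theta_n\). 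Then \(\sigma_n^{-2}\E\|\sigma_n A_0^{-1}Z\|_W^2=\Tr[WA_0^{-1}\Sigma_{\Phi,\theta,\beta_n}A_0^{-\top}]\) converges uniformly to the target. The remainder is controlled by Cauchy--Schwarz plus \(\E[\|Z\|^2\omega(\sigma_n\|Z\|)^2]\to0\) uniformly, which holds by bounded fourth moments and \(\sigma_n\to0\).

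\textbf{Step 3: the complement of \(E_n\).} This is the main obstacle. Off \(E_n\), the estimator still satisfies \(\|\widehat\theta-\theta\|\le\diam\Psi_0(\mathcal B_0)+o(1)\), because \(F_0\) is bounded. Hence the contribution is at most \(C\sigma_n^{-2}\Pr(\sigma_n\|Z\|>\delta)\le C\delta^{-4}\sigma_n^2\E\|Z\|^4\to0\) by Markov's inequality. The same bound removes the missing mass of the leading term on \(E_n^c\). Combining the three pieces gives the uniform convergence in \cref{eq:unified-fixed-stat-risk} for every \(W\succeq0\).
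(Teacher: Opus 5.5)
Your proposal is correct and follows essentially the same route as the paper, which proves this result via its fixed-statistic lemma: the same centered release \(Y_n-g(\theta)\) with covariance \(\sigma_n^2\Sigma_{\Phi,\theta,\beta_n}\), linearization on the event where the projection is inactive using uniform differentiability of \(\Psi_0\) and \(D\Psi_0\{g(\theta)\}=A_{\Phi,\theta}^{-1}\to A_0^{-1}\), and a fourth-moment Markov bound on the complement. The only cosmetic difference is that you control the complementary event through the bounded image of \(F_0\) rather than its global Lipschitz property, and this works equally well.
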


The idea of \cref{thm:bounded-stat-upper} is simple.
After the local inverse transformation, the released moment is adjusted by
the derivative \(A_0^{-1}\) of the inverse map.
Consequently, the sampling and privacy fluctuations have first-order
covariance
\begin{math}
    A_0^{-1}
  \Sigma_{\Phi,\theta_0,\beta}
  A_0^{-\top}.
\end{math}
The projection makes the estimator globally defined without affecting this
local expansion.
The details are given in \cref{appendix-sec:attain-proofs}.

\subsection{Near-optimal statistics and local attainment}
\label{subsec:near-optimal-attainment}

Next, we show that the bound in
\cref{thm:bounded-stat-upper} can be made arbitrarily close to
\(\uCt(W)\) by choosing a nearly optimal statistic.

\begin{lemma}[Nearly optimal statistic]
  \label{lem:verify-near-optimal}
  Fix \(W\succeq0\), \(\theta_0\in\Theta\), and \(\beta\in[0,1]\).
  Under \cref{ass:regular,ass:score-density-l1}, suppose
  \(I(\theta_0)\succ0\).
  For every \(\epsilon>0\), there is a statistic
  \(\Phi_\epsilon:\caX\to\R^p\), with
  \(\diam(\Phi_\epsilon)\le1\), whose mean map is continuously
  differentiable near \(\theta_0\), whose derivative
  \(A_\epsilon=A_{\Phi_\epsilon,\theta_0}\) is nonsingular, and such that
  \begin{equation}
    \label{eq:near-optimal-stat}
    \Tr\zk{
      WA_\epsilon^{-1}
      \Sigma_{\Phi_\epsilon,\theta_0,\beta}
      A_\epsilon^{-\top}
    }
    \le
    \uCt(W)+\epsilon.
  \end{equation}
\end{lemma}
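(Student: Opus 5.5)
We first reduce to a single statistic whose information matrix is nearly optimal and nonsingular, then adjust it to have exactly \(p\) coordinates and a nonsingular derivative without changing \(A^{-1}\Sigma A^{-\top}\) more than \(\epsilon\).

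\emph{Step 1 (near-optimal information matrix).} Since \(I(\theta_0)\succ0\), \cref{prop:dc-info-prop}(iv) gives a statistic \(\Phi_1\) with \(\diam(\Phi_1)\le1\) and \(A_{\Phi_1,\theta_0}^*A_{\Phi_1,\theta_0}\succ0\); in particular \(\uCt(W)<\infty\). Pick \(H\in\caU_{\theta_0,\beta}\) with \(\caI_W(H)\le\uCt(W)+\epsilon/3\), and then a finite-dimensional \(\Phi_2:\caX\to\R^d\) with \(\diam(\Phi_2)\le1\) whose matrix \(H_2=A_2^*\Sigma_2^\dagger A_2\) is close to \(H\) (here \(A_2=A_{\Phi_2,\theta_0}\), \(\Sigma_2=\Sigma_{\Phi_2,\theta_0,\beta}\)). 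Closeness alone is not enough, since \(\caI_W\) is only lower semicontinuous. To make \(H_2\) nonsingular, mix: form \(\Phi=(\sqrt{1-\eta}\,\Phi_2,\sqrt{\eta}\,\Phi_1)\), which has diameter at most one because the squared diameters add with weights \(1-\eta,\eta\). Its unified information dominates, roughly, a convex-type combination containing \(\eta\) times a positive definite part, so it is nonsingular. As \(\eta\downarrow0\) it converges to \(H_2\), and \(\caI_W\) is continuous at nonsingular points. The case \(H\) singular with \(\caI_W(H)<\infty\) needs care: I would instead approximate \(H\) by nonsingular elements \(H+\) (small) and use monotone convergence \(\Tr[W(H+tI)^{-1}]\uparrow\caI_W(H)\).

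\emph{Step 2 (reduce to \(p\) coordinates).} Given \(\Phi:\caX\to\R^d\) with \(H_\Phi=A^*\Sigma^{-1}A\succ0\) (\(\Sigma\succ0\) when \(\beta<1\)), set \(\widetilde\Phi=B\Phi\) with \(B=c\,A^\top\Sigma^{-1}\in\R^{p\times d}\). Then \(A_{\widetilde\Phi}=BA=cH_\Phi\) is nonsingular, \(\Sigma_{\widetilde\Phi}\) satisfies \(\beta BVB^\top+\frac{1-\beta}{2}I_p\), and we need \(A_{\widetilde\Phi}^{-1}\Sigma_{\widetilde\Phi}A_{\widetilde\Phi}^{-\top}\preceq H_\Phi^{-1}\), i.e. the Gauss–Markov/generalized least squares bound. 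The obstruction is that \(\widetilde\Phi\) must keep diameter \(\le1\) while the privacy part \(\frac{1-\beta}{2}I\) is not transformed covariantly. So I would whiten first: choose coordinates so that the optimal linear combination is realized by a contraction \(B\) (operator norm \(\le1\), hence diameter preserved) with \(B(\beta V+\frac{1-\beta}2 I)B^\top\) mapped correctly. Concretely, the linear estimator minimizing \(\Tr[W\,(BA)^{-1}B\Sigma B^\top(BA)^{-\top}]\) over \(B\) is invariant under \(B\mapsto MB\). Hence we may rescale so that \(\|B\|_{op}\le1\) while the ratio is unchanged in the sampling part, but the privacy part \(\frac{1-\beta}{2}I\) is then replaced by \(\frac{1-\beta}{2}BB^\top\preceq\frac{1-\beta}{2}I\) relative to the ideal. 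Since \(BB^\top\preceq I\) this only helps: after \(\widetilde\Phi=B\Phi\) with \(\|B\|\le1\), \(\Sigma_{\widetilde\Phi}=\beta BVB^\top+\frac{1-\beta}2 I\succeq B\Sigma B^\top\), which hurts, not helps. I expect this step to be the main obstacle. I would resolve it by first projecting \(\Phi\) onto \(\ran(\Sigma^{-1}A)\) (dimension \(p\)) orthogonally, which preserves \(A^*\Sigma^{-1}A\) when \(\Sigma\) commutes suitably. Otherwise I would simply note that the paper's definition lets \(\Phi\) be \(d\)-dimensional and check whether a \(p\)-dimensional optimizer can be obtained by choosing \(\Phi_2\) already \(p\)-dimensional from the outset, via a projection argument like the one after \cref{def:diameter-score} for \(\beta=0\).

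\emph{Step 3 (smoothness).} For bounded \(\Phi\), \(g(\theta)=\int\Phi p_\theta\,d\mu\) is \(C^1\) near \(\theta_0\) by \cref{ass:score-density-l1}, since \(\Phi\in L^\infty\) and \(\theta\mapsto p_\theta\in L^1\) is \(C^1\), with \(Dg(\theta_0)=A_{\Phi,\theta_0}\). Combining the steps gives \(\Tr[WA_\epsilon^{-1}\Sigma A_\epsilon^{-\top}]=\caI_W(H_{\Phi_\epsilon})\le\uCt(W)+\epsilon\).
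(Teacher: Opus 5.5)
Your Step~2 is left unresolved, and it is the substantive part of the lemma. Without it you have no \(\R^p\)-valued diameter-one statistic with a nonsingular square derivative, so the closing identity \(\Tr[WA_\epsilon^{-1}\Sigma A_\epsilon^{-\top}]=\caI_W(H_{\Phi_\epsilon})\) is not available.

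Of your fallbacks, projecting onto \(\ran(A)\) as in the \(\beta=0\) remark after \cref{def:diameter-score} fails for \(\beta>0\). Coordinates of \(\Phi\) with no score covariance can still act as control variates for the sampling noise. Moreover, \(h^\top H_\Phi h=\sup_v\{2v^\top Ah-v^\top\Sigma v\}\) is preserved only if the maximizer \(\Sigma^{-1}Ah\) lies in the retained subspace. Projecting onto \(\ran(\Sigma^{-1}A)\) is the right choice, and it needs no commutation hypothesis.

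Your invariance remark nearly gets there. The generalized-least-squares value is invariant under \(B\mapsto MB\), and the true unified covariance of \(B\Phi\) equals \(B\Sigma B^\top\) exactly when \(BB^\top=I_p\). So, for \(\beta<1\) and with \(A=A_{\Phi,\theta_0}\), \(V=V_{\Phi,\theta_0}\), \(\Sigma=\Sigma_{\Phi,\theta_0,\beta}\), put \(G=\Sigma^{-1}A\) (full column rank because \(H_\Phi\succ0\)), \(N=(G^\top G)^{-1/2}\), and
\[
  B=NG^\top,\qquad BB^\top=I_p .
\]
Then:
\begin{itemize}
  \item \(\norm{B}_{\mathrm{op}}=1\), so \(\diam(B\Phi)\le1\);
  \item \(\Sigma_{B\Phi,\theta_0,\beta}=\beta BVB^\top+\tfrac{1-\beta}{2}I_p=\beta BVB^\top+\tfrac{1-\beta}{2}BB^\top=B\Sigma B^\top=NH_\Phi N\);
  \item \(A_{B\Phi,\theta_0}=BA=NH_\Phi\) is nonsingular, and \((BA)^{-1}B\Sigma B^\top(BA)^{-\top}=H_\Phi^{-1}\) exactly.
\end{itemize}
This is orthogonal projection onto \(\ran(\Sigma^{-1}A)\) followed by an isometry into \(\R^p\), which is the compression in part~(ii) of \cref{appendix-prop:unified-region-geometry}. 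At \(\beta=1\) you can run the same construction with \(V^\dagger\) in place of \(\Sigma^{-1}\). Alternatively, bypass it as the paper does, using the truncated scores \(s_{\theta_0}\ind{\norm{s_{\theta_0}}_2\le M}/(2M)\), whose inverse-map covariance tends to \(I(\theta_0)^{-1}\).

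Step~1 also needs repair. The matrix \(H+tI_p\) need not lie in \(\caU_{\theta_0,\beta}\), so that fallback is not available. Sending \(\eta\downarrow0\) after fixing \(\Phi_2\) only returns you to \(\caI_W(H_2)\), which can be far from \(\caI_W(H)\) when \(H\) is singular.

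What works is the Loewner bound your direct sum actually satisfies, \(H_\Phi\succeq(1-\eta)H_2+\eta H_1\). It follows from the variational formula together with \(\Var(tZ_1+(1-t)Z_2)\le t\Var Z_1+(1-t)\Var Z_2\). Use it with the limits taken in the other order: fix \(\eta\), let \(H_2\to H\), and use continuity of \(\caI_W\) at the positive-definite matrix \((1-\eta)H+\eta H_1\). There its value is at most \(\caI_W(H)/(1-\eta)\).

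This is exactly the paper's route. The paper phrases it as convexity of \(\caU_{\theta_0,\beta}\) and the perturbation \(H_\delta=(1-\delta)H_*+\delta H_0\), then compresses approximating generators to \(\R^p\). Nonsingularity of the square derivative then comes for free from \(H_m\succ0\). Your Step~3 matches the paper.
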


For \(\beta<1\), the argument perturbs an optimizing information matrix
toward a positive-definite feasible matrix, approximates the result by an
actual diameter-one statistic, and reduces that statistic to \(\R^p\).
At \(\beta=1\), the same conclusion follows from bounded truncated-score
statistics approaching the Fisher information limit.
The assumed \(L^1\) differentiability and the inverse function theorem then
provide the local inverse required in
\cref{thm:bounded-stat-upper}.
Details are given in \cref{appendix-sec:attain-proofs}.

Combining \cref{lem:verify-near-optimal} with
\cref{thm:bounded-stat-upper} and taking a diagonal sequence yields
the following result, which is the constructive upper half of
\cref{thm:local-minimax} under the same assumptions.

\begin{theorem}[Local optimal estimator]
  \label{thm:local-attain-seq}
  Fix \(W\succeq0\).
  Under \cref{ass:regular,ass:score-density-l1}, suppose
  \(I(\theta_0)\succ0\).
  Let \((\rho_n)\) satisfy the asymptotic regime in
  \cref{eq:unified-asymp-regime}.
  Then, for every \((R_n)\) and \(\widetilde\Theta_n\) satisfying
  \cref{eq:unified-local-param-set}, there is a
  sequence \((\widehat\theta_n^\star)\) of \(\rho_n\)-zCDP moment-map
  inverse estimators, each based on an \(\R^p\)-valued statistic of diameter
  at most one, such that
  \begin{equation}
    \label{eq:local-attain-seq}
    \limsup_{n\to\infty}
    \sigma_n^{-2}
    \sup_{\theta\in\widetilde\Theta_n}
    \E_\theta
    \normx{\widehat\theta_n^\star-\theta}_W^2
    \le
    \uCt(W).
  \end{equation}
\end{theorem}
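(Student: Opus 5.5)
The plan is a diagonal argument that combines \cref{lem:verify-near-optimal} with \cref{thm:bounded-stat-upper}. Fix a decreasing sequence \(\epsilon_k\downarrow0\). For each \(k\), \cref{lem:verify-near-optimal} supplies a statistic \(\Phi_k\coloneqq\Phi_{\epsilon_k}:\caX\to\R^p\) with \(\diam(\Phi_k)\le1\), a \(C^1\) mean map near \(\theta_0\) and a nonsingular derivative \(A_k\), such that
\[
  c_k\coloneqq\Tr\zk{WA_k^{-1}\Sigma_{\Phi_k,\theta_0,\beta}A_k^{-\top}}\le\uCt(W)+\epsilon_k .
\]
For each \(k\) we fix the ball \(\mathcal B_k\) and the local inverse \(\Psi_k\), and we let \(\widehat\theta_{\Phi_k,n}\) be the moment-map inverse estimator \cref{eq:moment-map-inverse-estimator}. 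This estimator is \(\rho_n\)-zCDP by \cref{lem:gauss-composition}. Since \(\widetilde\Theta_n\) is a fixed given sequence of sets, \cref{thm:bounded-stat-upper} applies to each fixed \(k\) and gives
\[
  \delta_k(n)\coloneqq\sup_{\theta\in\widetilde\Theta_n}\abs{\sigma_n^{-2}\E_\theta\normx{\widehat\theta_{\Phi_k,n}-\theta}_W^2-c_k}\to0
\]
as \(n\to\infty\).

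Next we build the diagonal. Choose integers \(N_1<N_2<\cdots\) such that \(\delta_k(n)\le\epsilon_k\) for all \(n\ge N_k\). This is possible because \(\delta_k(n)\to0\) for each fixed \(k\). Define \(k(n)=\max\{k:N_k\le n\}\), and set \(k(n)=1\) for \(n<N_1\). Then \(k(n)\to\infty\), and \(n\ge N_{k(n)}\) for every \(n\ge N_1\). Put \(\widehat\theta_n^\star=\widehat\theta_{\Phi_{k(n)},n}\). For each \(n\) this is a single moment-map inverse estimator, based on an \(\R^p\)-valued statistic of diameter at most one, so it is \(\rho_n\)-zCDP. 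For \(n\ge N_1\),
\[
  \sigma_n^{-2}\sup_{\theta\in\widetilde\Theta_n}\E_\theta\normx{\widehat\theta_n^\star-\theta}_W^2\le c_{k(n)}+\delta_{k(n)}(n)\le\uCt(W)+2\epsilon_{k(n)}.
\]
Taking \(\limsup\) gives \cref{eq:local-attain-seq}.

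The only delicate point is bookkeeping. The construction needs \(\delta_k(n)\to0\) for each fixed \(k\); it does not need uniformity in \(k\). The diagonal choice of \(N_k\) supplies everything else. All choices here (the \(\Phi_k\), the balls \(\mathcal B_k\), the map \(n\mapsto k(n)\)) depend only on \(\theta_0\), \(W\), \(\beta\), \((\rho_n)\) and \((R_n)\), never on the data. Hence the privacy guarantee holds for each \(n\) separately, and no composition across different \(k\) is involved.

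The substantive difficulty sits entirely in the two earlier results, which we are allowed to assume. The main obstacle is \cref{lem:verify-near-optimal} in the boundary cases. When the optimizer of \(\caI_W\) over \(\caU_{\theta_0,\beta}\) is singular, which can happen for \(W\) singular, one must perturb toward a positive definite feasible matrix and then realize the result by an actual bounded statistic with nonsingular derivative. When \(\beta=1\), the constant is the Fisher bound, which must be approached through truncated scores. Given those results, the theorem follows from the diagonalization above.
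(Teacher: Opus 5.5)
Your proposal is correct and follows the paper's proof essentially verbatim: the paper (via \cref{appendix-lem:unified-local-upper}) applies the near-optimal statistic lemma with $\epsilon_k\downarrow0$, invokes the fixed-statistic expansion on $\widetilde\Theta_n$ for each fixed $k$, and uses the $k$th estimator on deterministic blocks $N_k\le n<N_{k+1}$ chosen so that the normalized risk is at most $\uCt(W)+2\epsilon_k$. You also locate the substantive work where the paper puts it, in the boundary cases of \cref{lem:verify-near-optimal}: a singular optimizer is handled by convex perturbation toward a positive-definite feasible matrix, and $\beta=1$ by truncated scores.
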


The preceding risk bounds admit a distributional refinement: the
optimal sequence can be chosen to have a uniform Gaussian limit.

\begin{theorem}[Gaussian limit for attaining sequences]
  \label{thm:gauss-attain-limit}
  Under the assumptions of \cref{thm:local-attain-seq}, suppose
  \(W\succ0\) and take the unique minimizer
  \begin{math}
    H_{W,\beta}
    =
    \argmin_{H\in\caU_{\theta_0,\beta}}
    \caI_W(H).
  \end{math}
  The attaining sequence may be chosen so that
  \begin{equation}
    \label{eq:attain-seq-gauss-limit}
    \sigma_n^{-1}
    (\widehat\theta_n^\star-\theta)
    \rightsquigarrow
    N\xkm{0,H_{W,\beta}^{-1}}
    \qquad
    \text{uniformly over }\theta\in\widetilde\Theta_n.
  \end{equation}
\end{theorem}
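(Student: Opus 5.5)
We prove the Gaussian limit for attaining sequences by building a single sequence of statistics whose local covariances converge to \(H_{W,\beta}^{-1}\), and then reading the Gaussian limit off the linearization behind \cref{thm:bounded-stat-upper}.

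\textbf{Step 1: uniqueness and near-optimal statistics.} First I would verify that the minimizer is unique. When \(W\succ0\), \(H\mapsto\Tr(WH^{-1})\) is strictly convex on the positive definite cone. The region \(\caU_{\theta_0,\beta}\) is convex and compact; for \(\beta<1\) this comes from combining statistics in orthogonal components, as in \cref{prop:dc-info-prop}, and for \(\beta=1\) I expect the region's Loewner maximum to be \(I(\theta_0)\), by Cauchy--Schwarz. The minimizer also has finite value and is positive definite, because \(W\succ0\) forces \(\caI_W(H)=\infty\) for singular \(H\). Hence it is unique.

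Next, \cref{lem:verify-near-optimal} with \(\epsilon_k\downarrow0\) gives statistics \(\Phi_k\) with nonsingular \(A_k\) and
\[
  \Tr\zk{W\,C_k}\le \uCt(W)+\epsilon_k,
  \qquad
  C_k\coloneqq A_k^{-1}\Sigma_{\Phi_k,\theta_0,\beta}A_k^{-\top}.
\]
Set \(H_k=C_k^{-1}\), which equals \(A_k^{\top}\Sigma_{\Phi_k,\theta_0,\beta}^{-1}A_k\) when \(\beta<1\). For \(\beta=1\) one may need the pseudo-inverse form, or an argument that the truncated-score \(V_{\Phi_k}\) is nonsingular. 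In either case \(H_k\in\caU_{\theta_0,\beta}\) and \(\caI_W(H_k)\to\uCt(W)\). By compactness, every limit point of \((H_k)\) is a minimizer, so by uniqueness \(H_k\to H_{W,\beta}\). Since the limit is positive definite, \(C_k\to H_{W,\beta}^{-1}\).

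\textbf{Step 2: Gaussian limit for a fixed statistic.} For fixed \(\Phi_k\), write \(\theta\in\widetilde\Theta_n\) and expand
\[
  \sigma_n^{-1}(\widehat\theta_{\Phi_k,n}-\theta)
  =
  A_k^{-1}\sigma_n^{-1}\bigl(Y_n-g_k(\theta)\bigr)+r_n .
\]
This linearization uses the \(C^1\) inverse \(\Psi_k\) and a uniform bound showing that \(\Pi_{\mathcal B}\) is inactive with probability tending to one, which is the same machinery as in \cref{thm:bounded-stat-upper}. The noise \(\sigma_n^{-1}G_n\) is exactly \(N\bigl(0,(1-\beta_n)I/2\bigr)\). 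The sampling part \(\sigma_n^{-1}\bigl(\bar\Phi-g_k(\theta)\bigr)\) equals \(\sqrt{\beta_n}\) times a normalized sum of bounded i.i.d.\ vectors.

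A Lindeberg CLT, uniform over \(\theta\) because \(\Phi_k\) is bounded and \(V_{\Phi_k,\theta}\to V_{\Phi_k,\theta_0}\) uniformly on the shrinking neighborhood, handles the sampling part; the case \(\beta_n\to0\) is trivial. Independence of the two parts then gives uniform convergence to \(N(0,C_k)\) in, say, the bounded-Lipschitz metric. Here \(V_{\Phi_k,\theta}\to V_{\Phi_k,\theta_0}\) follows from \(L^1\) continuity of \(p_\theta\) (\cref{ass:score-density-l1}) and boundedness of \(\Phi_k\). The remainder satisfies \(\sup_\theta\Pr_\theta(\norm{r_n}>\delta)\to0\), since \(Y_n-g_k(\theta)=O_P(\sigma_n)\) uniformly and \(D\Psi_k\) is continuous.

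\textbf{Step 3: diagonalization.} Let \(d_{n,k}\) be the supremum over \(\widetilde\Theta_n\) of the bounded-Lipschitz distance between the law of \(\sigma_n^{-1}(\widehat\theta_{\Phi_k,n}-\theta)\) and \(N(0,C_k)\). Let \(e_{n,k}\) be the supremum over \(\widetilde\Theta_n\) of the normalized risk excess over \(\Tr(WC_k)\), which tends to zero by \cref{thm:bounded-stat-upper}. Choose \(k_n\to\infty\) slowly enough that \(d_{n,k_n}+e_{n,k_n}\to0\), and put \(\widehat\theta_n^\star=\widehat\theta_{\Phi_{k_n},n}\). Then \(N(0,C_{k_n})\to N(0,H_{W,\beta}^{-1})\) gives the uniform Gaussian limit, and the risk bound of \cref{thm:local-attain-seq} is retained.

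\textbf{Main obstacle.} The delicate step is Step 2, specifically making the CLT and the linearization remainder uniform over the moving set \(\widetilde\Theta_n\). I would handle it by a triangular-array Lindeberg argument, using that \(\norm{\Phi_k}\) is bounded by the diameter after centering. The \(\beta=1\) edge of Step 1, where uniqueness needs \(\caU_{\theta_0,1}\)'s Loewner maximum to be \(I(\theta_0)\), is a secondary concern.
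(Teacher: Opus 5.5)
Your proof is correct and is essentially the paper's. The paper's Gaussian diagonalization lemma also takes \(\R^p\)-valued diameter-one statistics whose unified information matrices \(H_k\) converge to \(H_{W,\beta}\), notes that eventually \(H_k\succ0\) so that \(A_{\Phi_k,\theta_0}^{-1}\Sigma_{\Phi_k,\theta_0,\beta}A_{\Phi_k,\theta_0}^{-\top}=H_k^{-1}\), applies the fixed-statistic uniform bounded-Lipschitz limit, and chooses slowly growing blocks so that both the distributional and the risk errors vanish. The one difference is that the paper gets \(H_k\to H_{W,\beta}\) directly from the closure defining \(\caU_{\theta_0,\beta}\), so it needs no compactness or uniqueness step, whereas you derive it from near-optimal statistics plus uniqueness (which the statement supplies). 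Your convexity justification is loose, though: for \(0<\beta<1\) a direct sum only yields a matrix \(\succeq tH_1+(1-t)H_2\), so downward closure is also needed, and at \(\beta=1\) the region is not convex, so only your Loewner-maximum argument applies there.
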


This result parallels the asymptotic normality of efficient estimators in classical regular models, where the local limit has covariance \(I(\theta_0)^{-1}\) at the \(n^{-1/2}\) scale \citep[Chapter~8]{vaart1998_AsymptoticStatistics}.
In the sampling-dominated regime \(\beta=1\), we have \(\sigma_n^{-1}\sim\sqrt n\) and \(H_{W,1}=I(\theta_0)\), so \cref{eq:attain-seq-gauss-limit} recovers this classical limit.
For \(0\le\beta<1\), privacy changes both the effective scale and the attainable covariance: the optimizing information matrix \(H_{W,\beta}\) is selected from \(\caU_{\theta_0,\beta}\) and may depend on the loss \(W\).
For each fixed \(W\succ0\), the optimizing information matrix is unique: when \(\beta<1\), the region \(\caU_{\theta_0,\beta}\) is convex and \(H\mapsto\Tr(WH^{-1})\) is strictly convex on the positive definite cone, while at \(\beta=1\) the unique optimizer is \(I(\theta_0)\).
Different losses may therefore select different unique optimizing matrices, although statistics generating the same matrix need not be unique.

\subsection{Preliminary localization on compact parameter sets}
\label{subsec:preliminary-localization}

The local constructions above require the public center \(\theta_0\) to be
known.
In this subsection, we show that a private preliminary estimator can select
both the local center and the local statistic while using only vanishing
fractions of the sample and privacy budget.

The following lemma shows that, under the remaining regularity and nonsingularity conditions in \cref{thm:compact-minimax},
global identifiability supplies the bounded finite-dimensional moment identifier needed for preliminary localization.

\begin{lemma}[Bounded identifier]
  \label{lem:bounded-identifier}
  Let \(K\subset\Theta\) be nonempty and compact.
  Under \cref{ass:regular,ass:score-density-l1}, suppose
  \(I(\theta)\succ0\) for every \(\theta\in K\), and suppose the model is
  identifiable on \(K\).
  Then there are a finite integer \(m\) and a bounded measurable statistic
  \(T:\caX\to\R^m\), which may be chosen with \(\diam(T)\le1\), such that
  \[
    h(\theta)=\E_\theta T(X)
  \]
  is continuously differentiable on \(\Theta\) and one-to-one on \(K\).
\end{lemma}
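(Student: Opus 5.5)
We construct \(T\) by combining a global separating family with the local injectivity supplied by the information region, and then glue the two using compactness.

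\emph{Step 1 (global separation by finitely many bounded statistics).} Since \(\caX\) is standard Borel, its \(\sigma\)-field is countably generated by some sets \(B_1,B_2,\ldots\), and we may assume they form a \(\pi\)-system. Identifiability on \(K\) means that for \(\theta_1\ne\theta_2\) in \(K\) some \(B_k\) has \(P_{\theta_1}(B_k)\ne P_{\theta_2}(B_k)\). The map \(\theta\mapsto P_\theta(B_k)=\int_{B_k}p_\theta\,d\mu\) is \(C^1\) on \(\Theta\) by \cref{ass:score-density-l1}, with derivative \(\int_{B_k}\dot p_\theta\,d\mu\). The same holds for any bounded measurable \(\phi\), since \(\theta\mapsto\E_\theta\phi=\int\phi p_\theta\,d\mu\) is \(C^1\), being a bounded linear functional of a \(C^1\) \(L^1\)-valued map. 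Hence every finite collection of indicator statistics has a \(C^1\) mean map on \(\Theta\).

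\emph{Step 2 (local injectivity).} Fix \(\vartheta\in K\). Since \(I(\vartheta)\succ0\), \cref{prop:dc-info-prop}(iv) gives a feasible \(J\succ0\), and hence (approximating within the closure) a bounded \(\Phi_\vartheta:\caX\to\R^d\) with \(A_{\Phi_\vartheta,\vartheta}^*A_{\Phi_\vartheta,\vartheta}\succ0\). Its derivative \(A_{\Phi_\vartheta,\vartheta}\) therefore has rank \(p\), and we can select \(p\) coordinates, or project onto the range, to get \(\R^p\)-valued statistics with nonsingular derivative at \(\vartheta\). By continuity of the derivative, the mean map of \(\Phi_\vartheta\) has injective derivative on a ball \(U_\vartheta\). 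A \(C^1\) map with injective derivative is injective on a smaller closed ball: the quantitative inverse function theorem gives \(\norm{g(\theta_1)-g(\theta_2)}\ge c\norm{\theta_1-\theta_2}\) there. Compactness then yields finitely many \(\vartheta_1,\ldots,\vartheta_r\) whose balls \(U_j'\), of half radius, cover \(K\), together with a Lebesgue-type number \(\delta>0\): any two points of \(K\) within distance \(\delta\) lie in a common ball on which \(\Phi_{\vartheta_j}\) is injective.

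\emph{Step 3 (separating far pairs; the main obstacle).} The difficulty is that identifiability separates each pair by some \(B_k\), but we need finitely many statistics that work uniformly. Consider the compact set \(D=\{(\theta_1,\theta_2)\in K^2:\norm{\theta_1-\theta_2}\ge\delta\}\). For each \((\theta_1,\theta_2)\in D\) choose \(k\) with \(P_{\theta_1}(B_k)\ne P_{\theta_2}(B_k)\). By continuity of the mean maps, strict inequality persists on an open neighbourhood of the pair. Compactness of \(D\) then produces finitely many sets \(B_{k_1},\ldots,B_{k_s}\) such that every pair in \(D\) is separated by one of them.

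\emph{Step 4 (assembly).} Let \(T=c\,(\mathbf 1_{B_{k_1}},\ldots,\mathbf 1_{B_{k_s}},\Phi_{\vartheta_1},\ldots,\Phi_{\vartheta_r})\), where \(m=s+pr\) and \(c>0\) is small enough that \(\diam(T)\le1\); this is possible because every component is bounded. Scaling does not affect injectivity. Then \(h=\E_\cdot T\) is \(C^1\) on \(\Theta\) by Step 1. Suppose \(h(\theta_1)=h(\theta_2)\) with \(\theta_1,\theta_2\in K\). If \(\norm{\theta_1-\theta_2}\ge\delta\), Step 3 gives a contradiction. Otherwise both points lie in a common ball on which some \(\Phi_{\vartheta_j}\) is injective, so \(\theta_1=\theta_2\). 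Hence \(h\) is one-to-one on \(K\).
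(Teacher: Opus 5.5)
Your proof is correct and takes essentially the same route as the paper. Both arguments glue locally injective bounded statistics via a Lebesgue number, separate the compact set of far pairs with finitely many indicators, and rescale the concatenation to diameter one. The paper uses the explicit truncated score \(\frac{1}{2M}s_\theta\ind{\norm{s_\theta}_2\le M}\) with the inverse function theorem, and it takes an arbitrary separating measurable set for each far pair. Your detour through a countable generating \(\pi\)-system in Step~1 is therefore harmless but unnecessary.
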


Let \(K\subset\Theta\) be nonempty and compact, and let
\(T:\caX\to\R^m\) have finite diameter
\(\Delta_T=\diam(T)\), and suppose its mean map \(h\) is continuous and
one-to-one on \(K\).
Under the assumptions of \cref{thm:compact-minimax}, such a statistic is
supplied by \cref{lem:bounded-identifier}.
For \(n_1\) observations and privacy budget \(\rho_1\), define the
preliminary localization estimator \(\widetilde\theta\) by
\begin{equation}
  \label{eq:private-localization-estimator}
  \begin{aligned}
    Y_1
    &=
    \frac1{n_1}\sum_{i=1}^{n_1}T(X_i)+G_{1,n},
    \qquad
    G_{1,n}
    \sim
    N\xkm{0,\frac{\Delta_T^2I_m}{2n_1^2\rho_1}},\\
    \widetilde\theta
    &\in
    \argmin_{\vartheta\in K}
    \norm{Y_1-h(\vartheta)}_2.
  \end{aligned}
\end{equation}

The next lemma shows that this estimator attains the error probability needed
by the second stage while using only a small share of the available
resources.

\begin{lemma}[Private preliminary localization]
  \label{lem:private-localization}
  The estimator in \cref{eq:private-localization-estimator} is
  \(\rho_1\)-zCDP.
  Let \(a_n\to\infty\).
  If \(n_1/\log a_n\to\infty\) and
  \(n_1^2\rho_1/\log a_n\to\infty\), then, for every fixed \(r>0\),
  \[
    \sup_{\theta\in K}
    P_\theta\dk{
      \normx{\widetilde\theta-\theta}_2>r
    }
    =
    o(a_n^{-2}).
  \]
\end{lemma}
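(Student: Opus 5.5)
The privacy claim follows from \cref{lem:gauss-composition}. The statistic \(n_1^{-1}\sum_{i\le n_1}T(X_i)\) has replacement \(\ell_2\)-sensitivity at most \(\Delta_T/n_1\), so adding \(G_{1,n}\) with covariance \(\Delta_T^2I_m/(2n_1^2\rho_1)\) gives a \(\rho_1\)-zCDP release. The argmin over \(K\) is post-processing; if the argmin is not unique, we fix a measurable selection, which exists because \(K\) is compact and \(h\) is continuous.

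For accuracy, I plan a deterministic identifiability margin combined with uniform concentration. Fix \(r>0\). Since \(h\) is continuous and one-to-one on the compact set \(K\), the quantity
\[
  \eta(r)\coloneqq\inf\dk{\norm{h(\vartheta)-h(\theta)}_2:\ \theta,\vartheta\in K,\ \norm{\vartheta-\theta}_2\ge r}
\]
is strictly positive. Indeed, the set of such pairs is compact, the map \((\theta,\vartheta)\mapsto\norm{h(\vartheta)-h(\theta)}_2\) is continuous, and it vanishes only on the diagonal; if the set is empty, then \(\eta(r)=+\infty\) and the claim is trivial.

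Next, suppose \(\norm{Y_1-h(\theta)}_2<\eta(r)/2\). By minimality of \(\widetilde\theta\) we have \(\norm{Y_1-h(\widetilde\theta)}_2\le\norm{Y_1-h(\theta)}_2\). Hence \(\norm{h(\widetilde\theta)-h(\theta)}_2<\eta(r)\), which forces \(\norm{\widetilde\theta-\theta}_2<r\). So it suffices to bound \(P_\theta\{\norm{Y_1-h(\theta)}_2\ge\eta/2\}\) uniformly in \(\theta\in K\).

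Split this probability into a sampling part and a noise part:
\[
  P_\theta\dk{\norm{\bar T-h(\theta)}_2\ge\eta/4}+\Pr\dk{\norm{G_{1,n}}_2\ge\eta/4}.
\]
\begin{itemize}
  \item \emph{Sampling part.} The vector \(T(X_i)-h(\theta)\) is bounded by \(\Delta_T\) in \(\ell_2\), because \(h(\theta)\) lies in the closed convex hull of the range of \(T\). Applying Hoeffding's inequality coordinatewise with a union bound over the \(m\) coordinates gives the bound \(2m\exp(-c\,n_1\eta^2/(m\Delta_T^2))\). This holds uniformly in \(\theta\in K\) with no further regularity.
  \item \emph{Noise part.} The standard Gaussian tail bound gives at most \(2m\exp(-c\,n_1^2\rho_1\eta^2/(m\Delta_T^2))\).
\end{itemize}
Each exponent is a constant times \(n_1\), respectively \(n_1^2\rho_1\). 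Since both of these divided by \(\log a_n\) tend to infinity, each exponent eventually exceeds \(3\log a_n\). Both terms are then \(O(a_n^{-3})=o(a_n^{-2})\), which proves the claim.

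The only genuinely nontrivial step is the positivity of \(\eta(r)\), which is exactly where compactness and injectivity are used; the rest is routine concentration. A minor technical point is the measurability of the argmin selection, which I would handle with a standard measurable selection theorem.
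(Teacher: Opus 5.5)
Your proposal is correct and follows essentially the same route as the paper. Both arguments use the Gaussian mechanism for privacy together with a measurable argmin selection, obtain a positive identifiability margin from compactness of \(K\) and injectivity of \(h\), reduce to the event that \(Y_1\) deviates from \(h(\theta)\) by a fixed fraction of that margin, and bound that event with Hoeffding and Gaussian tail bounds whose exponents dominate \(\log a_n\).

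The differences are cosmetic. You use the margin \(\eta(r)/2\) through minimality of \(\widetilde\theta\), whereas the paper uses \(a_r/3\) by comparing criterion values directly.
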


To pass from preliminary localization to compact attainment, take
\(a_n=\sigma_n^{-1}\).
Compactness yields finitely many local charts, each equipped with a
diameter-one statistic whose Gaussian release and projected local inverse
are nearly optimal for \(\uC_\theta(W)\) throughout the chart.
We spend a vanishing fraction of the sample and privacy budget on the
preliminary estimator, use it to select a valid chart, and apply the
corresponding Gaussian mechanism to the remaining observations.
Adaptive composition preserves the total privacy budget, while uniform
localization and the negligible first stage leave the leading risk unchanged
on the \(\sigma_n^2\)-scale.

\begin{theorem}[Uniform optimal estimator]
  \label{thm:compact-attain-seq}
  Under the assumptions of \cref{thm:compact-minimax}, there is a
  sequence \((\widehat\theta_n^\star)\) of \(\rho_n\)-zCDP estimators,
  eventually obtained by the two-stage Gaussian construction described
  above, such that
  \[
    \limsup_{n\to\infty}
    \sup_{\theta\in K}
    \frac{
      \sigma_n^{-2}
      \E_\theta
      \normx{\widehat\theta_n^\star-\theta}_W^2
    }{
      \uC_\theta(W)
    }
    \le1.
  \]
\end{theorem}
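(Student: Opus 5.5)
The estimator is a two-stage construction built from three ingredients: the bounded identifier of \cref{lem:bounded-identifier}, the preliminary localization of \cref{lem:private-localization}, and a finite family of local charts, each equipped with a nearly optimal moment-map inverse estimator.

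\emph{Continuity and positivity.} First I would establish that \(\theta\mapsto\uC_\theta(W)\) is continuous on \(K\) and takes values in \((0,\infty)\). The theorem statement asserts this, and I take it as available from the compact lower-bound analysis; if it must be proved here, I would use \(L^1\)-continuity of \(\theta\mapsto\dot p_\theta\), which makes \(A_{\Phi,\theta}\) and \(V_{\Phi,\theta}\) continuous in \(\theta\) uniformly over \(\diam(\Phi)\le1\). Fix \(\epsilon>0\). For each \(\vartheta\in K\), \cref{lem:verify-near-optimal} gives a statistic \(\Phi_\vartheta\) with nonsingular \(A_{\Phi_\vartheta,\vartheta}\) and
\[
\Tr[WA^{-1}\Sigma A^{-\top}]\le\uC_\vartheta(W)+\epsilon .
\]
The map \(\theta\mapsto\Tr[WA_{\Phi_\vartheta,\theta}^{-1}\Sigma_{\Phi_\vartheta,\theta,\beta}A_{\Phi_\vartheta,\theta}^{-\top}]\) is continuous near \(\vartheta\), and \(\uC_\theta(W)\) is continuous. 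Hence there is an open ball \(U_\vartheta\) around \(\vartheta\) on which this quantity is at most \((1+2\epsilon')\uC_\theta(W)\), where \(\epsilon'\) is chosen relative to \(\min_K\uC\). On \(U_\vartheta\) the local inverse of \(g_{\Phi_\vartheta}\) exists, and the target ball \(\mathcal B\) contains \(g(\overline{U_\vartheta})\) with margin. By compactness I extract finitely many charts \(U_1,\dots,U_L\) with centers \(\vartheta_j\) and a Lebesgue number \(r>0\): every \(\theta\in K\) has \(B(\theta,2r)\subset U_j\) for some \(j\). The chart selection rule is to pick the first \(j\) with \(B(\widetilde\theta,r)\subset U_j\).

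\emph{Budget split.} Choose \(n_1=\lceil\delta_n n\rceil\) and \(\rho_1=\delta_n\rho_n\) with \(\delta_n\to0\) slowly enough that \(n_1/\log\sigma_n^{-1}\to\infty\) and \(n_1^2\rho_1/\log\sigma_n^{-1}\to\infty\). This is possible because \(\log\sigma_n^{-1}\) is at most logarithmic in \(\min(n,\tau_n^2)\), while \(n\to\infty\) and \(\tau_n^2\to\infty\). The second stage uses the remaining \(n_2=n-n_1\) observations with budget \(\rho_n-\rho_1\). By adaptive composition in \cref{lem:gauss-composition}, the whole procedure is \(\rho_n\)-zCDP. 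The first stage and the second stage use disjoint samples, so the selected chart is independent of the second-stage data. Two facts about the second-stage scale follow: \(\sigma_{n_2}^2/\sigma_n^2\to1\), and \(\beta_{n_2}\to\beta\).

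\emph{Risk decomposition.} I split \(\E_\theta\|\widehat\theta-\theta\|_W^2\) according to the good event \(\{\|\widetilde\theta-\theta\|\le r\}\). On the good event the selected chart satisfies \(\theta\in U_j\) with uniform margin, so conditional on \(j\) I apply a uniform version of \cref{thm:bounded-stat-upper}. Because the estimator is always a bounded map \(\Psi_j\circ\Pi_{\mathcal B_j}\), on the bad event the loss is bounded by a constant \(D\), and that event has probability \(o(\sigma_n^{2})\) by \cref{lem:private-localization}. Its contribution is therefore \(o(\sigma_n^2)\), which is negligible after dividing by \(\uC_\theta\ge\min_K\uC>0\). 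Letting \(\epsilon\downarrow0\) along a diagonal sequence then gives the limsup bound \(\le1\).

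\emph{Main obstacle.} The hardest step is that \cref{thm:bounded-stat-upper} is stated on shrinking neighborhoods of a fixed center, whereas here I need an expansion that is uniform over all \(\theta\) in a fixed compact subset of \(U_j\). The limit must be the \(\theta\)-dependent value \(\Tr[WA_{\Phi_j,\theta}^{-1}\Sigma_{\Phi_j,\theta,\beta}A_{\Phi_j,\theta}^{-\top}]\), not its value at the chart center. To get this I would rerun the proof of \cref{thm:bounded-stat-upper} with a moving center \(\theta\). The ingredients are a uniform \(C^1\) expansion of \(\Psi_j\) on the compact set \(g_j(\overline{B(\theta,2r)})\), uniform boundedness of \(\Phi_j\), which gives uniform integrability and a uniform CLT or second-moment control, and Gaussian tail bounds showing that the projection is inactive except on events of probability \(o(\sigma_n^2)\). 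These combine into a uniform-in-\(\theta\) second-order risk expansion, and that expansion finishes the proof.
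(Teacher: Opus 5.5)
Your proposal is correct and follows essentially the same route as the paper's proof, which goes through its compact upper-bound lemma: spend a vanishing share of the sample and budget on private localization through the bounded identifier (failure probability \(o(\sigma_n^2)\)), build finitely many near-optimal charts from continuity and positivity of \(\uC_\theta(W)\) and select one by a Lebesgue-number rule, use a uniform moving-center chart expansion with the \(\theta\)-dependent covariance \(A_{\Phi_j,\theta}^{-1}\Sigma_{\Phi_j,\theta,\beta}A_{\Phi_j,\theta}^{-\top}\), bound the bad event using the bounded loss, and diagonalize in \(\epsilon\). One cosmetic point: selecting \(j\) with \(B(\widetilde\theta,r)\subset U_j\) and working on \(\{\|\widetilde\theta-\theta\|\le r\}\) only places \(\theta\) in \(\overline{U_j}\), with no parameter-space margin; either select with radius \(2r\) as the paper does, or make the chart bounds hold on closures (your image-space margin \(g(\overline{U_\vartheta})\subset\operatorname{int}\mathcal B\) already covers this).
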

   \section{Lower Bounds over All zCDP Mechanisms}
\label{sec:lower-bounds-all-zcdp-mechanisms}

In this section, we establish optimality through minimax lower bounds over all zCDP procedures.
For a mechanism \(M\), let \(Q_\theta^M=M\circ P_\theta^{\otimes n}\) denote the output distribution, and let \(I_M(\theta)\) denote the Fisher information matrix of the output experiment.
Restricting attention to standard Borel outputs entails no loss, since composing any transcript with its final \(\R^p\)-valued estimator preserves zCDP by post-processing.

\subsection{Information-region contraction}
\label{subsec:info-region-contraction}

The main idea for the lower bound is to show that an arbitrary private mechanism cannot carry more local information than the information region in \cref{def:diameter-score}.
We have the following theorem.

\begin{theorem}[Information-region contraction under zCDP]
  \label{thm:diameter-contraction}
  Suppose \cref{ass:regular} holds.
  Let \(M\) be an arbitrary \(\rho\)-zCDP Markov kernel from \(\caX^n\) into a standard Borel output space \(\caZ\).
  Then there is a scalar \(\delta_\rho=1+o_\rho(1)\) as \(\rho\downarrow0\), depending only on \(\rho\), such that for every such \(M\) and every \(\theta\in\Theta\), there exists \(J_{M,\theta,\rho} \in\caJ_\theta\) with
  \begin{equation}
    \label{eq:fisher-matrix-contraction}
    I_M(\theta) = 2\rho n^2 \delta_\rho^2 J_{M,\theta,\rho}.
  \end{equation}
  The \(o_\rho(1)\) term is uniform over mechanisms and over parameter values for which the assumptions hold.
\end{theorem}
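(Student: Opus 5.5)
The plan is to realize \(I_M(\theta)\) as \(A^*A\) for an operator \(A\) into an \(L^2\) space of the output. I then split \(A\) over the \(n\) records and show that each per-record piece is the score–statistic operator of a Hilbert-valued statistic of diameter at most \(\sqrt{2\rho}\,\delta_\rho\). Convexity and downward closure of \(\caJ_\theta\) (\cref{prop:dc-info-prop}) then absorb everything into a single \(J_{M,\theta,\rho}\).

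\textbf{Step 1 (output score).} Write \(Q=Q^M_\theta\) and let \(q(z\mid S)\) be a density of \(M(S)\) with respect to a dominating measure. This exists after a standard reduction: zCDP forces \(M(S)\ll M(S')\) for all \(S,S'\in\caX^n\), since these are connected by a chain of adjacent pairs.
\begin{itemize}
\item By DQM of the product model and the usual conditional-expectation argument, the output score is \(S_M(z)=\sum_{i=1}^n \E_\theta[s_\theta(X_i)\mid Z=z]\). Hence \(I_M(\theta)=A^*A\), where \(A:\R^p\to L^2(Q)\), \(Ah=\sum_i A_i h\), and \(A_i h=\E_\theta[\ang{s_\theta(X_i),h}\mid Z]\).
\item For each \(i\) I define a statistic \(\Phi_i:\caX\to\caH=L^2(Q)\) by \(\Phi_i(x)(z)=\E_\theta\bigl[q(z\mid X_{1:i-1},x,X_{i+1:n})\bigr]/q(z)\), where \(q(z)=\E_\theta q(z\mid X)\) is the density of \(Q\).
\item Bayes' formula gives \(A_i=A_{\Phi_i,\theta}\) in the notation of \cref{eq:score-stat-operator}.
\end{itemize}

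\textbf{Step 2 (combining the records).} Cauchy--Schwarz in the form \((\sum_i a_i)^2\le n\sum_i a_i^2\) gives \(A^*A\preceq n\sum_i A_i^*A_i\). If each \(A_i^*A_i\in 2\rho\delta_\rho^2\,\caJ_\theta\), then convexity of \(\caJ_\theta\) gives \(n^{-1}\sum_i A_i^*A_i\in 2\rho\delta_\rho^2\caJ_\theta\). Hence \(I_M(\theta)\preceq 2\rho n^2\delta_\rho^2 J'\) for some \(J'\in\caJ_\theta\). Downward closure then produces \(J_{M,\theta,\rho}\in\caJ_\theta\) with exact equality in \cref{eq:fisher-matrix-contraction}. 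It is also fine to pass from Hilbert-valued to finite-dimensional statistics by projecting onto \(\ran A_i\), as remarked after \cref{prop:dc-info-prop}. So everything reduces to the per-record diameter bound
\[
\sup_{x,x'}\int\frac{\bigl(p_x(z)-p_{x'}(z)\bigr)^2}{q(z)}\,dz\le 2\rho\,\delta_\rho^2 ,
\qquad p_x(z)=\E_\theta\, q(z\mid X_{1:i-1},x,X_{i+1:n}).
\]

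\textbf{Step 3 (the diameter bound: the main obstacle).}
\begin{itemize}
\item \emph{Reduce to adjacent pairs.} Joint convexity of \((a,b)\mapsto a^2/b\) lets me replace \(p_x,p_{x'},q\) by their conditional versions given \(X_{-i}\), and also replace \(q\) by a mixture over a third value \(x''\) of the \(i\)th coordinate. All three densities are then outputs on pairwise adjacent datasets.
\item \emph{Why this is delicate.} If I simply compare with \(q\) via \(\chi^2\) triangle inequalities, I lose a factor of \(4\), which is fatal for the exact constant.
\item \emph{Sharp comparison.} Instead I write \(\int(p-p')^2/q=\int\bigl((p-p')/p'\bigr)^2(p'/q)\,p'\) and apply Hölder with exponents \(a,b\), \(1/a+1/b=1\).
\item The factor \(\E_{p'}(p'/q)^{b}\) is controlled through Rényi divergences. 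Convexity in the second argument gives \(D_{b+1}(p'\|q)\le (b+1)\rho\). This tends to \(0\) for any fixed \(b\) as \(\rho\downarrow0\).
\item The factor \(\E_{p'}|p/p'-1|^{2a}\) is controlled by expanding \(|u-1|^{2a}\) and bounding \(\E_{p'}(p/p')^{\alpha}=\exp\bigl((\alpha-1)D_\alpha(p\|p')\bigr)\le e^{\alpha(\alpha-1)\rho}\) for all \(\alpha\). This gives \((\E_{p'}|p/p'-1|^{2a})^{1/a}\le (2\rho)(1+\epsilon_a(\rho))\), where \(\epsilon_a\to\) a factor tending to \(1\) as \(a\downarrow1\), \(\rho\downarrow0\). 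At \(a=1\) it is exactly \(\chi^2(p\|p')=e^{D_2}-1\le e^{2\rho}-1\).
\item \emph{Choosing the exponents.} Letting \(a=a_\rho\downarrow1\) slowly, so that \(b_\rho\rho\to0\), yields \(\delta_\rho^2=1+o(1)\). This depends only on \(\rho\), uniformly over \(M\) and \(\theta\), as required.
\end{itemize}
I expect the most care to go into this exponent bookkeeping. The fallback if it fails is to bound the privacy-loss variable's moments directly via its sub-Gaussian-type control from the zCDP condition over all orders.
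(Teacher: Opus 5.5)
Your argument is correct in outline and shares the paper's skeleton: the output score as a sum of per-record conditional scores, the per-record statistic \(x\mapsto L_{i,x}=dQ_{i,x}/dQ\in L^2(Q)\), the identity \(B_M=\sum_i A_{\Lambda_i,\theta}\), and compression from Hilbert-valued to \(\R^p\)-valued statistics. It differs from the paper in two places.

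\emph{Aggregation over records.} The paper simply averages the statistics. The average \(\bar\Phi=n^{-1}\sum_i\Phi_i\) has diameter at most one by the triangle inequality. Linearity of \(\Phi\mapsto A_{\Phi,\theta}\) then gives \(I_M(\theta)=2\rho n^2\delta_\rho^2A_{\bar\Phi,\theta}^*A_{\bar\Phi,\theta}\) exactly. Your Cauchy--Schwarz step \(A^*A\preceq n\sum_iA_i^*A_i\), followed by convexity and downward closure, is valid but unnecessary.

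\emph{The diameter bound.} This is the substantive difference. The paper first obtains two-sided R\'enyi bounds between \(Q_{i,x}\) and \(Q_{i,x'}\) from joint convexity of the R\'enyi--Hellinger integral. It then proves a general barycentric lemma. Writing \(\|r_t-r_s\|^2_{L^2(\bar R)}=\E_{R_s}(Y-1)^2/Z\), it splits off \(\chi^2(R_t\|R_s)\le e^{2\rho}-1\). It bounds \((1/Z-1)_+\le e^{|\bar L|}-1\) via Jensen, \(Z\ge e^{\bar L}\), and controls the correction by Cauchy--Schwarz with sub-Gaussian privacy-loss moments, giving an \(O(\rho^{3/2})\) remainder. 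You instead push both mixtures inside using the jointly convex perspective \((a,b)\mapsto a^2/b\). This reduces the problem to \(\int(p-p')^2/r\) for outputs on pairwise adjacent datasets, which you then treat by H\"older with exponents \(a_\rho\downarrow1\). Your reduction is neat and tailored to the product barycenter. The paper's lemma holds for arbitrary families and mixing measures, and it is reused later for the star families and the partition transfer in the transition regime.

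\emph{One step needs repair.} You cannot literally expand \(|u-1|^{2a}\) when \(2a\) is not an integer. Even for integer powers, the alternating binomial signs mean that the upper bounds \(\E_{p'}Y^\alpha\le e^{\alpha(\alpha-1)\rho}\) (with \(Y=p/p'\)) do not by themselves give the sharp constant. Use log-convexity of \(L^q(p')\) norms instead. With \(U=Y-1\) and \(\vartheta=2(a-1)/a\),
\[
\|U\|_{2a}^2\le\|U\|_2^{2(1-\vartheta)}\|U\|_4^{2\vartheta},
\qquad
\|U\|_2^2\le e^{2\rho}-1 .
\]
Even the crude bound \(\E_{p'}U^4\le(e^{12\rho}-1)+6(e^{2\rho}-1)=O(\rho)\), which uses only \(\E_{p'}Y^3\ge1\), gives \(\|U\|_{2a}^2\le(e^{2\rho}-1)\exp\{O(\vartheta\log(1/\rho))\}\). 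So it suffices to take \(a_\rho\) with \((a_\rho-1)\log(1/\rho)\to0\) and \(\rho/(a_\rho-1)\to0\), for instance \(a_\rho-1=\log^{-2}(1/\rho)\). This closes the argument using integer-order R\'enyi bounds alone. With your sub-Gaussian fallback you get \(\E_{p'}U^4=O(\rho^2)\), and then only \(a_\rho\to1\) with \(\rho/(a_\rho-1)\to0\) is needed.
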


Taking the trace and using \cref{eq:info-region-fisher-bound} yields the following corollary.

\begin{corollary}
  \label{cor:fisher-trace-contraction}
  Under the same settings as in \cref{thm:diameter-contraction}, we have
  \begin{equation}
    \label{eq:fisher-trace-contraction}
    \Tr I_M(\theta)
    \le 2\rho n^2 \Gamma_\theta(1+o_\rho(1))
    \le \frac{1}{2}\rho n^2\Tr I(\theta)(1+o_\rho(1)).
  \end{equation}
\end{corollary}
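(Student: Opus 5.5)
The statement to prove is \cref{cor:fisher-trace-contraction}, and it follows by taking traces in \cref{thm:diameter-contraction} and then applying the Fisher envelope \cref{eq:info-region-fisher-bound}.

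First, fix a \(\rho\)-zCDP kernel \(M\) and a parameter \(\theta\). \Cref{thm:diameter-contraction} supplies \(J_{M,\theta,\rho}\in\caJ_\theta\) with \(I_M(\theta)=2\rho n^2\delta_\rho^2 J_{M,\theta,\rho}\). Taking traces, and using that \(\Tr J\le\Gamma_\theta\) for every \(J\in\caJ_\theta\) by the definition \cref{eq:diameter-score-coefficient}, gives
\[
\Tr I_M(\theta)=2\rho n^2\delta_\rho^2\Tr J_{M,\theta,\rho}\le 2\rho n^2\Gamma_\theta\,\delta_\rho^2 .
\]
Since \(\delta_\rho=1+o_\rho(1)\), we also have \(\delta_\rho^2=1+o_\rho(1)\). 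This factor depends only on \(\rho\), and \cref{thm:diameter-contraction} states that it is uniform over mechanisms and parameter values, so the first inequality holds with that uniformity.

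For the second inequality, take the trace in \(J\preceq\frac14 I(\theta)\), which holds for every \(J\in\caJ_\theta\) by \cref{eq:info-region-fisher-bound}. This gives \(\Tr J\le\frac14\Tr I(\theta)\), and taking the supremum over \(J\in\caJ_\theta\) yields \(\Gamma_\theta\le\frac14\Tr I(\theta)\). Multiplying by the nonnegative quantity \(2\rho n^2(1+o_\rho(1))\) then produces \(\frac12\rho n^2\Tr I(\theta)(1+o_\rho(1))\).

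If one prefers to justify \cref{eq:info-region-fisher-bound} rather than cite it, it follows from \cref{prop:directional-envelope-tv} together with Cauchy--Schwarz: \(h^\top Jh\le\frac14(\E_\theta|\langle s_\theta,h\rangle|)^2\le\frac14 h^\top I(\theta)h\) for every \(h\). There is no genuine obstacle here. The only point needing care is that \(\Gamma_\theta\) is finite and that the \(o_\rho(1)\) factor does not depend on \(M\) or \(\theta\), and both follow directly from the cited results.
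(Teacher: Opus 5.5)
Your proof is correct and is the paper's own route: take traces in $I_M(\theta)=2\rho n^2\delta_\rho^2J_{M,\theta,\rho}$, bound $\Tr J_{M,\theta,\rho}\le\Gamma_\theta$ with the uniform factor $\delta_\rho^2=1+o_\rho(1)$, and then use $\Gamma_\theta\le\frac14\Tr I(\theta)$ from \cref{eq:info-region-fisher-bound}. One fine point: \cref{prop:directional-envelope-tv} is stated under both assumptions, but the upper-bound half you need uses only $\E_\theta s_\theta(X)=0$ and the diameter constraint, so the envelope holds under \cref{ass:regular} alone, which is the setting of \cref{thm:diameter-contraction}.
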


\cref{thm:diameter-contraction} shows that the Fisher information of a \(\rho\)-zCDP mechanism is asymptotically restricted to \( \caJ_\theta \) after normalization.
In comparison to the rough upper bound in the private information contraction lemma in \citet{cai2026_MinimaxAdaptive},
\cref{thm:diameter-contraction} is more refined to capture the constant dependency.
In particular, the last inequality in \cref{cor:fisher-trace-contraction} recovers their private Fisher information contraction.
Moreover, the matrix representation is stronger than its trace consequence and retains the anisotropic geometry needed for general quadratic loss.

To prove \cref{thm:diameter-contraction}, the fingerprinting attack or score attack~\citep{cai2023_ScoreAttack} commonly used in the literature no longer works, as they are too coarse for our purpose.
We develop the following lemma to transfer \( \rho \)-zCDP condition to the boundedness of the likelihood ratio as \( L^2 \)-valued statistics.

\begin{lemma}[Barycentric likelihood ratio diameter]
  \label{lem:barycentric}
  Let \(\{R_t:t\in T\}\) be a probability kernel and let \(\lambda\) be a probability measure on \(T\).
  Suppose the likelihood ratios admit jointly measurable versions and \(D_\alpha(R_t\|R_s)\le \alpha\rho\) for all \(s,t\in T\) and \(\alpha>1\).
  If \(\bar{R}=\int R_t \lambda(dt)\) and \(r_t=dR_t/d\bar{R}\), then, uniformly over the family and mixing measure,
  \begin{equation}
     \sup_{s,t\in T} \norm{r_t-r_s}_{L^2(\bar{R})}^2 \le 2\rho(1+o_\rho(1)).
  \end{equation}
\end{lemma}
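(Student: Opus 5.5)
The plan is to work pairwise under one of the component laws, bound the weight \(r_s\) by powers of likelihood ratios, and then recover the sharp constant \(2\rho\) through a Hölder inequality whose exponent tends to one.

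\emph{Step 1: reduction to a pairwise chi-square-type quantity.} Fix \(s,t\in T\) and write \(L_{ts}=dR_t/dR_s\). Since \(r_t=r_sL_{ts}\) \(R_s\)-a.e. and \(d\bar R = r_s^{-1}dR_s\) on \(\{r_s>0\}\),
\[
  \norm{r_t-r_s}_{L^2(\bar R)}^2=\E_{R_s}\zk{r_s\,(L_{ts}-1)^2}.
\]
(The part of \(R_t\) singular to \(R_s\) is excluded because \(D_\alpha<\infty\).) Next, \(r_s=\xkx{\int L_{vs}\,\lambda(dv)}^{-1}\le \int L_{sv}\,\lambda(dv)\) by Jensen, where \(L_{sv}=1/L_{vs}\). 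Hence, for integer \(k\),
\[
  \E_{R_s} r_s^{k}\le \int \E_{R_s}L_{sv}^{k}\,\lambda(dv)=\int \E_{R_v}L_{sv}^{k+1}\,\lambda(dv)\le e^{k(k+1)\rho}
\]
by zCDP at order \(\alpha=k+1\). The same bound holds for real \(k\ge1\) by monotonicity of Lebesgue norms. The bound does not depend on \(s,t,\lambda\) or the family, which gives the required uniformity.

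\emph{Step 2: Hölder with a vanishing exponent gap.} For \(q>1\) with conjugate \(q'\), Hölder gives
\[
  \E_{R_s}\zk{r_s(L_{ts}-1)^2}\le \norm{r_s}_{L^{q'}(R_s)}\,\xk{\E_{R_s}\abs{L_{ts}-1}^{2q}}^{1/q}.
\]
I will choose \(q=q_\rho\downarrow1\) slowly enough that \(q'^2\rho\to0\), for instance \(q'=\rho^{-1/4}\). Then Step 1 gives \(\norm{r_s}_{q'}\le e^{(q'+1)\rho}=1+o_\rho(1)\). For the second factor I interpolate between the second moment and a fixed higher moment. Lyapunov with \(k=4\) and \(\theta=2-q\) gives
\[
  \E\abs{L-1}^{2q}\le\xk{\E(L-1)^2}^{2-q}\xk{\E(L-1)^4}^{q-1}.
\]
The second moment is exact: \(\E_{R_s}(L_{ts}-1)^2=e^{D_2(R_t\|R_s)}-1\le e^{2\rho}-1=2\rho(1+o(1))\). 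If \(\E(L-1)^4\le C\rho^2\), the right-hand side is at most \((2\rho)^q(1+o(1))(C/4)^{q-1}\). Taking the \(q\)-th root gives \(2\rho(1+o_\rho(1))\) because \(q\to1\), and multiplying by the first factor yields the lemma.

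\emph{Step 3: the fourth-moment bound, which I expect to be the main obstacle.} The estimate \(\E(L-1)^4=O(\rho^2)\) must come from Rényi bounds alone. Expanding \(\E L^k\) only gives \(O(\rho)\), since only upper bounds on \(\E L^k\) are available. Instead I will work with \(Y=\log L_{ts}\) under \(R_s\). zCDP in both directions gives
\[
  \E e^{\lambda Y}\le e^{\lambda(\lambda-1)\rho}\quad(\lambda>1),\qquad \E e^{-\lambda Y}\le e^{\lambda(\lambda+1)\rho}\quad(\lambda>0),
\]
the latter from \(D_{\lambda+1}(R_s\|R_t)\le(\lambda+1)\rho\). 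Together with \(-\rho\le \E Y\le 0\), which follows from the KL divergences being at most \(\rho\) (limit \(\alpha\downarrow1\)), this gives Chernoff tails \(P(\abs{Y}>x)\le 2e^{-x^2/(C\rho)}\) for \(x\) in the relevant range, and hence \(\E\abs{Y}^m\le C_m\rho^{m/2}\). Then I use \(\abs{L-1}\le\abs{Y}(1+L)\) and Cauchy--Schwarz with \(\E L^8\le e^{56\rho}\) to get \(\E(L-1)^4\le C\rho^2\). The care needed is in making the sub-Gaussian tail of \(Y\) valid at the scale \(x\asymp\sqrt\rho\) using only moment-generating bounds at \(\abs{\lambda}\ge1\). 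If needed, I will instead take \(\lambda\asymp x/\rho\) directly in the Chernoff bound, which is admissible since \(x/\rho\ge1\) for small \(\rho\). All constants are universal, so the \(o_\rho(1)\) term is uniform over the family and the mixing measure.
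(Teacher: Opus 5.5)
Your proof is correct, but it handles the change of reference measure differently from the paper. Both arguments start from the same identity,
\[
\|r_t-r_s\|_{L^2(\bar R)}^2=\E_{R_s}\bigl[r_s(L_{ts}-1)^2\bigr]=\E_{R_s}\frac{(Y-1)^2}{Z},
\]
and both rely on the same essential input: the fourth-moment bound $\E_{R_s}(L_{ts}-1)^4=O(\rho^2)$, obtained from two-sided sub-Gaussian privacy-loss tails (the paper's privacy-loss moment lemma, your Step~3).

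The paper's route is additive. It writes $1/Z\le 1+(1/Z-1)_+$ and bounds $(1/Z-1)_+\le e^{|\bar L|}-1$ using Jensen's $Z\ge e^{\bar L}$. It then closes with Cauchy--Schwarz, which requires a second estimate $\E(e^{|\bar L|}-1)^2=O(\rho)$ for the mixture log-ratio; this comes from convexity and the moment lemma applied to each $L_u$, and it yields an explicit $O(\rho^{3/2})$ remainder.

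Your route is multiplicative. You keep the weight $r_s$ intact and apply H\"older with exponent $q\downarrow 1$. Your bound $\|r_s\|_{L^{q'}(R_s)}\le e^{(q'+1)\rho}$ follows purely from Jensen on $x\mapsto 1/x$ and the R\'enyi bound at order $q'+1$; this is essentially quasi-convexity of $D_\alpha$ in the mixture argument. So you never need sub-Gaussian control of $\bar L$. The price is that the exact $\chi^2$ term must be upgraded to a $2q$-th moment, which your log-convex interpolation between the second and fourth moments handles correctly.

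Both versions use only universal constants, so the uniformity over families and mixing measures is the same. With $q'=\rho^{-1/4}$ your relative error is $O(\rho^{1/4})$. Balancing $\rho/(q-1)$ against $q-1$, i.e.\ taking $q-1\asymp\sqrt\rho$, recovers the paper's $O(\sqrt\rho)$ rate.

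Two minor remarks. The extension of Step~1 to real $k\ge1$ follows directly from Jensen for $x^k$ and the R\'enyi bound at real order $k+1$, so the appeal to monotonicity of norms is unnecessary. The bound $-\rho\le\E Y\le 0$ is never used.
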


For each record \(i\), let \(Q_{i,x}\) be the output law obtained by fixing that record at \(x\) and averaging over the remaining observations.
Then \(Q_\theta^M=\int Q_{i,x} P_\theta(dx)\) is the barycenter of this conditional output family.
Joint convexity transfers the zCDP bounds for neighboring deterministic data sets to pairwise Rényi bounds for \(Q_{i,x}\) and \(Q_{i,x'}\).
Thus \cref{lem:barycentric}, applied with \(L_{i,x}=dQ_{i,x}/dQ_\theta^M\), shows that the Hilbert-valued statistic \( \Phi_i(x) = L_{i,x}/(\sqrt{2\rho}\delta_\rho)\) has diameter at most one in \(L^2(Q_\theta^M)\), so is their average \( \bar{\Phi}(x) \).
Then, a finite dimensional compression yields an at most \(p\)-dimensional statistic with no larger diameter and the same associated information matrix \(A_{\bar\Phi,\theta}^*A_{\bar\Phi,\theta}\),
so this matrix must be contained in \( \caJ_\theta \) by its definition.
On the other hand, we can establish identity between the Fisher information matrix \( I_M(\theta) \) and the information matrix of \( L_{i,x} \).
Plugging in the scaling concludes \cref{thm:diameter-contraction}.
The full proof is given in \cref{appendix-subsec:contraction}.

\subsection{Local minimax lower bound under privacy regime}
\label{subsec:local-minimax-lower-bound}

With the local information contraction, we can use a van Trees argument to obtain the local minimax lower bound.

\begin{theorem}[Minimax lower bound]
  \label{thm:matrix-local-lower}
  Fix \(\theta_0\in\Theta\) and \(W\succeq0\).
  Under \cref{ass:regular,ass:score-density-l1}, let \((\rho_n)\) satisfy the privacy regime in \cref{eq:privacy-dominated-regime}.
  Then, with \(\Theta_n\) defined in \cref{eq:local-param-set},
  \[
    \liminf_{n\to\infty}
    \tau_n^2
    \inf_{\widehat{\theta}:\rho_n \text{-zCDP}}
    \sup_{\theta\in\Theta_n}
    \E_\theta
    \normx{\widehat{\theta}-\theta}_W^2
    \ge
    \mfC_{\theta_0}(W).
  \]
\end{theorem}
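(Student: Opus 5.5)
The plan is to combine the contraction in \cref{thm:diameter-contraction} with a matrix van Trees inequality on the shrinking neighbourhood \(\Theta_n\). Fix \(n\) and an arbitrary \(\rho_n\)-zCDP estimator \(\widehat\theta\), realized as a mechanism \(M\) whose output is \(\widehat\theta\) itself; this is legitimate by post-processing. Take a smooth prior density \(\pi\) on the unit ball in \(\R^p\) with finite Fisher information \(\caI(\pi)\), for example a product of \(\cos^2\) densities. Rescale it to \(\pi_n(\theta)=(\tau_n/R)^p\pi(\tau_n(\theta-\theta_0)/R)\) for a fixed \(R\le R_n\), so that its support lies in \(\Theta_n\) and its information is \((\tau_n/R)^2\caI(\pi)\).

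Because the minimax risk dominates the Bayes risk under \(\pi_n\), I apply the multivariate van Trees inequality of \citet{gill1995_ApplicationsVan}. For \(W\succeq0\), I use its matrix form with an arbitrary \(p\times p\) weight \(C\), take the trace against \(W\), and optimize over \(C\). This gives
\[
  \int \E_\theta\normx{\widehat\theta-\theta}_W^2\,\pi_n(d\theta)
  \ \ge\
  \caI_W\xkm{\textstyle\int I_M(\theta)\pi_n(d\theta) + (\tau_n/R)^2\caI(\pi)}.
\]
When \(W\) is singular, I would apply this to \(W+\eta I_p\) and let \(\eta\downarrow0\) at the end, or argue directly with the \(C\)-form. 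Legitimacy of van Trees for \(Q^M_\theta\) requires DQM, or at least \(L^2\)-differentiability of \(\theta\mapsto\sqrt{q_\theta}\). This follows from \cref{ass:regular}: the \(n\)-fold product is DQM, and a Markov kernel preserves DQM with information bounded by that of the input. Continuity of \(I_M\) in \(\theta\) is not needed, because only the prior average enters.

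Next I insert \cref{thm:diameter-contraction}: \(I_M(\theta)=2\rho_n n^2\delta_{\rho_n}^2 J_{M,\theta}\) with \(J_{M,\theta}\in\caJ_\theta\). Dividing by \(\tau_n^2=n^2\rho_n\) gives \(\tau_n^{-2}\int I_M\,d\pi_n = 2\delta_{\rho_n}^2\bar J_n + O(R^{-2})\), where \(\bar J_n=\int J_{M,\theta}\pi_n(d\theta)\). I would also note that \(\rho_n\to0\) forces \(\delta_{\rho_n}\to1\).

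The hard step is that \(J_{M,\theta}\) lies in \(\caJ_\theta\) at varying \(\theta\), not in \(\caJ_{\theta_0}\). I need an upper semicontinuity statement: for every \(\epsilon>0\) there is a neighbourhood \(U\) of \(\theta_0\) such that every \(J\in\caJ_\theta\) with \(\theta\in U\) is within \(\epsilon\) of \(\caJ_{\theta_0}\). I would prove this directly from \cref{ass:score-density-l1}. For \(\diam(\Phi)\le1\), I may center \(\Phi\) at a point by \cref{prop:dc-info-prop}(i), which gives \(\norm{\Phi}\le1\). Then
\[
  \norm{A_{\Phi,\theta}-A_{\Phi,\theta_0}}\le\norm{\dot p_\theta-\dot p_{\theta_0}}_{L^1}\to0
\]
uniformly over \(\Phi\). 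Hence \(A^*_{\Phi,\theta}A_{\Phi,\theta}\) lies within \(o(1)\) of an element of the \(\theta_0\)-set, uniformly over \(\Phi\), and the estimate passes to closures. Convexity and compactness of \(\caJ_{\theta_0}\) (\cref{prop:dc-info-prop}(ii)) then put the average \(\bar J_n\) within \(\epsilon_n\to0\) of some \(J_n\in\caJ_{\theta_0}\).

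To finish, I combine these facts. The Bayes bound becomes \(\tau_n^2\cdot\text{risk}\ge \caI_W(2\delta_{\rho_n}^2 J_n + E_n)\) with \(E_n\to O(R^{-2})\) in norm. The perturbation is handled by Loewner monotonicity: \(E_n\preceq(\epsilon_n+cR^{-2})I_p\), so the bound is at least \(\tfrac12\caI_W\xkm{J_n+\gamma I_p}\) up to factors tending to one, where \(\gamma\to cR^{-2}\). Taking liminf in \(n\), I use compactness, passing to a subsequence with \(J_n\to J\in\caJ_{\theta_0}\), together with continuity of \(J\mapsto\caI_W(J+\gamma I_p)\) for \(\gamma>0\). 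This gives a lower bound of \(\tfrac12\inf_{J\in\caJ_{\theta_0}}\caI_W(J+\gamma I_p)\). Letting \(R\to\infty\) sends \(\gamma\downarrow0\). Finally, a compactness and lower-semicontinuity argument shows these infima increase to \(\tfrac12\inf_{J}\caI_W(J)=\mfC_{\theta_0}(W)\), which proves the claim.
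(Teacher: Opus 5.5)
Your proposal is correct and is essentially the paper's proof: the contraction in \cref{thm:diameter-contraction}, uniform $L^1$-stability of $\caJ_\theta$ near $\theta_0$, Euclidean projection plus convexity to place the prior-averaged information in $\caJ_{\theta_0}$, and then the matrix van Trees inequality together with compactness and lower semicontinuity of $\caI_W$. The only difference is that you fix the prior radius $R$ and remove the resulting $\gamma I_p$ term by letting $R\to\infty$ afterwards, using the monotone limit $\inf_L\caI_W(L+cI_p)\uparrow\inf_L\caI_W(L)$, which the paper itself uses in the transition-regime proof, whereas the paper scales the prior with $R_n$ so that $J_{\pi_n}/\tau_n^2\to0$ directly. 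For singular $W$, rely on the $C$-weighted form of van Trees as in \cref{appendix-lem:matrix-van-trees}; the $W+\eta I_p$ device lower-bounds a larger risk, so it points the wrong way without an extra argument.
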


The proof of \cref{thm:matrix-local-lower} is given in \cref{appendix-subsec:matrix-local-lower}.
Together with the local optimal estimator in \cref{thm:local-attain-seq}, \cref{thm:matrix-local-lower} completes the proof of the local efficiency theorem.
The same lower bound, applied inside compact parameter sets and combined with \cref{thm:compact-attain-seq}, proves the compact efficiency theorem in the privacy-dominated regime.

\subsection{Transition regime}
\label{subsec:trans-regime-lower-bound}

The privacy contraction above fails to give the sharp constant in the transition regime where the sampling fluctuation is not negligible.
A more refined analysis is needed to characterize the sampling and privacy contributions jointly for every mechanism.

To this end, we first consider the \(k\)-category model, for which the information can be computed more explicitly.
A key observation is that symmetrization reduces an arbitrary mechanism to a channel \(m\mapsto Q_m\) from the multinomial count vector, without changing either its output law under iid sampling or its zCDP guarantee.
The output information then satisfies an identity for missing information: the full multinomial information is reduced by a term determined by the conditional covariance of the counts given the mechanism output.
To control this loss sharply, we compare the channel at neighboring count vectors \(m\) and \(m+e_a-e_k\).
The Gram matrix of the resulting shift scores separates, to first order, into a sampling contribution \(V_q^{-1}/n\) from the multinomial count law and a privacy contribution \(2\rho_nJ_{n,q}\), where \(V_q\) is the covariance matrix of the first \(d = k-1\) category indicators and \(J_{n,q}\) is the Gram matrix of a configuration with pairwise distances at most one.
A matrix projection inequality converts this decomposition into an upper bound on the output Fisher information.
Consequently, the sampling and privacy contributions, though of the same order, can be decomposed separately through
\begin{equation}
  \label{eq:trans-joint-info-bound}
  \frac{1}{n}I_{M_n}(q)
  \preceq
  V_q^{-1}
  -V_q^{-1}
  \xk{V_q^{-1}+2\lambda_nJ_{n,q}}^{-1}
  V_q^{-1}
  +o(1)I_d,
  \qquad
  \lambda_n=n\rho_n.
\end{equation}

For the general regular model, we freeze the conditional distribution within each cell of a finite measurable partition and vary only the cell probabilities.
The \(L^1\)-smoothness assumption permits a partition whose cell score approximates the full score arbitrarily well.
The bound on a finite alphabet then transfers uniformly over all mechanisms: for every \(\epsilon>0\), a shrinking prior \(\pi_n\) supported on the local parameter set satisfies, for a finite constant \(C_{\theta_0,\lambda}\) independent of the mechanism,
\begin{equation}
  \label{eq:trans-avg-info-bound}
  \frac{1}{n}\int I_{M_n}(\theta)\,\pi_n(d\theta)
  \preceq
  L_{n,\epsilon}
  +\xk{C_{\theta_0,\lambda}(\epsilon+\epsilon^2)+o(1)}I_p,
  \qquad
  L_{n,\epsilon}\in\beta\caU_{\theta_0,\beta}.
\end{equation}
The prior can be chosen so that its own information is negligible on the root-\(n\) scale, namely \(J_{\pi_n}/n\to0\).
Applying the matrix van Trees inequality to the output experiment, then letting \(n\to\infty\) and \(\epsilon\downarrow0\), gives
\begin{equation}
  \label{eq:trans-lower-bound}
  \liminf_{n\to\infty}
  n\int
  \E_\theta
  \normx{\widehat\theta-\theta}_W^2
  \,\pi_n(d\theta)
  \ge
  \inf_{L\in\beta\caU_{\theta_0,\beta}}\caI_W(L)
  =
  \frac{1}{\beta}\uCt(W).
\end{equation}
Since the Bayes risk is bounded above by the local worst-case risk, this proves the lower-bound half of \cref{cor:trans-local-minimax} for arbitrary mechanisms.
The technical details are given in \cref{appendix-sec:trans-regime}.
   \section{Examples}
\label{sec:examples}

In this section, we give explicit expressions of the efficiency constants \( \uCt(W) \) for various parametric models.
For simplicity, we focus on local constructions with a public center \(\theta_0\),
while private localization is covered by \cref{thm:compact-minimax}.
For more examples and a summary, see \cref{appendix-sec:model-class-reductions,appendix-sec:concrete-model-constants}.

\subsection{One-dimensional regular families}
\label{subsec:1d-regular-models}

Consider the one-dimensional model with scalar target
\begin{equation}
  \label{eq:1d-model}
  X_1,\ldots,X_n \overset{\mathrm{iid}}{\sim} P_\theta,
  \qquad
  \theta\in\Theta\subset\R,
  \qquad
  \mu:\Theta\to\R.
\end{equation}
\begin{proposition}[One-dimensional constant]
  \label{prop:1d-matching}
  For the model and target in \cref{eq:1d-model}, suppose \(\mu\) is continuously differentiable, with \(\mu'(\theta_0)\ne0\).
  Under \cref{ass:regular,ass:score-density-l1}, suppose \(I(\theta_0)>0\).
  For \(\beta\in[0,1]\), define
  \begin{equation}
    \label{eq:1d-unified-info}
    \mathfrak{I}_{\theta_0}^{(\beta)}
    =
    \sup_{\substack{\phi:\caX\to\R\\\diam(\phi)\le1}}
    \frac{
      \xk{\E_{\theta_0}\zk{\phi(X)s_{\theta_0}(X)}}^2
    }{
      \beta\Var_{\theta_0}\xk{\phi(X)}+(1-\beta)/2
    }.
  \end{equation}
  Then, writing \(a_{\theta_0}=\E_{\theta_0} \abs{s_{\theta_0}(X)}/2\), the exact local minimax constant of \(\mu(\theta)\) is
  \begin{equation}
    \label{eq:1d-unified-constant}
    \uC_{\mu,\theta_0}
    =
    \frac{\xk{\mu'(\theta_0)}^2}{\mathfrak{I}_{\theta_0}^{(\beta)}},
    \qquad
    \mfC_{\mu,\theta_0}^{(0)}
    =
    \frac{\xk{\mu'(\theta_0)}^2}{2a_{\theta_0}^2},
    \quad
    \mfC_{\mu,\theta_0}^{(1)}
    =
    \frac{\xk{\mu'(\theta_0)}^2}{I(\theta_0)}.
  \end{equation}
\end{proposition}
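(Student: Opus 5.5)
The plan is to reduce \cref{prop:1d-matching} to the unified efficiency theorem (\cref{thm:local-minimax}), applied with the scalar weight \(W=1\) and transported to the target \(\mu(\theta)\) by a first-order (delta-method) argument. Once that is done, the only real work is to evaluate the one-dimensional unified region \(\caU_{\theta_0,\beta}\) explicitly.

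\emph{Transfer to the target.} For a smooth scalar target I expect the differentiable-target extension of \cref{sec:extensions} to give the constant
\[
\inf_{H\in\caU_{\theta_0,\beta}} \caI_{w}(H),\qquad w=\mu'(\theta_0)^2,
\]
that is, \(W\) is replaced by \(\nabla\mu\,\nabla\mu^\top\).
\begin{itemize}
\item \emph{Upper bound.} Use the plug-in \(\mu(\widehat\theta_n^\star)\), where \(\widehat\theta_n^\star\) is the attaining sequence of \cref{thm:local-attain-seq}. Because \(\widehat\theta_n^\star\) takes values in the compact image \(\Psi(\mathcal B)\), the map \(\mu\) is bounded and Lipschitz there. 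A Taylor expansion with a uniformly vanishing remainder on \(\widetilde\Theta_n\) then gives the risk \(\mu'(\theta_0)^2\,\sigma_n^2/H+o(\sigma_n^2)\).
\item \emph{Lower bound.} Repeat the van Trees argument of \cref{thm:matrix-local-lower} (and of \cref{appendix-sec:trans-regime} for \(\beta>0\)) with the functional \(\psi(\theta)=\mu(\theta)\). The numerator becomes \((\int\mu'\,d\pi_n)^2\to\mu'(\theta_0)^2\), and the denominator is controlled by the information-region contraction.
\end{itemize}
Since \(\caU_{\theta_0,\beta}\subset[0,\infty)\) is a closed set, and it is an interval because it is downward closed (the same argument as in \cref{prop:dc-info-prop}), the infimum equals \(\mu'(\theta_0)^2/\sup\caU_{\theta_0,\beta}\). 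So it remains to show that \(\sup\caU_{\theta_0,\beta}=\mathfrak I^{(\beta)}_{\theta_0}\).

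\emph{Reduction from \(\R^d\)-valued to scalar statistics.} Let \(\Phi:\caX\to\R^d\) with \(\diam(\Phi)\le1\), and set \(a=\E_{\theta_0}[\Phi s_{\theta_0}]\in\R^d\) and \(\Sigma=\Sigma_{\Phi,\theta_0,\beta}\). Then \(A^*\Sigma^\dagger A=a^\top\Sigma^\dagger a\). When \(\Sigma\succ0\), the generalized Rayleigh quotient gives
\[
a^\top\Sigma^{-1}a=\sup_{v\ne0}\frac{(v^\top a)^2}{v^\top\Sigma v}.
\]
Putting \(\phi=v^\top\Phi/\norm{v}_2\), we have \(\diam(\phi)\le1\), \(v^\top a/\norm{v}_2=\E[\phi s]\), and \(v^\top\Sigma v/\norm{v}_2^2=\beta\Var\phi+(1-\beta)/2\). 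Hence every element of the generating set is at most \(\mathfrak I^{(\beta)}_{\theta_0}\). Conversely, scalar \(\phi\) are admissible with \(d=1\), so the two suprema coincide.

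The singular case \(\Sigma\) singular can occur only at \(\beta=1\). There the pseudoinverse form \(a^\top V^\dagger a\) is handled the same way, because \(a\in\ran V\): if \(\Var(v^\top\Phi)=0\) then \(v^\top a=\E[(v^\top\Phi-c)s]=0\).

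\emph{The two endpoint formulas.}
\begin{itemize}
\item \(\beta=0\): \(\mathfrak I^{(0)}=2\sup_\phi(\E\phi s)^2\). Since \(\E s=0\), we may shift \(\phi\) by the midpoint \(c\) of its range, so that \(|\phi-c|\le1/2\) and \(|\E\phi s|\le\E|s|/2=a_{\theta_0}\). Equality holds for \(\phi=\tfrac12\sign s\), exactly as in \cref{ex:1d-info-region}. This gives \(2a_{\theta_0}^2\).
\item \(\beta=1\): Cauchy--Schwarz gives \((\E[(\phi-\E\phi)s])^2\le\Var\phi\cdot I(\theta_0)\), so \(\mathfrak I^{(1)}\le I(\theta_0)\). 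The ratio is invariant under rescaling \(\phi\), so we may take \(\phi=\kappa\,s\,\ind{|s|\le M}\) with \(\kappa\) chosen to make the diameter at most one. Letting \(M\to\infty\), the ratio tends to \(I(\theta_0)\) by dominated convergence.
\end{itemize}
These are consistent with \cref{eq:unified-regime-identities}.

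The main obstacle I anticipate is the transfer step. The paper's main theorems are stated for the full parameter with loss matrix \(W\), so the functional version needs two things: uniform control of the Taylor remainder of \(\mu\) in the upper bound, and a van Trees inequality for \(\psi(\theta)\) in the lower bound, especially in the transition regime, where the lower bound runs through the partition approximation. If the extension section does not already state this, I would redo the van Trees step with gradient \(\mu'(\theta)\) integrated against the shrinking prior \(\pi_n\), using continuity of \(\mu'\) at \(\theta_0\).
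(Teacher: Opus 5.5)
Your proposal is correct, and its core matches the paper's. The Rayleigh-quotient reduction from \(\R^d\)-valued to scalar statistics and the two endpoint evaluations are exactly \cref{appendix-lem:unified-directional-envelope} with \(p=1\), \(h=1\), which the paper simply cites.

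Where you differ is the passage to the target. The paper reparameterizes by \(\eta=\mu(\theta)\), which is a local \(C^1\) diffeomorphism because \(\mu'(\theta_0)\ne0\). The score becomes \(s_\theta/\mu'(\theta)\), so the one-dimensional unified region is divided by \(\mu'(\theta_0)^2\). \Cref{thm:local-minimax} is then applied verbatim to the \(\eta\)-model, since DQM, \(L^1\) smoothness and positive information all survive a \(C^1\) change of parameter. No delta method or functional van Trees inequality is needed, and the limit holds for every admissible \(R_n\) in \(\eta\)-coordinates.

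You instead keep \(\theta\) and take one of two routes. You can cite the differentiable-target theorem (\cref{appendix-thm:differentiable-target-local}, with \(W_{\psi,\theta_0}=\mu'(\theta_0)^2\)), but it is stated only for sufficiently slowly growing \(R_n\). Or you can redo both bounds with a plug-in delta method and a van Trees bound whose numerator is \((\int\mu'\,d\pi_n)^2\). That is more work, but it never uses invertibility of \(\mu\), so it carries over to scalar functionals of a multivariate parameter, where no reparameterization is available. If you use this fallback, note that at \(\beta=1\) the lower bound must come from the classical local asymptotic minimax theorem, as in the paper's proof of \cref{thm:local-minimax}. \Cref{appendix-sec:trans-regime} requires \(\lambda<\infty\), so it does not cover that case.

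One small correction: \(\caU_{\theta_0,\beta}\) need not be an interval at \(\beta=1\). \Cref{appendix-rem:unified-sampling-regime-geometry} gives \(\caU_{\theta_0,1}=\{0,I(\theta_0)\}\) for a Bernoulli model. The contraction argument of \cref{prop:dc-info-prop} also does not transfer, because rescaling \(\Phi\) leaves the \(\beta=1\) quotient unchanged. You do not need the interval property anyway. For scalar \(H\), \(\caI_w(H)=w/H\) is decreasing, so the infimum over any compact subset of \([0,\infty)\) with positive maximum equals \(w/\max\caU_{\theta_0,\beta}\). The paper's own proof uses the same loose phrase ``downward-closed'' here.
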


The statistics for optimal estimators have simple forms in the privacy or sampling regime.
In the privacy regime \(\beta=0\), the optimal statistic is the score sign statistic
\begin{math}
  \phi_{\mathrm{priv}}(x)=\frac{1}{2}\sign(s_{\theta_0}(x)),
\end{math}
as in \cref{ex:1d-info-region}.
In the sampling regime \(\beta=1\), the optimal statistic is given by \( \phi_{\mathrm{stat}}(x) \propto s_{\theta_0}(x) \) with a proper rescaling and truncation approximation if the score is unbounded.

Moreover, we give explicit examples for the one-dimensional regular families.
See \Cref{tab:1d-examples}.
Specifically, the logistic location model is given by distribution function
\[
  F_\theta(x) = 1/(1+\exp\xkm{-(x-\theta)/s}),\theta \in \R,
\]
where the scale \(s>0\) is known and the target is \(\mu(\theta)=\theta\).
Denote \( r_\beta = (1-\sqrt{1-\beta})/(1+\sqrt{1-\beta}) \).
Both the Bernoulli and Laplace location models have binary scores: up to an affine transformation, \(s_{\theta_0}(X)\) is a binary statistic.
Consequently, the same statistic is simultaneously optimal in the privacy and sampling regimes.
By contrast, the logistic score takes a continuum of magnitudes, so its attaining statistic changes with \(\beta\) and its unified constant is not affine in \(\beta\).

\begin{table}[h!]
  \caption{Efficiency constants for selected one-dimensional models.}
  \label{tab:1d-examples}
  \centering
  \footnotesize
  \setlength{\tabcolsep}{3pt}
  \renewcommand{\arraystretch}{1.35}
  \begin{tabular}{@{}p{0.3\textwidth}p{0.25\textwidth}p{0.3\textwidth}@{}}
    \hline
    Model & Constant \( \mathfrak{C}^{(\beta)}(q) \) & Attaining statistic \\
    \hline
    \(\operatorname{Bernoulli}(q)\) &
    \(
    \beta q(1-q)+(1-\beta)/2
    \) &
    \(x\) \\
    {\raggedright Laplace location with scale \(b\)} &
    \(
    b^2(2-\beta)
    \) &
    \(
    \sign(x-\theta_0)/2
    \) \\
    {\raggedright Logistic location with scale \(s\)} &
    \(
    6\beta s^2/(r_\beta(3-r_\beta^2))
    \) &
    \(
    \sign(U)
    \min(\abs{U}/r_\beta,1)/2, \) \par \( U=2F_{\theta_0}(X)-1 \) \\
    \hline
  \end{tabular}
\end{table}

\subsection{Bounded mean estimation}
\label{subsec:bounded-mean-estimation}

We next give a global minimax consequence of the local theory.
Fix an integer \(p\ge1\) and \(\ell<r\), put \(\Delta=r-\ell\), and let
\(\mathcal{P}_{p,\ell,r}=\dk{P:\operatorname{supp}(P)\subset[\ell,r]^p}\).
The target is the mean
\(\mu(P)=\E_P X\in\R^p\), and the loss is squared Euclidean loss.

\begin{proposition}[Bounded mean]
  \label{prop:bounded-means}
  Suppose \(n^2\rho_n\to\infty\) and \(n\rho_n\to\lambda\in[0,\infty]\). Then
  \begin{equation}
    \label{eq:bounded-mean-global-risk}
    \inf_{\widehat\mu:\rho_n\text{-zCDP}}
    \sup_{P\in\mathcal{P}_{p,\ell,r}}
    \E_P
    \norm{\widehat\mu-\mu(P)}_2^2
    \sim
    \Delta^2\xk{
      \frac{p}{4n}
      +
      \frac{p^2}{2\tau_n^2}
    }.
  \end{equation}
  The bound is attained by releasing \(\bar X_n+G_n\) with independent \(G_n\sim N_p\xkm{0,p\Delta^2I_p/(2\tau_n^2)}\),
  where \( \bar X_n \) is the sample mean.
\end{proposition}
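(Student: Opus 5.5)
The plan is to prove matching upper and lower bounds. The upper bound comes from a direct computation for the stated Gaussian release. The lower bound restricts the supremum to a least favorable parametric subfamily and applies \cref{thm:local-minimax}, which covers both $\beta<1$ and $\beta=1$. Throughout, $\lambda\in[0,\infty]$ corresponds to $\beta=\lambda/(1+\lambda)\in[0,1]$. The target asymptotic identity is equivalent to
\[
\sigma_n^{-2}\cdot\text{minimax risk}\to \Delta^2\bigl(\beta p/4+(1-\beta)p^2/2\bigr),
\]
since $\sigma_n^2\beta_n=1/n$ and $\sigma_n^2(1-\beta_n)=1/\tau_n^2$.

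\emph{Upper bound.} Changing one record moves $\bar X_n$ by at most $\sqrt p\,\Delta/n$ in $\ell_2$. Hence, by \cref{lem:gauss-composition}, adding $G_n\sim N_p(0,p\Delta^2I_p/(2\tau_n^2))$ gives a $\rho_n$-zCDP release. Its risk is exactly
\[
\Tr\Var_P(X)/n+p^2\Delta^2/(2\tau_n^2).
\]
Each coordinate has variance at most $\Delta^2/4$, so $\Tr\Var_P(X)\le p\Delta^2/4$ uniformly over $\mathcal P_{p,\ell,r}$. This gives the bound for every $n$, with no asymptotics needed.

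\emph{Lower bound: reduction.} Restrict to product laws $X=\ell\mathbf 1+\Delta B$, where $B_j\sim\operatorname{Bernoulli}(q_j)$ are independent and $q\in(0,1)^p$. The mean is $\ell\mathbf 1+\Delta q$, so the global risk dominates $\Delta^2$ times the local minimax risk for estimating $q$ near $q_0=(1/2,\dots,1/2)$. This model satisfies \cref{ass:regular,ass:score-density-l1}, and $I(q_0)=4I_p\succ0$. By \cref{thm:local-minimax} it suffices to show
\[
\inf_{H\in\caU_{q_0,\beta}}\Tr(H^{-1})\ge \beta p/4+(1-\beta)p^2/2.
\]
Equality is already attained by $\Phi=B/\sqrt p$, which has diameter one. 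For this statistic, $A=I_p/\sqrt p$ and $\Sigma=\bigl(\beta/(4p)+(1-\beta)/2\bigr)I_p$, so $\Tr(A^{-1}\Sigma A^{-\top})$ equals the claimed value. This also confirms consistency with the upper bound.

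\emph{Lower bound: symmetrization.} The model at $q_0$ is invariant under coordinate permutations and under the flips $B_j\mapsto1-B_j$. These maps preserve diameter and conjugate $H_\Phi$ by signed permutation matrices, so they act on $\caU_{q_0,\beta}$. I would use convexity of $\caU_{q_0,\beta}$ (as asserted after \cref{thm:gauss-attain-limit}; at $\beta=1$ the region is governed by Fisher information and the claim is classical). Together with convexity of $H\mapsto\Tr H^{-1}$, averaging over the group shows the infimum is approached by multiples $hI_p$. It therefore suffices to bound $\Tr H_\Phi$ for every diameter-one $\Phi$.

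\emph{Lower bound: the trace inequality.} Write $\varepsilon=2B-\mathbf 1$, which is Rademacher. The score is $s_j=2\varepsilon_j$, so $A e_j=2c_j$ with $c_j=\E[\Phi\varepsilon_j]$. Set $L(\varepsilon)=\sum_j c_j\varepsilon_j$. Pairing $\varepsilon$ with $-\varepsilon$ gives
\[
\sum_j\norm{c_j}^2=\tfrac12\E\ang{\Phi(\varepsilon)-\Phi(-\varepsilon),L(\varepsilon)}\le\tfrac12\E\norm{L}\le\tfrac12\Bigl(\sum_j\norm{c_j}^2\Bigr)^{1/2}.
\]
Hence $\sum_j\norm{c_j}^2\le1/4$ and $\Tr(A^*A)\le1$. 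This settles $\beta=0$: $J=cI_p$ forces $c\le1/p$, so $\Tr J^{-1}\ge p^2$.

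\emph{Main obstacle.} For $0<\beta<1$ I need the joint bound
\[
\Tr\bigl(A^*\Sigma_{\Phi,\beta}^{-1}A\bigr)\le p^2\big/\bigl(\beta p/4+(1-\beta)p^2/2\bigr).
\]
This is where I expect the difficulty. My first attempt would decompose $\Phi$ in the Fourier--Walsh basis. Only the level-one part $L$ enters $A$, while $\Var(\Phi)\succeq\Var(L)=\sum_j c_jc_j^\top$. This reduces the problem to maximizing $\Tr(C^\top(\beta CC^\top+\tfrac{1-\beta}2I)^{-1}C)$, with $C=[c_1,\dots,c_p]$, subject to a constraint on $C$ inherited from the antipodal diameter argument. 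I would try to show that constraint is $\norm{C}_{\HS}^2\le1/4$, possibly strengthened via $\E\norm L\le\tfrac12$ to a bound on singular values. A concavity or Schur-convexity argument in the singular values of $C$ should then show the maximum occurs at $C=I_p/(2\sqrt p)$, which corresponds to $\Phi=B/\sqrt p$.
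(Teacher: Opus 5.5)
Your upper bound and your lower-bound reduction coincide with the paper's. The paper also restricts to the product Bernoulli submodel at $q_0=\mathbf{1}_p/2$, where it treats the score $2(2B-\mathbf{1}_p)$ as a fixed-radius score with $\kappa=4$. Your statistic $B/\sqrt p$ is its normalized score up to translation, and your antipodal-pairing bound $\Tr(A^*A)\le 1$ is the paper's $\Gamma_{q_0}=1$.

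The case $0<\beta<1$ is left unfinished, but it is not a real obstacle: your level-one inequality $V_\Phi\succeq CC^\top$ (with $A=2C$) already suffices once you invert rather than bound $\Tr H_\Phi$. It suffices to treat $\R^p$-valued generators with nonsingular $A$, by the $\R^p$ compression and because the objective is infinite at singular matrices. For such a generator,
\[
H_\Phi^{-1}=\beta A^{-1}V_\Phi A^{-\top}+\tfrac{1-\beta}{2}(A^\top A)^{-1}\succeq\tfrac{\beta}{4}I_p+\tfrac{1-\beta}{2}(A^\top A)^{-1}.
\]
Then $\Tr[(A^\top A)^{-1}]\ge p^2/\Tr(A^\top A)\ge p^2$ gives $\Tr H_\Phi^{-1}\ge\beta p/4+(1-\beta)p^2/2$. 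The bound passes to the closure because the objective is continuous on positive-definite matrices and infinite on singular ones.

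This is exactly the paper's route. Its fixed-statistic decomposition gives, in any model, the additive lower bound $\beta\Tr(WI(\theta_0)^{-1})+(1-\beta)\mfC_{\theta_0}(W)$. The bound is tight here because a single statistic is both Fisher-efficient and privacy-optimal; your $V_\Phi\succeq CC^\top$ is just the information inequality $A^\top V_\Phi^{-1}A\preceq I(q_0)=4I_p$ for this model.

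Your own sketch also closes, with no strengthening of the constraint. The eigenvalues of $4C^\top(\beta CC^\top+cI)^{-1}C$, with $c=(1-\beta)/2$, are $4f(\sigma_i^2)$ where $f(x)=x/(\beta x+c)$ is concave and increasing. Your proven bound $\norm{C}_{\HS}^2\le 1/4$ and Jensen then give
\[
\Tr H_\Phi\le 4pf\bigl(1/(4p)\bigr)=4p/\bigl(\beta+2p(1-\beta)\bigr),
\]
which is exactly $p^2/(\beta p/4+(1-\beta)p^2/2)$.

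The symmetrization step, and with it the appeal to convexity of $\caU_{q_0,\beta}$, is unnecessary for the lower bound. The inequality $\Tr H^{-1}\ge p^2/\Tr H$ holds for every $H\succ0$, and a trace bound on generators passes to the closure by continuity.

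The paper's decomposition is shorter and model-free: it needs only the information inequality, the privacy endpoint, and a common attaining statistic. Your Walsh-level argument computes $\sup_{\caU_{q_0,\beta}}\Tr H$ directly, but it relies on the product structure of the hypercube.
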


Thus, for distribution-free bounded mean estimation, the simple Gaussian release of the
sample mean is globally asymptotically minimax, and no other private procedure can improve its first-order risk.
The matching lower bound follows by restricting to the product Bernoulli
submodel on the vertices of the hypercube; see \cref{appendix-subsec:bounded-finite-support}.

\subsection{Gaussian mean}
\label{subsec:gauss-mean}

We consider the multidimensional Gaussian mean model \( N_p(\theta,\sigma^2 I_p), \)
where \(p\) is fixed and \(\sigma^2\) is known.
Let \(\chi_p=\norm{Z}_2\) for \(Z\sim N_p(0,I_p)\).
For \(0<\beta<1\), let \(b_{p,\beta}>0\) be the unique solution of
\begin{equation}
  \label{eq:gauss-unified-threshold}
  \E\xk{\frac{\chi_p}{b_{p,\beta}}-1}_+
  =
  \frac{2p(1-\beta)}{\beta}.
\end{equation}

\begin{proposition}[Gaussian mean]
  \label{prop:gauss-mean}
  For the Gaussian mean model, we have
  \begin{equation}
    \label{eq:gauss-unified-constant}
    \uC_{\theta}(I_p)
    =
    \frac{\beta p^2 \sigma^2}
    {\E\zk{\chi_p \min(\chi_p,b_{p,\beta})}},
    \qquad
    \mfC_{\theta}^{(0)}(I_p)
    =
    \frac{2\sigma^2 p^3}{(\E\chi_p)^2},
    \quad
    \mfC_{\theta}^{(1)}(I_p)
    =
    p\sigma^2.
  \end{equation}
  For \( \beta=0 \) and \( 0 < \beta < 1 \), the optimal statistic is given by the spatial sign and radial clipping respectively,
  \[
    \Phi_{\theta_0,\mathrm{priv}}(x) = \frac{1}{2}\frac{z}{\norm{z}},
    \quad
    \Phi_{\theta_0,\beta}(x)
    =
    \frac{1}{2b_{p,\beta}}
    \min\xk{1,\frac{b_{p,\beta}}{\norm{z}_2}}z,
    \quad
    z=\frac{x-\theta_0}{\sigma}.
  \]
  At \( \beta=1 \), it is asymptotically given by a truncation with diverging radius.
\end{proposition}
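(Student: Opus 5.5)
The plan is to reduce the variational problem \(\inf_{H\in\caU_{\theta,\beta}}\Tr(H^{-1})\) to a scalar problem by rotational symmetry, and then solve that scalar problem by a pointwise Lagrangian argument.

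\emph{Reduction to a trace problem.} Write \(z=(x-\theta)/\sigma\), so that \(s_\theta(x)=z/\sigma\). The law of \(z\) is invariant under \(O(p)\). If \(\Phi\) has diameter at most one and \(U\in O(p)\), then \(\Phi_U(x)=\Phi(\theta+\sigma U^\top z)\) also has diameter at most one. A change of variables gives
\[
A_{\Phi_U,\theta}=A_{\Phi,\theta}U^\top,
\qquad
\Sigma_{\Phi_U,\theta,\beta}=\Sigma_{\Phi,\theta,\beta},
\]
so \(H_{\Phi_U}=UH_\Phi U^\top\) and \(\caU_{\theta,\beta}\) is closed under orthogonal conjugation.

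For \(\beta<1\) the region is convex (as stated after \cref{thm:gauss-attain-limit}) and compact. Averaging the conjugates \(UHU^\top\) over Haar measure therefore shows that \((\Tr H/p)\,I_p\in\caU_{\theta,\beta}\) whenever \(H\in\caU_{\theta,\beta}\). Moreover \(\Tr(H^{-1})\ge p^2/\Tr H\) for every \(H\succ0\), with equality when \(H\) is a multiple of \(I_p\). Hence
\[
\uC_\theta(I_p)=\frac{p^2}{\sup_{H\in\caU_{\theta,\beta}}\Tr H},
\]
and the supremum may be restricted to statistics whose \(H_\Phi\) is isotropic. The case \(\beta=1\) is just \cref{eq:unified-regime-identities}, giving \(p\sigma^2\).

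\emph{Achievability with radial statistics.} Take \(\Phi(x)=f(\|z\|)\,z/\|z\|\) with \(0\le f\le 1/2\); its range lies in a ball of radius \(1/2\), so its diameter is at most one. By symmetry \(\E\Phi=0\), and
\[
A=\frac{\E[f(r)\,r]}{p\sigma}I_p,
\qquad
V=\frac{\E f(r)^2}{p}I_p,
\qquad
r=\chi_p .
\]
This gives
\[
\Tr H_\Phi=\frac{(\E[f r])^2}{\sigma^2\bigl(\beta\E f^2+p(1-\beta)/2\bigr)}.
\]
Maximizing this ratio over \(0\le f\le1/2\) is a one-dimensional problem. Fix the value of \(\E f^2\) and write the Lagrangian; it is maximized pointwise in \(r\) by \(f(r)=\min(r/b,1)/2\).

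Substituting this \(f\) and imposing stationarity in \(b\) yields the threshold equation \cref{eq:gauss-unified-threshold}. At that \(b\) the ratio simplifies to
\[
\frac{\E[\chi_p\min(\chi_p,b)]}{\beta\sigma^2},
\]
which is the stated constant. At \(\beta=0\) the constraint binds everywhere, giving \(f\equiv1/2\) (the spatial sign) and \(\Tr H=(\E\chi_p)^2/(2p\sigma^2)\), hence \(\mfC^{(0)}_\theta(I_p)=2\sigma^2p^3/(\E\chi_p)^2\). The matching estimators then come from \cref{thm:bounded-stat-upper}.

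\emph{Converse (main obstacle).} The hard part is showing that no non-radial \(\Phi\) beats the radial ones. The difficulty is that a diameter-one set need not lie in a ball of radius \(1/2\) (Jung's constant exceeds \(1/2\)), so the bound cannot be a naive pointwise one.

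I would use a pairing trick. Let \(X'\) be an independent copy of \(X\). Then
\[
A=\tfrac12\E\bigl[(\Phi(X)-\Phi(X'))(s(X)-s(X'))^\top\bigr],
\qquad
2V=\E\bigl[(\Phi(X)-\Phi(X'))(\Phi(X)-\Phi(X'))^\top\bigr],
\]
and \(\|\Phi(X)-\Phi(X')\|\le1\) always holds.

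For \(\beta=0\), take \(A\) with \(A^\top A\) isotropic, which is allowed by the reduction. Duality gives \(\|A\|_{\HS}=\Tr(C^\top A)\) with \(C=A/\|A\|_{\HS}\), and \(\|C\|_{\mathrm{op}}=p^{-1/2}\). This yields
\[
\|A\|_{\HS}\le\frac{1}{2\sqrt p}\,\E\|s(X)-s(X')\|.
\]
Checking this against the spatial sign statistic, whose value \((\E\chi_p)^2/(2p\sigma^2)\) is already attained, shows the bound is tight. For \(0<\beta<1\), I would combine the representation
\[
\Tr(A^\top\Sigma^{-1}A)=\sup_B\bigl[2\Tr(B^\top A)-\Tr(B^\top\Sigma B)\bigr]
\]
with the same pairing, using isotropic \(B\). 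This reduces the bound to maximizing, for each pair \((X,X')\), a quadratic in the difference \(\Phi(X)-\Phi(X')\) subject to norm at most one. I expect that pointwise problem to reproduce the radial clipping threshold \(b_{p,\beta}\).

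The uncertain step is whether the pairwise relaxation is exactly tight for \(0<\beta<1\). If it is not, I would fall back to a symmetrization argument in the Hilbert space \(L^2(O(p);\R^p)\) to force equivariance of \(\Phi\) before optimizing.
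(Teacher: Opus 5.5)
Your reduction $\uC_\theta(I_p)=p^2/\sup_{H\in\caU_{\theta,\beta}}\Tr H$ is correct for $\beta<1$. It follows from $O(p)$-invariance, convexity and compactness of $\caU_{\theta,\beta}$. Your radial achievability computation is also correct, and it is exactly the radial quotient the paper ends up optimizing.

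The gap is the converse. The pairing bound is not tight, and your tightness check contains an arithmetic slip. Since $s(X)-s(X')=(Z-Z')/\sigma$ with $Z-Z'\sim N(0,2I_p)$, one has $\E\norm{s(X)-s(X')}_2=\sqrt2\,\E\chi_p/\sigma$. Your inequality therefore gives $\norm{A}_{\HS}^2\le(\E\chi_p)^2/(2p\sigma^2)$, that is, $\Tr H=2\norm{A}_{\HS}^2\le(\E\chi_p)^2/(p\sigma^2)$ at $\beta=0$. The spatial sign has $A=\frac{\E\chi_p}{2p\sigma}I_p$, hence $\norm{A}_{\HS}^2=(\E\chi_p)^2/(4p\sigma^2)$ and $\Tr H=(\E\chi_p)^2/(2p\sigma^2)$. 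You compared a bound on $\norm{A}_{\HS}^2$ with the value of $\Tr H$. The bound is off by a factor $2$ and would only give $\mfC_\theta^{(0)}(I_p)\ge p^3\sigma^2/(\E\chi_p)^2$. Already for $p=1$, $\frac12\E\abs{s-s'}=\frac{\sqrt2}{2}\E\abs{s}$, while the true supremum of $\E[\phi(X)s(X)]$ over diameter-one $\phi$ is $\frac12\E\abs{s}$.

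The loss comes from treating $\Phi(X)-\Phi(X')$ as an arbitrary function of the pair. Your $0<\beta<1$ relaxation clips $s(X)-s(X')$ rather than $s(X)$, and it collapses to the same loose bound as $\beta\downarrow0$, so it cannot be exact either. Moreover, restricting the supremum over $B$ to isotropic matrices only lower-bounds $\Tr(A^\top\Sigma^{-1}A)$ unless $\Sigma^{-1}A$ is itself isotropic. You know that only for equivariant $\Phi$, so that step presupposes the symmetrization it is supposed to replace.

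Two ideas are missing. First, at $\beta=0$, use the central symmetry $s(X)\overset{d}{=}-s(X)$ instead of an independent copy. If $h_K$ is the support function of the range $K$ of $\Phi$, then $\E\langle\Phi(X),Cs(X)\rangle\le\E h_K(Cs(X))=\frac12\E\zk{h_K(Cs(X))+h_K(-Cs(X))}\le\frac12\E\norm{Cs(X)}_2$, since the width of $K$ in every direction is at most $\diam(K)\le1$. This is the support-function argument the paper uses for fixed-radius scores. It gives the tight bound $\norm{A}_{\HS}\le\E\chi_p/(2\sqrt p\,\sigma)$ and disposes of your Jung-constant concern.

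Second, for general $\beta$, simply averaging $\Phi$ over $O(p)$ does not work, because $\Phi\mapsto\Tr(A_\Phi^\top\Sigma_\Phi^{-1}A_\Phi)$ is not concave. The paper instead uses the fixed-statistic decomposition and passes to influence coordinates $\psi=A_\Phi^{-1}\Phi$, $Q=(A_\Phi^\top A_\Phi)^{-1}$. There the objective is $\beta\E\norm{\psi}_2^2+\frac{1-\beta}{2}\Tr Q$, the calibration $\E(\psi S^\top)=I_p$ is linear, and the diameter constraint is the linear matrix inequality
\[
\begin{pmatrix}
Q & \psi(s)-\psi(s')\\
(\psi(s)-\psi(s'))^\top & 1
\end{pmatrix}
\succeq0,
\]
which is jointly convex in $(\psi,Q)$. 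Hence the Haar average $\bar\psi(s)=\int G^\top\psi(Gs)\,d\nu(G)$, $\bar Q=(\Tr Q/p)I_p$, is feasible with no larger objective. Equivariance then forces $\psi=h(R)S/R$, which is odd, so the constraint reduces exactly to $q\ge4(\esssup\abs{h(R)})^2$. You then land on the radial quotient you already solved.

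Your fallback of symmetrizing in $L^2(O(p);\R^p)$ may be workable. It would still need an equivariant compression back to $\R^p$ and this oddness argument, and the proposal supplies neither.
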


The Gaussian mean model is the canonical example with an unbounded score.
Under squared Euclidean loss, its location and rotational symmetries make the exact constant independent of the local center and reduce the multivariate optimization to the single clipping threshold \(b_{p,\beta}\).
The optimal statistic retains the score direction while clipping its radial magnitude: it is the spatial sign in the privacy regime, a radially clipped score in the transition regime, and has the linear score as its influence statistic in the sampling limit.
Thus, unlike the binary score examples, the statistic optimal in the privacy regime is not sampling efficient, and the unified constant is not an affine interpolation of the constants in the privacy and sampling regimes.

\subsection{Gaussian quantile}
\label{subsec:gauss-quantile}

For Gaussian quantile estimation, if the variance is known, then it reduces to estimating the mean.
However, when both the mean and variance are unknown, private estimation of the quantile is significantly different.
Let \(z_\alpha\) denote the \(\alpha\)-quantile of the standard normal distribution, and consider the estimation of the quantile \( q_\alpha(\mu,\sigma)=\mu+z_\alpha\sigma \).
Fix a local center \((\mu_0,\sigma_0)\) and take \(\Theta_n\) as a shrinking neighborhood.
Define
\begin{equation}
  \label{eq:gauss-quantile-constant}
  d_\alpha
  =
  \inf_{\lambda\in\R}
  \E\abs{Z-\lambda\xk{Z^2-1-z_\alpha Z}},
  \qquad Z\sim N(0,1).
\end{equation}

\begin{proposition}[Gaussian quantile with unknown scale]
  \label{prop:gauss-quantile-unknown-scale}
  Suppose that \(\alpha\in(0,1)\), \(\sigma_0>0\), \(n^2\rho_n\to\infty\), and \(n\rho_n\to0\).
  For the Gaussian quantile problem, the privacy constant is
  \[
    \lim_{n\to\infty}
    \tau_n^2
    \inf_{\widehat{q}_\alpha:\rho_n \text{-zCDP}}
    \sup_{(\mu,\sigma)\in\Theta_n}
    \E_{\mu,\sigma}
    \xk{\widehat{q}_\alpha-q_\alpha(\mu,\sigma)}^2
    =
    \frac{2\sigma_0^2}{d_\alpha^2},
  \]
  where \(d_\alpha\) is defined in \cref{eq:gauss-quantile-constant}.
  If \(\lambda_\alpha^*\) minimizes \cref{eq:gauss-quantile-constant}, the optimal statistic is generated by
  \[
    \phi_\alpha^*(x)
    =
    \frac{1}{2}\sign\xk{
      Z_0-\lambda_\alpha^*\xk{Z_0^2-1-z_\alpha Z_0}
    },
    \qquad
    Z_0=\frac{x-\mu_0}{\sigma_0}.
  \]
\end{proposition}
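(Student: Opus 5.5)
The plan is to reduce the problem to the privacy-dominated constant for a smooth scalar functional $q_\alpha(\theta)$ of $\theta=(\mu,\sigma)$, and then compute that constant explicitly. By the differentiable-target extension (the functional analogue of \cref{thm:private-minimax}, as in \cref{prop:1d-matching}), the exact local constant for a scalar target $\psi(\theta)$ with gradient $g=\nabla\psi(\theta_0)$ is
\[
  \tfrac12\inf_{J\in\caJ_{\theta_0}} g^\top J^{-1} g,
\]
interpreted via $\caI_{gg^\top}$. Here $g=(1,z_\alpha)^\top$. I would first show the general identity
\[
  \inf_{J\in\caJ}\caI_{gg^\top}(J)=\Big(\sup_{h:\,\langle g,h\rangle=1}\ \sup_{J\in\caJ}h^\top J h\Big)^{-1}.
\]
The direction ``$\ge$'' follows from $g^\top J^{-1}g\ge (h^\top g)^2/(h^\top Jh)$, which is Cauchy--Schwarz. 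For the reverse direction, convexity and compactness of $\caJ_{\theta_0}$ (\cref{prop:dc-info-prop}) let me apply a minimax/separating argument. The simpler route is to use rank-one matrices: any $J=vv^\top$ with $v=A_\phi^*$ comes from a scalar statistic $\phi$ with $\diam(\phi)\le1$, and then $\caI_{gg^\top}(J)=(g^\top v)^{-2}\|v\|^{-2}\cdots$. I would rather argue directly that the optimum for a rank-one loss is attained by scalar statistics. Concretely, for $\Phi$ valued in $\R^d$, $g^\top (A^*A)^{-1}g\ge 1/\sup_u (u^\top A\,\cdot)$. This reduces to
\[
  \inf_J g^\top J^{\dagger}g=\inf_\phi \frac{1}{\langle A_\phi^*, \cdot\rangle}\cdots,
\]
namely $\big(\sup_{\diam\phi\le1}\ \sup_{h:\langle g,h\rangle=1}\cdots\big)$. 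In the cleanest form: the optimal scalar statistic gives $J=a a^\top$ with $a=\E[\phi s]$, and the feasible constant is $(g^\top a)^{-2}$ restricted to $a\parallel$ the appropriate direction. Since $\caJ$ is downward closed, rank-one truncations are allowed, so the value is $\tfrac12\,(\sup_{\phi}\,\cdot)^{-1}$ subject to a dual constraint.

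The cleanest formulation I would commit to is via duality with the directional envelope \cref{eq:directional-info-envelope}:
\[
  \mfC=\frac12\Big(\sup_{\langle g,h\rangle=1}\tfrac14(\E|\langle s,h\rangle|)^2\Big)^{-1}\!.
\]
This should hold because the linear-functional problem only needs one-dimensional projections. For the reverse inequality I would take $\phi=\tfrac12\sign\langle s,h^*\rangle$ at the optimal $h^*$ and show that $a=\E[\phi s]$ is parallel to $g$. That is exactly the first-order condition of minimizing $\E|\langle s,h\rangle|$ over the affine set $\{\langle g,h\rangle=1\}$, since the subgradient is $\E[\sign(\langle s,h\rangle)s]\propto g$. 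Then $J=aa^\top$ gives $g^\top J^{\dagger}g=\|g\|^2/\|a\|^2\cdot\ldots=4/(\E|\langle s,h^*\rangle|)^2$ after normalization. So the problem reduces to computing
\[
  \inf_{\langle g,h\rangle=1}\E|\langle s,h\rangle|.
\]

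For the computation, the Gaussian scores are $s_\mu=Z_0/\sigma_0$ and $s_\sigma=(Z_0^2-1)/\sigma_0$. Parametrize $h$ with $h_1+z_\alpha h_2=1$, i.e.\ $h=(1-z_\alpha\lambda',\lambda')$, and substitute $\lambda=-\lambda'$ with the matching sign convention. This gives
\[
  \sigma_0\langle s,h\rangle=Z_0-\lambda(Z_0^2-1-z_\alpha Z_0)
\]
up to that sign convention, so the infimum equals $d_\alpha/\sigma_0$. Hence $\mfC=\tfrac12\cdot 4\sigma_0^2/d_\alpha^2=2\sigma_0^2/d_\alpha^2$, and the optimal statistic is $\phi^*_\alpha$, matching the statement.

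The main obstacle is rigorously justifying the scalar-target reduction, namely the extension of \cref{thm:private-minimax} to $\psi(\theta)$ with rank-one loss, where $\caI_W$ may be finite for singular $J$. I would handle it by a delta-method upper bound (moment-map inversion using the 2-dimensional statistic $(\phi^*_\alpha,\ \epsilon\cdot\text{extra})$ to make $A$ nonsingular, with $\epsilon\to0$). For the lower bound, I would use van Trees along the one-dimensional least favorable submodel $t\mapsto\theta_0+th^*$ combined with \cref{thm:diameter-contraction}. I would also need to check that the infimum over $\lambda$ is attained, which holds by convexity and coercivity in $\lambda$.
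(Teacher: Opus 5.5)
Your route is correct once one slip is fixed, and it differs from the paper's. In the displayed identity and in your ``cleanest formulation'' you write $\sup_{\ang{g,h}=1}$, but it must be $\inf_{\ang{g,h}=1}$. Since $I(\theta_0)\succ0$, the map $h\mapsto\E_{\theta_0}\abs{\ang{s_{\theta_0}(X),h}}$ is unbounded on the line $\{\ang{g,h}=1\}$, so as written the right-hand side is zero. The correct statement is
\[
  \inf_{J\in\caJ_{\theta_0}}\caI_{gg^\top}(J)
  =\Bigl(\inf_{\ang{g,h}=1}\ \sup_{J\in\caJ_{\theta_0}}h^\top Jh\Bigr)^{-1}
  =\frac{4}{m^2},
  \qquad
  m=\inf_{\ang{g,h}=1}\E_{\theta_0}\abs{\ang{s_{\theta_0}(X),h}}.
\]
Your later steps (taking $h^*$ as the minimizer, using its first-order condition, and computing $m=d_\alpha/\sigma_0$) already use the infimum, so the conclusion stands. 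No minimax theorem is needed. Cauchy--Schwarz gives $\caI_{gg^\top}(J)\ge 1/(h^{*\top}Jh^*)\ge 4/m^2$ for every $J\in\caJ_{\theta_0}$, with singular $J$ covered by the $t\downarrow0$ definition. The first-order condition gives $\E_{\theta_0}[\tfrac12\sign\ang{s_{\theta_0},h^*}\,s_{\theta_0}]=\tfrac m2 g$; it applies because $\ang{s_{\theta_0},h^*}$ is a nonzero polynomial in a Gaussian variable, so its zero set is null. The rank-one matrix $\tfrac{m^2}{4}gg^\top$ generated by this statistic then attains $4/m^2$. The garbled intermediate attempts (the expressions with ``$\cdots$'') should simply be dropped.

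The paper argues on the primal side instead. It writes $\caI_{W_\alpha}(A^*A)=\min\{\norm{v}^2:A^*v=g_\alpha\}$ to compress an arbitrary $\Phi$ to a scalar statistic with $\E_{\theta_0}[\phi s_{\theta_0}]\parallel g_\alpha$, and handles the closure by mixing with a positive definite element. It then bounds $\sup\{\E[\phi Z]:\abs{\phi}\le\tfrac12,\ \E[\phi H_\alpha]=0\}$ via $\E[\phi Z]=\E[\phi(Z-\lambda H_\alpha)]\le\tfrac12\E\abs{Z-\lambda H_\alpha}$.

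The two proofs are Lagrange duals of each other. The paper's multiplier $\lambda$ is your coordinate $h=(1+z_\alpha\lambda,-\lambda)$ on the affine line, and its constraint $\E[\phi_\alpha^*H_\alpha]=0$ is your alignment condition $\E[\phi_\alpha^*s_{\theta_0}]\parallel g$. Your version uses \cref{prop:directional-envelope-tv} as a black box and needs no closure argument. It also yields a formula for any linear scalar target: the constant of the least favorable one-dimensional submodel $t\mapsto\theta_0+th^*$. The paper's version makes transparent why the optimal sign statistic must be uncorrelated with the nuisance score $H_\alpha$.

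The obstacle you anticipate is already handled by the paper's framework. \cref{thm:private-minimax} allows any $W\succeq0$, and $q_\alpha(\theta)=\ang{g,\theta}$ is linear. So the scalar problem is equivalent to the vector problem with $W_\alpha=gg^\top$: take $\ang{g,\widehat\theta}$ in one direction, and embed $\widehat q\,g/\norm{g}_2^2$ in the other. Your two-dimensional attaining statistic is the paper's $(\sqrt{1-\epsilon^2}\,\phi_\alpha^*,\epsilon\psi_c)$. Your submodel lower bound is also valid: on $t\mapsto\theta_0+th^*$ the target moves at unit speed and the information region is $[0,m^2/4]$, so \cref{prop:1d-matching} gives $2/m^2=2\sigma_0^2/d_\alpha^2$ directly.
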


Here \(H_\alpha(Z)=Z^2-1-z_\alpha Z\) is the scaled score in the local nuisance direction that leaves \(q_\alpha\) unchanged.
The quantity \(d_\alpha\) is the \(L^1\) residual after the location score \(Z\) is adjusted for this nuisance direction, and the resulting sign statistic has zero covariance with \(H_\alpha(Z)\).
At \(\alpha=1/2\), symmetry gives \(\lambda_\alpha^*=0\), so the statistic reduces to \(\sign(Z)/2\) and the constant to \(\pi\sigma_0^2\), recovering the unknown-scale Gaussian mean problem.
In comparison, the non-private optimal estimator is given by the plug-in MLE \(\bar X+z_\alpha\widehat\sigma\).
This example further shows differences between optimal private and non-private estimation.

\subsection{Finite alphabet and categorical model}
\label{subsec:fin-alphabet-categorical}

Consider the finite alphabet model
\[
  \caX=\{x_1,\ldots,x_m\},\quad P_\theta(X=x_i)=\pi_i(\theta)>0,
\]
on a neighborhood of \(\theta_0\).
Under this model, the optimization for efficiency constant becomes finite dimensional and can be computed by a semidefinite programming.
Due to its complexity, we only present the simpler form in the privacy regime, and defer the general form to the appendices.
At \(\theta_0\), write \(\pi=(\pi_1,\ldots,\pi_m)^\top\), \( S \in \R^{m\times p} \) with \(S_{i\cdot}=s_{\theta_0}(x_i)^\top\), and
\(  B=\operatorname{diag}(\pi)S \).

\begin{proposition}[Finite alphabet SDP]
  \label{prop:fin-alphabet-sdp}
  If \(I(\theta_0)\succ0\) and \(W\succ0\), then
  \begin{equation}
    \label{eq:fin-alphabet-sdp}
    \begin{aligned}
      \mfC_{\theta_0}(W)
      ={}&
      \frac{1}{2}\min_{G\in\bbS_+^m,\,Z\in\bbS_+^p} \Tr Z \\
      \text{subject to }{}&
      G\pi=0,\quad
      G_{ii}+G_{jj}-2G_{ij} \le1,\quad
      \begin{pmatrix}
        B^\top G B&W^{1/2}\\
        W^{1/2}&Z
      \end{pmatrix}
      \succeq0.
    \end{aligned}
  \end{equation}
  The minimum of \cref{eq:fin-alphabet-sdp} is attained.
\end{proposition}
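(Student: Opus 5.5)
The plan is to rewrite the variational problem $\frac12\inf_{J\in\caJ_{\theta_0}}\Tr(WJ^{-1})$ as the SDP, in two steps: parametrize $\caJ_{\theta_0}$ on a finite alphabet, then linearize the objective with a Schur complement.

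\textbf{Step 1: parametrize $\caJ_{\theta_0}$ by Gram matrices.} On the alphabet, a statistic is a list of points $\Phi(x_1),\ldots,\Phi(x_m)\in\R^d$. By \cref{prop:dc-info-prop}(i) we may recenter so that $\sum_i\pi_i\Phi(x_i)=0$. Let $F\in\R^{m\times d}$ have rows $\Phi(x_i)^\top$ and put $G=FF^\top\in\bbS_+^m$. Then:
\begin{itemize}
  \item the centering gives $F^\top\pi=0$, hence $G\pi=0$;
  \item the diameter constraint $\norm{\Phi(x_i)-\Phi(x_j)}_2^2\le1$ reads $G_{ii}+G_{jj}-2G_{ij}\le1$;
  \item the covariance operator is $A_{\Phi,\theta_0}=\sum_i\pi_i\Phi(x_i)s_{\theta_0}(x_i)^\top=F^\top B$, so $A^*A=B^\top FF^\top B=B^\top GB$.
\end{itemize}
Conversely, any $G\in\bbS_+^m$ satisfying these constraints factors as $G=FF^\top$ with $d=m$. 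The rows of $F$ define a statistic with diameter at most one, and $G\pi=0$ gives $\norm{F^\top\pi}^2=\pi^\top G\pi=0$, so it is centered. Hence the set $\{B^\top GB\}$ over the feasible $G$ is exactly the unclosed set in \cref{def:diameter-score}. This set is the image of a compact set: the feasible $G$ form a closed set, and they are bounded because $G\pi=0$ together with the distance constraints bounds the centered Gram entries. The image under a continuous linear map is therefore already closed and equals $\caJ_{\theta_0}$.

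\textbf{Step 2: linearize the objective.} For $J\succ0$ and $W\succ0$, the block matrix $\begin{pmatrix}J&W^{1/2}\\W^{1/2}&Z\end{pmatrix}$ is PSD if and only if $Z\succeq W^{1/2}J^{-1}W^{1/2}$, by the Schur complement. Thus $\min\Tr Z$ over such $Z$ equals $\Tr(WJ^{-1})=\caI_W(J)$.

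\textbf{Step 3: handle singular $J$.} This is the main subtlety. If $J$ is singular and the block matrix is PSD, then $\ran(W^{1/2})\subset\ran(J)$ by the generalized Schur complement. Since $W\succ0$, this forces $\ran(J)=\R^p$, a contradiction. So singular feasible $J$ give no feasible $Z$, consistent with $\caI_W(J)=\infty$ in that case.

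\textbf{Step 4: conclude.} The SDP value equals $\inf_{J\in\caJ_{\theta_0}}\caI_W(J)$, and multiplying by $\frac12$ gives $\mfC_{\theta_0}(W)$. This value is finite because $\caJ_{\theta_0}$ contains a positive definite matrix by \cref{prop:dc-info-prop}(iv). By compactness and lower semicontinuity, noted after \cref{def:diameter-score-constant}, some $J^\star\succ0$ attains the infimum. Writing $J^\star=B^\top G^\star B$ with $G^\star$ from Step 1 and taking $Z^\star=W^{1/2}J^{\star-1}W^{1/2}$ gives a feasible pair achieving the SDP value, so the minimum in \cref{eq:fin-alphabet-sdp} is attained.
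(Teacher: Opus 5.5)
Your proof is correct and follows essentially the same route as the paper. The paper also parametrizes $\caJ_{\theta_0}$ by centered Gram matrices $G$, noting that centering gives $\norm{\Phi_i}\le1$ so no closure is lost, and then uses the Schur complement with $W\succ0$ forcing $B^\top GB\succ0$. The only difference is packaging: the paper obtains this SDP as the $\beta=0$ case of its unified SDP by eliminating the auxiliary variable $T=2G$, whereas you argue directly at $\beta=0$ and make the singular case explicit via the generalized Schur complement.
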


The optimization problem in \cref{prop:fin-alphabet-sdp} not only gives the exact efficiency constant, but also implies the construction of the optimal statistics.
In detail, for any optimal \( G \) solving \cref{eq:fin-alphabet-sdp} with factorization \(G=FF^\top\),
choose an orthonormal basis \(Q\) of \(\operatorname{ran}(F^\top B)\),
the optimal statistic is given by \(\Phi(x_i)=Q^\top F_{i\cdot}^\top\).
Conversely, any optimal statistic corresponds to an optimal \( G \).

The SDP gives a finite-dimensional form of the loss-specific incompatibility in \cref{prop:common-efficiency}.
Its feasible Gram region is fixed by the model, whereas the loss matrix \(W\) enters the objective and may select a different optimal \(G\).
If two losses have no common minimizing Gram matrix, no single statistic can be efficient for both.

As a special case, the categorical probability vector model further simplifies the computation of the exact constant.
Let \(\Delta_{k-1} = \dkx{q = (q_1,\dots,q_k) \in \R^k : q_j\ge0,\ \sum_{j=1}^k q_j=1} \) be the probability simplex of dimension \( k-1 \).
Let \(q\) denote the probability vector to be estimated, assumed to be an interior point of the simplex.
Use category \(k\) as the baseline and put \(d=k-1\).
The sampling covariance matrix of the first \(d\) category indicators is
\[
  V_q
  =
  \operatorname{diag}(q_1,\ldots,q_d)-q_{1:d}q_{1:d}^\top.
\]
Define the Gram region generated by diameter-one statistics
\[
  \mathcal{G}_k
  =
  \dkx{
    J\succeq0:
    J_{aa}\le1,\quad
    J_{aa}+J_{bb}-2J_{ab}\le1
    \text{ for all }a,b\in\dkx{1,\ldots,d}
  }.
\]

\begin{proposition}[Categorical model]
  \label{prop:categorical-prob}
  For \(W\succ0\),
  \begin{equation}
    \begin{aligned}
      \mfC_q(W)
      &=
      \frac{1}{2} \min_{J\in\mathcal{G}_k,~ J \succ 0 } \Tr(WJ^{-1}), \\
      \uC_q(W)
      &=
      \beta\Tr(WV_q)+(1-\beta)\mfC_q(W),
      \qquad 0\le\beta\le1.
    \end{aligned}
    \label{eq:categorical-unified-constant}
  \end{equation}
  Every statistic that is optimal in the privacy regime remains optimal for \(0\le\beta<1\).
  Under squared Euclidean loss on the full probability vector, \(W=I_d+\mathbf{1}_d\mathbf{1}_d^\top\), and
  \[
    \uC_q
    =
    \beta\xkx{1-\norm{q}_2^2}
    +(1-\beta)(k-1).
  \]
\end{proposition}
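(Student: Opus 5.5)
The plan is to compute everything explicitly in the baseline coordinates \(\theta=q_{1:d}\), with \(q_k=1-\mathbf 1_d^\top\theta\). For \(x=a\le d\) the score is \(s_q(a)=e_a/q_a\), and \(s_q(k)=-\mathbf 1_d/q_k\). For any statistic \(\Phi\), write \(f_a=\Phi(a)-\Phi(k)\). Then
\[
A_{\Phi,q}h=\sum_{a\le d}q_a\Phi(a)\frac{h_a}{q_a}-q_k\Phi(k)\frac{\mathbf 1^\top h}{q_k}=\sum_{a\le d}h_a f_a,
\]
so \(A_{\Phi,q}=F\), the matrix with columns \(f_a\). Hence \(A^*_{\Phi,q}A_{\Phi,q}=F^\top F\) is the Gram matrix of the \(f_a\).

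The diameter constraint over the \(k\) points \(\Phi(1),\dots,\Phi(k)\) is exactly \(\norm{f_a}\le1\) and \(\norm{f_a-f_b}\le1\). These are precisely the constraints \(J_{aa}\le1\) and \(J_{aa}+J_{bb}-2J_{ab}\le1\). Conversely, any \(J\in\mathcal G_k\) factors as \(J=F^\top F\), and we realize it by setting \(\Phi(k)=0\) and \(\Phi(a)=f_a\). Thus \(\caJ_q=\mathcal G_k\); this set is already closed. The privacy formula then follows from \cref{def:diameter-score-constant}. Since \(W\succ0\), \(\caI_W(J)=\infty\) for singular \(J\), so we may restrict to \(J\succ0\), and the minimum is attained by compactness.

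For the unified constant, write \(\Phi(X)=\Phi(k)+F\,e_X\), where \(e_X\) is the vector of the first \(d\) category indicators. This gives \(V_{\Phi,q}=FV_qF^\top\). By the remark after \cref{def:diameter-score}, we may restrict \(\Phi\) to the affine span of its \(k\) values, which has dimension at most \(d\). Projection does not increase the diameter and leaves \(F^\top\Sigma^\dagger F\) unchanged, because \(\Sigma\) acts within that span plus an isotropic part. If \(\operatorname{rank}F<d\), then \(H=F^\top\Sigma^\dagger_{\Phi,q,\beta}F\) is singular and \(\caI_W(H)=\infty\). Otherwise \(F\) is, after an isometry, square and invertible, and
\[
H^{-1}=F^{-1}\Sigma_{\Phi,q,\beta}F^{-\top}=\beta V_q+\tfrac{1-\beta}{2}(F^\top F)^{-1}.
\]
Therefore \(\Tr(WH^{-1})=\beta\Tr(WV_q)+\tfrac{1-\beta}{2}\Tr(WJ^{-1})\) with \(J=F^\top F\in\mathcal G_k\). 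Infimizing over \(J\), and noting that taking the closure does not lower the infimum by lower semicontinuity of \(\caI_W\), gives \(\uC_q(W)=\beta\Tr(WV_q)+(1-\beta)\mfC_q(W)\). The objective is affine in \(\Tr(WJ^{-1})\) with a positive coefficient when \(\beta<1\), so any privacy-optimal \(J\), and hence any privacy-optimal statistic, is optimal for every \(\beta<1\). As a consistency check, at \(\beta=1\) the formula gives \(\Tr(WV_q)=\Tr(WI(q)^{-1})\).

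For the full probability vector, the loss is \(\norm{h}^2+(\mathbf 1^\top h)^2\), so \(W=I_d+\mathbf 1\mathbf 1^\top\). A direct computation gives \(\Tr(WV_q)=\sum_{a\le d}q_a(1-q_a)+(1-q_k)q_k=1-\norm{q}_2^2\). For the privacy part:
\begin{itemize}
\item \emph{Upper bound.} Take the regular simplex with unit edges, so \(J=(I+\mathbf 1\mathbf 1^\top)/2\). Then \(\Tr(WJ^{-1})=2d\), giving \(\mfC_q\le k-1\).
\item \emph{Lower bound.} I would symmetrize over permutations of all \(k\) categories. Such permutations act linearly on the baseline coordinates and preserve both \(\mathcal G_k\) (equivalently, the diameter-one point configuration) and the full-vector loss \(W\). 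By convexity of \(\mathcal G_k\) and of \(J\mapsto\Tr(WJ^{-1})\), the averaged \(J\) is no worse and corresponds to a permutation-symmetric Gram matrix \(aI+b\mathbf 1\mathbf 1^\top\). Among such matrices, the constraints force \(J\preceq(I+\mathbf 1\mathbf 1^\top)/2\), which yields \(\mfC_q\ge k-1\).
\end{itemize}

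The main obstacle is this last step: checking carefully that the permutation group, including permutations that move the baseline category \(k\), acts consistently on both \(\mathcal G_k\) and \(W\), and solving the resulting two-parameter problem.
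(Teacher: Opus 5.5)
Your derivation of the general-\(W\) identity is essentially the paper's proof. The paper also writes \(A_{\Phi,q}=U\) with \(Ue_a=\phi_a-\phi_k\) and \(V_{\Phi,q}=UV_qU^*\), identifies the admissible Gram matrices with \(\mathcal G_k\), and discards rank-deficient configurations because \(W\succ0\). It then obtains \(H^{-1}=\beta V_q+\frac{1-\beta}{2}J^{-1}\) through the Woodbury identity in its linear configuration lemma, which is the same computation as your direct inversion. The coincidence of optimizers is read off exactly as you do.

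\textbf{The closure step is justified in the wrong direction.} Lower semicontinuity of \(\caI_W\) cannot show that passing to the closure in \(\caU_{q,\beta}\) leaves the infimum unchanged. At a limit point \(H=\lim_m H_m\) it only gives \(\caI_W(H)\le\liminf_m\caI_W(H_m)\), which is precisely what would allow the closure to lower the infimum. What you need is that the closure adds no matrices with smaller objective.
\begin{itemize}
\item For \(\beta<1\), the push-through identity shows the generator depends only on \(J=F^\top F\): \(H=J(cI_d+\beta V_qJ)^{-1}\) with \(c=(1-\beta)/2\). The inverse exists because \(V_qJ\) has nonnegative eigenvalues. This map is continuous on the compact set \(\mathcal G_k\), so the set of generators is already closed; this is how the paper argues.
\item At \(\beta=1\) the map is discontinuous at rank-deficient \(J\), so this case is not merely a consistency check. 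You need that every closure point satisfies \(H\preceq V_q^{-1}=I(q)\), since the information inequality \(A^\top V^\dagger A\preceq I(q)\) survives limits. Hence \(\caI_W(H)\ge\Tr(WV_q)\).
\end{itemize}

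\textbf{Your Euclidean privacy constant takes a different route from the paper.} The paper works at the uniform distribution in orthonormal tangent coordinates, where the loss becomes the identity. There \(\norm{A_{\Phi}}_{\HS}^2=\frac1k\sum_{j<l}\norm{\Phi_j-\Phi_l}^2\le\frac{k-1}{2}\), and \(\Tr(J^{-1})\ge(k-1)^2/\Tr J\) gives the lower bound \(k-1\), attained by the regular simplex. Since \(\mathcal G_k\) does not depend on \(q\), the value transfers to every interior \(q\).

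Your \(S_k\)-symmetrization is also valid. Relabeling acts by \(J\mapsto P_\sigma^\top JP_\sigma\) for a \(d\times d\) matrix \(P_\sigma\) with \(\Pi_\sigma E=EP_\sigma\), where \(Eh=(h,-\mathbf 1^\top h)\) and \(\Pi_\sigma\) is orthogonal. Since \(W=E^\top E\), this gives \(P_\sigma^\top WP_\sigma=W\).

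However, the reduced problem has one parameter, not two. Invariance under permutations that move the baseline forces \(b=a\) in your notation, so \(\bar J=aW\). The constraint \(\bar J_{11}=2a\le1\) then gives \(\Tr(W\bar J^{-1})=d/a\ge2d\) at once. On the full family \(aI+b\mathbf 1\mathbf 1^\top\) your Loewner claim \(J\preceq\frac12(I+\mathbf 1\mathbf 1^\top)\) is false: \(J=\mathbf 1\mathbf 1^\top\in\mathcal G_k\) is a counterexample for \(d\ge2\). Directly minimizing \((d-1)/a+(d+1)/(a+db)\) over \(a\le\frac12\), \(a+b\le1\) would still yield \(2d\).

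Your route handles any loss invariant under relabeling of categories without needing a trace identity. The paper's route is shorter but relies on the loss being isotropic in tangent coordinates.
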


Here, \(\Tr(WV_q)\) is the sampling regime constant dictated by the Fisher information, whereas \(\mfC_q(W)\) is the privacy regime constant and depends only on \(W\) and \(k\).
The categorical model is exceptional because, for each fixed loss, a statistic optimal in the privacy regime remains optimal throughout \(0\le\beta<1\).
Thus, unlike in a general finite-alphabet model, the same bounded statistic jointly retains the sampling and privacy information needed in every non-sampling regime.

For squared Euclidean loss on the full probability vector, the scaled one-hot statistic \(\Phi_{\mathrm{hist}}(j)=e_j/\sqrt{2}\), up to translation to the simplex tangent space, has diameter one and attains the constant.
The term \(1-\norm{q}_2^2\) is the sampling contribution of the empirical histogram, whereas \(k-1\) is the privacy contribution from the \((k-1)\)-dimensional simplex tangent space.
The sampling contribution is largest at the uniform probability vector.

\subsection{Generalized linear regression}
\label{subsec:generalized-linear-regression}

We consider the canonical generalized linear model (GLM) with Gaussian design
\begin{equation}
  \label{eq:gauss-design-glm}
  X\sim N_p(0,I_p),
  \qquad
  p_\vartheta(y\mid x)
  =
  \exp\xk{
    \frac{y x^\top \vartheta-b(x^\top \vartheta)}{\kappa}
    +c(y,\kappa)
  },
\end{equation}
where \(\vartheta\in\R^p\) is the regression parameter, \(X\in\R^p\) is the covariate, and \(Y\) is the response.
The function \(b\) is the cumulant function, \(\kappa>0\) is a known dispersion parameter, and \(c(y,\kappa)\) is the normalizing term.

The results for GLM under Gaussian design resemble that of Gaussian mean estimation.
Let \(Y_0\) follow the response family at natural parameter zero and, independently, let \(\chi_p=\norm{Z}_2\) for \(Z\sim N_p(0,I_p)\).
\begin{equation}
  \label{eq:glm-absolute-score}
  A_0=\frac{Y_0-b'(0)}{\kappa},
  \qquad
  d_0=\E_0 \abs{A_0},
  \qquad
  R_0=\abs{A_0}\chi_p,
\end{equation}
At the null center, the score is \(s_0(X,Y_0)=A_0X\).
Suppose \cref{ass:regular,ass:score-density-l1} hold and \(b''(0)>0\).
For \(0<\beta<1\), let \(t_\beta>0\) be the unique solution of
\begin{math}
   \E\xk{{R_0}/{t_\beta}-1}_+
  =
  {2p(1-\beta)}/{\beta}.
\end{math}

\begin{proposition}[GLM with Gaussian design]
  \label{prop:gauss-design-glm}
  For the GLM \cref{eq:gauss-design-glm} at \(\vartheta_0=0\), the exact local constant under squared Euclidean loss is
  \begin{equation}
    \label{eq:glm-unified-constant}
    \uC_0(I_p)
    =
    \frac{\beta p^2}
    {\E\zk{R_0 \min(R_0,t_\beta)}},
    \qquad
    \mfC_0^{(0)}(I_p)
    =
    \frac{2p^3}{(\E\chi_p)^2d_0^2},
    \qquad
    \mfC_0^{(1)}(I_p)
    =
    \frac{p\kappa}{b''(0)}.
  \end{equation}
  For \(\beta=0\) and \(0<\beta<1\), the optimal statistic is given by the score direction and radial clipping, respectively,
  \[
    \Phi_{\mathrm{priv}}(x,y)
    =
    \frac{1}{2}\frac{s_0(x,y)}{\norm{s_0(x,y)}_2},
    \qquad
    \Phi_\beta(x,y)
    =
    \frac{1}{2}
    \frac{s_0(x,y)}{\norm{s_0(x,y)}_2}
    \min\xk{\frac{\norm{s_0(x,y)}_2}{t_\beta},1},
  \]
  with both statistics set to zero when \(s_0(x,y)=0\).
  At \(\beta=1\), an asymptotically efficient statistic is given by a score truncation with diverging radius.
\end{proposition}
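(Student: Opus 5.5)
The plan is to reduce the GLM to the spherically symmetric score problem already solved for the Gaussian mean in \cref{prop:gauss-mean}. The first reduction shows that $\caU_{0,\beta}$ depends on the model only through the law of the score $s_0(X,Y_0)=A_0X$. Given any $\Phi$ with $\diam(\Phi)\le1$, replace it by $\widetilde\Phi=\E_0[\Phi(X,Y)\mid s_0(X,Y)]$. Then:
\begin{itemize}
\item $\widetilde\Phi$ takes values in the closed convex hull of the range of $\Phi$, so $\diam(\widetilde\Phi)\le1$;
\item $A_{\widetilde\Phi,0}=A_{\Phi,0}$, because $s_0$ is $\sigma(s_0)$-measurable;
\item $V_{\widetilde\Phi,0}\preceq V_{\Phi,0}$, hence $\Sigma_{\widetilde\Phi,0,\beta}\preceq\Sigma_{\Phi,0,\beta}$.
\end{itemize}
So $H_{\widetilde\Phi}\succeq H_\Phi$. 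The closure and the downward-closed structure of \cref{prop:dc-info-prop}, together with its analogue for $\caU$, then allow us to restrict to statistics of the form $\Phi=f(s_0)$.

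Next we identify the law of $s_0$. Since $X\sim N_p(0,I_p)$ is independent of $Y_0$, we have $s_0=\abs{A_0}\sign(A_0)X$. Its norm is $\norm{s_0}_2=R_0=\abs{A_0}\chi_p$. Conditionally on $R_0$, its direction $s_0/\norm{s_0}_2$ is uniform on the sphere and independent of $R_0$, because $\sign(A_0)X/\chi_p$ is uniform and independent of $(\abs{A_0},\chi_p)$. Hence $s_0$ is spherically symmetric with radial variable $R_0$, in place of the radial variable $\chi_p/\sigma$ of the Gaussian mean model.

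The variational problem for $\uC_0(I_p)$ is then the same one solved in the proof of \cref{prop:gauss-mean}, which I expect to be written for a general spherically symmetric score with radial law $R$. I would re-run that argument with $R=R_0$:
\begin{itemize}
\item \textbf{Isotropy.} Averaging $\Phi\circ O$ over orthogonal $O$ (as $O\oplus\cdots$ blocks) keeps the diameter, and $H\mapsto\Tr H^{-1}$ is convex while $\caU_{0,\beta}$ is convex and rotation invariant for $\beta<1$. So we may take $H=hI_p$, giving $\uC_0(I_p)=p^2/\sup_\Phi\Tr H_\Phi$ up to the isotropic restriction.
\item \textbf{Radial form.} Take equivariant statistics $\Phi=g(R_0)\,s_0/\norm{s_0}_2$. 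Then $A=\E[g(R_0)R_0]/p\,I_p$ and $V=\E[g(R_0)^2]/p\,I_p$.
\item \textbf{Diameter constraint.} For such radial statistics the constraint amounts to $0\le g\le 1/2$ (antipodal points).
\item \textbf{Scalar Lagrangian.} Maximize $\E[gR_0]^2/(p\,[\beta\E g^2/p+(1-\beta)/2])$ over such $g$. The pointwise optimizer is a clipped linear function, $g=\min(R_0/t,1)/2$, with $t$ fixed by first-order stationarity. Stationarity is exactly the equation $\E(R_0/t_\beta-1)_+=2p(1-\beta)/\beta$. Substituting back gives $\beta p^2/\E[R_0\min(R_0,t_\beta)]$.
\end{itemize}

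The two endpoints follow directly. At $\beta=0$ the optimizer is $g\equiv1/2$, the spatial sign, with $A=\E R_0/(2p)\,I_p$ and $\E R_0=d_0\E\chi_p$, which gives $2p^3/(\E\chi_p)^2d_0^2$. At $\beta=1$, \eqref{eq:unified-regime-identities} gives $\Tr I(0)^{-1}$ with $I(0)=\E[A_0^2]\,\E[XX^\top]=(b''(0)/\kappa)I_p$, i.e.\ $p\kappa/b''(0)$. Attainment by $\Phi_{\mathrm{priv}}$, $\Phi_\beta$ and truncated scores then follows from \cref{thm:local-minimax} and \cref{lem:verify-near-optimal}, once \cref{ass:regular,ass:score-density-l1} are verified for the GLM using $b''(0)>0$.

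The main obstacle is the optimality of radial equivariant statistics. We must show that no non-equivariant or non-radial $\Phi$ of diameter one, for example one not centered at a point where antipodal radius $1/2$ is the binding constraint, does better. Diameter one does not force the range of $\Phi$ into a ball of radius $1/2$, so the pointwise bound $\norm{\Phi}\le 1/2$ is not automatic. To handle this I would first try symmetrization under the reflection $s\mapsto -s$ combined with rotations: the score law is invariant under these, and replacing $\Phi$ by $(\Phi(s)-\Phi(-s))/2$ yields an odd statistic with the same $A$, smaller variance, and $\norm{\Phi(s)}_2\le1/2$. I would then invoke the Gaussian-mean proof verbatim for the remaining convex pointwise optimization.
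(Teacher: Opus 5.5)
Your reduction is the paper's. Its proof of this proposition notes that \(s_0=A_0X\) is a factorized spherical score with radial variable \(R_0=\abs{A_0}\chi_p\). It then invokes the spherical clipping reduction (\cref{appendix-prop:trans-spherical-clipping}, via \cref{appendix-cor:trans-factorized-spherical-score}), which is stated for an arbitrary spherically symmetric score. The endpoints follow from \(\E R_0=d_0\E\chi_p\) and \(\E R_0^2=p\,b''(0)/\kappa\).

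Citing that result closes your argument. Your isotropy step, \(\uC_0(I_p)=p^2/\sup_{H\in\caU_{0,\beta}}\Tr H\) from convexity and rotation invariance of the region, is sound, and so is your direct Fisher computation at \(\beta=1\). One small repair: when \(\Pr(A_0=0)>0\), as in Poisson regression, use a sign \(\varepsilon(A_0)\in\{\pm1\}\) rather than \(\sign(A_0)\) to exhibit the uniform direction.

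However, if you prove the step you flagged yourself, the patch you sketch does not give radial optimality for \(0<\beta<1\). Odd symmetrization is fine: it keeps \(A_\Phi\), shrinks \(V_\Phi\), and puts the range in a ball of radius \(1/2\). The rotation average \(\bar\Phi(s)=\int O^\top\Phi(Os)\,\nu(dO)\) is the problem. It also replaces \(A_\Phi\) by \((\Tr A_\Phi/p)I_p\), and \((A,\Sigma)\mapsto A^\top\Sigma^{-1}A\) is jointly convex, so averaging can lower \(\Tr H_\Phi\).

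Two examples with \(p=2\) show this:
\begin{itemize}
\item \(\Phi(s)=\frac12\mathrm{diag}(1,-1)s/\norm{s}_2\) carries the same information as the spatial sign, yet its average is identically zero.
\item Aligning \(A_\Phi\) to be positive semidefinite first does not restore monotonicity. The rank-one statistic \(\frac12\sign(s_1)e_1\) has \(A_\Phi\succeq0\), but averaging halves \(\norm{A_\Phi}_{\HS}^2\), so \(\Tr H_\Phi\) drops for small \(\beta\).
\end{itemize}

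The paper's fix is to average in the influence coordinates of \cref{appendix-prop:trans-stat-decomp}, \(\psi=A_\Phi^{-1}\Phi\) and \(Q=(A_\Phi^\top A_\Phi)^{-1}\). There the objective \(\beta\E\norm{\psi}_2^2+\frac{1-\beta}{2}\Tr Q\) is convex. The calibration \(\E[\psi S^\top]=I_p\) is invariant under \(\psi\mapsto G^\top\psi(G\,\cdot)\). The diameter constraint becomes the convex condition
\[
  \begin{pmatrix}Q&\psi(s)-\psi(s')\\ (\psi(s)-\psi(s'))^\top&1\end{pmatrix}\succeq0 .
\]
Haar averaging then legitimately reduces to \(\psi=h(R)S/R\) and \(Q=qI_p\) with \(q\ge4\,\esssup h(R)^2\), after which your scalar Lagrangian applies.

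At \(\beta=0\) your reflection suffices on its own, through the bound \(\norm{A_\Phi}_{\HS}\le\frac12\sup_{\norm{B}_{\HS}\le1}\E\norm{BS}_2\le\E R_0/(2\sqrt p)\).
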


\begin{table}[htpb]
  \caption{Privacy and sampling regime constants for selected GLMs with Gaussian design at the null center.}
  \label{tab:glm-regime-constants}
  \centering
  \footnotesize
  \setlength{\tabcolsep}{3pt}
  \renewcommand{\arraystretch}{1.25}
  \begin{tabular}{@{}p{0.25\textwidth}p{0.20\textwidth}p{0.28\textwidth}p{0.15\textwidth}@{}}
    \hline
    Model & \(R_0\) & Privacy & Sampling \\
    \hline
    {\raggedright Gaussian linear regression with noise variance \(\sigma^2\)\par} &
    \(\chi_1\chi_p/\sigma\) &
    \(\pi\sigma^2p^3(\E\chi_p)^{-2}\) &
    \(p\sigma^2\) \\
    {\raggedright Logistic regression\par} &
    \(\chi_p/2\) &
    \(8p^3(\E\chi_p)^{-2}\) &
    \(4p\) \\
    {\raggedright Poisson log-linear regression\par} &
    {\raggedright \(\abs{Y_0-1}\chi_p\), \(Y_0\sim\operatorname{Poisson}(1)\)\par} &
    \(2^{-1}e^2p^3(\E\chi_p)^{-2}\) &
    \(p\) \\
    \hline
  \end{tabular}
\end{table}

At the null center, Gaussian design makes the score spherically symmetric with radial magnitude \(R_0\).
As in Gaussian mean estimation, the multivariate calculation therefore reduces to the scalar threshold \(t_\beta\).
The same reduction applies to GLMs with spherically symmetric designs.
\Cref{tab:glm-regime-constants} lists the constants for several GLM examples.
   \section{Extensions}
\label{sec:extensions}

\subsection{Gaussian differential privacy}
\label{subsec:gauss-differential-privacy}

Our efficiency theory naturally extends to the Gaussian DP notion \citep{dong2022_GaussianDifferential}.
For probability measures \(P\) and \(Q\), define the trade-off function
\[
  T(P,Q)(a)=\inf_{\varphi:\,\E_P\varphi\le a} \xk{1-\E_Q\varphi},
  \qquad a\in[0,1],
\]
where the infimum is over measurable tests \(\varphi\) taking values in \([0,1]\).

\begin{definition}[\(\mu\)-GDP]
  \label{def:gdp}
  A mechanism \(M\) with a standard Borel output space is \(\mu\)-Gaussian differentially private (GDP) if
  \[
    T(M(S),M(S'))\ge G_\mu \coloneqq T(N(0,1),N(\mu,1))
  \]
  pointwise on \([0,1]\) for every adjacent pair \(S,S'\).
\end{definition}

We use the following standard comparison, Gaussian mechanism, and composition properties of GDP \citep[Theorems~1, 2, and~4 and Corollary~2]{dong2022_GaussianDifferential}.

\begin{lemma}
  \label{lem:gdp-transfer}
  The following statements hold.
  \begin{enumerate}[label=(\roman*)]
    \item Every \(\mu\)-GDP mechanism is \((\mu^2/2)\)-zCDP.
    \item The Gaussian mechanism in \cref{lem:gauss-composition} is
    \(\mu\)-GDP when \(\rho=\mu^2/2\), and the adaptive composition of
    \(\mu_j\)-GDP mechanisms, \(j=1,\ldots,k\), is
    \((\sum_{j=1}^k\mu_j^2)^{1/2}\)-GDP.
  \end{enumerate}
\end{lemma}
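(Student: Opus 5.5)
The statement is \cref{lem:gdp-transfer}, which collects standard facts from \citet{dong2022_GaussianDifferential}. We verify each item by reduction to trade-off functions and Rényi divergences.

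For (i), we use the characterization of \(\mu\)-GDP through a pair of Gaussians. By the Blackwell-type result of \citet{dong2022_GaussianDifferential} (their Theorem~2/Proposition on postprocessing), if \(T(P,Q)\ge G_\mu\), then there is a Markov kernel \(K\) with \(K(N(0,1))=P\) and \(K(N(\mu,1))=Q\). Equivalently, \((P,Q)\) is a garbling of \((N(0,1),N(\mu,1))\).

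Rényi divergence satisfies the data-processing inequality. Hence for every \(\alpha>1\),
\[
D_\alpha(M(S)\|M(S'))\le D_\alpha(N(0,1)\|N(\mu,1))=\alpha\mu^2/2,
\]
where the equality is the standard Gaussian computation. This is exactly \((\mu^2/2)\)-zCDP in the sense of \cref{def:zcdp}.

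The main subtlety is the direction of the trade-off inequality together with the existence of the garbling kernel on standard Borel spaces. We invoke that result as stated for standard Borel outputs, which is why \cref{def:gdp} restricts to such outputs.

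For (ii), consider the Gaussian mechanism \(f(S)+Z\) with \(Z\sim N(0,\Delta^2I_d/(2\rho))\) and \(\rho=\mu^2/2\), so the noise standard deviation is \(\Delta/\mu\). For adjacent \(S,S'\), rescale by \(\mu/\Delta\) and rotate so that \(f(S)-f(S')\) lies along the first axis. The output pair then becomes \(N(0,I_d)\) versus \(N(v,I_d)\) with \(\norm{v}_2\le\mu\). The coordinates orthogonal to \(v\) are identically distributed and independent, so they do not change the trade-off function. The trade-off function is therefore \(G_{\norm{v}}\ge G_\mu\), using that \(G_m\) decreases in \(m\) by Neyman--Pearson monotonicity.

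For adaptive composition we cite the composition theorem of \citet{dong2022_GaussianDifferential}, namely \(G_{\mu_1}\otimes\cdots\otimes G_{\mu_k}=G_{\sqrt{\sum\mu_j^2}}\) combined with their adaptive composition theorem for \(f\)-DP. Reproving this would require the tensor-product calculus of trade-off functions. We will quote it rather than rederive it, because it is the only step with genuine technical content.
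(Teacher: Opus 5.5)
Your proposal is correct and takes the same route as the paper, which gives no argument of its own beyond citing \citet{dong2022_GaussianDifferential}. In that source, (i) follows from their Blackwell theorem together with the data-processing inequality for R\'enyi divergence and \(D_\alpha(N(0,1)\|N(\mu,1))=\alpha\mu^2/2\). Part (ii) follows from their Gaussian-mechanism and adaptive \(f\)-DP composition results, using \(G_{\mu_1}\otimes\cdots\otimes G_{\mu_k}=G_{(\sum_j\mu_j^2)^{1/2}}\). Your explicit reduction of the Gaussian mechanism to \(G_{\norm{v}_2}\ge G_\mu\), and your remark that the garbling kernel needs standard Borel outputs, are exactly the details those citations supply.
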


The zCDP theory transfers directly to GDP thanks to the connection between the two and the construction of the attaining estimator.
On one hand, \cref{lem:gdp-transfer}(i) makes the lower bound immediate by inclusion.
On the other hand, by \cref{lem:gdp-transfer}(ii), the moment-map inverse estimator and preliminary localization used in the upper bounds also satisfy GDP.
Combining the upper and lower part, we obtain the following efficiency corollary for GDP.

\begin{corollary}[Efficiency under GDP]
  \label{cor:gdp-unified-efficiency}
  Let \((\mu_n)\) be a positive sequence and set \(\rho_n=\mu_n^2/2\).
  In the notation of
  \cref{eq:unified-effective-info-scale,eq:unified-local-param-set},
  the following conclusions hold.
  \begin{enumerate}[label=(\roman*)]
    \item Under the assumptions of \cref{thm:local-minimax},
    \begin{equation}
      \lim_{n\to\infty}
      \sigma_n^{-2}
      \inf_{\widehat\theta:\mu_n\text{-GDP}}
      \sup_{\theta\in\widetilde\Theta_n}
      \E_\theta
      \normx{\widehat\theta-\theta}_W^2
      =
      \uCt(W).
    \end{equation}
    \item Under the assumptions of \cref{thm:compact-minimax},
    \begin{equation}
      \lim_{n\to\infty}
      \inf_{\widehat\theta:\mu_n\text{-GDP}}
      \sup_{\theta\in K}
      \frac{
        \sigma_n^{-2}
        \E_\theta
        \normx{\widehat\theta-\theta}_W^2
      }{
        \uC_\theta(W)
      }
      =1.
    \end{equation}
  \end{enumerate}
\end{corollary}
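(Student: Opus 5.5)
The plan is to obtain both parts of \cref{cor:gdp-unified-efficiency} by sandwiching the GDP minimax risk between the zCDP minimax risk and the risk of the explicit Gaussian constructions. The sandwich rests on \cref{lem:gdp-transfer}.

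\emph{Lower bound.} By \cref{lem:gdp-transfer}(i), every \(\mu_n\)-GDP mechanism is \(\rho_n\)-zCDP with \(\rho_n=\mu_n^2/2\). Hence the infimum over \(\mu_n\)-GDP estimators is taken over a subclass of the \(\rho_n\)-zCDP estimators, and is therefore at least the zCDP infimum, for every \(n\). The lower-bound halves of \cref{thm:local-minimax} and \cref{thm:compact-minimax} then give
\[
\liminf_{n\to\infty} \sigma_n^{-2}\inf_{\mu_n\text{-GDP}}\sup_{\widetilde\Theta_n}\E_\theta\normx{\widehat\theta-\theta}_W^2 \ge \uCt(W)
\]
and the normalized compact \(\liminf\ge1\). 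The scales \(\tau_n,\sigma_n,\beta_n\) are defined through \(\rho_n\), so they are identical in the two formulations, and the regime assumption \cref{eq:unified-asymp-regime} is simply a hypothesis on \(\mu_n^2/2\).

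\emph{Upper bound, local part.} The attaining sequence of \cref{thm:local-attain-seq} releases \(n^{-1}\sum\Phi(X_i)+G_n\), with sensitivity at most \(1/n\) and noise \(N(0,I_p/(2\tau_n^2))=N(0,(1/n)^2 I_p/(2\rho_n))\). This is exactly the Gaussian mechanism of \cref{lem:gauss-composition} with \(\Delta=1/n\). By \cref{lem:gdp-transfer}(ii) it is \(\mu_n\)-GDP. The projected inversion is post-processing, and post-processing preserves GDP because trade-off functions can only increase under a common Markov kernel. So the same estimator is \(\mu_n\)-GDP, and its risk bound from \cref{thm:local-attain-seq} gives \(\limsup\le\uCt(W)\).

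\emph{Upper bound, compact part.} The estimator of \cref{thm:compact-attain-seq} is an adaptive composition of two Gaussian mechanisms:
\begin{itemize}
\item the localization release on \(n_1\) records with budget \(\rho_1\);
\item the chart-specific release on the remaining records with budget \(\rho_2\), where \(\rho_1+\rho_2=\rho_n\).
\end{itemize}
For GDP, I would set \(\mu_j=\sqrt{2\rho_j}\). Then \(\sum\mu_j^2=2\rho_n=\mu_n^2\), and the GDP composition rule in \cref{lem:gdp-transfer}(ii) gives total \(\mu_n\)-GDP. This is exactly the zCDP budget split, so the estimator is literally the same one and its risk is unchanged.

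One subtlety needs a check. The two stages act on disjoint subsamples, so under replacement adjacency only one stage sees the changed record. The composition bound is still valid, and conservative. Because the estimator and the \(\rho\)-parametrization coincide, \cref{thm:compact-attain-seq} gives \(\limsup\le1\) in the normalized compact sense. This step, verifying that the exact budget bookkeeping and post-processing transfer to GDP, is the only point of care; the rest is immediate.
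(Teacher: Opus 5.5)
Your proposal is correct and follows the paper's own argument. The lower bound comes from the inclusion of \(\mu_n\)-GDP mechanisms among \(\rho_n\)-zCDP mechanisms via \cref{lem:gdp-transfer}(i), and the upper bound comes from checking, via \cref{lem:gdp-transfer}(ii) with \(\mu_j=\sqrt{2\rho_j}\) and post-processing, that the same Gaussian moment-map inverse and two-stage localization estimators are already \(\mu_n\)-GDP; your explicit budget bookkeeping and post-processing remark simply spell out details the paper leaves implicit.
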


\subsection{Differentiable targets}
\label{subsec:differentiable-targets}

Although our results are stated for \(\theta\) itself, the same theory applies
to a differentiable target \(\psi:\Theta\to\R^m\) under quadratic loss
\( \normx{\hat{\psi} - \psi(\theta)}_W^2 \) for a quadratic loss matrix \(W\succ0\).
The optimal constant \( \mfC_{\psi,\theta}^{(\beta)}(W) \) is obtained via the local expansion of \( \psi \) and is given by
\begin{equation}
  \label{eq:target-efficiency-constant}
  \mfC_{\psi,\theta}^{(\beta)}(W)
  =
  \mfC_\theta^{(\beta)}(W_{\psi,\theta}),
  \qquad
  W_{\psi,\theta}
  =
  D\psi(\theta)^\top W D\psi(\theta).
\end{equation}
Detailed formal results are given in \cref{appendix-subsec:differentiable-target-efficiency}.
   \section{Conclusion}
\label{sec:conclusion}

This paper develops an exact asymptotic efficiency theory for estimation under central zCDP.
Privacy does not simply apply a scalar discount to ordinary Fisher information,
but introduces a set-valued information region recording the privacy constraint.
The same region supplies both sides of the theory: it constrains the Fisher information of every zCDP output experiment and generates matching moment-map inverse estimators based on nearly optimal statistics.
The set-valued information formulation extends across privacy regimes and shows how privacy and sampling interact through the same bounded statistic.
In particular, in the transition regime, the statistic's sampling covariance and privacy sensitivity yield a joint variational constant rather than a weighted average of the two endpoint constants, while the sampling regime limit recovers ordinary Fisher efficiency.

Our theory also supplies practically useful private estimators.
For each loss and privacy regime, a moment-map inverse estimator based on a nearly optimal statistic and its Gaussian release attains the optimal constant,
so no nonadditive, nonlinear, or otherwise unrestricted zCDP mechanism can improve the leading constant for these problems.
In the examples, this principle yields explicit constructions, showing how the information geometry translates into concrete private procedures.

Several directions now become natural.
An immediate question is to develop optimal private estimators for other parametric models
and numerical methods for optimizing the information region and its associated mechanisms.
A natural direction is to develop an exact efficiency theory under \((\varepsilon,\delta)\)-differential privacy, where the privacy constraint calls for a different geometric characterization of the attainable local experiments.
More broadly, semiparametric functionals and covariance or eigenspace problems offer settings in which nuisance structure and matrix geometry may produce genuinely new private efficiency phenomena.
Establishing efficiency theory for nonparametric estimation under privacy is also an interesting future problem.

We believe that the theory developed here provides a foundation for a general theory of statistical efficiency under differential privacy, in which privacy, statistical information, and decision loss are understood through a common information geometry.

  \paragraph{Appendices.} Complete proofs and additional model calculations are included in Sections S1--S8 below.

\clearpage
\begingroup
    \makeatletter
    \def\nocontentsline{%
      \let\@@addcontentsline\addcontentsline
      \def\addcontentsline##1##2##3{%
        \let\addcontentsline\@@addcontentsline
      }%
    }
    \makeatother
    \setcounter{tocdepth}{2}

  \endgroup
  \clearpage

  \appendix
\setcounter{table}{0}\setcounter{figure}{0}
\section*{Appendices}
  \renewcommand{\thesection}{S\arabic{section}}
  \numberwithin{equation}{section}
  \renewcommand{\thefigure}{S\arabic{figure}}
  \renewcommand{\thetable}{S\arabic{table}}

  \section{Information Geometry}
\label{appendix-sec:info-region-proofs}

This section collects the information-geometric results used throughout the
appendices.
We first establish the compactness, order structure, finite-dimensional representation, directional envelope, and local stability of the privacy information region.
We then study the geometry of the unified information region in the privacy and sampling regimes.
Throughout the appendices, we write \(J_{\Phi,\theta}\coloneqq A_{\Phi,\theta}^*A_{\Phi,\theta}\) for the information matrix generated by \(\Phi\).

\subsection{Proof of \Cref{prop:dc-info-prop}}
\label{appendix-subsec:compact}

The diameter identity in part~\textup{(i)} is immediate.
Since \(\E_\theta s_\theta(X)=0\), for every \(h\in\R^p\),
\[
  A_{\Phi+c,\theta} h
  =
  A_{\Phi,\theta} h
  +c\E_\theta \ang{s_\theta(X),h}
  =
  A_{\Phi,\theta} h.
\]

The zero statistic shows that the set is nonempty.
For convexity, it suffices to consider two generating matrices \(J_{\Phi_1,\theta}\) and \(J_{\Phi_2,\theta}\).
For \(0\le\lambda\le1\), define the direct-sum statistic
\[
  \Phi(x)
  =
  \xk{\sqrt\lambda\,\Phi_1(x),\sqrt{1-\lambda}\,\Phi_2(x)}.
\]
Then \(\diam(\Phi)\le1\) and
\[
  J_{\Phi,\theta}
  =
  \lambda J_{\Phi_1,\theta}
  +(1-\lambda)J_{\Phi_2,\theta}.
\]
Passing to the closure proves convexity.

Translate any generating statistic \(\Phi:\caX\to\R^d\) so that \(\norm{\Phi(x)}_2 \le1\).
For the standard basis \(e_1,\ldots,e_p\) of \(\R^p\),
\[
\norm{A_{\Phi,\theta}}_{\HS}^2 = \sum_{j=1}^p \norm{\E_\theta (\Phi(X)s_{\theta,j}(X))}_2^2 \le \E_\theta \norm{s_\theta(X)}_2^2.
\]
Thus the generating matrices, and hence their closure, are bounded.
The set is closed by definition, so it is compact in the finite-dimensional space \(\bbS^p\).

We next prove downward closure.
First suppose that \(J=A^*A\) is generated by an admissible statistic and that \(0\preceq\widetilde{J}\preceq J\).
There is a positive contraction \(D\) on \(\R^p\), supported on \(\ran(J)\), such that
\[
\widetilde{J}=J^{1/2} DJ^{1/2}.
\]
Let \(A=UJ^{1/2}\) be the polar decomposition of \(A\), and define the positive contraction \(T=UDU^*\) on the finite-dimensional codomain of \(A\), extended by zero outside \(\ran(U)\).
Then the statistic \(T^{1/2} \Phi\) has diameter at most one and
\[
J_{T^{1/2} \Phi,\theta}
=
A^*TA
=
\widetilde{J}.
\]
For a general \(J\in\caJ_\theta\), take generating matrices \(J_m \to J\) and write \(\widetilde{J}=J^{1/2} DJ^{1/2}\) as above.
The matrices
\[
\widetilde{J}_m=J_m^{1/2} DJ_m^{1/2}
\]
satisfy \(0\preceq\widetilde{J}_m \preceq J_m\) and converge to \(\widetilde{J}\).
The generating case and closedness of \(\caJ_\theta\) therefore prove part~\textup{(iii)}.

Finally, if \(\caJ_\theta\) contains a positive definite matrix, \cref{eq:info-region-fisher-bound} implies \(I(\theta)\succ0\).
Conversely, suppose \(I(\theta)\succ0\), and define the bounded \(\R^p\)-valued statistic
\[
\Phi_M(x)
=
\frac{1}{2M}s_\theta(x)\ind{\norm{s_\theta(x)}_2 \le M}.
\]
Its diameter is at most one, and
\[
A_{\Phi_M,\theta}
=
\frac{1}{2M}
\E_\theta \zk{
  s_\theta(X)s_\theta(X)^\top
  \ind{\norm{s_\theta(X)}_2 \le M}
}.
\]
The matrix inside the expectation converges to \(I(\theta)\) as \(M\to\infty\), so it is positive definite for all sufficiently large \(M\).
Thus \(J_{\Phi_M,\theta} \succ0\), proving part~\textup{(iv)}.

\subsection{Hilbert-to-finite-dimensional compression}
\label{appendix-subsec:hilbert-compression}

\begin{lemma}[Hilbert-to-finite-dimensional compression]
  \label{appendix-lem:hilbert-compression}
  Suppose \cref{ass:regular} holds, fix \(\theta\in\Theta\), and let \(\Phi:\caX\to\caH\) be a statistic taking values in a real Hilbert space \(\caH\), with finite second moment.
  Then there is a statistic \(\widetilde\Phi:\caX\to\R^p\) such that
  \[
    \diam(\widetilde\Phi)
    \le
    \diam(\Phi),
    \qquad
    J_{\widetilde\Phi,\theta}
    =
    J_{\Phi,\theta}.
  \]
  Consequently,
  \[
    \caJ_\theta
    =
    \cl\dk{
      J_{\Phi,\theta}:
      \Phi:\caX\to\R^p,\ \diam(\Phi)\le1
    }.
  \]
  The same region is obtained from arbitrary Hilbert-valued statistics:
  \[
    \caJ_\theta
    =
    \cl\dk{
      J_{\Phi,\theta}:\
      \substack{
        \caH\text{ a real Hilbert space},\ \Phi:\caX\to\caH,\\
        \E_\theta\norm{\Phi(X)}_{\caH}^2<\infty,\ \diam(\Phi)\le1
      }
    },
  \]
\end{lemma}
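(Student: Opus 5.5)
The plan is to compress onto the finite-dimensional range of the operator \(A_{\Phi,\theta}\) by an orthogonal projection, and then transport that range isometrically into \(\R^p\). First I check that \(A_{\Phi,\theta}:\R^p\to\caH\) is a well-defined bounded linear map. For each \(h\), the Bochner integral \(\E_\theta[\Phi(X)\ang{s_\theta(X),h}]\) exists because
\[
\E_\theta\norm{\Phi(X)}_{\caH}\abs{\ang{s_\theta(X),h}}\le (\E_\theta\norm{\Phi(X)}_{\caH}^2)^{1/2}(\E_\theta\ang{s_\theta(X),h}^2)^{1/2}<\infty
\]
by Cauchy--Schwarz and \cref{ass:regular}. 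Measurability is harmless, because \(\Phi\) can be replaced by its projection onto a separable closed subspace that carries the distribution. Linearity in \(h\) is immediate. Consequently \(V\coloneqq\ran(A_{\Phi,\theta})\) is a closed subspace of dimension \(r\le p\). Let \(P_V\) be the orthogonal projection onto \(V\), and choose a linear isometry \(U:V\to\R^p\), for instance by taking an orthonormal basis \(v_1,\ldots,v_r\) of \(V\) and sending \(v_j\mapsto e_j\). Define \(\widetilde\Phi=U P_V\Phi\).

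Next I verify the two properties. Diameter: \(UP_V\) is a contraction, so
\[
\norm{\widetilde\Phi(x)-\widetilde\Phi(x')}_2=\norm{P_V(\Phi(x)-\Phi(x'))}_{\caH}\le\norm{\Phi(x)-\Phi(x')}_{\caH},
\]
and hence \(\diam(\widetilde\Phi)\le\diam(\Phi)\). Information: bounded linear maps commute with the Bochner expectation, so
\[
A_{\widetilde\Phi,\theta}h=UP_V\,\E_\theta[\Phi(X)\ang{s_\theta(X),h}]=UP_VA_{\Phi,\theta}h=UA_{\Phi,\theta}h,
\]
where the last step uses that \(A_{\Phi,\theta}h\in V\). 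Since \(U\) is an isometry on \(V\), \(U^*U=I_V\), and therefore
\[
J_{\widetilde\Phi,\theta}=A_{\Phi,\theta}^*U^*UA_{\Phi,\theta}=A_{\Phi,\theta}^*A_{\Phi,\theta}=J_{\Phi,\theta}.
\]
Here I take care to read \(A_{\Phi,\theta}^*\) as the adjoint into \(\caH\), which restricts correctly to \(V\).

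Finally, the two set identities follow by sandwiching. Let \(\mathcal{S}_p\), \(\mathcal{S}_{\mathrm{fin}}\) and \(\mathcal{S}_{\caH}\) denote the generating sets with \(\R^p\)-valued, finite-dimensional, and Hilbert-valued statistics, respectively. Then \(\mathcal{S}_p\subset\mathcal{S}_{\mathrm{fin}}\subset\mathcal{S}_{\caH}\). Every finite-dimensional \(\R^d\) is a Hilbert space and bounded statistics have finite second moment, which gives the second inclusion. The compression shows \(\mathcal{S}_{\caH}\subset\mathcal{S}_p\), because diameter at most one is preserved and a bounded \(\R^p\)-valued statistic has a finite second moment. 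So all three generating sets coincide, and so do their closures, which gives both displayed identities.

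The only step needing care is the existence and commutation of the vector-valued expectation in possibly nonseparable \(\caH\). I would handle it either by Pettis measurability, reducing to a separable subspace, or by defining \(A_{\Phi,\theta}h\) weakly through the Riesz representation; with either choice the identity \(A_{\widetilde\Phi,\theta}=UP_VA_{\Phi,\theta}\) can be checked against test vectors.
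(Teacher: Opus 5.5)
Your proposal is correct and matches the paper's proof essentially step for step. Both arguments project onto the (at most \(p\)-dimensional) range of \(A_{\Phi,\theta}\) and embed that range isometrically into \(\R^p\). Both then use that \(U P_V\) is a contraction and that \(U^*U\) is the identity on the range, and finish by comparing the generating sets before taking closures. Your remark that \(A_{\Phi,\theta}h\) can be defined weakly through the Riesz representation in a possibly nonseparable \(\caH\) supplies a well-definedness detail the paper leaves implicit.
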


\begin{proof}
  Let \(\caK=\ran(A_{\Phi,\theta})\), so \(\dim(\caK)\le p\), and let \(\Pi_{\caK}:\caH\to\caK\) be the orthogonal projection.
  Choose a linear isometry \(U:\caK\to\R^p\), set \(T=U\Pi_{\caK}\), and define \(\widetilde\Phi=T\Phi\).
  Since \(T\) is a contraction,
  \[
    \diam(\widetilde\Phi)
    \le
    \diam(\Phi).
  \]
  Moreover,
  \[
    A_{\widetilde\Phi,\theta}
    =
    TA_{\Phi,\theta}
    =
    UA_{\Phi,\theta}.
  \]
  Because \(U\) is isometric on \(\caK\supseteq\ran(A_{\Phi,\theta})\),
  \[
    J_{\widetilde\Phi,\theta}
    =
    A_{\Phi,\theta}^*U^*UA_{\Phi,\theta}
    =
    A_{\Phi,\theta}^*A_{\Phi,\theta}
    =
    J_{\Phi,\theta}.
  \]
  Thus every Hilbert-valued generator has an \(\R^p\)-valued representative.
  The reverse inclusion follows because \(\R^p\) is itself a real Hilbert space, and the first equality follows from the finite-dimensional definition of \(\caJ_\theta\) together with the same compression applied to each generator before taking the closure.
\end{proof}

\subsection{Proof of \Cref{prop:directional-envelope-tv}}
\label{appendix-subsec:directional-info}

Fix \(h\in\R^p\) and put \(g_h(x)=\ang{s_\theta(x),h}\).
For every statistic \(\Phi\) with diameter at most one and every unit vector \(v\in\caH\), the scalar statistic \(\ang{\Phi,v}_{\caH}\) has diameter at most one.
After subtracting the midpoint of the infimum and supremum of its range and using \(\E_\theta g_h(X)=0\), we obtain
\[
  \abs{\E_\theta \zk{\ang{\Phi(X),v}_{\caH} g_h(X)}}
  \le
  \frac{1}{2}\E_\theta \abs{g_h(X)}.
\]
Taking the supremum over unit vectors \(v\) and then over admissible \(\Phi\) gives the upper bound in \cref{eq:directional-info-envelope}.
Equality is attained by the scalar statistic
\[
  \phi_h(x)
  =
  \frac{1}{2}\sign\zk{\ang{s_\theta(x),h}},
\]
where \(\sign(0)=0\).
For this statistic,
\[
  A_{\phi_h,\theta} h
  =
  \frac{1}{2}\E_\theta \abs{g_h(X)},
\]
which proves equality in \cref{eq:directional-info-envelope}.

By \cref{ass:score-density-l1},
\[
  p_{\theta+th}
  -
  p_\theta
  =
  t p_\theta g_h
  +r_t,
  \qquad
  \norm{r_t}_{L^1(\mu)}
  =
  o(|t|).
\]
The reverse triangle inequality gives
\[
  \abs{
    \norm{t p_\theta g_h+r_t}_{L^1(\mu)}
    -
    |t|\norm{p_\theta g_h}_{L^1(\mu)}
  }
  \le
  \norm{r_t}_{L^1(\mu)}
  =
  o(|t|).
\]
Consequently,
\[
  \TV(P_{\theta+th},P_\theta)
  =
  \frac{|t|}{2}\E_\theta \abs{g_h(X)}
  +o(|t|).
\]
Combining this expansion with \cref{eq:directional-info-envelope} proves \cref{eq:local-tv-info}.

\subsection{Local stability of the information region}
\label{appendix-subsec:stability}

\begin{lemma}[Local stability of the information region]
  \label{appendix-lem:score-density-stability}
  Under \cref{ass:score-density-l1}, \(\Gamma_\theta\) is locally bounded at \(\theta_0\).
  Moreover, with
  \[
    d(J,\caJ_{\theta_0})
    =
    \inf_{\widetilde{J}\in\caJ_{\theta_0}}
    \norm{J-\widetilde{J}}_{\HS},
  \]
  we have
  \[
    \sup_{\norm{\theta-\theta_0}_2 \le\eta}
    \sup_{J\in\caJ_\theta}
    d(J,\caJ_{\theta_0})
    \to0
    \qquad\text{as }\eta\downarrow0.
  \]
\end{lemma}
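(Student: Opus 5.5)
The plan is to compare a generator $J_{\Phi,\theta}=A_{\Phi,\theta}^*A_{\Phi,\theta}$ at $\theta$ with the generator of the \emph{same} statistic at $\theta_0$, after a centering. By \cref{prop:dc-info-prop}(i) we may translate any diameter-one $\Phi$ so that $\norm{\Phi(x)}_2\le1$ for all $x$. By \cref{appendix-lem:hilbert-compression} we may also take $\Phi:\caX\to\R^p$. Then
\[
  A_{\Phi,\theta}h=\int \Phi\,\ang{\dot p_\theta,h}\,d\mu,
\]
using $\dot p_\theta=p_\theta s_\theta$ from \cref{ass:score-density-l1}.

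The first step is a uniform operator bound:
\[
  \norm{A_{\Phi,\theta}-A_{\Phi,\theta_0}}_{\mathrm{op}}
  \le
  \sup_{\norm{h}_2\le1}\norm{\ang{\dot p_\theta-\dot p_{\theta_0},h}}_{L^1(\mu)}
  \le
  \norm{\dot p_\theta-\dot p_{\theta_0}}_{L^1(\mu;\R^p)}
  =:\omega(\theta).
\]
This bound holds uniformly over all centered $\Phi$ with $\norm{\Phi}_\infty\le1$. Continuity of $\theta\mapsto\dot p_\theta$ in $L^1$ gives $\omega(\theta)\to0$ as $\theta\to\theta_0$.

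Local boundedness of $\Gamma_\theta$ follows along the same lines. For the centered statistic, $\norm{A_{\Phi,\theta}}_{\mathrm{op}}\le\norm{\dot p_\theta}_{L^1}$, so $\norm{A_{\Phi,\theta}}_{\HS}^2\le p\,\norm{\dot p_\theta}_{L^1}^2$. This is locally bounded by continuity of the derivative. The bound avoids any use of $\E_\theta\norm{s_\theta}_2^2$, whose local boundedness is not assumed.

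Next I pass to the information matrices. Write $A=A_{\Phi,\theta}$ and $A_0=A_{\Phi,\theta_0}$. Then
\[
  \norm{A^*A-A_0^*A_0}_{\HS}
  \le
  \sqrt p\,\norm{A-A_0}_{\mathrm{op}}\xk{\norm{A}_{\mathrm{op}}+\norm{A_0}_{\mathrm{op}}}
  \le
  C\,\omega(\theta)
\]
for $\norm{\theta-\theta_0}_2\le\eta_0$. Here $C=\sqrt p\,\sup_{\norm{\theta-\theta_0}_2\le\eta_0}2\norm{\dot p_\theta}_{L^1}$.

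Since $A_0^*A_0\in\caJ_{\theta_0}$, every generator of $\caJ_\theta$ lies within distance $C\omega(\theta)$ of $\caJ_{\theta_0}$. The function $J\mapsto d(J,\caJ_{\theta_0})$ is continuous, so the same bound passes to the closure $\caJ_\theta$. Taking the supremum over $\norm{\theta-\theta_0}_2\le\eta$ gives a bound by $C\sup_{\norm{\theta-\theta_0}_2\le\eta}\omega(\theta)\to0$.

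The main point needing care is the identification $A_{\Phi,\theta}h=\int\Phi\ang{\dot p_\theta,h}\,d\mu$ for bounded $\Phi$, together with the centering. Centering is needed so that the bound depends only on $\norm{\Phi}_\infty\le1$ rather than on $\Phi$'s location; \cref{prop:dc-info-prop}(i) guarantees it changes neither $A$ nor the diameter. Beyond that, everything reduces to the $L^1$ continuity in \cref{ass:score-density-l1}. There is one more subtlety: \cref{ass:score-density-l1} is stated on all of $\Theta$. So the score identity $\dot p_\theta=p_\theta s_\theta$ holds at every nearby $\theta$, and the DQM score from \cref{ass:regular} is the one appearing in $A_{\Phi,\theta}$. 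I would remark on this consistency rather than reprove it.
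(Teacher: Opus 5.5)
Your proposal is correct and follows essentially the same route as the paper's proof. Both center \(\Phi\) so that \(\norm{\Phi}_2\le1\), bound \(A_{\Phi,\theta}-A_{\Phi,\theta_0}\) uniformly over statistics by \(\int\norm{\dot p_\theta-\dot p_{\theta_0}}_2\,d\mu\), use the local bound \(\sqrt p\int\norm{\dot p_\theta}_2\,d\mu\) for \(\Gamma_\theta\), control \(A^*A-A_0^*A_0\) through the product identity, and pass to closures. The compression to \(\R^p\) that you add is harmless but not needed.
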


\begin{proof}
Let \(\Phi:\caX\to\R^d\) have diameter at most one.
After translation, we may assume that \(\norm{\Phi(x)}_2 \le1\).
Put
\[
\varepsilon(\theta) = \int\norm{\dot{p}_\theta(x)-\dot{p}_{\theta_0}(x)}_2 d\mu(x).
\]
For any unit vector \(h\in\R^p\),
\[
\norm{(A_{\Phi,\theta}-A_{\Phi,\theta_0})h}_2 \le \varepsilon(\theta),
\]
uniformly over such \(\Phi\).
Consequently
\[
\norm{A_{\Phi,\theta}-A_{\Phi,\theta_0}}_{\HS} \le \sqrt p \varepsilon(\theta),
\]
uniformly over diameter-one statistics.
The local boundedness follows from
\[
\norm{A_{\Phi,\theta}}_{\HS} \le \sqrt p\int\norm{\dot{p}_\theta(x)}_2 d\mu(x),
\]
again uniformly over translated diameter-one \(\Phi\).
In particular, the right-hand side is bounded for \(\theta\) near \(\theta_0\), so the corresponding operators have a common bound \(C\).
The identity
\[
A^*A-B^*B=A^*(A-B)+(A^*-B^*)B
\]
then yields
\[
\norm{ A_{\Phi,\theta}^*A_{\Phi,\theta} -A_{\Phi,\theta_0}^*A_{\Phi,\theta_0} }_{\HS} \le 2C\sqrt p \varepsilon(\theta).
\]
For every generating matrix in \(\caJ_\theta\), the matrix generated by the same statistic at \(\theta_0\) therefore lies within the distance of it and belongs to \(\caJ_{\theta_0}\).
Approximating an arbitrary element of \(\caJ_\theta\) by generating matrices and using closedness of \(\caJ_{\theta_0}\) gives
\[
\sup_{J\in\caJ_\theta} d(J,\caJ_{\theta_0}) \le 2C\sqrt p \varepsilon(\theta).
\]
Taking the supremum over \(\norm{\theta-\theta_0}_2 \le\eta\) proves the claimed stability, while the same common operator bound proves local boundedness of \(\Gamma_\theta\).
\end{proof}

\subsection{Unified information region}
\label{appendix-subsec:unified-info-geometry}

Fix \(\theta_0\in\Theta\).
For a bounded statistic \(\Phi:\caX\to\R^d\), write
\[
  A_{\Phi,0}
  =
  \E_{\theta_0}\{\Phi(X)s_{\theta_0}(X)^\top\},
  \qquad
  V_{\Phi,0}
  =
  \Var_{\theta_0}\{\Phi(X)\}.
\]
For \(\beta\in[0,1]\), let \(\caU_{\theta_0,\beta}\) and
\(\uCt(W)\) be the unified information region and
constant in \cref{def:unified-info}.

\begin{proposition}[Basic geometry of the unified information region]
  \label{appendix-prop:unified-region-geometry}
  Under \cref{ass:regular}, fix \(\theta_0\in\Theta\) and
  \(\beta\in[0,1]\).
  Then the following statements hold.
  \begin{enumerate}[label=(\roman*)]
    \item The unified information matrix is invariant under translations and
    linear isometries of the statistic.
    More precisely, for every \(c\in\R^d\) and every linear isometry
    \(Q:\R^d\to\R^m\),
    \[
      H_{\Phi+c,\theta_0,\beta}
      =H_{\Phi,\theta_0,\beta}
      =H_{Q\Phi,\theta_0,\beta},
      \qquad
      H_{\Phi,\theta_0,\beta}
      \coloneqq
      A_{\Phi,0}^\top
      \Sigma_{\Phi,\theta_0,\beta}^\dagger
      A_{\Phi,0}.
    \]
    \item Every finite-dimensional generator has a representative taking
    values in \(\R^p\), with no larger diameter.
    Consequently,
    \[
      \caU_{\theta_0,\beta}
      =
      \cl\dk{
        H_{\Phi,\theta_0,\beta}:
        \Phi:\caX\to\R^p,\ \diam(\Phi)\le1
      }.
    \]
    \item The region \(\caU_{\theta_0,\beta}\) is a nonempty compact subset
    of \(\bbS_+^p\), and
    \begin{equation}
      \label{appendix-eq:unified-fisher-envelope}
      0\preceq H
      \preceq
      \frac{1}{2-\beta}I(\theta_0),
      \qquad H\in\caU_{\theta_0,\beta}.
    \end{equation}
    \item If \(\beta<1\), then \(\caU_{\theta_0,\beta}\) is convex and
    downward closed in the Loewner order: if
    \(H\in\caU_{\theta_0,\beta}\) and
    \(0\preceq\widetilde H\preceq H\), then
    \(\widetilde H\in\caU_{\theta_0,\beta}\).
    \item The region \(\caU_{\theta_0,\beta}\) contains a positive-definite
    matrix if and only if \(I(\theta_0)\succ0\).
    In that case, for every \(W\succeq0\), the infimum defining
    \(\uCt(W)\) is attained.
  \end{enumerate}
\end{proposition}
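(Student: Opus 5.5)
The plan is to follow the proof of \cref{prop:dc-info-prop} step by step, adding the one new ingredient: the sampling-plus-privacy covariance $\Sigma_{\Phi,\theta_0,\beta}=\beta V_{\Phi,0}+\tfrac{1-\beta}{2}I_d$ acting through its pseudo-inverse.

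\emph{Part (i).} Translation leaves $A_{\Phi,0}$ unchanged, because $\E_{\theta_0}s_{\theta_0}=0$, and it leaves $V_{\Phi,0}$ unchanged as well. For a linear isometry $Q$ we have $A_{Q\Phi,0}=QA_{\Phi,0}$ and $\Sigma_{Q\Phi}=Q\Sigma_\Phi Q^\top+\tfrac{1-\beta}{2}(I_m-QQ^\top)$. The range of $Q$ is orthogonal to the second summand, so $(\Sigma_{Q\Phi})^\dagger$ restricted to $\ran Q$ equals $Q\Sigma_\Phi^\dagger Q^\top$, and we get $Q^\top(\Sigma_{Q\Phi})^\dagger Q=\Sigma_\Phi^\dagger$. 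In the case $\beta=1$, $\Sigma=V$; then $V_{Q\Phi}=QVQ^\top$ and $(QVQ^\top)^\dagger=QV^\dagger Q^\top$ hold directly.

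\emph{Part (ii).} Here the naive projection onto $\ran A_{\Phi,0}$ used in \cref{appendix-lem:hilbert-compression} does not preserve $H$, because $\Sigma^\dagger$ couples components. I would instead whiten. For $\beta<1$, set $K=\Sigma^{-1/2}\ran(A)$, so $\dim K\le p$, and let $P=\Sigma^{1/2}\Pi_K\Sigma^{-1/2}$, the $\Sigma$-orthogonal projection onto $\ran A$.

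The key inequality needed is a Schur-complement or regression fact: for any linear map $L$ with $LA=A$, one has $A^\top(L\Sigma L^\top)^\dagger A\ge A^\top\Sigma^{-1}A$. However, $L\Phi$ has covariance $\beta LVL^\top+\tfrac{1-\beta}{2}LL^\top$, which is not $L\Sigma L^\top$ in the form required. This mismatch is the main obstacle. My first attempt would be a cleaner route: diagonalize so that only the coordinates $\Phi\mapsto U^\top\Phi$ with orthogonal $U$ are kept, choosing $U$ to span the smallest $\Sigma$-invariant subspace containing $\ran A$. Since $\Sigma$ and $V$ share eigenvectors, this subspace is also $V$-invariant. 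On it, $H$ is exactly preserved and the diameter does not increase, because $U^\top$ is a contraction. The difficulty is that this subspace can have dimension larger than $p$. To cut it down to $p$, I would use the Gaussian-channel interpretation: $H$ is the Fisher information of the Gaussian shift experiment $N(A h,\Sigma)$. The sufficient statistic $A^\top\Sigma^{-1}Y$ is $p$-dimensional, so I would take $\widetilde\Phi=c\,A^\top\Sigma^{-1}\Phi$, or a rescaled polar factor of it, and check its diameter via the bound $\|\Sigma^{-1}\|\le 2/(1-\beta)$. If this fails to give diameter at most one, I would settle for dimension $\le p$ up to closure, by approximating.

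\emph{Part (iii).} Nonemptiness follows from $\Phi=0$, and compactness follows from boundedness together with closedness. For the envelope \eqref{appendix-eq:unified-fisher-envelope}, apply the covariance Cauchy--Schwarz inequality $A^\top V^\dagger A\preceq I(\theta_0)$ and the diameter bound $V\preceq\tfrac14 I_d$ (after centering, $\|\Phi\|\le 1/2$ can be arranged in the relevant directions via Jung-type bounds; for the scalar projections a diameter-one variable has variance at most $1/4$). Combining these, $\Sigma\preceq(\beta+2(1-\beta))V$ in the sense of generalized inverses, which gives $A^\top\Sigma^\dagger A\preceq\frac{1}{2-\beta}A^\top V^\dagger A$.

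\emph{Parts (iv) and (v).} For convexity when $\beta<1$, mix two generators as a randomized statistic $\Phi(x,U)$ with an independent Bernoulli label. A direct sum with weights does not work here because the covariance is nonlinear in the weights. Alternatively, use the Gaussian-experiment interpretation: the information of a mixture of experiments is the average of their informations, realized by time-sharing the samples. I expect this to need the trick of enlarging $\caX$ or a closure argument, so I would verify it via the direct sum $(\sqrt\lambda\Phi_1,\sqrt{1-\lambda}\Phi_2)$ with suitable scaling. Downward closure follows by adding independent noise-like shrinkage $T\Phi$ as in \cref{prop:dc-info-prop}(iii), using the monotonicity of $\Sigma^\dagger$. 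Part (v) follows from the truncated-score statistic together with lower semicontinuity of $\caI_W$ and compactness.
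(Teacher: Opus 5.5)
Your parts (i), (iii) and (v) are essentially right. In (iii) the intermediate inequality points the wrong way: $V\preceq\frac14 I_d$ gives $\Sigma\succeq(2-\beta)V$. Together with $\ran A\subseteq\ran V$, that is what yields $A^\top\Sigma^\dagger A\preceq\frac{1}{2-\beta}A^\top V^\dagger A$.

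The genuine gap is in (ii). Your candidate $\widetilde\Phi=c\,A^\top\Sigma^{-1}\Phi$ does not preserve $H$ when $p\ge2$. For a linear map $M$, the unified covariance of $M\Phi$ is $\beta MVM^\top+\frac{1-\beta}{2}I$, not $M\Sigma M^\top$, because the privacy noise enters after $M$ at a fixed isotropic level. A direct computation shows $H_{M\Phi}=H$ iff $MM^\top=I$ on the relevant range, and no scalar multiple of $A^\top\Sigma^{-1}$ satisfies this in general.

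The missing idea is the \emph{orthogonal} projection $P_\caK$ onto $\caK=\ran(\Sigma^\dagger A)$, which has dimension at most $p$. It works for three reasons:
\begin{itemize}
\item It is a contraction, so the diameter does not increase.
\item It compresses $\frac{1-\beta}{2}I$ to $\frac{1-\beta}{2}I_\caK$, so $P_\caK\Phi$ has unified covariance exactly $P_\caK\Sigma P_\caK$ on $\caK$.
\item The variational formula $h^\top Hh=\sup_v\{2v^\top Ah-v^\top\Sigma v\}$ is valid because $\ker\Sigma\subseteq\ker A^\top$, also at $\beta=1$. Its maximizer $v=\Sigma^\dagger Ah$ lies in $\caK$, so restricting $v$ to $\caK$ loses nothing.
\end{itemize}
The unscaled polar factor of $A^\top\Sigma^{-1}$ is precisely $P_\caK$ followed by an isometry, so that hint pointed the right way. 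But you never verify that it preserves $H$, and worry instead about the diameter, which is automatic. The fallback ``up to closure, by approximating'' is not an argument.

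In (iv), the randomized statistic $\Phi(x,U)$ and time-sharing are not admissible, because generators are deterministic functions on $\caX$. Your dismissal of the direct sum is also mistaken. $\Psi=(\sqrt t\,\Phi_1,\sqrt{1-t}\,\Phi_2)$ works once you aim for an inequality rather than an identity. Insert block vectors $(\sqrt t\,v_1,\sqrt{1-t}\,v_2)$ into the variational formula and use $\Var(tZ_1+(1-t)Z_2)\le t\Var Z_1+(1-t)\Var Z_2$; this gives $H_\Psi\succeq tH_1+(1-t)H_2$. Convexity then follows from downward closure.

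Downward closure, however, is not a copy of \cref{prop:dc-info-prop}(iii). The map $T\mapsto T^\top(\beta TVT^\top+c_\beta I)^{-1}T$, with $c_\beta=\frac{1-\beta}{2}$, is nonlinear. ``Monotonicity of $\Sigma^\dagger$'' does not produce a contraction hitting a prescribed $\widetilde H$; you need an explicit solve:
\begin{itemize}
\item For $\beta<1$, write $\widetilde H=A^\top RA$ with $0\preceq R\preceq\Sigma^{-1}$ (Douglas factorization).
\item Perturb to $R_\epsilon=(1-\epsilon)R+\epsilon\Sigma^{-1}\succ0$, and set $T_\epsilon=G_\epsilon^{1/2}$ with $G_\epsilon=c_\beta(R_\epsilon^{-1}-\beta V)^{-1}$.
\item Since $R_\epsilon\preceq\Sigma^{-1}$, you get $0\prec G_\epsilon\preceq I$, so $T_\epsilon\Phi$ has diameter at most one.
\item Moreover $T_\epsilon(\beta T_\epsilon VT_\epsilon+c_\beta I)^{-1}T_\epsilon=(\beta V+c_\beta G_\epsilon^{-1})^{-1}=R_\epsilon$, hence $H_{T_\epsilon\Phi}=A^\top R_\epsilon A\to\widetilde H$.
\item General elements of the closure are then handled via $\widetilde H_m=H_m^{1/2}DH_m^{1/2}$, where $\widetilde H=H^{1/2}DH^{1/2}$.
\end{itemize}
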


\begin{proof}
  Translation leaves the diameter, covariance, and score--statistic
  covariance unchanged.
  If \(Q\) is an isometry, then
  \(A_{Q\Phi,0}=QA_{\Phi,0}\) and
  \(V_{Q\Phi,0}=QV_{\Phi,0}Q^\top\).
  The Moore--Penrose inverse on \(\ran(Q)\), together with
  \(Q^\top Q=I_d\), proves part~\textup{(i)}.

  We next prove the compression claim.
  Put
  \[
    S=\Sigma_{\Phi,\theta_0,\beta},
    \qquad
    \caK=\ran(S^\dagger A_{\Phi,0}),
  \]
  so that \(\dim(\caK)\le p\), and let \(P_{\caK}\) be the orthogonal
  projection onto \(\caK\).
  We have
  \(\ker(S)\subseteq\ker(A_{\Phi,0}^\top)\): this is immediate when
  \(\beta<1\), while at \(\beta=1\) it follows because a zero-variance
  linear combination of \(\Phi\) is constant almost surely.
  Hence, for every \(h\in\R^p\),
  \[
    (A_{\Phi,0}h)^\top S^\dagger A_{\Phi,0}h
    =
    \sup_{v\in\R^d}
    \dk{
      2v^\top A_{\Phi,0}h-v^\top Sv
    },
  \]
  and the supremum has a maximizer in \(\caK\).
  Restricting the variational formula to \(\caK\) therefore does not change
  its value.
  The statistic \(P_{\caK}\Phi\), viewed as \(\caK\)-valued, has unified
  covariance \(P_{\caK}SP_{\caK}\) on \(\caK\), so it generates the same
  information matrix and has no larger diameter.
  An isometric embedding of \(\caK\) into \(\R^p\) and part~\textup{(i)}
  prove part~\textup{(ii)}.

  Translate \(\Phi\) so that it is centered under \(P_{\theta_0}\).
  For every unit vector \(v\), the scalar statistic \(v^\top\Phi\) has range
  of length at most one, and hence
  \[
    v^\top V_{\Phi,0}v
    \le\frac14.
  \]
  Thus \(V_{\Phi,0}\preceq I_d/4\) and
  \[
    \Sigma_{\Phi,\theta_0,\beta}
    \succeq
    (2-\beta)V_{\Phi,0}.
  \]
  Moreover,
  \(\ker(V_{\Phi,0})\subseteq\ker(A_{\Phi,0}^\top)\), and the ordinary
  information inequality gives
  \[
    A_{\Phi,0}^\top V_{\Phi,0}^\dagger A_{\Phi,0}
    \preceq I(\theta_0).
  \]
  Combining the last two inequalities yields
  \[
    H_{\Phi,\theta_0,\beta}
    \preceq
    \frac{1}{2-\beta}
    A_{\Phi,0}^\top V_{\Phi,0}^\dagger A_{\Phi,0}
    \preceq
    \frac{1}{2-\beta}I(\theta_0).
  \]
  The zero statistic gives nonemptiness, and the definition gives closedness.
  The uniform bound therefore proves compactness and part~\textup{(iii)}.

  Suppose now that \(\beta<1\), and write
  \(c_\beta=(1-\beta)/2>0\).
  We first establish downward closure for a generator
  \(H=A^\top S^{-1}A\), where
  \(A=A_{\Phi,0}\) and \(S=\beta V_{\Phi,0}+c_\beta I_d\).
  If \(0\preceq\widetilde H\preceq H\), the Douglas factorization lemma
  gives a positive contraction \(D\) on \(\R^d\) such that, with
  \(B=S^{-1/2}A\),
  \[
    \widetilde H=B^\top DB.
  \]
  Put \(R=S^{-1/2}DS^{-1/2}\preceq S^{-1}\), and for
  \(\epsilon\in(0,1)\) define
  \[
    R_\epsilon=(1-\epsilon)R+\epsilon S^{-1},
    \qquad
    G_\epsilon
    =c_\beta
    \xk{R_\epsilon^{-1}-\beta V_{\Phi,0}}^{-1}.
  \]
  Then \(0\prec G_\epsilon\preceq I_d\).
  With \(T_\epsilon=G_\epsilon^{1/2}\), direct calculation gives
  \[
    T_\epsilon^\top
    \xk{
      \beta T_\epsilon V_{\Phi,0}T_\epsilon^\top
      +c_\beta I_d
    }^{-1}
    T_\epsilon
    =R_\epsilon.
  \]
  Since \(T_\epsilon\) is a contraction,
  \(\diam(T_\epsilon\Phi)\le1\), and its information matrix converges to
  \(A^\top RA=\widetilde H\).
  Now let \(H_m\to H\) be a sequence of generators and suppose
  \(0\preceq\widetilde H\preceq H\).
  Write
  \(\widetilde H=H^{1/2}DH^{1/2}\) for a positive contraction \(D\),
  supported on \(\ran(H)\), and put
  \(\widetilde H_m=H_m^{1/2}DH_m^{1/2}\).
  Then \(0\preceq\widetilde H_m\preceq H_m\) and
  \(\widetilde H_m\to\widetilde H\).
  The generating case and closedness therefore prove downward closure of
  the entire region.

  For convexity, let \(H_i\) be generated by \(\Phi_i\), \(i=1,2\), and
  fix \(t\in[0,1]\).
  The direct-sum statistic
  \[
    \Psi=\xk{\sqrt t\,\Phi_1,\sqrt{1-t}\,\Phi_2}
  \]
  has diameter at most one.
  For a fixed \(h\in\R^p\), apply the variational formula above to block
  vectors of the form
  \(\xk{\sqrt t\,v_1,\sqrt{1-t}\,v_2}\).
  With \(Z_i=v_i^\top\Phi_i(X)\), the inequality
  \[
    \Var\{tZ_1+(1-t)Z_2\}
    \le
    t\Var(Z_1)+(1-t)\Var(Z_2)
  \]
  and optimization over \(v_1,v_2\) show that
  \[
    h^\top H_{\Psi,\theta_0,\beta}h
    \ge
    t h^\top H_1h+(1-t)h^\top H_2h.
  \]
  Since this holds for every \(h\), the corresponding matrix inequality
  follows.
  Downward closure and approximation of arbitrary elements by generators
  prove convexity, completing part~\textup{(iv)}.

  If \(\caU_{\theta_0,\beta}\) contains a positive-definite matrix, the
  bound in \cref{appendix-eq:unified-fisher-envelope} implies
  \(I(\theta_0)\succ0\).
  Conversely, if \(I(\theta_0)\succ0\),
  \cref{prop:dc-info-prop}\textup{(iv)} and
  \cref{appendix-lem:hilbert-compression} give a diameter-one statistic with
  full-column-rank \(A_{\Phi,0}\), whose unified information matrix is
  positive definite.
  Finally, lower semicontinuity of \(\caI_W\) on the compact region gives
  attainment, proving part~\textup{(v)}.
\end{proof}

\begin{lemma}[Directional envelope]
  \label{appendix-lem:unified-directional-envelope}
  Under \cref{ass:regular}, fix \(\theta_0\in\Theta\),
  \(\beta\in[0,1]\), and \(h\in\R^p\).
  With \(g_h(X)=\ang{s_{\theta_0}(X),h}\),
  \begin{equation}
    \label{appendix-eq:unified-directional-envelope}
    \sup_{H\in\caU_{\theta_0,\beta}}h^\top Hh
    =
    \sup_{\substack{\phi:\caX\to\R\\\diam(\phi)\le1}}
    \frac{
      \xk{\E_{\theta_0}\{\phi(X)g_h(X)\}}^2
    }{
      \beta\Var_{\theta_0}\{\phi(X)\}+(1-\beta)/2
    },
  \end{equation}
  where a zero-over-zero ratio at \(\beta=1\) is interpreted as zero.
  In particular,
  \begin{align}
    \sup_{H\in\caU_{\theta_0,0}}h^\top Hh
    & =
    \frac12
    \xk{\E_{\theta_0}|g_h(X)|}^2,
    \label{appendix-eq:unified-directional-privacy-regime}
    \\
    \sup_{H\in\caU_{\theta_0,1}}h^\top Hh
    & =
    h^\top I(\theta_0)h.
    \label{appendix-eq:unified-directional-sampling-regime}
  \end{align}
\end{lemma}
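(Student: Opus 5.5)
We reduce the supremum over the region to a supremum over generators and then evaluate a one-dimensional variational problem. Because \(H\mapsto h^\top Hh\) is continuous, the supremum over the closure \(\caU_{\theta_0,\beta}\) equals the supremum over generators \(H_{\Phi,\theta_0,\beta}\), with \(\Phi:\caX\to\R^d\) and \(\diam(\Phi)\le1\). Fix such a \(\Phi\) and write \(A=A_{\Phi,0}\) and \(S=\Sigma_{\Phi,\theta_0,\beta}\). The proof of \cref{appendix-prop:unified-region-geometry} shows that \(\ker S\subseteq\ker A^\top\). It also gives the variational formula
\[
  h^\top H_{\Phi,\theta_0,\beta}h=\sup_{v\in\R^d}\{2v^\top Ah-v^\top Sv\}.
\]
Optimizing over the scale \(c\) of \(v=cu\), for \(\norm{u}_2=1\), gives
\[
  h^\top H_{\Phi,\theta_0,\beta}h=\sup_{\norm{u}_2=1}\frac{(u^\top Ah)^2}{u^\top Su}.
\]
When \(u^\top Su=0\), which can only happen at \(\beta=1\), the kernel inclusion forces \(u^\top Ah=0\), so the ratio is zero, matching the stated convention.

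Next we identify the ratio with the scalar problem. For a unit vector \(u\), put \(\phi=u^\top\Phi\). Then \(\diam(\phi)\le1\), and
\[
  u^\top Ah=\E_{\theta_0}[\phi(X)g_h(X)],\qquad u^\top Su=\beta\Var_{\theta_0}\phi(X)+\tfrac{1-\beta}{2}.
\]
So each generator value is at most the right-hand side of \cref{appendix-eq:unified-directional-envelope}. Conversely, every scalar \(\phi\) with \(\diam(\phi)\le1\) is itself a generator with \(d=1\), and its \(H\) value in direction \(h\) is exactly the ratio. This proves the equality.

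For the privacy regime (\(\beta=0\)), the denominator is the constant \(1/2\). The bound from the proof of \cref{prop:directional-envelope-tv}, \(|\E\phi g_h|\le\frac12\E|g_h|\), combined with the attaining statistic \(\frac12\sign(g_h)\), gives \(\frac12(\E|g_h|)^2\).

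For the sampling regime (\(\beta=1\)), the ratio is \((\E\phi g_h)^2/\Var\phi\). Cauchy--Schwarz, after centering \(\phi\) and using \(\E g_h=0\), bounds it by \(\E g_h^2=h^\top I(\theta_0)h\). For the reverse inequality, take truncations \(\phi_M=g_h\ind{|g_h|\le M}/(2M)\). The ratio is scale invariant, so the factor \(1/(2M)\) is harmless. By dominated convergence, \((\E\phi_Mg_h)^2/\Var\phi_M\to(\E g_h^2)^2/\E g_h^2\), since \(\E\phi_M\to0\) after rescaling. If \(g_h=0\) almost surely, both sides are zero.

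The main care point is the degenerate denominator at \(\beta=1\), which the kernel inclusion handles. The rest is routine.
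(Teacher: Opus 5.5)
Your proof is correct and follows essentially the same route as the paper. You write each generator value as a Rayleigh quotient, identify that quotient with the scalar statistic $u^\top\Phi$, and use scalar statistics as generators for the reverse inequality. You then specialize via the sign statistic at $\beta=0$ and via Cauchy--Schwarz plus truncated scores at $\beta=1$. You are only slightly more explicit than the paper about passing to the closure and about the degenerate denominator at $\beta=1$.
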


\begin{proof}
  Fix a statistic \(\Phi:\caX\to\R^d\), put
  \(a=A_{\Phi,0}h\) and
  \(S=\Sigma_{\Phi,\theta_0,\beta}\).
  Since \(a\in\ran(S)\),
  \[
    a^\top S^\dagger a
    =
    \sup_{v\ne0}
    \frac{(v^\top a)^2}{v^\top Sv}.
  \]
  For every \(v\ne0\), the scalar statistic
  \(\phi=v^\top\Phi/\norm{v}_2\) has diameter at most one, and its ratio in
  \cref{appendix-eq:unified-directional-envelope} equals the corresponding
  Rayleigh quotient.
  The reverse inequality follows because scalar statistics are included in
  the definition of \(\caU_{\theta_0,\beta}\).

  At \(\beta=0\), translation invariance and \(\E_{\theta_0}g_h(X)=0\)
  give
  \[
    \sup_{\diam(\phi)\le1}
    \abs{\E_{\theta_0}\{\phi(X)g_h(X)\}}
    =
    \frac12\E_{\theta_0}|g_h(X)|,
  \]
  attained by \(\phi=\sign(g_h)/2\).
  At \(\beta=1\), Cauchy--Schwarz bounds the ratio by
  \(\E_{\theta_0}g_h(X)^2\), and bounded truncations of \(g_h\), followed
  by rescaling to diameter one, attain this bound in the limit.
  This proves the identities for the privacy and sampling regimes.
\end{proof}

\begin{lemma}[Monotonicity in the sampling weight]
  \label{appendix-lem:unified-beta-monotonicity}
  Under \cref{ass:regular}, fix \(\theta_0\in\Theta\).
  If \(0\le\beta_1\le\beta_2<1\), then
  \begin{equation}
    \label{appendix-eq:unified-region-nesting}
    \caU_{\theta_0,\beta_1}
    \subseteq
    \caU_{\theta_0,\beta_2}.
  \end{equation}
  Moreover, \(\beta\mapsto\caU_{\theta_0,\beta}\) is locally Lipschitz in
  Hausdorff distance on \([0,1)\).
  If \(I(\theta_0)\succ0\), then for every \(W\succeq0\),
  \begin{equation}
    \label{appendix-eq:unified-constant-monotonicity}
    \mfC_{\theta_0}^{(\beta_2)}(W)
    \le
    \mfC_{\theta_0}^{(\beta_1)}(W),
    \qquad
    0\le\beta_1\le\beta_2\le1,
  \end{equation}
  and
  \begin{equation}
    \label{appendix-eq:unified-constant-fisher-limit}
    \lim_{\beta\uparrow1}
    \uCt(W)
    =
    \mfC_{\theta_0}^{(1)}(W)
    =
    \Tr\{WI(\theta_0)^{-1}\}.
  \end{equation}
\end{lemma}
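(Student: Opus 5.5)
The plan is to prove the nesting \cref{appendix-eq:unified-region-nesting} at the level of generators and then pass to closures; continuity, monotonicity of the constants, and the Fisher limit will follow from it. Fix a generator \(\Phi\) with \(\diam(\Phi)\le1\), translated so that it is centered, and write \(V=V_{\Phi,0}\), \(A=A_{\Phi,0}\). Since \(V\preceq I_d/4\), we have \(V\preceq \tfrac12 I_d\). Hence the map \(\beta\mapsto\Sigma_{\Phi,\theta_0,\beta}=\tfrac12 I_d+\beta(V-\tfrac12 I_d)\) is nonincreasing in the Loewner order. For \(\beta_1\le\beta_2<1\) this gives \(\Sigma_{\Phi,\theta_0,\beta_2}\preceq\Sigma_{\Phi,\theta_0,\beta_1}\), so \(H_{\Phi,\theta_0,\beta_1}\preceq H_{\Phi,\theta_0,\beta_2}\in\caU_{\theta_0,\beta_2}\). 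By downward closure of \(\caU_{\theta_0,\beta_2}\) (\cref{appendix-prop:unified-region-geometry}(iv), available because \(\beta_2<1\)), we get \(H_{\Phi,\theta_0,\beta_1}\in\caU_{\theta_0,\beta_2}\). Taking closures proves the nesting.

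For the local Lipschitz property, fix \(\bar\beta<1\) and take \(\beta_1\le\beta_2\le\bar\beta\). Nesting gives one half of the Hausdorff bound for free. For the other half, compare \(H_{\Phi,\theta_0,\beta_2}\) with the same statistic's \(H_{\Phi,\theta_0,\beta_1}\), which lies in \(\caU_{\theta_0,\beta_1}\). Both covariances are bounded below by \(c_{\bar\beta}I_d\), where \(c_{\bar\beta}=(1-\bar\beta)/2\), and they differ by \((\beta_2-\beta_1)(\tfrac12I_d-V)\), whose norm is at most \(\beta_2-\beta_1\). Using \(S_2^{-1}-S_1^{-1}=S_2^{-1}(S_1-S_2)S_1^{-1}\), the difference of information matrices is at most \(\norm{A}^2c_{\bar\beta}^{-2}(\beta_2-\beta_1)\). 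Here \(\norm{A}^2\le\Tr I(\theta_0)/4\) by \cref{prop:dc-info-prop}, so the bound is uniform over generators and survives closure.

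For \cref{appendix-eq:unified-constant-monotonicity}, the case \(\beta_2<1\) is immediate from nesting, since the infimum is taken over a larger set. When \(\beta_2=1\), I will instead show \(\mfC^{(1)}_{\theta_0}(W)=\Tr\{WI(\theta_0)^{-1}\}\le\mfC^{(\beta)}_{\theta_0}(W)\) for every \(\beta\). The envelope \cref{appendix-eq:unified-fisher-envelope} is not enough here because of the factor \(1/(2-\beta)\). Instead I will use the sharper fact that \(\Sigma_{\Phi,\theta_0,\beta}\succeq V\), which holds because \(\tfrac{1-\beta}{2}I\succeq(1-\beta)V\). Combined with the information inequality \(A^\top V^\dagger A\preceq I(\theta_0)\), this gives \(H\preceq I(\theta_0)\) on all of \(\caU_{\theta_0,\beta}\). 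Since \(\caI_W\) is Loewner-antitone, the inequality follows. The same argument identifies \(\mfC^{(1)}\): the region \(\caU_{\theta_0,1}\) lies below \(I(\theta_0)\), and truncated score statistics \(\Phi_M\) as in \cref{prop:dc-info-prop}(iv) yield generators \(A_M^\top V_M^{-1}A_M\to I(\theta_0)\). By closedness, \(I(\theta_0)\in\caU_{\theta_0,1}\), so \(\mfC^{(1)}=\Tr\{WI(\theta_0)^{-1}\}\).

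The main obstacle is the limit \cref{appendix-eq:unified-constant-fisher-limit}, because the Lipschitz bound degenerates as \(\beta\uparrow1\). The lower bound \(\liminf\ge\Tr\{WI^{-1}\}\) is the monotonicity just proved. For the upper bound, fix \(\epsilon>0\) and a truncated score statistic \(\Phi_M\) with \(A_M\) nonsingular and \(\Tr[W(A_M^\top V_M^{-1}A_M)^{-1}]\le\Tr\{WI^{-1}\}+\epsilon\). For this fixed statistic, \(\Sigma_{\Phi_M,\theta_0,\beta}\to V_M\) as \(\beta\uparrow1\), and \(V_M\succ0\) because \(A_M\) has full rank. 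Hence \(H_{\Phi_M,\theta_0,\beta}\to A_M^\top V_M^{-1}A_M\succ0\), and \(\caI_W\) is continuous near this limit, so \(\limsup_{\beta\uparrow1}\uCt(W)\le\Tr\{WI^{-1}\}+\epsilon\). One point needs checking: \(V_M\) could be singular on \(\R^p\) if \(\Phi_M\) has constant components. Full column rank of \(A_M\) excludes this, because a constant component of \(\Phi_M\) would give a zero row of \(A_M\), which is impossible when \(A_M\) is a nonsingular \(p\times p\) matrix.
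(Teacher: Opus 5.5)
Your proposal is correct and follows essentially the paper's proof. The nesting comes from $\Sigma_{\Phi,\theta_0,\beta_2}\preceq\Sigma_{\Phi,\theta_0,\beta_1}$ plus downward closure, the Hausdorff claim from a perturbation bound uniform over generators, and the Fisher limit from truncated-score statistics with $\beta\uparrow1$ taken before $M\to\infty$. Two small corrections follow.

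First, the envelope \cref{appendix-eq:unified-fisher-envelope} is in fact enough at $\beta_2=1$, and it is what the paper uses. Since $1/(2-\beta)\le1$, it gives $H\preceq I(\theta_0)/(2-\beta)\preceq I(\theta_0)$, which is stronger than your bound via $\Sigma_{\Phi,\theta_0,\beta}\succeq V$; your detour is valid but unnecessary.

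Second, argue nonsingularity of $V_M$ for an arbitrary $v$ with $v^\top V_M v=0$, not only for coordinate components. Such a $v$ makes $v^\top\Phi_M$ a.s.\ constant, hence $v^\top A_M=0$ and $v=0$.
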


\begin{proof}
  As in the proof of
  \cref{appendix-prop:unified-region-geometry}\textup{(iii)}, translate every
  diameter-one statistic so that it is centered.
  Then \(V_{\Phi,0}\preceq I_d/4\), and hence
  \[
    \Sigma_{\Phi,\theta_0,\beta_1}
    -
    \Sigma_{\Phi,\theta_0,\beta_2}
    =
    (\beta_2-\beta_1)
    \xk{\frac12I_d-V_{\Phi,0}}
    \succeq0.
  \]
  Therefore
  \[
    H_{\Phi,\theta_0,\beta_1}
    \preceq
    H_{\Phi,\theta_0,\beta_2}.
  \]
  Since \(\caU_{\theta_0,\beta_2}\) is downward closed when
  \(\beta_2<1\), every generator at \(\beta_1\) belongs to the region at
  \(\beta_2\).
  Taking closures proves \cref{appendix-eq:unified-region-nesting}.

  On every interval \([0,\bar\beta]\) with \(\bar\beta<1\), the matrices
  \(\Sigma_{\Phi,\theta_0,\beta}\) are uniformly bounded below by
  \((1-\bar\beta)I_d/2\).
  The inverse identity, the bound \(V_{\Phi,0}\preceq I_d/4\), and the
  uniform bound on \(A_{\Phi,0}\) show that
  \[
    \norm{
      H_{\Phi,\theta_0,\beta_1}
      -H_{\Phi,\theta_0,\beta_2}
    }_{\mathrm{op}}
    \le
    C_{\bar\beta,\theta_0}|\beta_1-\beta_2|
  \]
  uniformly over all admissible statistics.
  Matching each generator with the same statistic at the other value of
  \(\beta\), and then taking closures, proves the Hausdorff claim.

  Region nesting and inverse monotonicity give
  \cref{appendix-eq:unified-constant-monotonicity} when \(\beta_2<1\).
  For \(\beta_2=1\), \cref{appendix-eq:unified-fisher-envelope} gives
  \[
    \uCt(W)
    \ge
    (2-\beta)\Tr\{WI(\theta_0)^{-1}\}
    \ge
    \mfC_{\theta_0}^{(1)}(W),
    \qquad \beta<1,
  \]
  where the last identity follows from
  \cref{appendix-lem:unified-info-identification}\textup{(ii)}.
  This extends monotonicity to \(\beta_2=1\) and gives the lower bound in
  \cref{appendix-eq:unified-constant-fisher-limit}.

  For the reverse bound, use the bounded truncated-score statistics
  \(\Phi_B\) constructed in the proof of
  \cref{appendix-lem:unified-info-identification}\textup{(ii)}.
  For every sufficiently large fixed \(B\), their information matrices are
  positive definite and continuous in \(\beta\) at one.
  Their values at \(\beta=1\) converge to \(I(\theta_0)\) as
  \(B\to\infty\).
  First letting \(\beta\uparrow1\) and then \(B\to\infty\) proves the
  matching upper bound.
\end{proof}

\begin{remark}[The sampling regime need not inherit the interior geometry]
  \label{appendix-rem:unified-sampling-regime-geometry}
  The restrictions \(\beta_2<1\) in
  \cref{appendix-eq:unified-region-nesting} and \(\beta<1\) in the convexity and
  downward-closure statements are essential.
  In a one-dimensional nondegenerate Bernoulli model, every nonconstant
  statistic generates the full Fisher information at \(\beta=1\), whereas a
  constant statistic generates zero.
  Hence
  \[
    \caU_{\theta_0,1}=\{0,I(\theta_0)\},
  \]
  which is neither convex nor downward closed.
  Thus the regions need not converge in Hausdorff distance to
  \(\caU_{\theta_0,1}\), even though their variational constants converge by
  \cref{appendix-eq:unified-constant-fisher-limit}.
\end{remark}

\begin{lemma}[Identification in the privacy and sampling regimes]
  \label{appendix-lem:unified-info-identification}
  Under \cref{ass:regular}, fix \(\theta_0\in\Theta\) with \(I(\theta_0)\succ0\).
  Then, for every \(W\succeq0\), the following statements hold.
  \begin{enumerate}[label=(\roman*)]
    \item At \(\beta=0\),
    \[
      \caU_{\theta_0,0}=2\caJ_{\theta_0},
      \qquad
      \mfC_{\theta_0}^{(0)}(W)=\mfC_{\theta_0}(W).
    \]
    \item At \(\beta=1\),
    \[
      \mfC_{\theta_0}^{(1)}(W)
      =
      \Tr\{WI(\theta_0)^{-1}\}.
    \]
  \end{enumerate}
\end{lemma}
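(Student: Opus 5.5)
For part (i), I will compare generators directly. At \(\beta=0\) the weighted covariance is \(\Sigma_{\Phi,\theta_0,0}=\tfrac12 I_d\). Its Moore--Penrose inverse is \(2I_d\), so for every finite-dimensional \(\Phi\) with \(\diam(\Phi)\le1\),
\[
  H_{\Phi,\theta_0,0}
  =
  A_{\Phi,0}^\top (2I_d) A_{\Phi,0}
  =
  2J_{\Phi,\theta_0}.
\]
The generating sets of \(\caU_{\theta_0,0}\) and \(2\caJ_{\theta_0}\) are therefore identical. Multiplication by \(2\) is a homeomorphism of \(\bbS^p\), so taking closures gives \(\caU_{\theta_0,0}=2\caJ_{\theta_0}\). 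The constant then follows from the scaling \(\caI_W(2J)=\tfrac12\caI_W(J)\), which comes from \(\Tr[W(2J+tI_p)^{-1}]=\tfrac12\Tr[W(J+(t/2)I_p)^{-1}]\) after letting \(t\downarrow0\). Hence
\[
  \mfC^{(0)}_{\theta_0}(W)
  =
  \inf_{J\in\caJ_{\theta_0}}\caI_W(2J)
  =
  \tfrac12\inf_{J\in\caJ_{\theta_0}}\caI_W(J)
  =
  \mfC_{\theta_0}(W).
\]

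For part (ii), I will prove matching lower and upper bounds.

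\emph{Lower bound.} At \(\beta=1\) we have \(\Sigma=V_{\Phi,0}\), and the information inequality already used in \cref{appendix-prop:unified-region-geometry}(iii) gives \(H_{\Phi,\theta_0,1}\preceq I(\theta_0)\) for every generator. This inequality passes to the closure, so every \(H\in\caU_{\theta_0,1}\) satisfies \(H\preceq I(\theta_0)\). Since \(\caI_W\) is antitone in the Loewner order (apply it to \(J+tI_p\) and let \(t\downarrow0\)), we get \(\caI_W(H)\ge\Tr\{WI(\theta_0)^{-1}\}\), and so \(\mfC^{(1)}_{\theta_0}(W)\ge\Tr\{WI(\theta_0)^{-1}\}\).

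\emph{Upper bound.} I will use truncated scores, following the statistics in \cref{prop:dc-info-prop}(iv). Set
\[
  \Phi_B(x)=\frac{1}{2B}\,s_{\theta_0}(x)\ind{\norm{s_{\theta_0}(x)}_2\le B}.
\]
This statistic has diameter at most one. Because \(H_{\Phi,\theta_0,1}\) is invariant under invertible rescaling of \(\Phi\), I may drop the factor \(1/(2B)\) in the computation. Writing \(s_B=s_{\theta_0}\ind{\norm{s_{\theta_0}}_2\le B}\), dominated convergence gives
\[
  A_B=\E_{\theta_0}[s_B s_{\theta_0}^\top]\to I(\theta_0),
  \qquad
  V_B=\Var_{\theta_0}(s_B)\to I(\theta_0),
\]
where the convergence of \(V_B\) also uses \(\E_{\theta_0} s_B\to 0\). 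Both limits are positive definite, so for large \(B\) the pseudoinverse \(V_B^\dagger\) is the ordinary inverse, and it varies continuously in \(B\). It follows that \(H_{\Phi_B,\theta_0,1}=A_B^\top V_B^{-1}A_B\to I(\theta_0)\), and each such matrix lies in \(\caU_{\theta_0,1}\). Since \(\caI_W\) is continuous at positive definite matrices,
\[
  \mfC^{(1)}_{\theta_0}(W)
  \le
  \lim_{B\to\infty}\Tr\{WH_{\Phi_B,\theta_0,1}^{-1}\}
  =
  \Tr\{WI(\theta_0)^{-1}\}.
\]

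The main obstacle is the closure step in the lower bound of (ii). At \(\beta=1\) the region \(\caU_{\theta_0,1}\) is neither convex nor downward closed (see \cref{appendix-rem:unified-sampling-regime-geometry}). The argument must therefore rely only on the generator-wise inequality \(H\preceq I(\theta_0)\) being closed under limits, which it is, together with lower semicontinuity and monotonicity of \(\caI_W\). It must avoid any use of the structural properties available only for \(\beta<1\).
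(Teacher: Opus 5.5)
Your proposal is correct and follows essentially the same route as the paper. In part (i) you identify the generators as \(H_{\Phi,\theta_0,0}=2J_{\Phi,\theta_0}\) and use \(\caI_W(2J)=\tfrac12\caI_W(J)\). In part (ii) you combine the generator-wise information inequality \(H\preceq I(\theta_0)\), which survives closure, with the truncated-score statistics \(\Phi_B\), whose matrices \(A_B^\top V_B^{-1}A_B\) converge to \(I(\theta_0)\). The only difference is cosmetic: the paper records \(I(\theta_0)\in\caU_{\theta_0,1}\) and evaluates \(\caI_W\) there, whereas you pass directly to the limit of \(\Tr(WH_{\Phi_B,\theta_0,1}^{-1})\); the two are equivalent.
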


\begin{proof}
  At \(\beta=0\), every matrix generating \(\caU_{\theta_0,0}\) is
  \[
    A_{\Phi,0}^\top
    \xk{\frac12I_d}^{-1}
    A_{\Phi,0}
    =
    2A_{\Phi,0}^\top A_{\Phi,0}.
  \]
  Since \(\caJ_{\theta_0}\) and \(\caU_{\theta_0,0}\) use the same finite-dimensional generating class, taking closures gives \(\caU_{\theta_0,0}=2\caJ_{\theta_0}\), and the first constant identity follows from
  \(\caI_W(2J)=\caI_W(J)/2\).

  It remains to identify the sampling regime.
  Translate \(\Phi\), if necessary, so that it is centered under \(P_{\theta_0}\).
  Then
  \(\ker(V_{\Phi,0})\subseteq\ker(A_{\Phi,0}^\top)\), and the ordinary information inequality gives
  \begin{equation}
    \label{appendix-eq:unified-sampling-regime-upper-bound}
    A_{\Phi,0}^\top
    V_{\Phi,0}^{\dagger}
    A_{\Phi,0}
    \preceq
    I(\theta_0).
  \end{equation}
  The inequality is preserved under closure, so every
  \(H\in\caU_{\theta_0,1}\) satisfies \(H\preceq I(\theta_0)\).

  To attain this upper bound, write \(s_0=s_{\theta_0}\) and, for \(B>0\), set
  \[
    U_B(x)=s_0(x)\ind{\norm{s_0(x)}_2\le B},
    \qquad
    \Phi_B(x)=\frac{U_B(x)}{2B}.
  \]
  Then \(\diam(\Phi_B)\le1\).
  With
  \[
    I_B
    =
    \E_{\theta_0}
    \zk{s_0(X)s_0(X)^\top
    \ind{\norm{s_0(X)}_2\le B}},
    \qquad
    V_B=\Var_{\theta_0}\{U_B(X)\},
  \]
  DQM and dominated convergence give
  \[
    I_B\longrightarrow I(\theta_0),
    \qquad
    V_B\longrightarrow I(\theta_0).
  \]
  Moreover,
  \[
    A_{\Phi_B,0}=\frac{I_B}{2B},
    \qquad
    V_{\Phi_B,0}=\frac{V_B}{4B^2}.
  \]
  Since \(I(\theta_0)\succ0\), both \(I_B\) and \(V_B\) are nonsingular for all sufficiently large \(B\), and
  \[
    A_{\Phi_B,0}^\top
    V_{\Phi_B,0}^{\dagger}
    A_{\Phi_B,0}
    =
    I_BV_B^{-1}I_B
    \longrightarrow
    I(\theta_0).
  \]
  Thus \(I(\theta_0)\in\caU_{\theta_0,1}\).
  Combining this fact with \cref{appendix-eq:unified-sampling-regime-upper-bound} and inverse monotonicity yields
  \[
    \inf_{H\in\caU_{\theta_0,1}}\caI_W(H)
    =
    \Tr\{WI(\theta_0)^{-1}\},
  \]
  proving part~(ii).
\end{proof}

\begin{lemma}[Continuity of unified constants]
  \label{appendix-lem:unified-constant-continuity}
  Fix \(\beta\in[0,1]\), \(W\succeq0\) with \(W\ne0\), and a nonempty compact set \(K\subset\Theta\).
  Under \cref{ass:regular,ass:score-density-l1}, suppose \(I(\theta)\succ0\) for every \(\theta\in K\).
  If \(\beta=1\), suppose additionally that \(\theta\mapsto I(\theta)\) is continuous on \(K\).
  Then \(\theta\mapsto\uC_\theta(W)\) is continuous and
  \[
    0<
    \min_{\theta\in K}\uC_\theta(W)
    \le
    \max_{\theta\in K}\uC_\theta(W)
    <\infty.
  \]
\end{lemma}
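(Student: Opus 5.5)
The plan is to prove continuity of \(\theta\mapsto\uC_\theta(W)\) by establishing that the unified region \(\caU_{\theta,\beta}\) varies continuously in Hausdorff distance when \(\beta<1\). The case \(\beta=1\) I would treat separately through the explicit formula \(\mfC_\theta^{(1)}(W)=\Tr\{WI(\theta)^{-1}\}\) from \cref{appendix-lem:unified-info-identification}(ii). For \(\beta=1\), continuity of \(I\), \(I(\theta)\succ0\), and compactness give continuity and finiteness of \(\Tr\{WI(\theta)^{-1}\}\). Positivity holds because \(W\ne0\) and \(I(\theta)^{-1}\succ0\).

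For \(\beta<1\), fix \(\theta_1\in K\) and a statistic \(\Phi\) of diameter at most one, translated so that \(\norm{\Phi}_2\le1\). I would follow the proof of \cref{appendix-lem:score-density-stability} and bound \(\norm{A_{\Phi,\theta}-A_{\Phi,\theta_1}}_{\HS}\le\sqrt p\,\varepsilon(\theta)\), uniformly in \(\Phi\), where \(\varepsilon(\theta)=\int\normx{\dot p_\theta-\dot p_{\theta_1}}_2\,d\mu\to0\) by \cref{ass:score-density-l1}. For the covariance,
\[
\norm{V_{\Phi,\theta}-V_{\Phi,\theta_1}}_{\mathrm{op}}\le C\int\abs{p_\theta-p_{\theta_1}}\,d\mu,
\]
uniformly over such \(\Phi\), and the right-hand side tends to zero by \(L^1\) continuity. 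Since \(\Sigma_{\Phi,\theta,\beta}\succeq\frac{1-\beta}{2}I_d\), the identity \(S^{-1}-T^{-1}=S^{-1}(T-S)T^{-1}\) gives a uniform bound
\[
\norm{H_{\Phi,\theta,\beta}-H_{\Phi,\theta_1,\beta}}\le C_\beta\,\omega(\theta,\theta_1),
\qquad \omega(\theta,\theta_1)\to0 \text{ as } \theta\to\theta_1.
\]
Here the \(A\)'s are locally bounded, as in the same lemma. Pairing generators statistic by statistic and taking closures then yields Hausdorff continuity of \(\theta\mapsto\caU_{\theta,\beta}\).

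Next I would pass from Hausdorff continuity of regions to continuity of the constant. Upper semicontinuity is the delicate direction because \(\caI_W\) is only lower semicontinuous. By \cref{appendix-prop:unified-region-geometry}(v), \(\caU_{\theta_1,\beta}\) contains a positive definite matrix. By convexity (part (iv)), any \(H\in\caU_{\theta_1,\beta}\) can be perturbed into \(H_\epsilon=(1-\epsilon)H+\epsilon H_+\succ0\) while keeping \(\caI_W\) within \(\epsilon'\) of \(\caI_W(H)\).

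Local uniformity of positive definiteness is the key point, and the main obstacle. The strictly positive definite \(H_\epsilon\) is approximated by some \(H'\in\caU_{\theta,\beta}\) for \(\theta\) near \(\theta_1\). Since \(H_\epsilon\succ0\), the map \(\caI_W\) is continuous at \(H_\epsilon\), which gives
\[
\limsup_{\theta\to\theta_1}\uC_\theta(W)\le\uC_{\theta_1}(W).
\]
For the reverse inequality, I would take minimizers \(H_\theta\in\caU_{\theta,\beta}\) along a sequence \(\theta\to\theta_1\). These are bounded by \(I(\theta)/(2-\beta)\), which is locally bounded via the \(L^1\) bound, so a subsequence converges. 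The Hausdorff limit lies in \(\caU_{\theta_1,\beta}\), and lower semicontinuity of \(\caI_W\) gives \(\liminf\ge\uC_{\theta_1}(W)\).

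Finally, finiteness of \(\uC_\theta(W)\) at each point follows from part (v). Positivity follows from the envelope \(H\preceq I(\theta)/(2-\beta)\), which gives \(\uC_\theta(W)\ge(2-\beta)\Tr\{WI(\theta)^{-1}\}>0\) pointwise. A continuous positive finite function on the compact set \(K\) attains a positive minimum and a finite maximum.
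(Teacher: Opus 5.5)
Your proposal follows the paper's proof. Uniform moduli for \(A_{\Phi,\theta}\) and \(V_{\Phi,\theta}\) over translated diameter-one statistics, together with \(\Sigma_{\Phi,\theta,\beta}\succeq\frac{1-\beta}{2}I_d\), give Hausdorff continuity of \(\caU_{\theta,\beta}\) for \(\beta<1\). Lower semicontinuity then comes from compactness and lower semicontinuity of \(\caI_W\), and \(\beta=1\) is handled through \(\Tr\{WI(\theta)^{-1}\}\).

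The only real difference is in upper semicontinuity. The paper uses \cref{appendix-lem:unified-near-optimal-stat} to obtain a fixed \(\R^p\)-valued statistic with nonsingular derivative and near-optimal risk at \(\theta_1\). It then uses continuity in \(\theta\) of that statistic's inverse-map risk \(\Tr[WA_{\Phi,\theta}^{-1}\Sigma_{\Phi,\theta,\beta}A_{\Phi,\theta}^{-\top}]\). You instead perturb, by convexity, to a positive-definite point of \(\caU_{\theta_1,\beta}\) and transport it to nearby \(\theta\) with the Hausdorff bound. The two routes are equivalent, since the lemma's own proof uses the same convex perturbation. Yours avoids the compression and invertibility step, while the paper's needs Hausdorff continuity only for the lower semicontinuity direction.

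One justification needs correcting. \cref{ass:regular,ass:score-density-l1} do not make \(\theta\mapsto I(\theta)\) locally bounded. They control \(\int\norm{\dot p_\theta}_2\,d\mu\) locally, but \(\E_\theta\norm{s_\theta(X)}_2^2\) is only finite pointwise. So you cannot bound the minimizers \(H_\theta\) through \(I(\theta)/(2-\beta)\).

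Bound them instead with the estimate you already have: every generator satisfies \(H_{\Phi,\theta,\beta}\preceq\frac{2}{1-\beta}A_{\Phi,\theta}^\top A_{\Phi,\theta}\), and \(\norm{A_{\Phi,\theta}}_{\HS}\le\sqrt p\int\norm{\dot p_\theta}_2\,d\mu\). Alternatively, note that \(\caU_{\theta,\beta}\) lies within Hausdorff distance \(o(1)\) of the compact set \(\caU_{\theta_1,\beta}\). This is also how the paper gets uniform boundedness of the regions. Your pointwise use of \(I(\theta)\succ0\) to get positivity is fine.
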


\begin{proof}
  Choose a compact neighborhood \(K^+\) of \(K\) contained in \(\Theta\).
  Translate every diameter-one statistic so that \(\norm{\Phi(x)}_2\le1\), without changing its mean derivative or covariance.
  For \(\theta,\vartheta\in K^+\), put
  \[
    d_0(\theta,\vartheta)
    =
    \int\abs{p_\theta-p_\vartheta}\,d\mu,
    \qquad
    d_1(\theta,\vartheta)
    =
    \int\norm{\dot p_\theta-\dot p_\vartheta}_2\,d\mu.
  \]
  Uniformly over these statistics,
  \[
    \norm{A_{\Phi,\theta}-A_{\Phi,\vartheta}}_{\mathrm{op}}
    \le d_1(\theta,\vartheta),
    \qquad
    \norm{V_{\Phi,\theta}-V_{\Phi,\vartheta}}_{\mathrm{op}}
    \le3d_0(\theta,\vartheta).
  \]
  Both moduli vanish uniformly as \(\vartheta\to\theta\) on \(K^+\).

  Suppose first that \(\beta<1\).
  Since \(\Sigma_{\Phi,\theta,\beta}\succeq(1-\beta)I/2\), the inverse identity for positive-definite matrices and the preceding bounds show that the generators
  \[
    A_{\Phi,\theta}^\top
    \Sigma_{\Phi,\theta,\beta}^{-1}
    A_{\Phi,\theta}
  \]
  vary uniformly continuously with \(\theta\).
  They are also uniformly bounded on \(K^+\).
  Hence the compact regions \(\caU_{\theta,\beta}\) vary continuously in Hausdorff distance.
  As in the standard compactness argument, lower semicontinuity of \(\caI_W\) gives lower semicontinuity of \(\uC_\theta(W)\).
  Uniform boundedness of the regions and \(W\ne0\) give a common positive lower bound, while a positive-definite generator exists because \(I(\theta)\succ0\).

  For the reverse semicontinuity, fix \(\theta\in K\).
  By \cref{appendix-lem:unified-near-optimal-stat}, for every \(\delta>0\) there is a diameter-one \(p\)-dimensional statistic with nonsingular \(A_{\Phi,\theta}\) whose inverse-map risk is at most \(\uC_\theta(W)+\delta\).
  The derivative remains nonsingular nearby, and the risk is continuous there.
  This proves upper semicontinuity, and hence continuity, when \(\beta<1\).
  Compactness of \(K\) gives the asserted finite upper bound.

  If \(\beta=1\), part~\textup{(ii)} of \cref{appendix-lem:unified-info-identification} gives
  \[
    \mfC_\theta^{(1)}(W)
    =
    \Tr\{WI(\theta)^{-1}\}.
  \]
  The assumed continuity and positive definiteness of \(I(\theta)\) imply continuity, strict positivity, and boundedness of this function on \(K\).
\end{proof}

\subsection{Proof of \Cref{prop:common-efficiency}}

Since \(\caI_{W_1+W_2}(H)=\caI_{W_1}(H)+\caI_{W_2}(H)\), taking infima over \(H\in\caU_{\theta_0,\beta}\) gives the inequality.
A common minimizer gives equality.
Conversely, if equality holds, choose a minimizer for \(W_1+W_2\), which exists by the compactness of \(\caU_{\theta_0,\beta}\) and lower semicontinuity.
It must minimize both summands and hence belongs to the intersection.

For the last assertion, let
\[
  \mathcal{R}_n^{(\beta)}(W;\widehat{\theta}_n)
  =
  \sigma_n^{-2}
  \sup_{\theta\in\widetilde\Theta_n}
  \E_\theta
  \norm{\widehat{\theta}_n-\theta}_W^2.
\]
By \cref{thm:local-minimax}, every sequence of \(\rho_n\)-zCDP estimators satisfies
\[
  \liminf_{n\to\infty}
  \mathcal{R}_n^{(\beta)}(W_1+W_2;\widehat{\theta}_n)
  \ge
  \uCt(W_1+W_2).
\]
On the other hand,
\[
  \mathcal{R}_n^{(\beta)}(W_1+W_2;\widehat{\theta}_n)
  \le
  \mathcal{R}_n^{(\beta)}(W_1;\widehat{\theta}_n)
  +
  \mathcal{R}_n^{(\beta)}(W_2;\widehat{\theta}_n).
\]
If the same sequence attained both separate constants, then
\[
  \limsup_{n\to\infty}
  \mathcal{R}_n^{(\beta)}(W_1+W_2;\widehat{\theta}_n)
  \le
  \uCt(W_1)
  +
  \uCt(W_2),
\]
contradicting the strict inequality implied by disjointness.
   \clearpage
  \section{Moment-Map Inverse Estimators with Gaussian Releases}
\label{appendix-sec:attain-proofs}

This section develops the common local estimator for every limiting balance between sampling and privacy noise.
A single fixed-statistic expansion keeps sampling and Gaussian privacy fluctuations on the common scale.
We next approximate the variational constant by bounded statistics with invertible mean derivatives and then obtain the upper bounds from one diagonal construction, provided the public center is fixed.

\subsection{Fixed-statistic expansion}
\label{appendix-subsec:fixed-stat-expansion}

\begin{lemma}[Fixed-statistic moment-map inverse estimator]
  \label{appendix-lem:unified-fixed-stat}
  Under \cref{ass:regular,ass:score-density-l1}, fix
  \(\theta_0\in\Theta\).
  Let \(\Phi:\caX\to\R^p\) have diameter at most one, and suppose its mean map
  \(g(\theta)=\E_\theta\Phi(X)\) is continuously differentiable near
  \(\theta_0\), with nonsingular derivative
  \(A_0=A_{\Phi,\theta_0}\).
  Let \((\rho_n)\) satisfy the asymptotic regime in
  \cref{eq:unified-asymp-regime}, and let
  \(\mathcal T_n\subset\Theta\) satisfy
  \[
    \sup_{\theta\in\mathcal T_n}
    \norm{\theta-\theta_0}_2
    \longrightarrow0.
  \]
  Release
  \[
    Y_n
    =
    \frac1n\sum_{i=1}^n\Phi(X_i)+G_n,
    \qquad
    G_n
    \sim
    N\left(0,\frac{I_p}{2\tau_n^2}\right),
  \]
  and let
  \(\widehat\theta_{\Phi,n}=F_0(Y_n)\), where
  \(F_0=\Psi_0\circ\Pi_{\mathcal B_0}\) is a projected local inverse of \(g\)
  around \(\theta_0\).
  Then \(\widehat\theta_{\Phi,n}\) is \(\rho_n\)-zCDP and
  \begin{equation}
    \label{appendix-eq:unified-fixed-stat-linearization}
    \sup_{\theta\in\mathcal T_n}
    \sigma_n^{-2}
    \E_\theta
    \norm{
      \widehat\theta_{\Phi,n}-\theta
      -
      A_0^{-1}\{Y_n-g(\theta)\}
    }_2^2
    \longrightarrow0.
  \end{equation}
  Consequently, for every \(W\succeq0\),
  \begin{equation}
    \label{appendix-eq:unified-fixed-stat-risk}
    \sup_{\theta\in\mathcal T_n}
    \left|
      \sigma_n^{-2}
      \E_\theta
      (\widehat\theta_{\Phi,n}-\theta)^\top
      W(\widehat\theta_{\Phi,n}-\theta)
      -
      \Tr\zk{
        W A_0^{-1}
        \Sigma_{\Phi,\theta_0,\beta}
        A_0^{-\top}
      }
    \right|
    \longrightarrow0.
  \end{equation}
  Moreover,
  \begin{equation}
    \label{appendix-eq:unified-fixed-stat-limit}
    \sup_{\theta\in\mathcal T_n}
    d_{\mathrm{BL}}\left(
      \caL_\theta\{
        \sigma_n^{-1}
        (\widehat\theta_{\Phi,n}-\theta)
      \},
      N\left(
        0,
        A_0^{-1}
        \Sigma_{\Phi,\theta_0,\beta}
        A_0^{-\top}
      \right)
    \right)
    \longrightarrow0.
  \end{equation}
\end{lemma}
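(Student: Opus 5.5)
Privacy is immediate from \cref{lem:gauss-composition}: the empirical mean has replacement sensitivity at most \(\diam(\Phi)/n\le 1/n\), the noise variance is \(1/(2\tau_n^2)=1/(2n^2\rho_n)\), and \(F_0\) is post-processing. For the risk, I will write \(Y_n-g(\theta)=\xi_n+G_n\), where \(\xi_n=n^{-1}\sum_i\{\Phi(X_i)-g(\theta)\}\). The covariance is \(\Var_\theta(Y_n-g(\theta))=\sigma_n^2\Sigma_{\Phi,\theta,\beta_n}\). Since \(\norm{\Phi}\) is bounded, \(V_{\Phi,\theta}\to V_{\Phi,\theta_0}\) uniformly over \(\mathcal T_n\); this follows from the \(L^1\)-continuity of \(p_\theta\) in \cref{ass:score-density-l1}, as in \cref{appendix-lem:unified-constant-continuity}. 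Together with \(\beta_n\to\beta\), this gives \(\sigma_n^{-2}\E_\theta\norm{Y_n-g(\theta)}_2^2\) bounded uniformly, with covariance converging to \(\Sigma_{\Phi,\theta_0,\beta}\). Note that \(\sigma_n\to0\), because \(n\to\infty\) and \(\tau_n\to\infty\).

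The main step is the linearization \cref{appendix-eq:unified-fixed-stat-linearization}. Let \(r\) be the radius of \(\mathcal B_0\). For large \(n\), \(g(\theta)\) lies in the ball of radius \(r/2\) for all \(\theta\in\mathcal T_n\), by continuity of \(g\). On the event \(E_n=\{\norm{Y_n-g(\theta)}_2\le r/2\}\) the projection is inactive, so \(\widehat\theta-\theta=\Psi_0(Y_n)-\Psi_0(g(\theta))\). A mean-value bound then gives
\[
\norm{\widehat\theta-\theta-D\Psi_0(g(\theta))\{Y_n-g(\theta)\}}_2\le\omega(\norm{Y_n-g(\theta)}_2)\norm{Y_n-g(\theta)}_2,
\]
where \(\omega\) is the modulus of continuity of \(D\Psi_0\) on \(\mathcal B_0\). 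We also have \(D\Psi_0(g(\theta))\to A_0^{-1}\) uniformly. Off \(E_n\), both \(\widehat\theta\) and \(\theta\) are bounded, so the contribution is at most \(C\,P_\theta(E_n^c)\). This probability is exponentially small: Hoeffding's inequality applies to the bounded average \(\xi_n\), and the Gaussian tail applies to \(G_n\). Exponential smallness is \(o(\sigma_n^2)\), since \(\sigma_n^2\ge 1/n\).

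The remaining task is to show that \(\sigma_n^{-2}\E[\omega(\norm{U})^2\norm{U}^2]\to0\) for \(U=Y_n-g(\theta)\). I will split on whether \(\norm{U}\le\eta\). On the small part, the term is bounded by \(\omega(\eta)^2\sigma_n^{-2}\E\norm{U}^2\le C\omega(\eta)^2\). On the large part, it is bounded by \(\omega(r)^2\sigma_n^{-2}\E[\norm{U}^2\mathbf 1\{\norm U>\eta\}]\). This goes to zero by Cauchy--Schwarz together with a uniform fourth-moment bound \(\E\norm U^4\le C\sigma_n^4\), which holds for bounded i.i.d. averages plus Gaussian noise. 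This uniform control, with the constants not depending on \(\theta\in\mathcal T_n\), is where I expect most of the care to go.

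Given \cref{appendix-eq:unified-fixed-stat-linearization}, the risk statement \cref{appendix-eq:unified-fixed-stat-risk} follows by expanding the square. The leading term is \(\sigma_n^{-2}\E\norm{A_0^{-1}U}_W^2=\Tr[WA_0^{-1}\Sigma_{\Phi,\theta,\beta_n}A_0^{-\top}]\), which converges uniformly to the target. The cross term is handled by Cauchy--Schwarz. For the distributional limit \cref{appendix-eq:unified-fixed-stat-limit}, write \(\sigma_n^{-1}U=\sigma_n^{-1}\xi_n+\sigma_n^{-1}G_n\), with the two parts independent. The Gaussian part is exactly \(N(0,(1-\beta_n)I/2)\). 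For the sampling part, \(\sigma_n^{-1}\xi_n=\sqrt{\beta_n}\cdot\sqrt n\,\xi_n\), since \(\sigma_n^{2}=1/(n\beta_n)\). If \(\beta>0\), a uniform Lindeberg (Berry--Esseen) CLT for bounded summands gives a limit uniform over \(\theta\). If \(\beta=0\), the sampling part vanishes in \(L^2\). In either case the sum converges in bounded-Lipschitz distance to \(N(0,\Sigma_{\Phi,\theta_0,\beta})\), uniformly over \(\theta\in\mathcal T_n\). Applying \(A_0^{-1}\) and the \(L^2\)-negligible linearization remainder, which is controlled in \(d_{\mathrm{BL}}\) by its \(L^1\) norm, gives the claim.
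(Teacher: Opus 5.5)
Your proposal is correct and follows essentially the paper's route. Both arguments use the centered release \(U=Y_n-g(\theta)\) with its exact covariance identity, the uniform bound \(\E_\theta\norm{U}_2^4=O(\sigma_n^4)\), linearization through uniform continuity of \(D\Psi_0\) on the event where the projection is inactive, negligibility of the complement, and a uniform Lindeberg CLT for the sampling part added to the exact Gaussian privacy part.

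Two small repairs are needed. First, off \(E_n\) the remainder still contains the unbounded term \(A_0^{-1}U\), so it is not bounded by \(C\,P_\theta(E_n^c)\) alone. Use Cauchy--Schwarz with your fourth-moment bound, \(\E_\theta[\norm{U}_2^2\mathbf{1}_{E_n^c}]\le(\E_\theta\norm{U}_2^4)^{1/2}P_\theta(E_n^c)^{1/2}=o(\sigma_n^2)\); the paper handles this through the global Lipschitz property of \(F_0\). Second, the Gaussian tail \(e^{-c\tau_n^2}\) is \(o(\sigma_n^2)\) because \(\sigma_n^2\ge\tau_n^{-2}\), not because \(\sigma_n^2\ge1/n\). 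Alternatively, Chebyshev with the fourth moment gives \(P_\theta(E_n^c)=O(\sigma_n^4)\) directly, as in the paper.
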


\begin{proof}
  Translate \(\Phi\) by a fixed vector, if necessary, so that
  \(\norm{\Phi(x)}_2\le1\); this leaves its diameter, derivative, covariance,
  and the recentered estimator unchanged.
  The empirical query has replacement sensitivity at most \(1/n\), so the
  Gaussian mechanism gives \(\rho_n\)-zCDP.
  By the inverse function theorem, choose a closed image ball
  \(\mathcal B_0\) on which the local inverse \(\Psi_0\) is defined.
  The projected inverse \(F_0=\Psi_0\circ\Pi_{\mathcal B_0}\) is globally
  Lipschitz.

  Write
  \[
    Z_{n,\theta}
    =
    Y_n-g(\theta)
    =
    \frac1n\sum_{i=1}^n
    \{\Phi(X_i)-g(\theta)\}
    +
    G_n.
  \]
  The exact covariance identity is
  \[
    \Var_\theta(Z_{n,\theta})
    =
    \frac{V_{\Phi,\theta}}n
    +
    \frac{I_p}{2\tau_n^2}
    =
    \sigma_n^2
    \Sigma_{\Phi,\theta,\beta_n}.
  \]
  Boundedness of \(\Phi\) and Gaussian moment bounds give, uniformly on
  \(\mathcal T_n\),
  \[
    \E_\theta\norm{Z_{n,\theta}}_2^2
    =
    O(\sigma_n^2),
    \qquad
    \E_\theta\norm{Z_{n,\theta}}_2^4
    =
    O(\sigma_n^4).
  \]
  Since \(\mathcal T_n\) shrinks to \(\theta_0\), there is a fixed
  \(r>0\) such that \(g(\theta)\) lies at distance at least \(r\) from the
  boundary of \(\mathcal B_0\) for every \(\theta\in\mathcal T_n\) and all
  sufficiently large \(n\).
  Hence the projection is inactive on
  \(\{\norm{Z_{n,\theta}}_2\le r\}\), while the fourth-moment bound gives
  \[
    \sup_{\theta\in\mathcal T_n}
    P_\theta\{\norm{Z_{n,\theta}}_2>r\}
    =O(\sigma_n^4).
  \]
  On the first event, uniform differentiability of \(\Psi_0\), together
  with
  \(D\Psi_0\{g(\theta)\}=A_{\Phi,\theta}^{-1}\to A_0^{-1}\), gives the
  desired linearization.
  On the complement, the global Lipschitz property of \(F_0\) and the
  fourth-moment bound make the squared remainder \(o(\sigma_n^2)\)
  uniformly.
  Therefore
  \[
    \widehat\theta_{\Phi,n}-\theta
    =
    A_0^{-1}Z_{n,\theta}
    +
    o_{L^2}(\sigma_n)
  \]
  uniformly on \(\mathcal T_n\), proving
  \cref{appendix-eq:unified-fixed-stat-linearization}.

  Continuity of \(V_{\Phi,\theta}\) at \(\theta_0\), the covariance identity,
  and Cauchy--Schwarz give
  \cref{appendix-eq:unified-fixed-stat-risk}.
  Finally, write the empirical term as
  \[
    \sigma_n^{-1}
    \frac1n\sum_{i=1}^n
    \{\Phi(X_i)-g(\theta)\}
    =
    \sqrt{\beta_n}
    \frac1{\sqrt n}\sum_{i=1}^n
    \{\Phi(X_i)-g(\theta)\}.
  \]
  The variables are uniformly bounded, and
  \(P_\theta\to P_{\theta_0}\) in total variation uniformly for
  \(\theta\in\mathcal T_n\).
  Thus the Cram\'er--Wold Lindeberg argument applies uniformly and gives
  \[
    \sigma_n^{-1}
    \frac1n\sum_{i=1}^n
    \{\Phi(X_i)-g(\theta)\}
    \rightsquigarrow
    N(0,\beta V_{\Phi,\theta_0})
  \]
  uniformly on \(\mathcal T_n\).
  Independently,
  \[
    \sigma_n^{-1}G_n
    \sim
    N\left(
      0,
      \frac{1-\beta_n}{2}I_p
    \right).
  \]
  Combining the two limits with the uniform linearization proves
  \cref{appendix-eq:unified-fixed-stat-limit}.
\end{proof}

\subsection{Near-optimal invertible moment maps}
\label{appendix-subsec:near-optimal-moment-map}

\begin{lemma}[Approximation by invertible statistics]
  \label{appendix-lem:unified-near-optimal-stat}
  Under \cref{ass:regular,ass:score-density-l1}, suppose
  \(I(\theta_0)\succ0\).
  For every \(\beta\in[0,1]\), \(W\succeq0\), and \(\epsilon>0\), there is
  a diameter-one statistic \(\Phi:\caX\to\R^p\) whose mean map is
  continuously differentiable near \(\theta_0\), whose derivative
  \(A_{\Phi,\theta_0}\) is nonsingular, and such that
  \begin{equation}
    \label{appendix-eq:unified-near-optimal-stat}
    \Tr\zk{
      W A_{\Phi,\theta_0}^{-1}
      \Sigma_{\Phi,\theta_0,\beta}
      A_{\Phi,\theta_0}^{-\top}
    }
    \le
    \uCt(W)+\epsilon.
  \end{equation}
\end{lemma}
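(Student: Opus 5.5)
Split into $\beta<1$ and $\beta=1$; the first case carries the work.

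\emph{Case $\beta<1$.} By \cref{appendix-prop:unified-region-geometry}\textup{(v)} there is a minimizer $H_*\in\caU_{\theta_0,\beta}$ with $\caI_W(H_*)=\uCt(W)$. The same proposition also gives a positive-definite generator $H_+$, produced by a diameter-one statistic $\Phi_+$ with full-rank $A_{\Phi_+,0}$.

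For $t\in(0,1)$, set $H_t=(1-t)H_*+tH_+$. This matrix lies in $\caU_{\theta_0,\beta}$ by convexity \textup{(iv)}, and it is positive definite. We need $\caI_W(H_t)\to\caI_W(H_*)$ as $t\downarrow0$:
\begin{itemize}
\item One direction is upper semicontinuity. Since $H_t\succeq(1-t)H_*$, we get $\caI_W(H_t)\le(1-t)^{-1}\caI_W(H_*)$, using the monotonicity of $\caI_W$ under $\preceq$ together with its definition via $(J+sI)^{-1}$ limits.
\item The other direction is automatic because $H_*$ is a minimizer.
\end{itemize}
Fix $t$ small, then approximate $H_t$ by an actual generator $H_{\Phi,\theta_0,\beta}$. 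By \cref{appendix-prop:unified-region-geometry}\textup{(ii)} we may take $\Phi:\caX\to\R^p$ with $\diam(\Phi)\le1$. Since $H_t\succ0$ and $J\mapsto\Tr(WJ^{-1})$ is continuous on the positive definite cone, a close enough generator is positive definite and has $\caI_W$ within $\epsilon/2$.

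Positive definiteness of $A_{\Phi,0}^\top\Sigma^{-1}A_{\Phi,0}$ forces $A_{\Phi,0}$ to be injective, hence a nonsingular $p\times p$ matrix. Then
\[
\Tr\bigl(W(A^\top\Sigma^{-1}A)^{-1}\bigr)=\Tr\bigl(WA^{-1}\Sigma A^{-\top}\bigr),
\]
which is exactly the left side of \cref{appendix-eq:unified-near-optimal-stat}.

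\emph{Case $\beta=1$.} Use the truncated-score statistics $\Phi_B$ from \cref{appendix-lem:unified-info-identification}\textup{(ii)}. For large $B$, $A_{\Phi_B,0}=I_B/(2B)$ is nonsingular, and
\[
A^{-1}V A^{-\top}=I_B^{-1}V_BI_B^{-1}\to I(\theta_0)^{-1}.
\]
The trace therefore tends to $\Tr\{WI(\theta_0)^{-1}\}=\mfC^{(1)}_{\theta_0}(W)$, so some finite $B$ works.

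\emph{Differentiability of the mean map.} This holds for any bounded $\Phi$. By \cref{ass:score-density-l1}, $\theta\mapsto p_\theta$ is $C^1$ into $L^1(\mu)$, and $\Phi$ is bounded, so $g(\theta)=\int\Phi\,p_\theta\,d\mu$ is $C^1$ with $Dg(\theta)=\int\Phi\,\dot p_\theta^\top d\mu=A_{\Phi,\theta}$. Continuity of this derivative follows from the $L^1$ bound already used in \cref{appendix-lem:score-density-stability}.

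\emph{Main obstacle.} The delicate step is the perturbation when $H_*$ is singular. There $\caI_W(H_*)$ may be finite only because $W$ annihilates $\ker H_*$, and generators near $H_*$ could be singular or have blown-up $\caI_W$. The argument handles this by mixing toward $H_+$ before approximating, so we only ever approximate a positive definite target, where $\caI_W$ is continuous. It relies on convexity, so it works only for $\beta<1$, which is why $\beta=1$ is treated separately.
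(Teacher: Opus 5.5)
Your proposal is correct and follows the paper's proof step by step. For \(\beta<1\) you mix the minimizer toward a positive-definite element of the convex region, approximate that positive-definite target by \(\R^p\)-valued generators from the compression statement, and read off nonsingularity of \(A_{\Phi,\theta_0}\) together with \(H^{-1}=A^{-1}\Sigma A^{-\top}\); for \(\beta=1\) you use the same truncated-score statistics \(\Phi_B\). The only differences are cosmetic: you obtain the lower direction from minimality of \(H_*\) rather than lower semicontinuity, and you correctly note that finite diameter already implies boundedness, which makes the paper's translation step unnecessary.
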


\begin{proof}
  Suppose first that \(\beta<1\).
  By \cref{appendix-prop:unified-region-geometry}\textup{(v)}, choose an
  optimizer \(H_*\in\caU_{\theta_0,\beta}\) and a matrix
  \(H_0\in\caU_{\theta_0,\beta}\) with \(H_0\succ0\).
  For \(\delta\in(0,1)\), convexity in
  \cref{appendix-prop:unified-region-geometry}\textup{(iv)} gives
  \[
    H_\delta=(1-\delta)H_*+\delta H_0
    \in\caU_{\theta_0,\beta},
    \qquad H_\delta\succ0.
  \]
  Since \(H_\delta\succeq(1-\delta)H_*\), inverse monotonicity and lower
  semicontinuity yield
  \[
    \caI_W(H_*)
    \le
    \liminf_{\delta\downarrow0}\caI_W(H_\delta)
    \le
    \limsup_{\delta\downarrow0}\caI_W(H_\delta)
    \le
    \lim_{\delta\downarrow0}
    \frac{\caI_W(H_*)}{1-\delta}
    =
    \caI_W(H_*).
  \]
  Choose \(\delta\) so that
  \(\caI_W(H_\delta)\le
  \uCt(W)+\epsilon/2\).

  By the \(p\)-dimensional representation in
  \cref{appendix-prop:unified-region-geometry}\textup{(ii)}, there are
  diameter-one statistics \(\Phi_m:\caX\to\R^p\) whose generated matrices
  \[
    H_m
    =
    A_{\Phi_m,\theta_0}^\top
    \Sigma_{\Phi_m,\theta_0,\beta}^{-1}
    A_{\Phi_m,\theta_0}
  \]
  converge to \(H_\delta\).
  Since \(H_\delta\succ0\), for all sufficiently large \(m\),
  \(H_m\succ0\), and hence the square matrix
  \(A_{\Phi_m,\theta_0}\) is nonsingular.
  For such \(m\),
  \[
    A_{\Phi_m,\theta_0}^{-1}
    \Sigma_{\Phi_m,\theta_0,\beta}
    A_{\Phi_m,\theta_0}^{-\top}
    =H_m^{-1}.
  \]
  Continuity of the inverse on \(\bbS_{++}^p\) gives
  \[
    \Tr(WH_m^{-1})
    \longrightarrow
    \caI_W(H_\delta).
  \]
  Translate \(\Phi_m\), if necessary, so that it is bounded.
  The score-density assumption makes its mean map continuously
  differentiable near \(\theta_0\).
  Taking \(m\) sufficiently large proves
  \cref{appendix-eq:unified-near-optimal-stat} for every \(\beta<1\).

  It remains to treat \(\beta=1\).
  Use the truncated score statistic \(\Phi_B\) from the proof of
  \cref{appendix-lem:unified-info-identification}\textup{(ii)}.
  With the notation there,
  \[
    A_{\Phi_B,\theta_0}=\frac{I_B}{2B},
    \qquad
    V_{\Phi_B,\theta_0}=\frac{V_B}{4B^2},
  \]
  and hence its inverse-map covariance is
  \(I_B^{-1}V_BI_B^{-1}\).
  This converges to \(I(\theta_0)^{-1}\), while part~\textup{(ii)} of the
  same lemma identifies the limiting trace with
  \(\mfC_{\theta_0}^{(1)}(W)\).
  For all sufficiently large \(B\), \(I_B\) is nonsingular; moreover,
  boundedness of \(\Phi_B\) and the score-density assumption make its mean
  map continuously differentiable near \(\theta_0\).
  Taking \(B\) sufficiently large completes the proof.
\end{proof}

\subsection{Local attaining sequences}
\label{appendix-subsec:unified-local-upper}

\begin{lemma}[Local upper bound]
  \label{appendix-lem:unified-local-upper}
  Under the assumptions of \cref{thm:local-minimax}, there exists a
  sequence \((\widehat\theta_n^\star)\) of \(\rho_n\)-zCDP moment-map inverse
  estimators such that
  \begin{equation}
    \label{appendix-eq:unified-local-upper}
    \limsup_{n\to\infty}
    \sigma_n^{-2}
    \sup_{\theta\in\widetilde\Theta_n}
    \E_\theta
    (\widehat\theta_n^\star-\theta)^\top
    W(\widehat\theta_n^\star-\theta)
    \le
    \uCt(W).
  \end{equation}
\end{lemma}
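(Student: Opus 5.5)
The plan is a diagonal combination of \cref{appendix-lem:unified-near-optimal-stat} with the fixed-statistic expansion in \cref{appendix-lem:unified-fixed-stat}. Fix \(W\succeq0\) and the limit \(\beta\in[0,1]\) of \(\beta_n\). For each \(k\ge1\), \cref{appendix-lem:unified-near-optimal-stat} with \(\epsilon=1/k\) supplies a diameter-one statistic \(\Phi_k:\caX\to\R^p\). Each \(\Phi_k\) has a continuously differentiable mean map \(g_k\) near \(\theta_0\) and nonsingular derivative \(A_k=A_{\Phi_k,\theta_0}\), and satisfies
\[
  c_k\coloneqq\Tr\zk{WA_k^{-1}\Sigma_{\Phi_k,\theta_0,\beta}A_k^{-\top}}\le\uCt(W)+1/k.
\]
For each \(k\) we fix, once and for all, a closed image ball \(\mathcal B_k\) and the projected local inverse \(F_k=\Psi_k\circ\Pi_{\mathcal B_k}\). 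We denote by \(\widehat\theta_{k,n}=F_k(Y_{k,n})\) the resulting moment-map inverse estimator at sample size \(n\), with release \(Y_{k,n}=n^{-1}\sum_i\Phi_k(X_i)+G_n\) and \(G_n\sim N(0,I_p/(2\tau_n^2))\). By \cref{lem:gauss-composition} and post-processing, each \(\widehat\theta_{k,n}\) is \(\rho_n\)-zCDP.

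Next we apply \cref{appendix-lem:unified-fixed-stat} with \(\mathcal T_n=\widetilde\Theta_n\). This is legitimate because \(\sup_{\theta\in\widetilde\Theta_n}\norm{\theta-\theta_0}_2\le R_n\sigma_n\to0\) by \cref{eq:unified-local-param-set}, and \(\widetilde\Theta_n\subset\Theta\) eventually since \(\Theta\) is open. For every fixed \(k\), \cref{appendix-eq:unified-fixed-stat-risk} then gives
\[
  \delta_{k,n}\coloneqq\sup_{\theta\in\widetilde\Theta_n}\left|\sigma_n^{-2}\E_\theta\normx{\widehat\theta_{k,n}-\theta}_W^2-c_k\right|\longrightarrow0\qquad(n\to\infty).
\]

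For the diagonal step, we choose increasing integers \(N_1<N_2<\cdots\) such that \(\delta_{k,n}\le1/k\) for all \(n\ge N_k\). We then set \(k(n)=\max\{k:N_k\le n\}\), taking \(k(n)=1\) for \(n<N_2\), and define \(\widehat\theta_n^\star=\widehat\theta_{k(n),n}\). Each \(\widehat\theta_n^\star\) is a \(\rho_n\)-zCDP moment-map inverse estimator based on an \(\R^p\)-valued diameter-one statistic. Since \(k(n)\to\infty\) and \(n\ge N_{k(n)}\), we get
\[
  \sigma_n^{-2}\sup_{\theta\in\widetilde\Theta_n}\E_\theta\normx{\widehat\theta_n^\star-\theta}_W^2\le c_{k(n)}+\delta_{k(n),n}\le\uCt(W)+2/k(n)\to\uCt(W),
\]
which proves \cref{appendix-eq:unified-local-upper}.

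The only delicate point is the uniformity hidden in \(\delta_{k,n}\). \Cref{appendix-lem:unified-fixed-stat} gives convergence only for each fixed statistic, with rates depending on \(\Phi_k\) through the inverse-function radius and the modulus of continuity of \(D\Psi_k\). This is why the diagonalization must be carried out as above: each \(N_k\) is chosen after \(k\) is fixed, and no uniformity across \(k\) is required. We also need the sequence \((R_n)\) to be fixed before the \(N_k\) are chosen, which it is, since the statement is for given \((R_n)\). Beyond this point, nothing else is needed: privacy is preserved for every \(n\), because each release is a single Gaussian mechanism calibrated to \(\rho_n\).
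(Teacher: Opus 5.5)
Your proposal is correct and follows the paper's proof essentially verbatim. It takes near-optimal invertible statistics with $\epsilon_k\downarrow0$, applies the fixed-statistic expansion with $\mathcal T_n=\widetilde\Theta_n$ for each fixed $k$, and diagonalizes slowly over deterministic thresholds $N_k$, with your explicit bound $\uCt(W)+2/k(n)$ spelling out what the paper's one-line diagonal argument uses.
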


\begin{proof}
  For each \(k\), apply
  \cref{appendix-lem:unified-near-optimal-stat} with
  \(\epsilon_k\downarrow0\), and use the corresponding estimator from
  \cref{appendix-lem:unified-fixed-stat} with
  \(\mathcal T_n=\widetilde\Theta_n\).
  For fixed \(k\), its normalized maximal risk converges to at most
  \(\uCt(W)+\epsilon_k\).
  Choose increasing deterministic thresholds \(N_k\) beyond which this risk
  is at most \(\uCt(W)+2\epsilon_k\), and use the
  \(k\)th estimator when \(N_k\le n<N_{k+1}\).
  This slow diagonal sequence remains \(\rho_n\)-zCDP and proves the claim.
\end{proof}

\begin{lemma}[Gaussian diagonalization]
  \label{appendix-lem:unified-gauss-diagonalization}
  Under the assumptions of \cref{thm:local-attain-seq}, suppose \(W\succ0\), and fix \(H_{W,\beta}\in\argmin_{H\in\caU_{\theta_0,\beta}}\caI_W(H)\).
  There is a sequence \((\widehat\theta_n^\star)\) of \(\rho_n\)-zCDP moment-map inverse estimators such that
  \[
    \limsup_{n\to\infty}
    \sigma_n^{-2}
    \sup_{\theta\in\widetilde\Theta_n}
    \E_\theta
    (\widehat\theta_n^\star-\theta)^\top
    W(\widehat\theta_n^\star-\theta)
    \le
    \uCt(W)
  \]
  and
  \[
    \sup_{\theta\in\widetilde\Theta_n}
    d_{\mathrm{BL}}\left(
      \caL_\theta\{\sigma_n^{-1}(\widehat\theta_n^\star-\theta)\},
      N\xk{0,H_{W,\beta}^{-1}}
    \right)
    \longrightarrow0.
  \]
\end{lemma}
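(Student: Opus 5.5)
The plan is to repeat the diagonal construction of \cref{appendix-lem:unified-local-upper}, but to choose the near-optimal statistics so that their inverse-map covariances converge to \(H_{W,\beta}^{-1}\) itself, and not merely have traces converging to \(\uCt(W)\). Since \(W\succ0\), the minimizer \(H_{W,\beta}\) is positive definite, because \(\caI_W(H)<\infty\) forces \(\ker H=\{0\}\). It is also unique: for \(\beta<1\) this follows from convexity of \(\caU_{\theta_0,\beta}\) (\cref{appendix-prop:unified-region-geometry}\textup{(iv)}) and strict convexity of \(H\mapsto\Tr(WH^{-1})\) on the positive definite cone; for \(\beta=1\), \(H_{W,1}=I(\theta_0)\) by \cref{appendix-lem:unified-info-identification}.

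\emph{Step 1: approximating statistics.} Fix \(k\) and \(\epsilon_k\downarrow0\). For \(\beta<1\), I use the \(p\)-dimensional representation \cref{appendix-prop:unified-region-geometry}\textup{(ii)} directly at \(H_{W,\beta}\), so no perturbation toward \(H_0\) is needed because \(H_{W,\beta}\succ0\). This gives diameter-one \(\Phi_m:\caX\to\R^p\) with \(H_{\Phi_m}\to H_{W,\beta}\). For large \(m\), \(A_{\Phi_m,\theta_0}\) is nonsingular and
\[
A_{\Phi_m,\theta_0}^{-1}\Sigma_{\Phi_m,\theta_0,\beta}A_{\Phi_m,\theta_0}^{-\top}=H_{\Phi_m}^{-1}\to H_{W,\beta}^{-1}.
\]
I pick \(\Phi^{(k)}\) with \(\|H_{\Phi^{(k)}}^{-1}-H_{W,\beta}^{-1}\|_{\mathrm{op}}\le\epsilon_k\). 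For \(\beta=1\), I use the truncated scores \(\Phi_B\), whose inverse-map covariance is \(I_B^{-1}V_BI_B^{-1}\to I(\theta_0)^{-1}\). In both cases, bounded statistics together with \cref{ass:score-density-l1} give a \(C^1\) mean map, so \cref{appendix-lem:unified-fixed-stat} applies.

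\emph{Step 2: fixed \(k\).} \Cref{appendix-lem:unified-fixed-stat} with \(\mathcal T_n=\widetilde\Theta_n\) gives, for each \(k\), that the normalized risk converges uniformly to \(\Tr(WH_{\Phi^{(k)}}^{-1})\le\uCt(W)+\|W\|_{\Tr}\epsilon_k\). It also gives that the \(d_{\mathrm{BL}}\) distance to \(N(0,H_{\Phi^{(k)}}^{-1})\) tends to zero uniformly. Separately, \(d_{\mathrm{BL}}(N(0,\Sigma_1),N(0,\Sigma_2))\le C\|\Sigma_1^{1/2}-\Sigma_2^{1/2}\|_{\HS}\), which is small once \(\epsilon_k\) is small, by continuity of the square root.

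\emph{Step 3: slow diagonalization.} I choose increasing \(N_k\) such that for \(n\ge N_k\) the \(k\)th estimator has sup-risk at most \(\uCt(W)+2\|W\|_{\Tr}\epsilon_k\) and sup \(d_{\mathrm{BL}}\) to its own Gaussian limit at most \(\epsilon_k\). I then use the \(k\)th estimator for \(N_k\le n<N_{k+1}\). Each estimator is \(\rho_n\)-zCDP, so the sequence is as well. The triangle inequality for \(d_{\mathrm{BL}}\) then gives both conclusions.

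The main obstacle is the fixed-\(k\) step: it needs the uniform convergences of \cref{appendix-lem:unified-fixed-stat} to hold in the \(d_{\mathrm{BL}}\) sense as well as in risk, and it needs the limit covariance to be matched exactly rather than only in trace. The latter is what uniqueness and positive definiteness of \(H_{W,\beta}\) provide. Everything else is bookkeeping, given that thresholds \(N_k\) exist for each fixed \(k\).
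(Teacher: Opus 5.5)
Your proposal is correct and follows essentially the same route as the paper. You approximate the positive definite fixed optimizer \(H_{W,\beta}\) by \(\R^p\)-valued diameter-one generators with nonsingular derivatives, invoke the uniform risk and \(d_{\mathrm{BL}}\) limits of \cref{appendix-lem:unified-fixed-stat}, and diagonalize over slowly chosen deterministic blocks. The differences are cosmetic: the paper uses the representation of \cref{appendix-prop:unified-region-geometry}\textup{(ii)} for every \(\beta\in[0,1]\) instead of switching to truncated scores at \(\beta=1\), and your uniqueness argument is not needed because the statement already fixes \(H_{W,\beta}\).
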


\begin{proof}
  Since \(W\succ0\), the optimizer \(H_{W,\beta}\) is positive definite.
  By \cref{appendix-prop:unified-region-geometry}\textup{(ii)}, there are diameter-one statistics \(\Phi_k:\caX\to\R^p\) whose information matrices
  \[
    H_k
    =
    A_{\Phi_k,\theta_0}^\top
    \Sigma_{\Phi_k,\theta_0,\beta}^{\dagger}
    A_{\Phi_k,\theta_0}
  \]
  converge to \(H_{W,\beta}\).
  Translate them so that they are bounded.
  After discarding finitely many terms, \(H_k\succ0\), so the square derivatives \(A_{\Phi_k,\theta_0}\) are nonsingular.
  \[
    A_{\Phi_k,\theta_0}^{-1}
    \Sigma_{\Phi_k,\theta_0,\beta}
    A_{\Phi_k,\theta_0}^{-\top}
    =
    H_k^{-1}.
  \]
  The fixed-statistic limit in \cref{appendix-eq:unified-fixed-stat-limit} therefore applies.

  Choose deterministic blocks slowly enough that both the bounded-Lipschitz error and the scaled-risk error for the \(k\)th statistic are at most \(\epsilon_k\downarrow0\).
  \[
    N\xk{0,H_k^{-1}}
    \rightsquigarrow
    N\xk{0,H_{W,\beta}^{-1}},
    \qquad
    \Tr(WH_k^{-1})
    \longrightarrow
    \uCt(W),
  \]
  The preceding limits show that the block sequence is both attaining and uniformly asymptotically normal, proving the lemma.
\end{proof}
   \clearpage
  \section{Upper Bounds over Compact Parameter Set}
\label{appendix-sec:compact-param-upper}

This section extends the previous local estimator to compact parameter sets.
We first construct a bounded identifying statistic for private preliminary localization.
We then combine finite near-optimal charts with a two-stage Gaussian release to obtain the uniform upper bound.

\subsection{Preliminary localization}
\label{appendix-subsec:preliminary-localization}

\subsubsection{Bounded identification on a compact parameter set}
\label{appendix-subsubsec:bounded-identifier}

\begin{proof}[Proof of \cref{lem:bounded-identifier}]
We first construct finitely many bounded statistics that separate nearby parameter values.
Fix \(\theta\in K\).
By dominated convergence, the matrices
\[
  B_{\theta,M}
  =
  \E_\theta \left[
    s_\theta(X)s_\theta(X)^\top
    \ind{\norm{s_\theta(X)}_2 \le M}
  \right]
\]
converge to \(I(\theta)\) as \(M\to\infty\).
Because \(I(\theta)\succ0\), the matrix \(B_{\theta,M}\) is positive definite for all sufficiently large \(M\).
For such an \(M\), define
\[
  \Phi_\theta(x)
  =
  \frac{1}{2M}
  s_\theta(x)
  \ind{\norm{s_\theta(x)}_2 \le M}.
\]
This is a bounded measurable \(\R^p\)-valued statistic with diameter at most one.
Its mean map
\[
  g_\theta(\vartheta)
  =
  \E_\vartheta \Phi_\theta(X)
\]
is continuously differentiable on \(\Theta\) by \cref{ass:score-density-l1}, and
\[
  Dg_\theta(\theta)
  =
  \E_\theta \left[
    \Phi_\theta(X)s_\theta(X)^\top
  \right]
  =
  \frac{1}{2M}B_{\theta,M}
\]
is nonsingular.
The inverse function theorem therefore gives an open neighborhood \(U_\theta \subset\Theta\) on which \(g_\theta\) is one-to-one.

Compactness of \(K\) gives a finite subcover
\[
  K\subset\bigcup_{j=1}^J U_j,
\]
where \(U_j=U_{\theta_j}\), and we write \(\Phi_j=\Phi_{\theta_j}\).
The cover \(\{U_j \cap K:1\le j\le J\}\) has a Lebesgue number \(\delta>0\).
Consequently, whenever \(\theta,\vartheta\in K\) satisfy
\(\norm{\theta-\vartheta}_2<\delta\), both points belong to some common \(U_j\), and hence
\[
  \theta\ne\vartheta
  \quad\Longrightarrow\quad
  \E_\theta \Phi_j(X)\ne\E_\vartheta \Phi_j(X).
\]
Thus the finite collection \(\Phi_1,\ldots,\Phi_J\) separates all distinct nearby parameter values.

It remains to separate parameter pairs at distance at least \(\delta\).
The set
\[
  F_\delta
  =
  \left\{
    (\theta,\vartheta)\in K\times K:
    \norm{\theta-\vartheta}_2 \ge\delta
  \right\}
\]
is compact.
For every \((\theta,\vartheta)\in F_\delta\), identifiability gives
\(P_\theta \ne P_\vartheta\), so there is a measurable set
\(A_{\theta,\vartheta} \subset\caX\) such that
\[
  P_\theta(A_{\theta,\vartheta})
  \ne
  P_\vartheta(A_{\theta,\vartheta}).
\]
For every fixed measurable \(A\), the map
\[
  \eta\longmapsto P_\eta(A)
  =
  \int\ind{x\in A}p_\eta(x)d\mu(x)
\]
is continuous by the \(L^1(\mu)\) continuity in
\cref{ass:score-density-l1}.
Hence the strict inequality for \(A_{\theta,\vartheta}\) persists on an open neighborhood of \((\theta,\vartheta)\) in \(K\times K\).
Compactness of \(F_\delta\) gives finitely many measurable sets
\(A_1,\ldots,A_L\) whose indicators separate every pair in \(F_\delta\).

Concatenate the local statistics and separating indicators:
\[
  T_0(x)
  =
  \left(
    \Phi_1(x),\ldots,\Phi_J(x),
    \ind{x\in A_1},\ldots,\ind{x\in A_L}
  \right)
  \in\R^{Jp+L}.
\]
This statistic is bounded and measurable.
Its mean map is one-to-one on \(K\): a pair at distance below \(\delta\) is separated by one of the \(\Phi_j\), while a pair in \(F_\delta\) is separated by one of the indicators.
Moreover,
\[
  \diam(T_0)^2
  \le
  J+L.
\]
Thus \(T=T_0/\sqrt{J+L}\) has diameter at most one and retains the same identifying property.
Finally, boundedness of \(T\) and \cref{ass:score-density-l1} show that
\[
  h(\eta)=\E_\eta T(X),
  \qquad
  Dh(\eta)v
  =
  \int T(x)\ang{\dot{p}_\eta(x),v}d\mu(x),
\]
with derivative continuous in \(\eta\).
\end{proof}

\subsubsection{Proof of \Cref{lem:private-localization}}
\label{appendix-subsubsec:private-localization}

The argmin correspondence in \cref{eq:private-localization-estimator} has nonempty compact values.
The measurable maximum theorem gives a Borel measurable selection, because its criterion is jointly measurable and continuous in \(\vartheta\), and \(K\) is compact.
Fix such a selection for \(\widetilde\theta\).
Because \(T\) has finite diameter, a fixed translation makes it bounded; this shifts \(Y_1\) and \(h\) by the same amount and leaves the estimator in \cref{eq:private-localization-estimator} unchanged.
The query has replacement sensitivity \(\Delta_T/n_1\), so \cref{lem:gauss-composition} gives \(\rho_1\)-zCDP.

Fix \(r>0\).
If
\[
  \dk{
    (\theta,\vartheta)\in K\times K:
    \norm{\theta-\vartheta}_2\ge r
  }
  =\varnothing,
\]
then the error probability is zero.
Otherwise, continuity and injectivity of \(h\) on the compact set \(K\) imply
\[
a_r
=
\inf_{\substack{\theta,\vartheta\in K\\
                  \norm{\theta-\vartheta}_2 \ge r}}
\norm{h(\theta)-h(\vartheta)}_2
>0.
\]
If \(\norm{Y_1-h(\theta)}_2<a_r/3\), the criterion at every \(\vartheta\) with \(\norm{\vartheta-\theta}_2 \ge r\) exceeds \(2a_r/3\), while its value at \(\theta\) is below \(a_r/3\).
Hence \(\norm{\widetilde{\theta}-\theta}_2<r\).

Each coordinate of \(T\) has range at most \(\Delta_T\).
Fixed-dimensional Hoeffding and Gaussian tail bounds therefore give constants \(c,C>0\), independent of \(\theta\in K\), such that
\[
\sup_{\theta\in K}
P_\theta\{\norm{Y_1-h(\theta)}_2 \ge a_r/3\}
\le
C\exp(-cn_1)+C\exp(-cn_1^2 \rho_1).
\]
The two growth conditions in the statement make the right-hand side \(o(a_n^{-2})\), proving the claim.

\subsection{Finite near-optimal charts}
\label{appendix-subsec:unified-compact-charts}

\begin{lemma}[Finite near-optimal charts]
  \label{appendix-lem:unified-compact-charts}
  Fix \(\beta\in[0,1]\), \(W\succeq0\) with \(W\ne0\), and a nonempty compact set \(K\subset\Theta\).
  Under \cref{ass:regular,ass:score-density-l1}, suppose \(I(\theta)\succ0\) for every \(\theta\in K\).
  If \(\beta=1\), suppose additionally that \(\theta\mapsto I(\theta)\) is continuous on \(K\).
  For every \(\epsilon>0\), there are open sets \(V_j\) with
  \(\overline V_j\subset\Theta\) and \(K\subset\bigcup_{j=1}^J V_j\),
  diameter-one statistics \(\Phi_j:\caX\to\R^p\), and closed Euclidean balls
  \(\mathcal B_j\).
  Writing
  \[
    g_j(\theta)=\E_\theta\Phi_j(X),
    \qquad
    A_{\Phi_j,\theta}=Dg_j(\theta),
  \]
  each \(g_j\) has a \(C^1\) inverse \(\Psi_j\) on a neighborhood of
  \(\mathcal B_j\),
  \[
    \Psi_j\{g_j(\theta)\}=\theta
    \quad\text{for }\theta\in\overline V_j,
    \qquad
    g_j(\overline V_j)\subset\operatorname{int}(\mathcal B_j),
  \]
  and
  \begin{equation}
    \label{appendix-eq:unified-compact-chart-bound}
    \sup_{\theta\in\overline V_j\cap K}
    \frac{
      \Tr\zk{
        W A_{\Phi_j,\theta}^{-1}
        \Sigma_{\Phi_j,\theta,\beta}
        A_{\Phi_j,\theta}^{-\top}
      }
    }{
      \uC_\theta(W)
    }
    \le1+\epsilon.
  \end{equation}
  In addition,
  \(F_j=\Psi_j\circ\Pi_{\mathcal B_j}\) is globally Lipschitz and has compact image.
\end{lemma}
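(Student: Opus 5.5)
The plan is a pointwise-to-local argument followed by a compactness cover. Fix \(\epsilon>0\) and a point \(\vartheta\in K\). By \cref{appendix-lem:unified-constant-continuity}, \(\theta\mapsto\uC_\theta(W)\) is continuous and bounded below by a positive constant on \(K\). Choose \(\epsilon'>0\) with \((1+\epsilon')^2\le1+\epsilon\). Then apply \cref{appendix-lem:unified-near-optimal-stat} at the center \(\vartheta\), with tolerance \(\epsilon'\min_K\uC_\theta(W)\). This gives a diameter-one statistic \(\Phi_\vartheta:\caX\to\R^p\) with nonsingular \(A_{\Phi_\vartheta,\vartheta}\) and
\[
  \Tr\zk{WA_{\Phi_\vartheta,\vartheta}^{-1}\Sigma_{\Phi_\vartheta,\vartheta,\beta}A_{\Phi_\vartheta,\vartheta}^{-\top}}\le(1+\epsilon')\uC_\vartheta(W).
\]
We translate \(\Phi_\vartheta\) so that it is bounded by one in norm.

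\emph{Continuity in the base point.} Exactly as in the proof of \cref{appendix-lem:unified-constant-continuity}, we have the bounds
\[
  \normx{A_{\Phi,\theta}-A_{\Phi,\vartheta}}_{\mathrm{op}}\le d_1(\theta,\vartheta),
  \qquad
  \normx{V_{\Phi,\theta}-V_{\Phi,\vartheta}}_{\mathrm{op}}\le3d_0(\theta,\vartheta).
\]
By \cref{ass:score-density-l1} both moduli tend to zero as \(\theta\to\vartheta\). Hence \(\theta\mapsto A_{\Phi_\vartheta,\theta}\) is continuous, with \(A_{\Phi_\vartheta,\theta}=Dg_\vartheta(\theta)\) for \(g_\vartheta(\theta)=\E_\theta\Phi_\vartheta(X)\), and it stays nonsingular near \(\vartheta\). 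It follows that the map
\[
  \theta\mapsto\Tr\zk{WA_{\Phi_\vartheta,\theta}^{-1}\Sigma_{\Phi_\vartheta,\theta,\beta}A_{\Phi_\vartheta,\theta}^{-\top}}
\]
is continuous near \(\vartheta\). This holds also at \(\beta=1\), because the formula involves \(A^{-1}\) and not \(\Sigma^\dagger\). Combined with continuity and positivity of \(\uC_\theta(W)\), there is an open ball \(U_\vartheta\) around \(\vartheta\) on which the ratio in \cref{appendix-eq:unified-compact-chart-bound} is at most \((1+\epsilon')^2\le1+\epsilon\).

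\emph{Local inversion.} The inverse function theorem applied to the \(C^1\) map \(g_\vartheta\) at \(\vartheta\) gives an open neighborhood \(O_\vartheta\subset U_\vartheta\) of \(\vartheta\) such that \(g_\vartheta\) is a \(C^1\) diffeomorphism of \(O_\vartheta\) onto an open set containing \(g_\vartheta(\vartheta)\). Choose a closed ball \(\mathcal B_\vartheta\) centered at \(g_\vartheta(\vartheta)\) inside that image, with a neighborhood of \(\mathcal B_\vartheta\) still inside it. Then take \(V_\vartheta\) to be an open ball around \(\vartheta\) with \(\overline V_\vartheta\subset O_\vartheta\cap g_\vartheta^{-1}(\operatorname{int}\mathcal B_\vartheta)\). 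This gives \(\Psi_\vartheta\{g_\vartheta(\theta)\}=\theta\) on \(\overline V_\vartheta\) and \(g_\vartheta(\overline V_\vartheta)\subset\operatorname{int}\mathcal B_\vartheta\). Since \(\Pi_{\mathcal B_\vartheta}\) is \(1\)-Lipschitz with image \(\mathcal B_\vartheta\), and \(\Psi_\vartheta\) is \(C^1\) on a neighborhood of the compact convex set \(\mathcal B_\vartheta\), the map \(\Psi_\vartheta\) is Lipschitz on \(\mathcal B_\vartheta\) with bound \(\sup_{\mathcal B_\vartheta}\normx{D\Psi_\vartheta}\). Hence \(F_\vartheta=\Psi_\vartheta\circ\Pi_{\mathcal B_\vartheta}\) is globally Lipschitz, and its image \(\Psi_\vartheta(\mathcal B_\vartheta)\) is compact.

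\emph{Covering.} The sets \(V_\vartheta\), \(\vartheta\in K\), form an open cover of \(K\). Compactness of \(K\) yields a finite subcover \(V_1,\dots,V_J\), and we keep the corresponding \(\Phi_j\), \(\mathcal B_j\), \(\Psi_j\). The bound \cref{appendix-eq:unified-compact-chart-bound} holds on \(\overline V_j\cap K\subset U_j\).

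The main obstacle is the uniform continuity step, and the case \(\beta=1\) needs the most care there. For \(\beta=1\) I would rely on the extra assumption that \(I(\theta)\) is continuous, which by \cref{appendix-lem:unified-constant-continuity} makes the denominator \(\Tr\{WI(\theta)^{-1}\}\) continuous. The numerator only needs continuity of \(V_{\Phi,\theta}\) and \(A_{\Phi,\theta}\), and both hold for bounded \(\Phi\) under \cref{ass:score-density-l1}.
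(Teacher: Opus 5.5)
Your proposal is correct and follows essentially the same route as the paper's proof: a pointwise near-optimal statistic, continuity of \(A_{\Phi,\theta}\), \(V_{\Phi,\theta}\) and \(\uC_\theta(W)\) in the base point, inverse-function-theorem charts with a shrunken closed neighborhood sitting inside the interior of the image ball, Lipschitz control of \(\Psi\) from a bound on \(D\Psi\) over the compact ball, and a finite subcover. The only cosmetic difference is at \(\beta=1\): the paper remarks that nonsingular \(A_{\Phi,\theta}\) forces \(V_{\Phi,\theta}\succ0\), whereas you observe, correctly, that the chart bound involves only \(A_{\Phi,\theta}^{-1}\), so continuity of the numerator needs no definiteness of the covariance.
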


\begin{proof}
  By \cref{appendix-lem:unified-constant-continuity}, the normalizing constant is continuous and bounded away from zero on \(K\).
  For each \(\theta\in K\), choose a diameter-one \(p\)-dimensional statistic whose derivative is nonsingular and whose risk at \(\theta\) is at most \((1+\epsilon/2)\uC_\theta(W)\).
  Its derivative and covariance are continuous nearby.
  At \(\beta=1\), nonsingularity of the derivative also ensures positive definiteness of the covariance: if \(v^\top V_{\Phi,\theta}v=0\), then \(v^\top\Phi\) is constant \(P_\theta\)-almost surely, which forces \(v^\top A_{\Phi,\theta}=0\).
  Hence the risk ratio is continuous and remains at most \(1+\epsilon\) on a sufficiently small neighborhood.

  The inverse function theorem supplies a \(C^1\) inverse \(\Psi_\theta\) on
  an open neighborhood of \(g(\theta)\).
  Choose \(0<r_1<r_2\) so that \(\Psi_\theta\) is defined on a neighborhood of
  \(\overline B(g(\theta),r_2)\), and so that
  \(\Psi_\theta\{\overline B(g(\theta),r_1)\}\) lies in the neighborhood on
  which the ratio bound holds.
  Set
  \[
    V_\theta
    =
    \Psi_\theta\{B(g(\theta),r_1)\},
    \qquad
    \mathcal B_\theta
    =
    \overline B(g(\theta),r_2).
  \]
  Then \(\overline V_\theta\subset\Theta\),
  \(g(\overline V_\theta)\subset\operatorname{int}(\mathcal B_\theta)\), and
  \(\Psi_\theta\{g(\vartheta)\}=\vartheta\) on \(\overline V_\theta\).
  Boundedness of \(D\Psi_\theta\) on the larger ball makes
  \(\Psi_\theta\circ\Pi_{\mathcal B_\theta}\) globally Lipschitz, and its
  image is compact.
  A finite subcover of \(K\) completes the proof.
\end{proof}

\begin{lemma}[Uniform fixed-chart expansion]
  \label{appendix-lem:unified-fixed-chart-expansion}
  Let \((V_j,\Phi_j,\mathcal B_j,F_j)\), \(1\le j\le J\), be a finite chart family as in \cref{appendix-lem:unified-compact-charts}, and put \(K_j=\overline V_j\cap K\).
  Let \(m_n\to\infty\) and \(\varrho_n>0\) satisfy \(m_n^2\varrho_n\to\infty\), and set
  \[
    \widetilde\sigma_n^2
    =
    \frac1{m_n}+\frac1{m_n^2\varrho_n}.
  \]
  For each \(j\), release
  \[
    Y_{j,n}
    =
    \frac1{m_n}\sum_{i=1}^{m_n}\Phi_j(X_i)+G_{j,n},
    \qquad
    G_{j,n}\sim N\left(0,\frac{I_p}{2m_n^2\varrho_n}\right),
  \]
  and define \(\widehat\theta_{j,n}=F_j(Y_{j,n})\).
  Then
  \[
    \max_{1\le j\le J}
    \sup_{\theta\in K_j}
    \widetilde\sigma_n^{-2}
    \E_\theta
    \norm{
      \widehat\theta_{j,n}-\theta
      -
      A_{\Phi_j,\theta}^{-1}
      \{Y_{j,n}-g_j(\theta)\}
    }_2^2
    \longrightarrow0.
  \]
\end{lemma}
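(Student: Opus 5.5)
We follow the argument of \cref{appendix-lem:unified-fixed-stat}, now uniformly over finitely many charts and over the enlarged sets \(K_j\), which no longer shrink. The finite maximum over \(j\) is harmless, so fix \(j\) and prove the convergence uniformly over \(\theta\in K_j\).

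First we translate \(\Phi_j\) so that \(\norm{\Phi_j(x)}_2\le1\). This changes neither \(A_{\Phi_j,\theta}\) nor the remainder, because \(Y_{j,n}\), \(g_j\) and the ball \(\mathcal B_j\) shift together and \(F_j\) is recomposed accordingly. Write \(Z_{j,n,\theta}=Y_{j,n}-g_j(\theta)\). Its covariance is \(V_{\Phi_j,\theta}/m_n+I_p/(2m_n^2\varrho_n)\preceq\widetilde\sigma_n^2 I_p\). Boundedness of \(\Phi_j\), Rosenthal or Hoeffding for bounded i.i.d.\ sums, and Gaussian moments then give
\[
  \sup_{\theta\in K_j}\E_\theta\norm{Z_{j,n,\theta}}_2^2=O(\widetilde\sigma_n^2),
  \qquad
  \sup_{\theta\in K_j}\E_\theta\norm{Z_{j,n,\theta}}_2^4=O(\widetilde\sigma_n^4),
\]
with constants depending only on the dimension.

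Next we use the chart geometry from \cref{appendix-lem:unified-compact-charts}. Since \(\overline V_j\) is compact and \(g_j(\overline V_j)\subset\operatorname{int}(\mathcal B_j)\), there is \(r>0\) with \(\operatorname{dist}(g_j(\theta),\partial\mathcal B_j)\ge r\) for all \(\theta\in\overline V_j\). On the event \(E=\{\norm{Z}_2\le r\}\) the projection is inactive, so \(\widehat\theta_{j,n}=\Psi_j(g_j(\theta)+Z)\). Because \(\Psi_j\) is \(C^1\) on a neighbourhood of the compact ball \(\mathcal B_j\), its derivative is uniformly continuous there. Let \(\omega\) be that modulus of continuity. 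Since \(\Psi_j(g_j(\theta))=\theta\) and \(D\Psi_j(g_j(\theta))=A_{\Phi_j,\theta}^{-1}\), the mean value inequality gives
\[
  \norm{\widehat\theta_{j,n}-\theta-A_{\Phi_j,\theta}^{-1}Z}_2\le\omega(\norm{Z}_2)\norm{Z}_2
  \qquad\text{on }E.
\]
On \(E\) the term \(\omega(\norm Z)^2\norm Z^2\) is dominated by \(\omega(r)^2\norm Z^2\). Splitting according to whether \(\norm Z\le\eta\) or \(\eta<\norm Z\le r\), we bound \(\E[\omega(\norm Z)^2\norm Z^2\mathbf 1_E]\) by \(\omega(\eta)^2 O(\widetilde\sigma_n^2)\) plus \(\omega(r)^2\E\norm Z^4/\eta^2=O(\widetilde\sigma_n^4/\eta^2)\). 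After normalizing by \(\widetilde\sigma_n^{-2}\), we let \(n\to\infty\) and then \(\eta\downarrow0\); this uses \(\widetilde\sigma_n\to0\), which follows from \(m_n\to\infty\) and \(m_n^2\varrho_n\to\infty\).

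On \(E^c\) we use the global Lipschitz constant \(L_j\) of \(F_j\) together with \(F_j(g_j(\theta))=\theta\), the latter holding because \(g_j(\theta)\in\mathcal B_j\). These give \(\norm{\widehat\theta_{j,n}-\theta}\le L_j\norm Z\). We also have \(\norm{A_{\Phi_j,\theta}^{-1}}_{\mathrm{op}}\le C_j\) uniformly on the compact \(\overline V_j\), by continuity and nonsingularity of \(A_{\Phi_j,\theta}\) via \(D\Psi_j\). Hence the remainder is at most \((L_j+C_j)\norm Z\) on \(E^c\), and
\[
  \E\big[\norm Z^2\mathbf 1_{E^c}\big]\le\frac{\E\norm Z^4}{r^2}=O(\widetilde\sigma_n^4)=o(\widetilde\sigma_n^2).
\]

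The main obstacle is making every constant uniform over the whole non-shrinking set \(K_j\), rather than only near a center as in \cref{appendix-lem:unified-fixed-stat}. Three uniformities are needed: the margin \(r\), the modulus \(\omega\), and the bound \(C_j\). All three come from compactness of \(\overline V_j\) and \(\mathcal B_j\) and the \(C^1\) property of \(\Psi_j\) on a neighbourhood of \(\mathcal B_j\), which is exactly what \cref{appendix-lem:unified-compact-charts} provides. Taking the maximum over the \(J\) charts then concludes.
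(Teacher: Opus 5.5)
Your proposal is correct and follows essentially the same route as the paper: a uniform fourth-moment bound on \(Z_{j,n,\theta}\), a common margin \(r\) that makes the projection inactive, uniform continuity of \(D\Psi_j\) for the linearization on \(\{\norm{Z}_2\le r\}\), and a moment bound on the complementary event. The one variation is on that complementary event, where you use the global Lipschitz constant of \(F_j\) and the paper uses its compact image; either choice works.
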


\begin{proof}
  Write \(Z_{j,n,\theta}=Y_{j,n}-g_j(\theta)\).
  Since the chart family is finite and each statistic has diameter at most one, the empirical summands are uniformly bounded.
  Gaussian moment bounds therefore give
  \[
    \max_{1\le j\le J}
    \sup_{\theta\in K_j}
    \E_\theta\norm{Z_{j,n,\theta}}_2^4
    =
    O(\widetilde\sigma_n^4).
  \]
  The compact sets \(g_j(K_j)\) lie in \(\operatorname{int}(\mathcal B_j)\).
  Thus there is a common \(r>0\) such that every such set has distance at least \(r\) from the boundary of its corresponding ball.
  On \(\{\norm{Z_{j,n,\theta}}_2\le r\}\), projection is inactive.
  Uniform continuity of the derivatives of the finitely many inverse maps on the relevant compact neighborhoods gives the stated linearization on this event.

  On its complement, the fourth-moment bound gives probability \(O(\widetilde\sigma_n^4)\) uniformly over \(j\) and \(\theta\in K_j\).
  The compact images of the \(F_j\), together with the same moment bound, make the complementary squared remainder \(o(\widetilde\sigma_n^2)\).
  Combining the two events proves the lemma.
\end{proof}

\subsection{Two-stage Gaussian estimator}
\label{appendix-subsec:unified-compact-upper}

\begin{lemma}[Compact upper bound]
  \label{appendix-lem:unified-compact-upper}
  Fix \(W\succeq0\) with \(W\ne0\), and let \(K\subset\Theta\) be nonempty
  and compact.
  Under \cref{ass:regular,ass:score-density-l1}, suppose
  \(I(\theta)\succ0\) for every \(\theta\in K\), and suppose the model is
  identifiable on \(K\).
  Let \((\rho_n)\) satisfy the asymptotic regime in
  \cref{eq:unified-asymp-regime}.
  If \(\beta=1\), suppose additionally that
  \(\theta\mapsto I(\theta)\) is continuous on \(K\).
  Then there exists a sequence \((\widehat\theta_n^\star)\) of
  \(\rho_n\)-zCDP estimators,
  eventually of the two-stage Gaussian form below, such that
  \[
    \limsup_{n\to\infty}
    \sup_{\theta\in K}
    \frac{
      \sigma_n^{-2}
      \E_\theta
      (\widehat\theta_n^\star-\theta)^\top
      W(\widehat\theta_n^\star-\theta)
    }{
      \uC_\theta(W)
    }
    \le1.
  \]
\end{lemma}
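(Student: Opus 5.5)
We construct the estimator in two stages and split the sample and the privacy budget. Fix \(\epsilon>0\). Apply \cref{appendix-lem:unified-compact-charts} to obtain a finite chart family \((V_j,\Phi_j,\mathcal B_j,F_j)_{j\le J}\) satisfying the ratio bound \cref{appendix-eq:unified-compact-chart-bound}. Let \(\eta>0\) be a Lebesgue number of the cover \(\{V_j\cap K\}\) of \(K\). Then every \(\vartheta\in K\) has a ball \(B(\vartheta,\eta)\cap K\) contained in some \(V_{j(\vartheta)}\), where \(j(\vartheta)\) is a measurable selection such as the smallest such index. Choose \(n_1=n_{1,n}\) and \(\rho_{1,n}\) with \(n_1/n\to0\), \(\rho_{1,n}/\rho_n\to0\), \(n_1/\log(\sigma_n^{-1})\to\infty\) and \(n_1^2\rho_{1,n}/\log(\sigma_n^{-1})\to\infty\). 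This is possible because \(\sigma_n^{-2}\le\min(n,\tau_n^2)\) makes \(\log\sigma_n^{-1}\) negligible relative to both \(n\) and \(\tau_n^2\). For example, take \(n_1=\lceil n\,\omega_n\rceil\) and \(\rho_{1,n}=\rho_n\omega_n'\) with slowly vanishing \(\omega_n,\omega_n'\) chosen so that \(n\omega_n\) and \(\tau_n^2\omega_n^2\omega_n'\) still dominate \(\log\sigma_n^{-1}\).

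The first stage uses \(X_1,\ldots,X_{n_1}\) and budget \(\rho_{1,n}\) to form the preliminary estimator \(\widetilde\theta\) of \cref{eq:private-localization-estimator}, with the identifier \(T\) from \cref{lem:bounded-identifier}. The second stage uses the remaining \(m_n=n-n_1\) observations and budget \(\varrho_n=\rho_n-\rho_{1,n}\). It computes \(\widehat\theta_n^\star=F_{\hat j}(Y_{\hat j,n})\), where \(\hat j=j(\widetilde\theta)\) and \(Y_{\hat j,n}\) is the Gaussian release of \cref{appendix-lem:unified-fixed-chart-expansion} for \(\Phi_{\hat j}\). Privacy holds because each stage is zCDP conditionally on the previous output (\cref{lem:gauss-composition}) and the total budget is \(\rho_{1,n}+\varrho_n=\rho_n\). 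The two stages use disjoint records, so a replacement affects only one of them, which gives an even stronger guarantee.

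For the risk, let \(E=\{\|\widetilde\theta-\theta\|_2<\eta\}\). On \(E\), \(\theta\in V_{\hat j}\cap K\). Since \(\hat j\) is independent of the second-stage data,
\[
\E_\theta[\|\widehat\theta_n^\star-\theta\|_W^2\mathbf 1_E]\le\max_{j:\theta\in K_j}\E_\theta\|F_j(Y_{j,n})-\theta\|_W^2 .
\]
By \cref{appendix-lem:unified-fixed-chart-expansion} and the covariance identity, the right-hand side equals \(\widetilde\sigma_n^2\Tr[WA_{\Phi_j,\theta}^{-1}\Sigma_{\Phi_j,\theta,\tilde\beta_n}A_{\Phi_j,\theta}^{-\top}]+o(\widetilde\sigma_n^2)\) uniformly over \(j\) and \(\theta\in K_j\). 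Here \(\tilde\beta_n\) is the sampling weight of the second stage, and \(\tilde\beta_n\to\beta\) and \(\widetilde\sigma_n^2/\sigma_n^2\to1\) because \(m_n/n\to1\) and \(\varrho_n/\rho_n\to1\). The trace depends continuously on \(\beta\) uniformly over \(\theta\in K_j\): \(V_{\Phi_j,\theta}\) is bounded, and \(A_{\Phi_j,\theta}^{-1}\) is bounded on the compact \(K_j\) by nonsingularity and continuity. So \cref{appendix-eq:unified-compact-chart-bound}, together with the lower bound \(\min_K\uC_\theta(W)>0\) from \cref{appendix-lem:unified-constant-continuity}, bounds the normalized contribution of \(E\) by \(1+\epsilon+o(1)\) uniformly on \(K\). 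On \(E^c\), every \(F_j\) has compact image, so \(\|\widehat\theta_n^\star-\theta\|_W^2\le C\) uniformly. By \cref{lem:private-localization} with \(a_n=\sigma_n^{-1}\), \(\sup_K P_\theta(E^c)=o(\sigma_n^2)\), so this term is \(o(\sigma_n^2)\) after normalization. Finally, let \(\epsilon=\epsilon_k\downarrow0\) along deterministic blocks \(N_k\le n<N_{k+1}\), as in \cref{appendix-lem:unified-local-upper}. The resulting sequence is eventually of two-stage form and has \(\limsup\le1\).

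The main obstacle is the uniform replacement of \(\beta\) by \(\tilde\beta_n\) and of \(\sigma_n\) by \(\widetilde\sigma_n\), together with the choice of split rates so that the localization tail is \(o(\sigma_n^2)\) in every regime, including \(\beta\in\{0,1\}\). At \(\beta=1\) the trace involves \(V_{\Phi_j,\theta}\) directly, and the nonsingularity shown in the chart lemma handles this. I would verify the rate conditions by bounding \(\log\sigma_n^{-1}\le\tfrac12\log\min(n,\tau_n^2)\).
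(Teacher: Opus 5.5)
Your proposal is correct and follows essentially the same route as the paper: a vanishing-resource private localization (the paper takes \(n_1=\lfloor q_n^{-1/4}n\rfloor\) and \(\rho_1=q_n^{-1/4}\rho_n\) with \(q_n=\sigma_n^{-2}\), which is a concrete instance of your slowly vanishing split), Lebesgue-number chart selection, the uniform fixed-chart expansion with \(\sigma_{2,n}/\sigma_n\to1\) and \(\beta_{2,n}\to\beta\), an \(o(\sigma_n^2)\) complement bound via the compact chart images, and block diagonalization in \(\epsilon\). The only cosmetic point is the radius: the standard Lebesgue number lemma controls sets of diameter below \(\eta\), while \(B(\vartheta,\eta)\cap K\) can have diameter up to \(2\eta\), so you should use balls of radius \(\eta/2\) (the paper builds in this slack with \(B(t,2r)\) and the event \(\{\|\widetilde\theta-\theta\|_2\le r\}\)); this changes nothing else.
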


\begin{proof}
  \noindent\textit{Resource split.}
  Put
  \[
    q_n=\sigma_n^{-2},
    \qquad
    \eta_n=q_n^{-1/4},
    \qquad
    n_1=\lfloor\eta_n n\rfloor,
    \qquad
    \rho_1=\eta_n\rho_n,
  \]
  and let \(n_2=n-n_1\) and \(\rho_2=\rho_n-\rho_1\).
  Since \(n^2\rho_n\to\infty\), we have \(q_n\to\infty\).
  Moreover,
  \[
    q_n\le n,
    \qquad
    q_n\le n^2\rho_n,
  \]
  and therefore
  \[
    n_1\sim\eta_n n,
    \qquad
    \eta_n n\ge q_n^{3/4},
    \qquad
    n_1^2\rho_1
    \sim
    \eta_n^3n^2\rho_n,
    \qquad
    \eta_n^3n^2\rho_n\ge q_n^{1/4}.
  \]
  Both quantities dominate
  \(\log(\sigma_n^{-1})=\frac12\log q_n\).
  Choose the bounded identifying statistic from
  \cref{lem:bounded-identifier}, and apply
  \cref{lem:private-localization} with target sequence
  \(\sigma_n^{-1}\).
  For every fixed \(r>0\),
  \begin{equation}
    \label{appendix-eq:unified-compact-localization}
    \sup_{\theta\in K}
    P_\theta\dk{
      \norm{\widetilde\theta-\theta}_2>r
    }
    =
    o(\sigma_n^2).
  \end{equation}

  The split spends a vanishing fraction of both resources:
  \[
    \frac{n_1}{n}\to0,
    \qquad
    \frac{\rho_1}{\rho_n}\to0.
  \]
  Define the second-stage scale and weight by
  \[
    \sigma_{2,n}^2
    =
    \frac1{n_2}+\frac1{n_2^2\rho_2},
    \qquad
    \beta_{2,n}
    =
    \frac{n_2\rho_2}{1+n_2\rho_2}.
  \]
  Then
  \[
    \frac{\sigma_{2,n}^2}{\sigma_n^2}\to1,
    \qquad
    \beta_{2,n}\to\beta.
  \]

  \noindent\textit{Chart selection and release.}
  Fix \(\epsilon>0\) and take the finite charts from
  \cref{appendix-lem:unified-compact-charts}.
  By the Lebesgue number lemma, there is \(r>0\) such that, for every
  \(t\in K\), at least one chart satisfies
  \[
    B(t,2r)\cap K\subset V_j.
  \]
  Let \(j(t)\) be the smallest such index.
  This selector is measurable because
  \[
    \dk{
      t\in K:
      B(t,2r)\cap K\subset V_j
    }
    =
    \dk{
      t\in K:
      \operatorname{dist}(t,K\setminus V_j)\ge2r
    }.
  \]
  After releasing the preliminary estimator, set
  \begin{equation}
    \label{appendix-eq:compact-two-stage-estimator}
    \begin{aligned}
      \widehat j
      &=
      j(\widetilde\theta),\\
      Y_2
      &=
      \frac1{n_2}\sum_{i=n_1+1}^n
      \Phi_{\widehat j}(X_i)
      +
      G_{2,n},
      \qquad
      G_{2,n}
      \sim
      N\left(
        0,
        \frac{I_p}{2n_2^2\rho_2}
      \right),\\
      \widehat\theta_{\epsilon,n}
      &=
      F_{\widehat j}(Y_2)
      =
      \Psi_{\widehat j}
      \left(
        \Pi_{\mathcal B_{\widehat j}}Y_2
      \right).
    \end{aligned}
  \end{equation}
  The first- and second-stage Gaussian noises are independent of the data and
  of each other.
  Conditional on every first-stage output, the selected second-stage query
  has sensitivity at most \(1/n_2\), and hence is \(\rho_2\)-zCDP.
  Adaptive composition gives total budget \(\rho_n\).

  \noindent\textit{Risk bound.}
  Let
  \[
    \mathcal L_n
    =
    \dk{
      \norm{\widetilde\theta-\theta}_2\le r
    }.
  \]
  On \(\mathcal L_n\), the selected chart contains the true parameter.
  Conditional on the first-stage release, the selected chart is fixed and
  the second-stage sample is independent.
  For each \(j\), let \(Y_{j,n}\) and \(\widehat\theta_{j,n}\) denote the second-stage release and projected inverse obtained by using \(\Phi_j\) and \(F_j\) in place of \(\Phi_{\widehat j}\) and \(F_{\widehat j}\).
  By \cref{appendix-lem:unified-fixed-chart-expansion}, applied with \(m_n=n_2\) and \(\varrho_n=\rho_2\),
  \[
    \max_{1\le j\le J}
    \sup_{\theta\in\overline V_j\cap K}
    \sigma_{2,n}^{-2}
    \E_\theta
    \norm{
      \widehat\theta_{j,n}-\theta
      -
      A_{\Phi_j,\theta}^{-1}
      \{Y_{j,n}-g_j(\theta)\}
    }_2^2
    \longrightarrow0.
  \]
  \[
    \sigma_{2,n}^{-2}
    \xk{
      \frac{V_{\Phi_j,\theta}}{n_2}
      +
      \frac{I_p}{2n_2^2\rho_2}
    }
    =
    \Sigma_{\Phi_j,\theta,\beta_{2,n}}
  \]
  The covariance identity and \(\beta_{2,n}\to\beta\) show, using the finite chart family, that
  \[
    \E_\theta\zk{
      (\widehat\theta_{\epsilon,n}-\theta)^\top
      W(\widehat\theta_{\epsilon,n}-\theta)
      \ind{\mathcal L_n}
    }
    \le
    (1+2\epsilon)
    \sigma_n^2
    \uC_\theta(W)
    +
    o(\sigma_n^2)
  \]
  uniformly on \(K\).
  The union of the compact images of the \(F_j\) is bounded.
  Thus \cref{appendix-eq:unified-compact-localization} makes the complementary contribution \(o(\sigma_n^2)\).
  Consequently,
  \[
    \limsup_{n\to\infty}
    \sup_{\theta\in K}
    \frac{
      \sigma_n^{-2}
      \E_\theta
      (\widehat\theta_{\epsilon,n}-\theta)^\top
      W(\widehat\theta_{\epsilon,n}-\theta)
    }{
      \uC_\theta(W)
    }
    \le1+2\epsilon.
  \]

  \noindent\textit{Diagonalization.}
  Finally, take \(\epsilon_k\downarrow0\), choose increasing deterministic thresholds \(N_k\) beyond which the \(\epsilon_k\)-construction has normalized risk at most \(1+3\epsilon_k\), and use that construction for \(N_k\le n<N_{k+1}\).
  Use any fixed point of \(K\) for the finitely many earlier indices.
  This diagonal sequence remains \(\rho_n\)-zCDP and proves the lemma.
\end{proof}
   \clearpage
  \section{zCDP Contraction and Lower Bounds in Privacy Regime}
\label{appendix-sec:lower-bound-proofs}

This section provides the technical arguments for the information region contraction and proves the lower bounds under the privacy regime.
We first establish the barycentric likelihood ratio bound in \cref{lem:barycentric};
then use it to prove the information contraction \cref{thm:diameter-contraction};
finally establish the lower bound \Cref{thm:matrix-local-lower} using the van Trees inequality.

\subsection{DQM under a Markov kernel}
\label{appendix-subsec:dqm}

\begin{lemma}[DQM under a Markov kernel]
  \label{appendix-lem:markov-dqm}
  Under \cref{ass:regular}, fix \(\theta\in\Theta\), and let \(M\) be any parameter-independent Markov kernel from \(\caX^n\) to a standard Borel output space \(\caZ\).
  Then \(Q_\theta^M=M\circ P_\theta^{\otimes n}\) is differentiable in quadratic mean at \(\theta\), with score
  \begin{equation}
    \label{appendix-eq:output-cond-score}
    s_M(Z;\theta)
    =
    \E_\theta \left[
      \sum_{i=1}^n s_\theta(X_i)
      \,\middle|\,
      Z
    \right].
  \end{equation}
  Consequently,
  \[
    I_M(\theta)
    =
    \E_\theta \left[
      s_M(Z;\theta)s_M(Z;\theta)^\top
    \right].
  \]
\end{lemma}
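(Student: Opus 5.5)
The plan follows the classical route: DQM of the product model, preservation of DQM under a parameter-free Markov kernel, and identification of the kernel score by a Hilbert-space projection.

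\emph{Step 1 (product model).} By \cref{ass:regular} and the standard tensorization result \citep{vaart1998_AsymptoticStatistics}, $P_\theta^{\otimes n}$ is DQM at $\theta$. Its score is $S_n=\sum_{i=1}^n s_\theta(X_i)$, and $\E_\theta\norm{S_n}_2^2=n\,\E_\theta\norm{s_\theta(X)}_2^2<\infty$.

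\emph{Step 2 (kernel densities).} Since $\caZ$ is standard Borel, we may choose a dominating measure $\nu$ for the output. For instance, take $\nu=Q_\theta^M$ together with the outputs at nearby parameters, or better, work with the product data measure $\mu^{\otimes n}$ directly. Write $q_\vartheta(z)=\int m(z\mid x)\,p_\vartheta^{\otimes n}(x)\,d\mu^{\otimes n}(x)$, where $m$ is a jointly measurable density of $M(x,\cdot)$. If $M$ is not dominated, it can be disintegrated through the mixture $\int M(x,\cdot)\,P_\theta^{\otimes n}(dx)$; every $M(x,\cdot)$ is absolutely continuous with respect to this mixture for $P_\vartheta^{\otimes n}$-almost every $x$ near $\theta$ once mutual absolute continuity of $P_\vartheta^{\otimes n}$ and $P_\theta^{\otimes n}$ is handled. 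A cleaner route avoids this issue: use a common dominating measure for $\{Q_{\theta+h}^M\}$ and the Radon--Nikodym derivative of the joint law of $(X,Z)$.

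\emph{Step 3 (main step: DQM of the marginal).} Consider the joint experiment of $(X^{(n)},Z)$. Its density is $p_\vartheta^{\otimes n}(x)\,m(z\mid x)$, so it is DQM with score $S_n(x)$, because the kernel factor does not depend on $\vartheta$. The output law is its $Z$-marginal. The standard result that DQM is preserved under measurable maps, i.e.\ under marginalization (Bickel--Klaassen--Ritov--Wellner, Prop.~A.5.5, or \citealp{vaart1998_AsymptoticStatistics}), then shows that the marginal is DQM with score $\E_\theta[S_n\mid Z]$. The proof of that fact is the step I expect to be delicate, so I would reproduce it:
\begin{itemize}
\item Write $\sqrt{q_{\theta+h}}-\sqrt{q_\theta}$ and compare it with $\tfrac12 h^\top \E_\theta[S_n\mid Z]\sqrt{q_\theta}$.
\item Use the conditional Cauchy--Schwarz bound $\bigl(\sqrt{q_{\theta+h}}-\sqrt{q_\theta}\bigr)^2\le \E\bigl[(\sqrt{r_{\theta+h}}-\sqrt{r_\theta})^2\mid z\bigr]$, where $r$ is the joint density. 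This gives $L^2$ control of the marginal increment by the joint increment.
\item Split the difference quotient into two parts: the conditional expectation of the joint quadratic-mean remainder, and a term handled by uniform integrability of the squared difference quotients. Vitali's theorem then gives the $o(\norm{h}_2)$ remainder.
\item Care is needed on the set $\{q_\theta=0\}$, where the joint DQM mass condition $\int_{q_\theta=0} q_{\theta+h}=o(\norm{h}^2)$ must be verified. It follows from the same Cauchy--Schwarz inequality.
\end{itemize}

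\emph{Step 4 (Fisher information).} The formula $I_M(\theta)=\E_\theta[s_M s_M^\top]$ is the definition of Fisher information for a DQM family with score $s_M$. Its finiteness follows from Jensen: $\E\norm{\E[S_n\mid Z]}_2^2\le \E\norm{S_n}_2^2<\infty$. Mean zero follows from the tower property.
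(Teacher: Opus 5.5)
Your proposal is correct and follows essentially the paper's route: DQM of the product model, then adjoining the kernel randomization to get a joint experiment with unchanged score $\sum_i s_\theta(X_i)$, and finally the Le Cam--Yang information-loss result that marginalization preserves DQM with the conditional-expectation score. The paper only cites that last result, whereas you sketch its Hellinger-contraction/Vitali proof.

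One repair is needed in Steps 2--3. For a non-dominated kernel such as $M(x,\cdot)=\delta_x$, neither a kernel density $m(z\mid x)$ nor your claimed absolute continuity of $M(x,\cdot)$ with respect to the mixture exists. Taking $\mu^{\otimes n}(dx)\,M(x,dz)$ as the joint base measure fixes this: the joint density is then simply $p_\vartheta^{\otimes n}(x)$, and the rest of your argument goes through unchanged.
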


The product-score statement is the standard product rule for DQM models; see \citet[Chapter~7]{vaart1998_AsymptoticStatistics}.
Preservation of DQM under a parameter-independent Markov kernel and the conditional-score identity follow from the information-loss theorem of \citet[Section~7, Proposition~4]{lecam1988_PreservationLocal}, applied to the product experiment after adjoining the kernel randomization.

\subsection{Barycentric estimate for a small zCDP budget}
\label{appendix-subsec:barycentric}

We begin with the privacy loss moment bound that controls the passage from a pairwise reference law to a mixture reference law.

\begin{lemma}[Privacy loss moments for a small zCDP budget]
  \label{appendix-lem:privacy-loss-moments}
Suppose that \(P,Q\) satisfy
  \[
D_\alpha(P\|Q)\le\alpha\rho, \qquad D_\alpha(Q\|P)\le\alpha\rho, \qquad \alpha>1.
  \]
Let \(L=\log(dP/dQ)\).
For each fixed integer \(m\ge1\), as \(\rho\downarrow0\),
  \[
\E_Q \abs L^m=O_m(\rho^{m/2}), \qquad \E_Q \bigl(e^{\abs L}-1\bigr)^m=O_m(\rho^{m/2}).
  \]
The bounds are uniform over all pairs satisfying the two-sided R\'enyi inequalities.
\end{lemma}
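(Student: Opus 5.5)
The plan is to control everything through the Rényi moment generating function of the privacy loss, using both directions of the bound. Since \(D_\alpha(P\|Q)\le\alpha\rho\), we have
\[
\E_Q e^{(\alpha-1)L}=\E_P e^{(\alpha-1) L}\cdot(\text{same})\ \text{— more precisely}\ \E_Q e^{\alpha L}=\exp\{(\alpha-1)D_\alpha(P\|Q)\}\le e^{\alpha(\alpha-1)\rho}.
\]
Using \(D_\alpha(Q\|P)\le\alpha\rho\) and the change of measure \(\E_Q e^{-tL}=\E_P e^{-(t+1)L}\), we also get
\[
\E_Q e^{-tL}=\exp\{t D_{t+1}(Q\|P)\}\le e^{t(t+1)\rho}
\]
for \(t>0\). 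Hence, for every fixed \(t>0\), both \(\E_Q e^{tL}\) and \(\E_Q e^{-tL}\) are at most \(e^{C_t\rho}\), with \(C_t=t(t+1)\) and constants independent of the pair \((P,Q)\).

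The key intermediate claim is that \(L\) is sub-Gaussian under \(Q\) at scale \(\sqrt\rho\). To obtain it, I would apply the bound with \(t\) proportional to \(1/\sqrt\rho\), say \(t=\lambda/\sqrt\rho\) for a fixed \(\lambda>0\). This gives
\[
\E_Q e^{\pm\lambda L/\sqrt\rho}\le \exp\{\lambda^2+\lambda\sqrt\rho\}\le e^{\lambda^2+\lambda}
\]
for \(\rho\le1\). Since the Rényi bounds hold for every \(\alpha>1\), taking \(t\) large is legitimate, and no small-\(\rho\) expansion of the divergence is needed. From this two-sided exponential moment bound, Markov's inequality gives the tail bound \(Q(|L|>u\sqrt\rho)\le 2e^{\lambda^2+\lambda-\lambda u}\). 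Integrating the tail, or directly bounding \(|x|^m\le C_m e^{|x|}\) applied to \(x=L/\sqrt\rho\) with \(\lambda=1\), yields
\[
\E_Q|L|^m\le C_m\rho^{m/2}\,\E_Q\bigl(e^{L/\sqrt\rho}+e^{-L/\sqrt\rho}\bigr)=O_m(\rho^{m/2}),
\]
uniformly over all admissible pairs.

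For the second estimate, use \(e^{|L|}-1\le |L|e^{|L|}\), so that
\[
(e^{|L|}-1)^m\le |L|^m e^{m|L|}.
\]
By Cauchy--Schwarz, \(\E_Q(e^{|L|}-1)^m\le(\E_Q L^{2m})^{1/2}(\E_Q e^{2m|L|})^{1/2}\). The first factor is \(O_m(\rho^{m/2})\) by the first part. The second factor is at most \(\E_Q(e^{2mL}+e^{-2mL})\le 2e^{C_{2m}\rho}\), which is \(O(1)\) by the fixed-\(t\) bounds above. Uniformity holds because every constant depends only on \(m\) and on the bound \(\rho\le1\).

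The main point to verify carefully is the sub-Gaussian step with \(t\sim1/\sqrt\rho\). In particular, the negative direction requires the reverse Rényi bound at order \(t+1\), and the identities must be checked when \(P\) and \(Q\) are not mutually absolutely continuous. Finiteness of \(D_\alpha\) in both directions forces mutual absolute continuity, so \(L\) is well defined \(Q\)-a.s.
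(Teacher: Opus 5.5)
Your proof is correct and follows essentially the paper's route. Two-sided R\'enyi moment-generating-function bounds give sub-Gaussian control of \(L\) at scale \(\sqrt\rho\), and the second estimate uses exactly the paper's bound \(e^{|L|}-1\le|L|e^{|L|}\) together with Cauchy--Schwarz and \(\E_Q e^{2m|L|}=O_m(1)\). The only difference is cosmetic: the paper cites the Bun--Steinke tail bound and integrates it with the layer-cake formula, whereas you evaluate the MGF directly at order \(\alpha\asymp 1/\sqrt\rho\) and use \(|x|^m\le m!\,(e^{x}+e^{-x})\), which makes your argument self-contained.
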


\begin{proof}
The privacy-loss moment-generating-function identity and its subgaussian tail consequence are standard; see \citet[equation~(2) and the subsequent tail bound]{bun2016_ConcentratedDifferential}.
Applying them to both \((P,Q)\) and \((Q,P)\), there are absolute constants \(C_0,c_0,c_1>0\) such that
\[
Q(\abs L>t) \le C_0 \exp(-c_1 t^2/\rho), \qquad t\ge c_0 \rho.
\]
The layer-cake formula, split at \(c_0 \rho\), now yields
\[
  \begin{aligned}
\E_Q \abs L^m & =
  m\int_0^\infty t^{m-1} Q(\abs L>t) dt \\
& \le (c_0 \rho)^m + mC_0 \int_{c_0 \rho}^\infty t^{m-1} e^{-c_1 t^2/\rho} dt =O_m(\rho^{m/2}).
  \end{aligned}
\]
Finally, \(e^{\abs L}-1\le\abs L e^{\abs L}\).
Cauchy--Schwarz and the same two-sided moment-generating-function bounds imply
\[
\E_Q \bigl(e^{\abs L}-1\bigr)^m \le (\E_Q \abs L^{2m})^{1/2} (\E_Q e^{2m\abs L})^{1/2} =O_m(\rho^{m/2}),
\]
because \(m\) is fixed and \(\E_Q e^{2m\abs L}=O_m(1)\) uniformly as \(\rho\downarrow0\).
\end{proof}

\begin{proof}[Proof of \cref{lem:barycentric}]
The finite two-sided R\'enyi bounds make the laws \(R_t\) mutually absolutely continuous.
They are also dominated by \(\bar{R}\): if \(\bar{R}(A)=0\), then \(R_u(A)=0\) for \(\lambda\)-almost every \(u\); choosing one such \(u\) and using \(R_t \ll R_u\) gives \(R_t(A)=0\).

Fix \(s,t\).
Relative to \(R_s\), write
\[
Y=\frac{dR_t}{dR_s}=e^L, \qquad Y_u=\frac{dR_u}{dR_s}=e^{L_u}, \qquad Z=\frac{d\bar{R}}{dR_s}=\int Y_u \lambda(du).
\]
Then
\[
\norm{r_t-r_s}_{L^2(\bar{R})}^2 = \E_{R_s} \frac{(Y-1)^2}{Z}.
\]
Separating the pairwise term from the change of reference measure gives
\[
\E_{R_s} \frac{(Y-1)^2}{Z} \le \E_{R_s}(Y-1)^2 + \E_{R_s} \left[ (Y-1)^2\left(\frac{1}{Z}-1\right)_+ \right].
\]
The first term is the \(\chi^2\)-divergence of \(R_t\) from \(R_s\), and therefore
\[
\E_{R_s}(Y-1)^2 = e^{D_2(R_t\|R_s)}-1 \le e^{2\rho}-1.
\]

For the second term, Jensen's inequality gives
\[
Z = \int e^{L_u} \lambda(du) \ge \exp\left(\int L_u \lambda(du)\right) =e^{\bar{L}}, \qquad \bar{L}=\int L_u \lambda(du).
\]
It follows that
\[
\left(\frac{1}{Z}-1\right)_+ \le e^{\abs{\bar{L}}}-1.
\]
The function \(x\mapsto(e^{\abs x}-1)^2\) is convex.
Hence \cref{appendix-lem:privacy-loss-moments}, Jensen's inequality, and Tonelli's theorem give
\[
  \begin{aligned}
\E_{R_s}(e^{\abs{\bar{L}}}-1)^2 & \le \int \E_{R_s}(e^{\abs{L_u}}-1)^2\lambda(du)
  =O(\rho), \\
\E_{R_s}(Y-1)^4 & =O(\rho^2).
  \end{aligned}
\]
Cauchy--Schwarz therefore bounds the denominator error by
\[
\E_{R_s} \left[(Y-1)^2(e^{\abs{\bar{L}}}-1)\right] =O(\rho^{3/2})=o(\rho).
\]
Combining the bounds yields
\[
\norm{r_t-r_s}_{L^2(\bar{R})}^2 \le e^{2\rho}-1+O(\rho^{3/2}) = (e^{2\rho}-1)(1+o_\rho(1)).
\]
All constants come from the uniform bounds in \cref{appendix-lem:privacy-loss-moments}; the remainder is therefore uniform in \(s,t\), the family, and the mixing measure \(\lambda\).
Since \(e^{2\rho}-1=2\rho\{1+o_\rho(1)\}\), this proves \cref{lem:barycentric}.
\end{proof}

\subsection{Operator representation of the conditional score}
\label{appendix-subsec:cond}

\begin{lemma}[Operator representation of the conditional score]
  \label{appendix-lem:cond-score-operator}
  Under \cref{ass:regular}, fix \(\theta\in\Theta\), and let \(Q=Q_\theta^M=M\circ P_\theta^{\otimes n}\).
  For each \(i\in[n]\) and \(x\in\caX\), let \(Q_{i,x}\) be the output law obtained by fixing the \(i\)th record at \(x\) and drawing the remaining records from \(P_\theta\).
  Assume that, for each \(i\in[n]\), \(Q_{i,x}\ll Q\) for \(P_\theta\)-almost every \(x\), and write \(L_{i,x}=dQ_{i,x}/dQ\) for such \(x\).
  Suppose that \(\Lambda_i(x)=L_{i,x}\) is a measurable \(L^2(Q)\)-valued statistic and
  \[
    \E_\theta \left[
      \norm{\Lambda_i(X)}_{L^2(Q)}^2
      \norm{s_\theta(X)}_2^2
    \right]
    <\infty.
  \]
  Then
  \[
    B_M=\sum_{i=1}^n A_{\Lambda_i,\theta},
  \]
  where \(B_M:\R^p \to L^2_0(Q)\) is defined by \(B_M a=a^\top s_M(\cdot;\theta)\).
  Consequently,
  \[
    B_M^*B_M=I_M(\theta),
    \qquad
    \norm{B_M}_{\HS}^2=\Tr I_M(\theta).
  \]
\end{lemma}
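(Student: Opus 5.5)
The identity $B_M=\sum_i A_{\Lambda_i,\theta}$ will follow by testing both sides against an arbitrary element $f\in L^2(Q)$ and computing inner products. The adjoint identities are then immediate consequences.

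\emph{Step 1.} By \cref{appendix-lem:markov-dqm}, $s_M(Z;\theta)=\E_\theta[\sum_i s_\theta(X_i)\mid Z]$ and $I_M(\theta)=\E_Q[s_M s_M^\top]$. Since $\E_\theta s_\theta=0$, we have $s_M\in L^2_0(Q)$, so $B_M$ maps into $L^2_0(Q)$. Moreover, $B_M^*B_M=\E_Q[s_Ms_M^\top]=I_M(\theta)$ directly, and taking traces gives the Hilbert--Schmidt identity. The content of the lemma is therefore the representation.

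\emph{Step 2.} Fix $a\in\R^p$ and a bounded measurable $f$ on $\caZ$. By the tower property,
\[
\ang{f,B_Ma}_{L^2(Q)}=\E_Q\zk{f(Z)\,a^\top s_M(Z;\theta)}=\sum_{i=1}^n\E_\theta\zk{f(Z)\,a^\top s_\theta(X_i)},
\]
where $Z\sim M(X_1,\dots,X_n)$. For the $i$th term, condition on $X_i=x$. The conditional law of $Z$ is $Q_{i,x}$ because the records are independent, so
\[
\E_\theta\zk{f(Z)\,a^\top s_\theta(X_i)}=\int \zk{\int f\,dQ_{i,x}}\,a^\top s_\theta(x)\,P_\theta(dx).
\]
The inner integral equals $\int fL_{i,x}\,dQ=\ang{f,\Lambda_i(x)}_{L^2(Q)}$ for $P_\theta$-a.e.\ $x$, using $Q_{i,x}\ll Q$. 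Hence the $i$th term equals $\E_\theta[\ang{f,\Lambda_i(X)}\ang{s_\theta(X),a}]$.

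The assumed integrability $\E_\theta[\norm{\Lambda_i}^2\norm{s_\theta}^2]<\infty$ makes $A_{\Lambda_i,\theta}a=\E_\theta[\Lambda_i(X)\ang{s_\theta(X),a}]$ a well-defined Bochner integral in $L^2(Q)$. It also permits interchanging it with the continuous functional $\ang{f,\cdot}$. The $i$th term is therefore $\ang{f,A_{\Lambda_i,\theta}a}$, and summing over $i$ gives $\ang{f,B_Ma}=\ang{f,\sum_iA_{\Lambda_i,\theta}a}$.

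\emph{Step 3.} Bounded functions are dense in $L^2(Q)$, so $B_Ma=\sum_iA_{\Lambda_i,\theta}a$ for every $a$, which is the claimed representation.

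The main obstacle is measure-theoretic, in Step 2: justifying that the disintegration $x\mapsto Q_{i,x}$ is a measurable kernel whose mixture over $P_\theta$ reproduces the joint law of $(X_i,Z)$, and that the Bochner interchange is legitimate. The first point uses the standard Borel assumption together with Fubini over the product $P_\theta^{\otimes(n-1)}\otimes M$. The second uses the stated integrability condition. Everything else is bookkeeping with the tower property.
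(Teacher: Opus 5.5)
Your proposal is correct and follows the paper's proof essentially step for step. You test $\sum_i A_{\Lambda_i,\theta}a$ against a function on the output space, use that the joint law of $(X_i,Z)$ is $P_\theta(dx)\,Q_{i,x}(dz)$ with $dQ_{i,x}=L_{i,x}\,dQ$ to identify each term with $\E_\theta[g(Z)\,a^\top s_\theta(X_i)]$, and then sum using the conditional-score formula of \cref{appendix-lem:markov-dqm}. The only cosmetic differences are that you test against bounded $f$ and invoke density, whereas the paper tests directly against arbitrary $g\in L^2(Q)$ (legitimate by Cauchy--Schwarz), and that the paper also checks separately that each $A_{\Lambda_i,\theta}$ maps into $L^2_0(Q)$.
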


\begin{proof}
The integrability assumption and Cauchy--Schwarz make each \(A_{\Lambda_i,\theta}\) a well-defined \(L^2(Q)\)-valued operator.
Moreover,
\[
\int A_{\Lambda_i,\theta} a dQ = \E_\theta \left[ a^\top s_\theta(X)\int L_{i,X} dQ \right] = \E_\theta (a^\top s_\theta(X)) =0,
\]
so its range is contained in \(L^2_0(Q)\).

Let \(g\in L^2(Q)\) and \(a\in\R^p\).
The joint law of \((X_i,Z)\) is \(P_\theta(dx)Q_{i,x}(dz)\), and therefore
\[
  \begin{aligned}
\langle g,A_{\Lambda_i,\theta} a\rangle_{L^2(Q)} & = \E_\theta \left[ a^\top s_\theta(X) \int g(z)L_{i,X}(z) Q(dz)
  \right] \\
& = \E_\theta \left[ a^\top s_\theta(X) \int g(z) Q_{i,X}(dz)
  \right] \\
& = \E_\theta (g(Z)a^\top s_\theta(X_i)).
  \end{aligned}
\]
Summing this identity over \(i\), and using \cref{appendix-lem:markov-dqm,appendix-eq:output-cond-score}, gives
\[
  \begin{aligned}
\left\langle g,\sum_{i=1}^n A_{\Lambda_i,\theta} a\right\rangle_{L^2(Q)} & = \E_\theta \left[ g(Z)a^\top \sum_{i=1}^n s_\theta(X_i)
  \right] \\
& = \E_Q (g(Z)a^\top s_M(Z;\theta)) = \langle g,B_M a \rangle_{L^2(Q)}.
  \end{aligned}
\]
Since \(g\) is arbitrary, this proves \(B_M=\sum_i A_{\Lambda_i,\theta}\).
Finally, for \(a,b\in\R^p\),
\[
\langle B_M a,B_M b \rangle_{L^2(Q)} = \E_Q ((a^\top s_M)(b^\top s_M)) = a^\top I_M(\theta)b.
\]
Thus \(B_M^*B_M=I_M(\theta)\), and taking traces gives the Hilbert--Schmidt identity.
\end{proof}

\subsection{Proof of \Cref{thm:diameter-contraction}}
\label{appendix-subsec:contraction}

Let \(Q=M\circ P_\theta^{\otimes n}\).
For each \(i\) and \(x\), define \(Q_{i,x}\) by fixing the \(i\)th record at \(x\) and averaging the remaining records under \(P_\theta\); thus
\[
Q = \int Q_{i,x} P_\theta(dx).
\]

\textit{Record-level R\'enyi bound.}
If \(x,x'\in\caX\), then \(Q_{i,x}\) and \(Q_{i,x'}\) are mixtures of output laws on neighboring deterministic databases, with the same mixing measure for the other \(n-1\) records.
For
\[
H_\alpha(P,Q) = \exp((\alpha-1)D_\alpha(P\|Q)),
\]
let \(Q_{x,w}\) and \(Q_{x',w}\) be the output laws when the remaining records are fixed at \(w\).
The function \(H_\alpha\) is the \(f\)-divergence associated with \(u\mapsto u^\alpha\), hence is jointly convex.
If \(\nu_i=P_\theta^{\otimes(n-1)}\), then
\[
  \begin{aligned}
H_\alpha(Q_{i,x},Q_{i,x'}) &= H_\alpha \left( \int Q_{x,w} \nu_i(dw), \int Q_{x',w} \nu_i(dw)
  \right)\\
&\le \int H_\alpha(Q_{x,w},Q_{x',w})\nu_i(dw) \le e^{\alpha(\alpha-1)\rho}.
  \end{aligned}
\]
The same argument with \(x,x'\) interchanged gives
\[
D_\alpha(Q_{i,x}\|Q_{i,x'})\le\alpha\rho, \qquad D_\alpha(Q_{i,x'}\|Q_{i,x})\le\alpha\rho .
\]
In particular, these conditional output laws are mutually absolutely continuous.
The same null set argument as in \cref{lem:barycentric} shows that each \(Q_{i,x}\) is absolutely continuous with respect to the barycenter \(Q\), so \(L_{i,x}=dQ_{i,x}/dQ\) is well defined.

\textit{Square integrability and record diameter.}
The pairwise bounds also control the second moments relative to the barycenter \(Q=\int Q_{i,u} P_\theta(du)\).
Choose a common dominating measure \(\nu\) and write \(q_x,q_u,q\) for the densities of \(Q_{i,x},Q_{i,u},Q\), respectively.
Pointwise convexity of \(v\mapsto v^{-1}\) gives
\[
  \begin{aligned}
\int L_{i,x}^2 dQ &=
  \int\frac{q_x^2}{q} d\nu\\
&\le \int  \int\frac{q_x^2}{q_u} d\nu P_\theta(du) = \int H_2(Q_{i,x},Q_{i,u})P_\theta(du) \le e^{2\rho}.
  \end{aligned}
\]
Thus \(x\mapsto L_{i,x}\) is an \(L^2(Q)\)-valued statistic, and the integrability condition in \cref{appendix-lem:cond-score-operator} follows from
\[
\sup_x \norm{L_{i,x}}_{L^2(Q)}^2 \le e^{2\rho}, \qquad \E_\theta \norm{s_\theta(X)}_2^2<\infty.
\]
Applying \cref{lem:barycentric} to the family \(\{Q_{i,x}:x\in\caX\}\), whose barycenter is \(Q\), gives
\[
\norm{L_{i,x}-L_{i,x'}}_{L^2(Q)}^2 \le (e^{2\rho}-1)(1+o_\rho(1))
\]
uniformly in \(i,x,x'\).
Choose the deterministic remainder in the barycentric lemma uniformly over all families and mixing measures, and set
\[
\delta_\rho^2 = \frac{(e^{2\rho}-1)(1+o_\rho(1))}{2\rho}.
\]
Then \(\delta_\rho=1+o_\rho(1)\), depends only on \(\rho\), and
\[
\diam\left( x\mapsto\frac{L_{i,x}}{\sqrt{2\rho}\delta_\rho} \right) \le1
\]
simultaneously for every \(i\), mechanism, and parameter value under consideration.

\textit{Operator assembly.}
Set
\[
\Phi_i(x)=\frac{L_{i,x}}{\sqrt{2\rho}\delta_\rho}\in L^2(Q), \qquad \bar{\Phi}(x)=\frac{1}{n}\sum_{i=1}^n \Phi_i(x).
\]
The average \(\bar{\Phi}\) also has diameter at most one.
Indeed, for all \(x,x'\),
\[
\norm{\bar{\Phi}(x)-\bar{\Phi}(x')}_{L^2(Q)} \le \frac{1}{n}\sum_{i=1}^n \norm{\Phi_i(x)-\Phi_i(x')}_{L^2(Q)} \le1.
\]
By \cref{appendix-lem:cond-score-operator},
\[
B_M = \sum_{i=1}^n A_{\Lambda_i,\theta} = n\sqrt{2\rho}\delta_\rho A_{\bar{\Phi},\theta}.
\]
Thus
\[
I_M(\theta) = B_M^*B_M = 2\rho n^2 \delta_\rho^2 A_{\bar{\Phi},\theta}^*A_{\bar{\Phi},\theta}.
\]
By \cref{appendix-lem:hilbert-compression}, there is an \(\R^p\)-valued statistic \(\widetilde\Phi\) with diameter at most one and
\[
  J_{\widetilde\Phi,\theta}
  =
  A_{\bar{\Phi},\theta}^*A_{\bar{\Phi},\theta}.
\]
Taking \(J_{M,\theta,\rho}=J_{\widetilde\Phi,\theta}\in\caJ_\theta\) gives \cref{eq:fisher-matrix-contraction}, and the trace bound follows from
\[
\Tr I_M(\theta) = 2\rho n^2 \delta_\rho^2 \Tr J_{M,\theta,\rho} \le 2\rho n^2 \Gamma_\theta(1+o_\rho(1)).
\]
All remainders are inherited from the uniform barycentric estimate, which proves the asserted uniformity.

\subsection{Matrix van Trees inequality}
\label{appendix-subsec:van-trees}

We use the following exact matrix form of the multivariate van Trees inequality.

\begin{lemma}[Matrix van Trees inequality]
  \label{appendix-lem:matrix-van-trees}
Let \(\pi\) be a continuously differentiable prior density with compact support contained in an open set \(\caV\subset\R^p\).
Suppose its boundary term vanishes under coordinatewise integration by parts and its Fisher matrix
  \[
J_\pi = \int \frac{\nabla\pi(\theta)\nabla\pi(\theta)^\top}{\pi(\theta)} d\theta
  \]
is finite.
Let \(Q_\theta\), \(\theta\in\caV\), be a dominated experiment that is DQM on the support of \(\pi\), with a jointly measurable score \(s_Z(z;\theta)\), and suppose
  \[
\int\Tr I_Z(\theta)\pi(\theta) d\theta<\infty.
  \]
Assume the density derivatives are integrable so that differentiation and coordinatewise integration by parts may be interchanged.
Define
  \[
H = J_\pi+ \int I_Z(\theta)\pi(\theta) d\theta,
  \]
and suppose \(H\succ0\).
Then every estimator \(T(Z)\in\R^p\) satisfying
\(\E_\pi \E_\theta \norm{T(Z)-\theta}_2^2<\infty\) obeys
  \begin{equation}
    \label{appendix-eq:matrix-van-trees}
\E_\pi \E_\theta (T(Z)-\theta)(T(Z)-\theta)^\top \succeq H^{-1}.
  \end{equation}
Moreover, for every estimator \(T\) and every \(W\succeq0\), without requiring a finite
unweighted second moment,
  \[
\E_\pi \E_\theta \norm{T(Z)-\theta}_W^2 \ge \Tr(WH^{-1}).
  \]
Here the left-hand side is interpreted as an extended nonnegative expectation.
\end{lemma}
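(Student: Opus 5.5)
The proof is the classical multivariate van Trees argument in the form of \citet{gill1995_ApplicationsVan}, run under the joint law of \((\theta,Z)\) with \(\theta\sim\pi\). Write \(\psi(z;\theta)=s_Z(z;\theta)+\nabla\log\pi(\theta)\), so that \(\psi\) is the score of the joint density \(\pi(\theta)q_\theta(z)\) with respect to \(\theta\). The plan is to establish three identities or bounds for this joint score and then combine them.

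First, \(\E[\psi\psi^\top]=H\). Expanding the product gives three kinds of terms.
\begin{itemize}
  \item The cross terms vanish, because \(\E_\theta s_Z(Z;\theta)=0\) for each \(\theta\), which follows from DQM.
  \item The \(s_Z s_Z^\top\) term integrates to \(\int I_Z\pi\).
  \item The prior term gives \(J_\pi\).
\end{itemize}
The integrability hypotheses, namely finiteness of \(J_\pi\) and of \(\int\Tr I_Z\,\pi\), make all of these expectations finite.

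Second, the key covariance identity \(\E[(T(Z)-\theta)\psi^\top]=I_p\). For fixed \(z\), integrate \(\partial_{\theta_j}\{\pi(\theta)q_\theta(z)\}\) against \(T(z)-\theta\) over \(\theta\) and integrate by parts coordinatewise. The boundary terms vanish by assumption, and \(\partial_{\theta_j}(T_k(z)-\theta_k)=-\delta_{jk}\). Integrating over \(z\) then gives
\[
  \E[(T-\theta)\psi_j]=e_j,
\]
since \(\int\pi q_\theta\,d\theta\,dz=1\). The step needing care is that DQM gives only an \(L^2\) derivative of \(\sqrt{q_\theta}\), not a pointwise derivative of \(q_\theta\). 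I would handle it by working with \(\partial_\theta q_\theta = s_Z q_\theta\), using the assumed interchangeability of differentiation and integration. Fubini is then justified by Cauchy--Schwarz: \(\E\norm{T-\theta}_2^2<\infty\) and \(\E\norm{\psi}_2^2=\Tr H<\infty\).

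Third, the matrix Cauchy--Schwarz step. For \(U=T-\theta\), the covariance matrix of the stacked vector \((U,\psi)\) is positive semidefinite, and its Schur complement gives
\[
  \E UU^\top\succeq \E[U\psi^\top](\E\psi\psi^\top)^{-1}\E[\psi U^\top]=H^{-1},
\]
using \(H\succ0\). This proves the first display.

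For the weighted statement, if \(\E\norm{T-\theta}_W^2=\infty\) there is nothing to prove. Otherwise I would reduce to the unweighted case, and this reduction is the main obstacle: \(W\) may be singular, so finiteness of the weighted risk does not control \(T\) in the directions of \(\ker W\). I would first try the following. Decompose \(\R^p=\ran(W)\oplus\ker(W)\) and replace \(T\) by \(T'=P_WT+(I-P_W)m\), where \(m\) is a fixed point in the compact support of \(\pi\) and \(P_W\) is the orthogonal projection onto \(\ran(W)\). Then \(\norm{T'-\theta}_W=\norm{T-\theta}_W\), because \(W\) annihilates \(\ker W\). Also \(T'\) has finite unweighted risk, since \(\theta\) is bounded on the support of \(\pi\) and \(\norm{P_W(T-\theta)}_2^2\le\lambda_{\min}^+(W)^{-1}\norm{T-\theta}_W^2\). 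Applying the matrix bound to \(T'\) and taking the trace against \(W\) yields \(\Tr(WH^{-1})\). As a fallback, if the projection argument runs into trouble, I would instead truncate \(T\) to a large ball, which also gives finite unweighted risk, and pass to the limit by monotone convergence.
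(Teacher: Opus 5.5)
Your proposal is correct. Its first half is essentially the argument behind the result the paper simply cites.

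\textbf{The matrix inequality.} The paper does not rederive anything. It invokes the multivariate van Trees inequality of Gill and Levit for the scalar functionals $a^\top\theta$ with a constant weight vector $c$, which gives $\E\,(a^\top(T-\theta))^2\ge (a^\top c)^2/(c^\top Hc)$. It then maximizes over $c$ to obtain $a^\top H^{-1}a$ for every $a$. Your Schur-complement step on the stacked vector $(T-\theta,\psi)$ is this same optimization, done at once in matrix form. Your joint-score Gram identity and the covariance identity $\E[(T-\theta)\psi^\top]=I_p$ are exactly the ingredients inside the Gill--Levit proof. What your version buys is self-containedness, plus an explicit Fubini/Cauchy--Schwarz justification that the citation hides. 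Like the paper, you still lean on the stated hypothesis that $\partial_\theta q_\theta=s_Zq_\theta$ pointwise and that integration by parts in $\theta$ is legitimate.

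\textbf{The weighted statement.} Here the routes genuinely differ.
\begin{itemize}
  \item The paper writes $W=\sum_i w_iu_iu_i^\top$ and applies the scalar inequality separately in each direction $u_i$ with $w_i>0$. Finiteness of $\E\|T-\theta\|_W^2$ already gives $\E\,(u_i^\top(T-\theta))^2<\infty$, so no modification of $T$ and no boundedness of $\theta$ is needed.
  \item You instead build the auxiliary estimator $T'=P_WT+(I-P_W)m$. It has the same $W$-loss and finite unweighted risk, because the prior has compact support. You then reuse the matrix bound, which is also valid.
\end{itemize}

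\textbf{The truncation fallback.} It is not needed. If you ever used it, replace $T$ by a fixed point outside the ball rather than projecting radially. Radial projection can increase the $W$-seminorm, and Fatou's lemma alone yields the wrong inequality.
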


Both conclusions are direct consequences of the multivariate van Trees inequality of \citet[Theorem~1, pp.~64--65]{gill1995_ApplicationsVan}, applied to linear functionals of \(\theta\) and optimized over its constant matrix weight; the weighted form follows by spectral decomposition of \(W\).

\subsection{Proof of \Cref{prop:strong-privacy-lower}}
\label{appendix-subsec:strong-privacy-lower}

Choose a continuously differentiable density \(\pi\) whose compact support is contained in \(\Theta^\circ\), whose boundary term vanishes, and whose Fisher matrix \(J_\pi\) is finite and positive definite.
By \cref{appendix-lem:score-density-stability},
\[
  \overline{\Gamma}_\pi
  =
  \sup_{\theta\in\operatorname{supp}(\pi)}\Gamma_\theta
  <\infty.
\]
For any \(\rho_n\)-zCDP mechanism and estimator, \cref{appendix-lem:matrix-van-trees}, the matrix Cauchy--Schwarz inequality, and \cref{thm:diameter-contraction} give
\[
  \begin{aligned}
  \sup_{\theta\in \Theta}
  \E_\theta\norm{\widehat{\theta}-\theta}_W^2
  &\ge
  \Tr\left[
    W\left\{J_\pi+\int I_M(\theta)\pi(\theta)\,d\theta\right\}^{-1}
  \right] \\
  &\ge
  \frac{\{\Tr(W^{1/2})\}^2}{
    \Tr(J_\pi)
    +2n^2\rho_n\delta_{\rho_n}^2\overline{\Gamma}_\pi
  }.
  \end{aligned}
\]
Here the regularity conditions in \cref{appendix-lem:matrix-van-trees} follow exactly as in the proof of \cref{thm:matrix-local-lower} below.
Since \(W\ne0\), \(n^2\rho_n=O(1)\), and \(\delta_{\rho_n}\to1\), the last expression is bounded below by a positive constant independent of the mechanism and estimator.
Taking the infimum proves the result.

\subsection{Proof of \Cref{thm:matrix-local-lower}}
\label{appendix-subsec:matrix-local-lower}

We prove the bound for an arbitrary sequence of \(\rho_n\)-zCDP mechanisms and estimators.

\textit{Prior construction.}
Choose a smooth product density \(\varphi\) supported on \([-1,1]^p\), with vanishing boundary term and finite positive definite Fisher matrix
\[
J_\varphi = \int \frac{\nabla\varphi(v)\nabla\varphi(v)^\top}{\varphi(v)} dv.
\]
Set
\[
r_n=\frac{R_n}{\sqrt p}, \qquad \varphi_{r_n}(u)=r_n^{-p} \varphi(u/r_n),
\]
and define the prior by
\[
U\sim\varphi_{r_n}, \qquad \theta=\theta_0+\frac{U}{\tau_n}.
\]
Because \(\norm{U}_\infty \le r_n\) implies \(\norm{U}_2 \le\sqrt p r_n=R_n\), this prior, denoted by \(\pi_n\), is supported on \(\Theta_n\).
The support is eventually contained in \(\Theta\) because \(\Theta\) is open and \(R_n/\tau_n \to0\).

On the \(\theta\)-scale, the prior score is
\[
\nabla_\theta \log\pi_n(\theta) = \frac{\tau_n}{r_n} \nabla\log\varphi\left( \frac{\tau_n(\theta-\theta_0)}{r_n} \right).
\]
Changing variables gives
\begin{equation}
  \label{appendix-eq:lower-prior-info}
J_{\pi_n} = \tau_n^2 r_n^{-2} J_\varphi = p\tau_n^2 R_n^{-2} J_\varphi, \qquad \frac{J_{\pi_n}}{\tau_n^2}\to0.
\end{equation}
In particular, \(J_{\pi_n} \succ0\).

\textit{Average privatized information.}
Fix a sequence of mechanisms \(M_n\), put
\[
\delta_n=\delta_{\rho_n}=1+o(1), \qquad s_n=2n^2 \rho_n \delta_n^2,
\]
and use \cref{appendix-lem:markov-dqm} to define the output Fisher information for every \(\theta\) in the prior support.
By \cref{thm:diameter-contraction},
\[
J_{n,\theta} = \frac{I_{M_n}(\theta)}{s_n} \in \caJ_\theta.
\]
The jointly measurable density and score versions supplied by the dominated setup on standard Borel spaces make \(\theta\mapsto J_{n,\theta}\) measurable.
Define
\[
K_n = \int J_{n,\theta} \pi_n(d\theta).
\]

Let \(\eta_n=R_n/\tau_n=o(1)\).
By \cref{appendix-lem:score-density-stability},
\[
\Delta_n = \sup_{\theta\in\operatorname{supp}(\pi_n)} \sup_{J\in\caJ_\theta} d(J,\caJ_{\theta_0}) \to0.
\]
Let \(\Pi_0\) be Euclidean projection onto the nonempty closed convex set \(\caJ_{\theta_0}\), and write
\[
\widetilde{J}_{n,\theta}=\Pi_0(J_{n,\theta}), \qquad \widetilde{K}_n = \int\widetilde{J}_{n,\theta} \pi_n(d\theta).
\]
The projection is continuous, hence measurable.
Convexity and closedness of \(\caJ_{\theta_0}\) imply \(\widetilde{K}_n \in\caJ_{\theta_0}\), and
\[
\norm{K_n-\widetilde{K}_n}_{\HS} \le \Delta_n=o(1).
\]
Local boundedness of \(\Gamma_\theta\) gives a uniform trace, and hence norm, bound for \(K_n\).
It follows that every subsequential limit \(K\) of \(K_n\) belongs to \(\caJ_{\theta_0}\): along the same subsequence, \(\widetilde{K}_n-K_n \to0\), and \(\caJ_{\theta_0}\) is closed.

\textit{Matrix van Trees and the limit.}
Continuous differentiability in \(L^1\), as assumed in \cref{ass:score-density-l1}, is preserved by the product experiment and by the Markov kernel \(M_n\).
Indeed, the product density has derivative
\[
\dot{p}_\theta^{(n)}(x_{1:n}) = p_\theta^{\otimes n}(x_{1:n}) \sum_{i=1}^n s_\theta(x_i),
\]
and the product rule is valid in \(L^1(\mu^{\otimes n})\).
The kernel maps finite signed input measures to output measures with no larger total variation norm, so applying it to the \(L^1\) remainder preserves continuous differentiability and pushes this derivative forward to the output score density.
Together with \cref{appendix-lem:markov-dqm}, it verifies the score and integration-by-parts conditions in \cref{appendix-lem:matrix-van-trees} on the support of \(\pi_n\).
The integrated output information is finite because the contraction theorem and local boundedness of \(\Gamma_\theta\) give
\[
\int\Tr I_{M_n}(\theta)\pi_n(d\theta) \le s_n \sup_{\theta\in\operatorname{supp}(\pi_n)}\Gamma_\theta<\infty.
\]
Applying \cref{appendix-lem:matrix-van-trees} therefore yields, for every estimator,
\[
  \begin{aligned}
&\E_{\pi_n} \E_\theta
  \norm{\widehat{\theta}-\theta}_W^2 \\
&\quad\ge \Tr\left[ W\xk{ J_{\pi_n}+ \int I_{M_n}(\theta)\pi_n(d\theta) }^{-1}
  \right] \\
&\quad= \frac{1}{s_n} \Tr\left[ W\left(K_n+\frac{J_{\pi_n}}{s_n}\right)^{-1} \right].
  \end{aligned}
\]
The inverse exists because \(J_{\pi_n} \succ0\).
By \cref{appendix-eq:lower-prior-info},
\[
\frac{J_{\pi_n}}{s_n} = \frac{p}{2\delta_n^2}R_n^{-2} J_\varphi \to0, \qquad \frac{\tau_n^2}{s_n} = \frac{1}{2\delta_n^2} \to\frac{1}{2}.
\]

Take a subsequence along which the liminf of the scaled Bayes risks is attained, and then a further subsequence for which \(K_n \to K\).
The preceding projection argument gives \(K\in\caJ_{\theta_0}\).
Since \(\caI_W\) is lower semicontinuous and agrees with \(A\mapsto\Tr(WA^{-1})\) on positive definite matrices,
\[
  \begin{aligned}
\liminf_n \Tr\left[ W\left(K_n+\frac{J_{\pi_n}}{s_n}\right)^{-1} \right] &\ge
  \caI_W(K) \\
&\ge \inf_{J\in\caJ_{\theta_0}}\caI_W(J).
  \end{aligned}
\]
Consequently,
\[
\liminf_{n\to\infty} \tau_n^2 \E_{\pi_n} \E_\theta \norm{\widehat{\theta}-\theta}_W^2 \ge \frac{1}{2} \inf_{J\in\caJ_{\theta_0}}\caI_W(J) = \mfC_{\theta_0}(W).
\]
The Bayes risk under a prior supported on \(\Theta_n\) is no larger than the supremum risk over \(\Theta_n\).
Because the sequence of mechanisms and estimators was arbitrary, the local minimax lower bound follows.

\begin{corollary}[Trace-relaxed Euclidean lower bound]
  \label{appendix-cor:euclidean-trace-lower}
Under the assumptions of \cref{thm:matrix-local-lower}, if \(0<\Gamma_{\theta_0}<\infty\), then
  \[
\liminf_{n\to\infty}
\tau_n^2
\inf_{\widehat{\theta}:\rho_n \text{-zCDP}}
\sup_{\theta\in\Theta_n}
\E_\theta \norm{\widehat{\theta}-\theta}_2^2
\ge
\frac{p^2}{2\Gamma_{\theta_0}}.
  \]
\end{corollary}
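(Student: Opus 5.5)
This bound follows from \cref{thm:matrix-local-lower} with \(W=I_p\), after replacing the variational constant \(\mfC_{\theta_0}(I_p)=\frac12\inf_{J\in\caJ_{\theta_0}}\caI_{I_p}(J)\) by an explicit lower bound in terms of \(\Gamma_{\theta_0}\). Since \(\Gamma_{\theta_0}<\infty\) is automatic from \cref{eq:info-region-fisher-bound}, the only job is a pointwise trace inequality over the region.

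First, I will show that \(\Tr(J^{-1})\ge p^2/\Tr J\) for every positive definite \(J\). Write \(\lambda_1,\ldots,\lambda_p>0\) for the eigenvalues of \(J\). The Cauchy--Schwarz inequality gives \(p^2=(\sum_j\lambda_j^{1/2}\lambda_j^{-1/2})^2\le(\sum_j\lambda_j)(\sum_j\lambda_j^{-1})\), which is the claim.

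Second, I will handle singular \(J\in\caJ_{\theta_0}\). For such \(J\), \(\caI_{I_p}(J)=\lim_{t\downarrow0}\Tr[(J+tI_p)^{-1}]=\infty\), because \(I_p\) does not annihilate \(\ker J\); the bound is trivial there. For nonsingular \(J\in\caJ_{\theta_0}\), the definition \cref{eq:diameter-score-coefficient} gives \(\Tr J\le\Gamma_{\theta_0}\). Hence \(\caI_{I_p}(J)\ge p^2/\Gamma_{\theta_0}\), using \(\Gamma_{\theta_0}>0\) so that the bound is meaningful. Taking the infimum over \(J\in\caJ_{\theta_0}\) yields
\[
\mfC_{\theta_0}(I_p)\ge\frac{p^2}{2\Gamma_{\theta_0}}.
\]

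Finally, I will substitute this into the conclusion of \cref{thm:matrix-local-lower}. That conclusion holds for every sequence of mechanisms and estimators, and \(\normx{\cdot}_{I_p}=\normx{\cdot}_2\), so the stated liminf bound follows directly.

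The main point requiring care is the singular case. I will dispatch it through the limit definition of \(\caI_W\), as above, rather than attempting to extend the inequality \(\Tr(J^{-1})\ge p^2/\Tr J\) to singular matrices.
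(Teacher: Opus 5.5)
Your proposal is correct and matches the paper's proof. Singular elements of \(\caJ_{\theta_0}\) give \(\caI_{I_p}(J)=\infty\), and nonsingular ones satisfy \(\Tr(J^{-1})\ge p^2/\Tr J\ge p^2/\Gamma_{\theta_0}\) by Cauchy--Schwarz on the eigenvalues. The resulting bound \(\mfC_{\theta_0}(I_p)\ge p^2/(2\Gamma_{\theta_0})\) then combines with \cref{thm:matrix-local-lower} at \(W=I_p\).
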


\begin{proof}
For every \(J\in\caJ_{\theta_0}\), if \(J\) is singular, then
\(\caI_{I_p}(J)=\infty\).
Otherwise, the Cauchy--Schwarz inequality for the eigenvalues of \(J\) gives
  \[
\caI_{I_p}(J)
=
\Tr(J^{-1})
\ge
\frac{p^2}{\Tr J}
\ge
\frac{p^2}{\Gamma_{\theta_0}}.
  \]
Thus, by \cref{def:diameter-score-constant},
  \[
\mfC_{\theta_0}(I_p)
=
\frac{1}{2}\inf_{J\in\caJ_{\theta_0}}\caI_{I_p}(J)
\ge
\frac{p^2}{2\Gamma_{\theta_0}}.
  \]
Applying \cref{thm:matrix-local-lower} with \(W=I_p\) proves the result.
\end{proof}

\subsection{Extensions}
\label{appendix-subsec:outer}

\begin{remark}[Local outer information without stability]
  \label{appendix-rem:local-outer-bound}
If \cref{ass:score-density-l1} is not imposed, the same proof gives a lower bound with
  \[
\caJ_{\theta_0}^{\mathrm{loc}} = \bigcap_{\eta>0} \overline{\conv} \left( \bigcup_{\norm{\theta-\theta_0}_2 \le\eta}\caJ_\theta \right)
  \]
in place of \(\caJ_{\theta_0}\), provided \(\Gamma_\theta\) is locally bounded.
The corresponding constant is
  \[
\mfC_{\theta_0}^{\mathrm{loc}}(W) = \frac{1}{2} \inf_{J\in\caJ_{\theta_0}^{\mathrm{loc}}}\caI_W(J).
  \]
This version separates the contraction argument from continuity of the feasible information region.
\end{remark}
   \clearpage
  \section{Lower Bounds in the Transition Regime}
\label{appendix-sec:trans-regime}

This section proves the lower bound over arbitrary mechanisms in the transition regime, where sampling fluctuation and privacy noise have the same order.
Fix \(\theta_0 \in\Theta\), suppose \cref{ass:regular,ass:score-density-l1} hold, and assume \(I(\theta_0)\succ0\).
Let
\begin{equation}
  \label{appendix-eq:trans-regime}
  \rho_n \to0,
  \qquad
  \lambda_n \coloneqq n\rho_n \to\lambda\in(0,\infty),
  \qquad
  \tau_n=n\sqrt{\rho_n}.
\end{equation}
For \(R_n \to\infty\) with \(R_n/\tau_n \to0\), put
\[
  \Theta_n
  =
  \dk{
    \theta_0+\frac{u}{\tau_n}:
    \norm{u}_2 \le R_n
  }.
\]
Put \(\beta=\lambda/(1+\lambda)\).

\begin{theorem}[Local lower bound in the transition regime]
  \label{appendix-thm:trans-local-lower}
  For every \(W\succeq0\),
  \begin{equation}
    \label{appendix-eq:trans-local-lower}
    \liminf_{n\to\infty}
    n
    \inf_{\widehat{\theta}:\rho_n \text{-zCDP}}
    \sup_{\theta\in\Theta_n}
    \E_\theta
    (\widehat{\theta}-\theta)^\top
    W(\widehat{\theta}-\theta)
    \ge
    \frac{\uCt(W)}{\beta}.
  \end{equation}
\end{theorem}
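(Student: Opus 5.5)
We follow the route sketched in \cref{subsec:trans-regime-lower-bound}: first prove a sharp output-information bound for finite-alphabet models, then transfer it to the general model by cell discretization, and finish with the matrix van Trees inequality (\cref{appendix-lem:matrix-van-trees}).

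\textbf{Step 1: categorical model.} For the $k$-category model with probability vector $q$, symmetrize an arbitrary $\rho_n$-zCDP mechanism over permutations of the records. This reduces it to a channel $m\mapsto Q_m$ acting on the multinomial count vector $m$, with the same output law under iid sampling and the same zCDP guarantee. The missing-information identity then gives
\[
  I_{M_n}(q)=nV_q^{-1}-V_q^{-1}\,\E\bigl[\Var(m\mid Z)\bigr]\,V_q^{-1}
\]
in the baseline coordinates. We bound the conditional covariance of the counts from below by testing against the shift scores $\log dQ_{m+e_a-e_k}/dQ_m$. Their Gram matrix splits, to first order, into the sampling part $V_q^{-1}/n$ and a privacy part $2\rho_nJ_{n,q}$. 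To see the privacy part, apply \cref{lem:barycentric} to the conditional laws given one record, exactly as in the proof of \cref{thm:diameter-contraction}. This yields a configuration of likelihood-ratio vectors with pairwise $L^2$ distances at most $\sqrt{2\rho_n}(1+o(1))$, so their normalized Gram matrix $J_{n,q}$ lies in $\mathcal G_k$. A matrix projection (Schur-complement) inequality then gives \cref{eq:trans-joint-info-bound}. Finally, check that $V_q^{-1}-V_q^{-1}(V_q^{-1}+2\lambda J)^{-1}V_q^{-1}$ equals $\beta$ times a generator of $\caU_{q,\beta}$. This identification follows from Woodbury and the identity $\sigma_n^2\Sigma_{\Phi,\theta,\beta_n}=V_\Phi/n+I/(2\tau_n^2)$.

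\textbf{Step 2: transfer to the regular model.} Fix $\epsilon>0$. By \cref{ass:score-density-l1}, choose a finite measurable partition such that the cell-averaged score $\bar s$ satisfies $\E_{\theta_0}\|s_{\theta_0}-\bar s\|_2\le\epsilon$, uniformly for $\theta$ near $\theta_0$ in the $L^1$ sense. Introduce the enlarged experiment in which the within-cell conditional laws are frozen at $\theta_0$ and only the cell probabilities $\pi(\theta)$ vary. The true output information is then compared with the categorical bound pulled back through $D\pi(\theta_0)$. We split the true score into its cell part and a residual. The cross and residual terms are controlled by the trace contraction of \cref{cor:fisher-trace-contraction}, since the residual statistic carries only $O(\epsilon)$ information on the $n$ scale. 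This produces the $C_{\theta_0,\lambda}(\epsilon+\epsilon^2)$ term in \cref{eq:trans-avg-info-bound}. Averaging over a shrinking prior $\pi_n$ on $\Theta_n$ and using \cref{appendix-lem:score-density-stability}-type continuity gives \cref{eq:trans-avg-info-bound}. Pulling back through $D\pi(\theta_0)$ maps $\beta\caU$ of the partition model into $\beta\caU_{\theta_0,\beta}$, because composing a statistic with the cell map preserves its diameter.

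\textbf{Step 3: van Trees.} Choose the prior as a scaled product density at radius $r_n\asymp R_n/\tau_n$. Since $\tau_n^2\asymp n$, this gives $J_{\pi_n}/n\to0$. Applying \cref{appendix-lem:matrix-van-trees} yields
\[
  n\,\E_{\pi_n}\E_\theta\|\widehat\theta-\theta\|_W^2\ge\Tr\bigl[W(L_{n,\epsilon}+o(1)I+C\epsilon I)^{-1}\bigr].
\]
Pass to a subsequence along which $L_{n,\epsilon}\to L\in\beta\caU_{\theta_0,\beta}$, which is possible by compactness. Lower semicontinuity of $\caI_W$ then handles $n\to\infty$, and letting $\epsilon\downarrow0$ gives \cref{eq:trans-lower-bound}. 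Since the Bayes risk is at most the local maximal risk, this proves the statement.

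The main obstacle is Step 1's sharp decomposition. The sampling and privacy contributions must be combined as a joint harmonic (Schur-complement) expression attached to the same configuration, rather than by adding separate bounds. Getting the $o(1)$ uniform over all mechanisms requires the barycentric estimate with explicit remainders. I would first try to write the conditional covariance variationally, as a supremum over linear test functions of the shift scores, and then bound that supremum using both the multinomial covariance and the diameter constraint simultaneously.
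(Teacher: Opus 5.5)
\textbf{Gap in Step 1: wrong test functions.} Your architecture is the paper's: symmetrize to a count channel, prove a finite-alphabet contraction, transfer by freezing cells, then apply van Trees. But Step 1 tests against the wrong functions, and that is where the sharp constant is decided. The channel-only shift scores $U_a=\log dQ_{m+\delta_a}/dQ_m$, with $\delta_a=e_a-e_k$, carry no sampling information. Their Gram matrix is only the privacy part, approximately $2\rho_nJ_{n,q}$, not $V_q^{-1}/n+2\rho_nJ_{n,q}$. Their cross moment with $R=\E(M_{1:d}\mid Z)-M_{1:d}$ is also not $I_d$.

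Consider the Gaussian caricature $M_{1:d}\sim N(nq_{1:d},nV_q)$ and $Z=M_{1:d}+N(0,sI_d)$ with $s\asymp\rho_n^{-1}$. One gets $\E(RU^\top)=nV_q(nV_q+sI_d)^{-1}$. Your projection bound $C\,\E(UU^\top)^{-1}C^\top$ then equals the true $\E(RR^\top)$ multiplied by $nV_q(nV_q+sI_d)^{-1}$. When $n\rho_n\to\lambda\in(0,\infty)$ this factor stays bounded away from $I_d$, so the resulting information bound is strictly weaker than \cref{eq:trans-joint-info-bound}.

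\textbf{The missing ingredient.} The test function must be the shift likelihood ratio of the joint law of counts and output:
$D_a=r_a(M)L_a(M,Z)-1$, with $r_a(m)=\pi_q(m+\delta_a)/\pi_q(m)$ and $L_a=dQ_{m+\delta_a}/dQ_m$. Use a ratio minus one rather than a logarithm, since that makes the next two identities exact.
\begin{itemize}
\item The substitution $m\mapsto m+\delta_a$ on the count lattice gives $\E(R_iD_a)=\ind{i=a}(1-\Pr(M_a=0))+\E[R_i\ind{M_k=0}]-\E[R_i\ind{M_a=0}]$. Hence $\E(RD^\top)=I_d+O(ne^{-\eta n})$.
\item Because $\E_{Q_m}(L_a-1)=0$, the mixed terms vanish. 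So $\E(DD^\top)$ splits exactly into $\E[(r-\mathbf 1)(r-\mathbf 1)^\top]=V_q^{-1}/n+o(n^{-1})$ plus the channel term.
\end{itemize}
This cross-moment identity is exactly what your closing paragraph leaves open. Without it, the Schur-complement step has no cross moment to work with.

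\textbf{Smaller corrections.}
\begin{itemize}
\item In Step 2, freeze the within-cell laws at the current $\theta$, not at $\theta_0$. Only then is the cell part $\E_\theta[\sum_i\E_\theta(s_\theta\mid\mathcal P)(X_i)\mid Z]$ of the true output score exactly the score, at $\vartheta=\theta$, of a $\rho_n$-zCDP cell channel with the same output marginal. The experiment frozen at $\theta_0$ has a different output law at $\theta\ne\theta_0$, so its information there is not that of the cell part. Since the finite-alphabet bound is uniform over channels, letting the frozen channel depend on $\theta$ costs nothing.
\item The residual is not controlled by \cref{cor:fisher-trace-contraction}, which bounds the model score's output information. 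It is controlled by the representation $B_{M,\theta}=\sum_iA_{\Lambda_i,\theta}$ from the proof of \cref{thm:diameter-contraction}, together with the barycentric diameter bound on $x\mapsto L_{i,x}$. This gives $n^{-1/2}\norm{B_{M,\theta}-B_{\mathcal P,\theta}}_{\mathrm{op}}\le\sqrt{2n\rho_n}\,\delta_{\rho_n}\E_\theta\norm{s_\theta-\E_\theta(s_\theta\mid\mathcal P)}_2$, where $B_{M,\theta}$ and $B_{\mathcal P,\theta}$ are the original and frozen output score operators.
\item For the privacy Gram matrix, apply \cref{lem:barycentric} to the star family $\{Q_{l+e_a}\}_{a\le k}$ with point mass at $a=k$, not with the output marginal as barycenter. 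The vectors then live in $L^2(Q_{l+e_k})$, where $U_a$ lives. Then absorb the weights $r_ar_b$ by reindexing over $\operatorname{Multinomial}(n-1,q)$.
\end{itemize}
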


\noindent\emph{Proof idea and roadmap.}
The proof has three stages.
\begin{itemize}[leftmargin=*]
  \item \emph{Finite alphabet.}
  Establish the sharp lower bound for the finite alphabet model by reducing an arbitrary mechanism to a channel from multinomial counts and proving the corresponding information contraction.
  \item \emph{Transfer to the general model.}
  Approximate the original score by conditional scores on a fixed finite partition and pull the finite alphabet information bound back to the original sample space.
  \item \emph{Van Trees closure.}
  Average the transferred information bound under a shrinking prior and apply the matrix van Trees inequality to obtain the local minimax constant.
\end{itemize}

\smallskip
\noindent\emph{Finite alphabet.}
The first stage, developed in \cref{appendix-subsec:trans-fin-alphabet}, retains the sampling and privacy contributions within a single information bound.
By \cref{appendix-lem:trans-symmetrization}, symmetrization reduces an arbitrary mechanism to a channel from the multinomial count vector without changing its iid output law or its zCDP guarantee.
For this count channel, \cref{appendix-lem:trans-count-residual-info,appendix-eq:trans-output-info-residual} express the output information as the full multinomial information minus a term determined by the residual count covariance.
To bound this covariance from below, we introduce the vector \(D\) of joint scores associated with count shifts.
The residual identity in \cref{appendix-lem:trans-count-shift-identity,appendix-eq:trans-cross-identity} supplies the asymptotically standardized cross moment between the count residual and \(D\), and a matrix projection inequality reduces the problem to controlling \(\E_q(DD^\top)\).
The exact decomposition in \cref{appendix-eq:trans-joint-score-gram-split} separates the contribution of the multinomial count law from that of the privacy channel.
The two terms are evaluated in \cref{appendix-eq:trans-prior-gram,appendix-eq:trans-privacy-gram}, producing respectively \(V_q^{-1}/n\) and \(2\rho_nJ_{n,q}\) to first order.
Combining these ingredients gives the contraction on a finite alphabet in \cref{appendix-thm:trans-categorical-contraction}; \cref{appendix-cor:trans-fin-alphabet-gauss} then rewrites its right-hand side as the Gaussian information generated by a bounded categorical statistic.

\smallskip
\noindent\emph{Transfer to the general model.}
The second stage is carried out in \cref{appendix-subsec:trans-partition-transfer}.
For each local parameter value, we freeze the conditional distribution within every cell of a fixed finite partition and vary only the cell probabilities.
At the frozen parameter, the resulting experiment on a finite alphabet has exactly the same output marginal as the original mechanism, while its score is the cellwise conditional expectation of the original score.
Using the \(L^1\) smoothness assumption together with the barycentric bound for likelihood ratios, \cref{appendix-lem:trans-partition-score-approx} chooses a partition, independent of the mechanism, for which the two normalized output information matrices differ by at most the error in \cref{appendix-eq:trans-partition-info-approx}.
The Gaussian representation on a finite alphabet can then be pulled back to the original sample space by making the statistic constant on each cell, placing the resulting information matrix in \(\beta\caU_{\theta_0,\beta}\).

\smallskip
\noindent\emph{Van Trees closure.}
Finally, \cref{appendix-subsec:trans-local-lower-proof} chooses a shrinking prior whose information is negligible by \cref{appendix-eq:trans-prior-info-vanishes}.
Averaging the transferred information bound gives \cref{appendix-eq:trans-averaged-info-bound}.
The matrix van Trees inequality in \cref{appendix-lem:matrix-van-trees}, followed by compactness of the unified information region and passage to the limits \(n\to\infty\) and \(\ep\downarrow0\), yields the constant in \cref{appendix-eq:trans-local-lower}.

\subsection{Sharp lower bound on a finite alphabet}
\label{appendix-subsec:trans-fin-alphabet}

We next prove an information bound, uniform over mechanisms, that retains both sources of fluctuation.
Consider the \(k\)-category model, use category \(k\) as baseline, and put \(d=k-1\).
For \(x\in\{1,\ldots,k\}\), define the indicator vector relative to this baseline,
\[
  Y(x)
  =
  \xk{\mathbf{1}\{x=1\},\ldots,\mathbf{1}\{x=d\}}^\top.
\]
Parameterize the categorical law by \(q_{1:d}=(q_1,\ldots,q_d)^\top\), with \(q_k=1-\sum_{a=1}^d q_a\).
If \(X\sim q\), then
\begin{equation}
  \label{appendix-eq:trans-categorical-indicator-moments}
  \E_q \xk{Y(X)}=q_{1:d},
  \qquad
  V_q
  :=
  \Var_q \xk{Y(X)}
  =
  \operatorname{diag}(q_1,\ldots,q_d)
  -q_{1:d} q_{1:d}^\top.
\end{equation}
For iid observations \(X_1,\ldots,X_n\), the first \(d\) category counts satisfy
\[
  M_{1:d}=\sum_{i=1}^n Y(X_i),
  \qquad
  \Var_q(M_{1:d})=nV_q.
\]
When \(q\) lies in the simplex interior, the single-observation score in these coordinates is
\[
  S_q(X)
  =
  V_q^{-1} \xk{Y(X)-q_{1:d}},
\]
and hence its Fisher information is
\[
  \Var_q \xk{S_q(X)}
  =
  V_q^{-1} V_q V_q^{-1}
  =
  V_q^{-1}.
\]
Because the multinomial count vector is sufficient for \(q\), the complete count experiment has information \(nV_q^{-1}\), or \(V_q^{-1}\) after normalization by \(n\).
We also have
\begin{equation}
  \label{appendix-eq:trans-categorical-Vq-inverse}
  V_q^{-1}
  =
  \operatorname{diag}\xk{q_1^{-1},\ldots,q_d^{-1}}
  +q_k^{-1}\mathbf{1}\mathbf{1}^\top,
\end{equation}

\begin{lemma}[Symmetrization on a finite alphabet]
  \label{appendix-lem:trans-symmetrization}
  Let \(M_n:\caX^n \to\caZ_n\) be a \(\rho_n\)-zCDP mechanism with \(\caX=\{1,\ldots,k\}\).
  Define its symmetrization by
  \[
    \overline{M}_n(dz\mid x_{1:n})
    =
    \frac{1}{n!}
    \sum_{\sigma\in\mathfrak{S}_n}
    M_n(dz\mid x_{\sigma(1)},\ldots,x_{\sigma(n)}).
  \]
  Then \(\overline{M}_n\) is also \(\rho_n\)-zCDP.
  Under every iid distribution \(q^{\otimes n}\), it has the same output law as \(M_n\).
  Moreover, \(\overline{M}_n\) factors through the count vector \(m(x)=(m_1(x),\ldots,m_k(x))\): there is a channel \(m\mapsto Q_m\) such that \(\overline{M}_n(\,\cdot\mid x)=Q_{m(x)}\).

  \begin{proof}
    If \(x,x'\) are neighboring data sets, then \((x_{\sigma(1)},\ldots,x_{\sigma(n)})\) and \((x'_{\sigma(1)},\ldots,x'_{\sigma(n)})\) are neighboring for every \(\sigma\in\mathfrak{S}_n\).
    Hence, for every \(\alpha>1\), the corresponding output laws satisfy the zCDP bounds in both directions.
    Joint convexity of \(\exp\xk{(\alpha-1)D_\alpha(\,\cdot\,\|\,\cdot\,)}\) then gives the same bound for the two equally weighted mixtures defining \(\overline{M}_n\), so \(\overline{M}_n\) is \(\rho_n\)-zCDP.

    If \(X\sim q^{\otimes n}\), then \(X_\sigma\) has the same law as \(X\) for every \(\sigma\), and therefore \(M_n(X_\sigma)\) and \(M_n(X)\) have the same mixture law.
    Finally, two sequences with the same count vector are permutations of one another, so the averaged kernel \(\overline{M}_n(\,\cdot\mid x)\) depends on \(x\) only through \(m(x)\).
  \end{proof}
\end{lemma}

For the remaining arguments on a finite alphabet, we use the following common notation for the count channel.
Fix \(q\) in the simplex interior.
Given a channel \(m\mapsto Q_m\), let \(\pi_q\) denote the \(\operatorname{Multinomial}(n,q)\) count law, and let
\[
  M\sim\pi_q,
  \qquad
  Z\mid M=m\sim Q_m.
\]
Put
\begin{equation}
  \label{appendix-eq:trans-count-residual-notation}
  T(Z)=\E_q(M_{1:d} \mid Z),
  \qquad
  R=T-M_{1:d}.
\end{equation}

\begin{lemma}[Output information from count residuals]
  \label{appendix-lem:trans-count-residual-info}
  Let \(M_n\) be a mechanism on the \(k\)-category model whose kernel factors through the count vector as \(m\mapsto Q_m\), and adopt the notation for the count channel in \cref{appendix-eq:trans-count-residual-notation} at a point \(q\) in the simplex interior.
  Then
  \begin{equation}
    \label{appendix-eq:trans-output-info-residual}
    \frac{1}{n}I_{M_n}(q)
    =
    V_q^{-1}
    -V_q^{-1}
    \frac{\E_q(RR^\top)}{n}
    V_q^{-1}.
  \end{equation}

  \begin{proof}
    The score for the full multinomial experiment is
    \[
      S_M
      =
      V_q^{-1}(M_{1:d}-nq_{1:d}),
      \qquad
      \Var_q(S_M)=nV_q^{-1}.
    \]
    By \cref{appendix-lem:markov-dqm,appendix-eq:output-cond-score}, the output score is
    \(\E_q(S_M \mid Z)\).
    Total covariance therefore gives
    \[
      I_{M_n}(q)
      =
      \Var_q \zk{\E_q(S_M \mid Z)}
      =
      \Var_q(S_M) -\E_q \zk{\Var_q(S_M \mid Z)}.
    \]
    Since \(R=T-M_{1:d}\) has conditional mean zero given \(Z\),
    \[
      \E_q(RR^\top)
      =
      \E_q[\Var_q(M_{1:d} \mid Z)].
    \]
    Moreover,
    \[
      S_M-\E_q(S_M \mid Z)
      =
      V_q^{-1}(M_{1:d}-T(Z))
      =
      -V_q^{-1} R,
    \]
    so
    \[
      \E_q \zk{\Var_q(S_M \mid Z)}
      =
      V_q^{-1} \E_q(RR^\top)V_q^{-1}.
    \]
    Hence
    \[
      I_{M_n}(q)
      =
      nV_q^{-1}
      -V_q^{-1} \E_q(RR^\top)V_q^{-1}.
    \]
    Dividing by \(n\) proves the claim.
  \end{proof}
\end{lemma}

For the arguments involving count shifts below, write \(\delta_a=e_a-e_k\), \(a=1,\ldots,d\).
Whenever the neighboring output laws are mutually absolutely continuous, define, for \(m_k \ge1\),
\begin{equation}
  \label{appendix-eq:trans-count-shift-likelihood-ratios}
  r_a(m)
  =
  \frac{\pi_q(m+\delta_a)}{\pi_q(m)}
  =
  \frac{q_a m_k}{q_k(m_a+1)},
  \qquad
  L_a(m,z)
  =
  \frac{dQ_{m+\delta_a}}{dQ_m}(z).
\end{equation}
Set \(D_a=r_a L_a-1\).
When \(m_k=0\), set \(r_a=L_a=1\), and hence \(D_a=0\), as an algebraic boundary convention.
Write \(D=(D_1,\ldots,D_d)^\top\).

\begin{lemma}[Residual identity for count shifts]
  \label{appendix-lem:trans-count-shift-identity}
  Under the notation for the count channel in \cref{appendix-eq:trans-count-residual-notation,appendix-eq:trans-count-shift-likelihood-ratios}, suppose the likelihood ratios \(L_a\) for the shifts exist.
  Then, for \(i,a=1,\ldots,d\),
  \begin{equation}
    \label{appendix-eq:trans-count-shift-exact}
    \E_q(R_i D_a)
    =
    \ind{i=a}\xk{1-\Pr_q \dk{M_a=0}}
    +\E_q[R_i \ind{M_k=0}]
    -\E_q[R_i \ind{M_a=0}].
  \end{equation}
  Consequently, uniformly for \(q\) in a fixed compact subset of the simplex interior,
  \begin{equation}
    \label{appendix-eq:trans-cross-identity}
    C_{n,q} \coloneqq\E_q(RD^\top)=I_d+o(1).
  \end{equation}

  \begin{proof}
    Since \(\E_q(R\mid Z)=0\), we have \(\E_q R=0\).
    By the boundary convention and a change of variables \(m'=m+\delta_a\) on the count lattice,
    \[
      \begin{aligned}
        \E_q(R_i D_a)
        ={}&
        \sum_{m:m_k \ge1}
        \pi_q(m)
        \int \xk{T_i(z)-m_i}\xk{r_a(m)L_a(m,z)-1}Q_m(dz)
        \\
        ={}&
        \sum_{m':m'_a\ge1}
        \pi_q(m')
        \int \xk{T_i(z)-m'_i+\ind{i=a}}Q_{m'}(dz)
        -\E_q[R_i \ind{M_k \ge1}]
        \\
        ={}&
        \E_q[R_i \ind{M_a \ge1}]
        +\ind{i=a}\Pr_q(M_a \ge1)
        +\E_q[R_i \ind{M_k=0}].
      \end{aligned}
    \]
    Using \(\E_q R_i=0\) in the last identity proves \cref{appendix-eq:trans-count-shift-exact}.
    Let \(K\) be a fixed compact subset of the simplex interior and put
    \[
      \eta
      =
      \inf_{q\in K} \min_{1\le j\le k} q_j
      >0.
    \]
    Since \(M_j \sim\operatorname{Binomial}(n,q_j)\),
    \[
      \sup_{q\in K} \Pr_q(M_j=0)
      =
      \sup_{q\in K}(1-q_j)^n
      \le
      e^{-\eta n}.
    \]
    Together with \(\abs{R_i}\le n\), the exact identity therefore gives
    \[
      \sup_{q\in K}
      \abs{\E_q(R_i D_a)-\ind{i=a}}
      \le
      (2n+1)e^{-\eta n}
      =
      o(1).
    \]
    Hence every entry of \(\E_q(RD^\top)-I_d\) is \(o(1)\) uniformly, proving \cref{appendix-eq:trans-cross-identity}.
  \end{proof}
\end{lemma}

For the contraction bound, let \(\mathcal{G}_k\) be the configuration Gram region consisting of matrices
\begin{equation}
  \label{appendix-eq:trans-config-region}
  J_{ab}
  =
  \ang{\phi_a-\phi_k,\phi_b-\phi_k}_{\caH},
  \qquad
  \max_{a,b} \norm{\phi_a-\phi_b}_{\caH} \le1,
\end{equation}
over all real Hilbert spaces and configurations \((\phi_1,\ldots,\phi_k)\).
After translating \(\phi_k\) to zero, the same region has the finite-dimensional characterization
\[
  \mathcal{G}_k
  =
  \dk{
    J\succeq0:
    J_{aa} \le1,\quad
    J_{aa}+J_{bb}-2J_{ab} \le1
    \text{ for all }a,b
  }.
\]
Thus \(\mathcal{G}_k\) is convex and compact, and every one of its elements has a realization in \(\R^d\).

\begin{theorem}[Transition contraction on a finite alphabet]
  \label{appendix-thm:trans-categorical-contraction}
  Let \(M_n\) be any \(\rho_n\)-zCDP mechanism applied to \(n\) observations from the \(k\)-category model.
  Uniformly for \(q\) in a fixed compact subset of the simplex interior, there exists \(J_{n,q} \in\mathcal{G}_k\) such that
  \begin{equation}
    \label{appendix-eq:trans-categorical-info-bound}
    \frac{1}{n} I_{M_n}(q)
    \preceq
    V_q^{-1}
    -V_q^{-1}
    (V_q^{-1}+2\lambda_n J_{n,q})^{-1}
    V_q^{-1}
    +o(1)I_d.
  \end{equation}
  The remainder is uniform over all such mechanisms.
\end{theorem}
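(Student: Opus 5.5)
The plan is to run the roadmap's first stage directly. \Cref{appendix-lem:trans-symmetrization} lets us replace \(M_n\) by its symmetrization without changing \(I_{M_n}(q)\) or the privacy guarantee, so we work with a count channel \(m\mapsto Q_m\). \Cref{appendix-lem:trans-count-residual-info} then reduces the theorem to a lower bound on \(\E_q(RR^\top)/n\).

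\textbf{Step 1: reduce to an upper bound on \(\E_q(DD^\top)\).} Matrix Cauchy--Schwarz (projection of \(R\) onto the span of \(D\)) gives
\[
  \E_q(RR^\top)\succeq C_{n,q}\,\E_q(DD^\top)^{-1}C_{n,q}^\top,
  \qquad C_{n,q}=I_d+o(1)
\]
by \cref{appendix-eq:trans-cross-identity}. So it suffices to show, uniformly in \(q\) and in the mechanism,
\[
  n\,\E_q(DD^\top)\preceq V_q^{-1}+2\lambda_n J_{n,q}+o(1)I_d
  \quad\text{for some } J_{n,q}\in\mathcal G_k.
\]
Given this, monotonicity of inversion gives \(\E_q(RR^\top)/n\succeq (V_q^{-1}+2\lambda_nJ_{n,q})^{-1}-o(1)\). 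This uses that \(V_q^{-1}+2\lambda_n J\) is bounded below by \(V_q^{-1}\) and above uniformly, since \(\mathcal G_k\) is compact. Substituting into \cref{appendix-eq:trans-output-info-residual} yields \cref{appendix-eq:trans-categorical-info-bound}.

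\textbf{Step 2: split the Gram matrix.} Write
\[
  D_a=(r_a-1)+r_a(L_a-1).
\]
Since \(\int (L_a-1)\,dQ_m=0\) for each fixed \(m\) with \(m_k\ge1\), the cross terms vanish after conditioning on \(M\). Hence
\[
  \E_q(DD^\top)=\E_q\big[(r-1)(r-1)^\top\big]+\E_q\big[r_ar_b\,\langle L_a-1,L_b-1\rangle_{L^2(Q_M)}\big]_{a,b},
\]
which is the decomposition \cref{appendix-eq:trans-joint-score-gram-split}.

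\textbf{Step 3: the sampling term.} This is a pure multinomial computation. Expand
\[
  r_a(M)=\frac{q_aM_k}{q_k(M_a+1)}
\]
around \(M\approx nq\). Its linearization is
\[
  r_a-1\approx \frac{M_k-nq_k}{nq_k}-\frac{M_a-nq_a}{nq_a},
\]
whose covariance is \((q_a^{-1}\ind{a=b}+q_k^{-1})/n=(V_q^{-1})_{ab}/n\) by \cref{appendix-eq:trans-categorical-Vq-inverse}. Binomial moment bounds and the exponentially small boundary events \(\{M_j=0\}\) make the remainder \(o(1/n)\) uniformly on compacts.

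\textbf{Step 4: the privacy term.} Fix \(m\) and set \(\phi_a=L_a(m,\cdot)\in L^2(Q_m)\) for \(a\le d\), and \(\phi_k\equiv1\). The data sets with counts \(m\), \(m+\delta_a\), \(m+\delta_b\) are pairwise adjacent, since each pair differs by changing one record. Therefore:
\begin{itemize}
  \item \(\|\phi_a-\phi_k\|^2=\chi^2(Q_{m+\delta_a}\|Q_m)\le e^{2\rho_n}-1\);
  \item \(\|\phi_a-\phi_b\|^2_{L^2(Q_m)}\le 2\rho_n(1+o(1))\), obtained by the change-of-reference argument from the proof of \cref{lem:barycentric} together with \cref{appendix-lem:privacy-loss-moments}.
\end{itemize}
After scaling by \((2\rho_n)^{1/2}(1+o(1))\), the Gram matrix \(\langle\phi_a-\phi_k,\phi_b-\phi_k\rangle\) therefore lies in \(2\rho_n(1+o(1))\mathcal G_k\). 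The weights \(r_ar_b\) concentrate at \(1\), so I would split \(r_ar_b=1+(r_ar_b-1)\) and control the second part as follows. Cauchy--Schwarz, with \(\E_q(r_ar_b-1)^2=O(1/n)\) and fourth moments \(\int(L_a-1)^4dQ_m=O(\rho_n^2)\) from \cref{appendix-lem:privacy-loss-moments}, bounds it by \(O(\rho_n n^{-1/2})=o(1/n)\), using \(\lambda_n=n\rho_n\) bounded. Averaging the main part over \(M\sim\pi_q\) keeps us in the convex set \(\mathcal G_k\), which produces \(J_{n,q}\).

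\textbf{Main obstacle.} The delicate point is Step 4, for two reasons. First, the \(o(1)\) factors must be uniform over all mechanisms, so only the universal moment bounds of \cref{appendix-lem:privacy-loss-moments} may be used. Second, the weights \(r_a\) are unbounded when \(m_a\) is small; the contribution of such \(m\) is exponentially small in probability, and I would dispose of it with a crude \(L^4\) bound on \(r_a\) and on the chi-square terms.
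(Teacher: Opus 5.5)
Your proposal is correct and follows the paper's proof essentially step for step. The shared steps are symmetrization, the residual-covariance identity, projection of \(R\) onto \(D\) using \(C_{n,q}=I_d+o(1)\), and the exact split of \(\E_q(DD^\top)\) into a multinomial term and a privacy term. The multinomial term is linearized to \(V_q^{-1}/n\), and the privacy term is controlled by applying the barycentric bound to the pairwise-adjacent star family \(\{Q_m,Q_{m+\delta_1},\ldots,Q_{m+\delta_d}\}\). The only difference is cosmetic and lies in the weights. The paper reindexes \(m=l+e_k\), averages \(J_l\) under \(\operatorname{Multinomial}(n-1,q)\), and bounds \(c_ac_b/c_k-1\) via \(\abs{[J_l]_{ab}}\le1\) and a central/tail split. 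You instead split \(r_ar_b=1+(r_ar_b-1)\) under \(\pi_q\) and use Cauchy--Schwarz; your average is over the sub-probability event \(\{M_k\ge1\}\), which still lands in \(\mathcal G_k\) because \(0\in\mathcal G_k\).
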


\begin{proof}
  By \cref{appendix-lem:trans-symmetrization}, we may replace \(M_n\) by its symmetrization.
  This leaves its output law, and hence \(I_{M_n}(q)\), unchanged, preserves \(\rho_n\)-zCDP, and gives a channel \(m\mapsto Q_m\) from the multinomial count vector \(M=(M_1,\ldots,M_k)\).
  For every admissible count shift, zCDP bounds the R\'enyi divergences between \(Q_m\) and \(Q_{m+\delta_a}\) in both directions.
  These laws are therefore mutually absolutely continuous, so the shift likelihood ratios above exist.
  We adopt the notation for the count channel in \cref{appendix-eq:trans-count-residual-notation,appendix-eq:trans-count-shift-likelihood-ratios}.

  We first reduce the desired information bound to a residual covariance bound.
  By \cref{appendix-lem:trans-count-residual-info}, it is enough to show that, for some \(J_{n,q} \in\mathcal{G}_k\),
  \begin{equation}
    \label{appendix-eq:trans-residual-covariance-target}
    \frac{1}{n}\E_q(RR^\top)
    \succeq
    (V_q^{-1}+2\lambda_n J_{n,q})^{-1}
    -o(1)I_d.
  \end{equation}

  The residual identity for count shifts supplies the cross moment needed to project the residual onto \(D\).
  By \cref{appendix-lem:trans-count-shift-identity}, \(C_{n,q}=I_d+o(1)\) uniformly on interior compact subsets.
  In particular, \(C_{n,q}\) is nonsingular for all sufficiently large \(n\).
  Put \(G_{n,q}=\E_q(DD^\top)\).
  If \(G_{n,q}\) were singular, there would be a nonzero \(v\) such that \(v^\top D=0\) almost surely, which would imply \(C_{n,q} v=\E_q \xk{R(D^\top v)}=0\), a contradiction.
  Thus \(G_{n,q}\) is also nonsingular for all sufficiently large \(n\).

  Taking \(B=C_{n,q} G_{n,q}^{-1}\), we obtain
  \[
    0
    \preceq
    \E_q \zk{(R-BD)(R-BD)^\top}
    =
    \E_q(RR^\top)
    -
    C_{n,q} G_{n,q}^{-1} C_{n,q}^\top.
  \]
  Therefore,
  \[
    \E_q(RR^\top)
    \succeq
    C_{n,q}
    G_{n,q}^{-1}
    C_{n,q}^\top
    =
    C_{n,q} \zk{\E_q(DD^\top)}^{-1} C_{n,q}^\top.
  \]
  It therefore remains to show that, for some \(J_{n,q} \in\mathcal{G}_k\),
  \begin{equation}
    \label{appendix-eq:trans-total-joint-score-gram}
    \E_q(DD^\top)
    =
    \frac{1}{n}(
    V_q^{-1}+2\lambda_n J_{n,q}
    )
    +o(n^{-1}).
  \end{equation}
  Indeed, \(q\) ranges over a fixed interior compact subset, \(\lambda_n\) is bounded, and \(\mathcal{G}_k\) is compact, so the matrices \(V_q^{-1}+2\lambda_n J_{n,q}\) are uniformly positive definite.
  Hence \cref{appendix-eq:trans-total-joint-score-gram} implies
  \[
    \frac{1}{n}G_{n,q}^{-1}
    =
    (V_q^{-1}+2\lambda_n J_{n,q})^{-1}
    +o(1)I_d,
  \]
  and the preceding projection inequality together with \(C_{n,q}=I_d+o(1)\) yields \cref{appendix-eq:trans-residual-covariance-target}.

  We now establish \cref{appendix-eq:trans-total-joint-score-gram} by separating the contributions of the count law and the privacy channel to the joint shift score.
  For \(a=1,\ldots,d\), set
  \begin{equation}
    \label{appendix-eq:trans-count-shift-score-notation}
    U_a=L_a-1,
    \qquad
    D_a
    =
    r_a(1+U_a)-1
    =
    (r_a-1)+r_a U_a.
  \end{equation}
  Here \(r_a-1\) is the shift score of the multinomial count law, whereas \(U_a\) is the shift score of the output channel.
  Since \(L_a=dQ_{m+\delta_a}/dQ_m\), we have \(\E_{Q_m} U_a=0\).
  Conditioning on \(M\) therefore makes both mixed terms vanish, giving the exact decomposition
  \begin{equation}
    \label{appendix-eq:trans-joint-score-gram-split}
    \E_q(DD^\top)
    =
    J^{\mathrm{prior}}_{n,q}
    +H^{\mathrm{priv}}_{n,q}.
  \end{equation}
  Writing \(r=(r_1,\ldots,r_d)^\top\), the two terms are
  \[
    J^{\mathrm{prior}}_{n,q}
    =
    \E_q[
    (r(M)-\mathbf{1})(r(M)-\mathbf{1})^\top
    \ind{M_k \ge1}
    ],
  \]
  and
  \[
    [H^{\mathrm{priv}}_{n,q}]_{ab}
    =
    \E_q[
    r_a(M)r_b(M)
    \E_{Q_M}[U_a(M,Z)U_b(M,Z)]
    \ind{M_k \ge1}
    ].
  \]

  \smallskip
  \noindent\emph{Prior term.}
  Let \(K\) be the fixed compact subset of the simplex interior and put
  \[
    \eta
    =
    \inf_{q\in K}\min_{1\le a\le k}q_a
    >0.
  \]
  Write
  \[
    h=m_{1:d}-nq_{1:d},
    \qquad
    h_k=m_k-nq_k=-\sum_{b=1}^d h_b,
  \]
  and define
  \[
    \Delta(m)=\norm{m-nq}_2,
    \qquad
    A_{n,q}
    =
    \dk{m:\Delta(m)\le n^{2/3}}.
  \]
  For all sufficiently large \(n\), every \(m\in A_{n,q}\) satisfies
  \[
    m_k
    \ge
    nq_k-n^{2/3}
    \ge
    n\eta-n^{2/3}
    \ge1.
  \]
  Therefore,
  \begin{equation}
    \label{appendix-eq:trans-prior-central-tail-split}
    \begin{aligned}
      J^{\mathrm{prior}}_{n,q}
      ={}&
      \E_q\zk{
        (r(M)-\mathbf{1})(r(M)-\mathbf{1})^\top
        \ind{M\in A_{n,q}}
      }
      \\
      &+
      \E_q\zk{
        (r(M)-\mathbf{1})(r(M)-\mathbf{1})^\top
        \ind{M\notin A_{n,q}}
        \ind{M_k\ge1}
      }\\
      \eqqcolon{} & I_1 + I_2
    \end{aligned}
  \end{equation}

  Introduce the linear term,
  \[
    \ell_{n,q}=(\ell_{n,q,1},\ldots,\ell_{n,q,d})^\top,\quad
    \ell_{n,q,a}(m)
    =
    \frac{h_k}{nq_k}-\frac{h_a}{nq_a},\quad a=1,\ldots,d.
  \]
  Direct algebra gives the exact decomposition
  \[
    r_a(m)-1
    =
    \ell_{n,q,a}(m)
    +b_{n,q,a}(m),\quad
    b_{n,q,a}(m) \coloneqq
    -\frac{
      1+(h_a+1)\ell_{n,q,a}(m)
    }{m_a+1}.
  \]

  Moreover, on \(A_{n,q}\),
  \[
    \max\xk{\abs{h_a},\abs{h_k}}
    \le
    \Delta(m),
    \qquad
    \abs{\ell_{n,q,a}(m)}
    \le
    C\frac{\Delta(m)}{n},
  \]
  and
  \[
    m_a+1
    \ge
    nq_a-n^{2/3}+1
    \ge
    cn.
  \]
  Consequently,
  \[
    \abs{b_{n,q,a}(m)}
    \le
    \frac{C}{n}
    \xk{
      1+\xk{\Delta(m)+1}\frac{\Delta(m)}{n}
    }
    \le
    Cn^{-2/3}.
  \]
  The last inequality follows by substituting \(\Delta(m)\le n^{2/3}\).
  Since \(d\) is fixed, the same bound holds for \(\norm{b_{n,q}(m)}_2\), uniformly over \(q\in K\).
  Therefore, we conclude that
  \[
    \sup_{q\in K}
    \sup_{m\in A_{n,q}}
    \norm{
      r(m)-\mathbf{1}-\ell_{n,q}(m)
    }_2
    \le
    Cn^{-2/3}.
  \]
  Consequently, Cauchy--Schwarz shows that, uniformly over \(q\in K\),
  \[
    I_1 = \E_q \left[
           \ell_{n,q}(M)\ell_{n,q}(M)^\top
           \ind{M\in A_{n,q}}
    \right]
    +o(n^{-1}).
  \]
  On the other hand, direct computation shows that
  \[
    \E_q\zk{
      \ell_{n,q}(M)\ell_{n,q}(M)^\top
    }
    =
    \frac{1}{n}V_q^{-1},
  \]
  so we have
  \begin{equation}
    \label{appendix-eq:trans-prior-central-tail-split-1}
    I_1 =  \frac{1}{n}V_q^{-1} - I_{12} + o(n^{-1}),\quad I_{12} = \E_q \zk{\ell_{n,q}(M)\ell_{n,q}(M)^\top \ind{M\notin A_{n,q}}}.
  \end{equation}

  It remains to control the complement of the central event.
  Hoeffding's inequality and a union bound give constants \(c,C>0\), depending only on \(K\) and \(k\), such that
  \[
    \sup_{q\in K}
    \Pr_q\xk{M\notin A_{n,q}}
    \le
    C\exp\xk{-c n^{1/3}}.
  \]
  The definition of \(r_a\) gives, whenever \(M_k\ge1\),
  \[
    0
    \le
    r_a(M)
    \le
    Cn.
  \]
  Hence,
  \[
    \begin{aligned}
      \norm{I_2}_{\mathrm{op}}
      &\le
      \E_q\zk{
        \norm{r(M)-\mathbf{1}}_2^2
        \ind{M\notin A_{n,q}}
        \ind{M_k\ge1}
      }
      \\
      &\le
      Cn^2\exp\xk{-c n^{1/3}}
      =
      o(n^{-1}).
    \end{aligned}
  \]
  Similarly, \(\ell_{n,q}(M)\) is uniformly bounded on the whole count lattice, and hence
  \[
    \begin{aligned}
      \norm{I_{12}}_{\mathrm{op}}
      &\le
      \E_q\zk{
        \norm{\ell_{n,q}(M)}_2^2
        \ind{M\notin A_{n,q}}
      }
      \\
      &\le
      C\exp\xk{-c n^{1/3}}
      =
      o(n^{-1}).
    \end{aligned}
  \]
  Plugging these back into \cref{appendix-eq:trans-prior-central-tail-split,appendix-eq:trans-prior-central-tail-split-1} gives
  \begin{equation}
    \label{appendix-eq:trans-prior-gram}
    J^{\mathrm{prior}}_{n,q}
    =
    \frac{1}{n}V_q^{-1}+o(n^{-1}).
  \end{equation}

  \smallskip
  \noindent\emph{Privacy term.}
  Write \(m=l+e_k\), where \(l\) has total count \(n-1\), and consider the star family \(\dk{Q_{l+e_a}:a=1,\ldots,k}\).
  Any two members correspond to adjacent databases.
  Put
  \[
    \begin{aligned}
      U(l+e_k,z)
      &={}
      \xk{U_1(l+e_k,z),\ldots,U_d(l+e_k,z)}^\top,
      \\
      G_l
      &={}
      \E_{Q_{l+e_k}}\zk{
        U(l+e_k,Z)U(l+e_k,Z)^\top
      }.
    \end{aligned}
  \]
  For the application of the barycentric lemma, extend the notation by setting \(U_k(l+e_k,\cdot)=0\).
  Apply \cref{lem:barycentric} to \(R_a=Q_{l+e_a}\), \(a=1,\ldots,k\), with mixing measure \(\lambda_{\mathrm{mix}}=\delta_k\).
  Then \(\bar{R}=Q_{l+e_k}\); by \cref{appendix-eq:trans-count-shift-score-notation}, the corresponding centered likelihood ratios are \(U_a(l+e_k,\cdot)\) for \(a=1,\ldots,d\), while the \(k\)th one is zero by convention.
  By the lemma, there is a deterministic \(\delta_{\rho_n} \to1\), independent of the star family and the mechanism, such that the normalized vectors
  \[
    \phi_{l,a}
    =
    \frac{1}{\sqrt{2\rho_n}\,\delta_{\rho_n}}
    U_a(l+e_k,\cdot),
    \qquad a=1,\ldots,k,
  \]
  satisfy
  \[
    \phi_{l,k}=0,
    \qquad
    \max_{1\le a,b\le k}
    \norm{\phi_{l,a}-\phi_{l,b}}_{L^2(Q_{l+e_k})}
    \le1.
  \]
  Let \(J_l\) be the Gram matrix of \(\phi_{l,1},\ldots,\phi_{l,d}\) in \(L^2(Q_{l+e_k})\).
  By \cref{appendix-eq:trans-config-region}, the preceding bound on pairwise distances gives \(J_l\in\mathcal{G}_k\).
  By the definition of \(G_l\),
  \[
    G_l
    =
    2\rho_n\delta_{\rho_n}^2J_l.
  \]

  Let \(\pi_{n-1,q}\) denote the \(\operatorname{Multinomial}(n-1,q)\) law and put
  \[
    c_a(l)=\frac{nq_a}{l_a+1},
    \qquad a=1,\ldots,k,
    \qquad
    w_{ab}(l)=\frac{c_a(l)c_b(l)}{c_k(l)},
    \qquad
    a,b=1,\ldots,d.
  \]
  Direct calculation gives
  \[
    \pi_q(l+e_k)
    =
    c_k(l)\pi_{n-1,q}(l),
    \qquad
    r_a(l+e_k)
    =
    \frac{c_a(l)}{c_k(l)}.
  \]
  Reindexing the definition of \(H^{\mathrm{priv}}_{n,q}\) by \(m=l+e_k\) and using the preceding identities gives
  \[
    \begin{aligned}
      [H^{\mathrm{priv}}_{n,q}]_{ab}
      &={}
      \sum_l
      \pi_q(l+e_k)
      r_a(l+e_k)r_b(l+e_k)
      [G_l]_{ab}
      \\
      &={}
      2\rho_n\delta_{\rho_n}^2
      \sum_l
      \pi_{n-1,q}(l)
      w_{ab}(l)[J_l]_{ab}.
    \end{aligned}
  \]
  Define
  \[
    \overline{J}_{n,q}
    =
    \sum_l \pi_{n-1,q}(l)J_l
    \in\mathcal{G}_k.
  \]
  Splitting \(w_{ab}(l)=1+\xk{w_{ab}(l)-1}\) gives the exact decomposition
  \begin{equation}
    \label{appendix-eq:trans-privacy-weighted-gram}
    H^{\mathrm{priv}}_{n,q}
    =
    2\rho_n\delta_{\rho_n}^2\xk{\overline{J}_{n,q}+H_{12}},
    \qquad
    [H_{12}]_{ab}
    =
    \sum_l
    \pi_{n-1,q}(l)
    \xk{w_{ab}(l)-1}[J_l]_{ab}.
  \end{equation}

  To control the remainder, define
  \[
    B_{n,q}
    =
    \dk{
      l:
      \max_{1\le a\le k}
      \abs{l_a-(n-1)q_a}
      \le n^{2/3}
    }.
  \]
  On \(B_{n,q}\), direct calculation gives
  \[
    c_a(l)-1
    =
    -\frac{l_a-(n-1)q_a+1-q_a}{l_a+1},
    \qquad
    \sup_{q\in K}
    \sup_{l\in B_{n,q}}
    \max_{1\le a,b\le d}
    \abs{w_{ab}(l)-1}
    \le
    Cn^{-1/3}.
  \]
  Hoeffding's inequality and a union bound give
  \[
    \sup_{q\in K}
    \sum_{l\notin B_{n,q}}
    \pi_{n-1,q}(l)
    \le
    C\exp\xk{-cn^{1/3}}.
  \]
  On the whole count lattice,
  \[
    0<c_a(l)\le Cn,
    \qquad
    c_k(l)^{-1}\le C,
    \qquad
    0\le w_{ab}(l)\le Cn^2.
  \]
  Since \(J_l\in\mathcal{G}_k\) implies \(\abs{[J_l]_{ab}}\le1\) and \(d\) is fixed, the preceding central and tail bounds yield
  \[
    \norm{H_{12}}_{\mathrm{op}}
    \le
    C\xk{
      n^{-1/3}
      +n^2\exp\xk{-cn^{1/3}}
    }
    =
    o(1).
  \]
  Set \(J_{n,q}=\overline{J}_{n,q}\).
  Plugging this bound into \cref{appendix-eq:trans-privacy-weighted-gram} and using \(\delta_{\rho_n} \to1\) give
  \begin{equation}
    \label{appendix-eq:trans-privacy-gram}
    H^{\mathrm{priv}}_{n,q}
    =
    2\rho_n(J_{n,q}+o(1)),
    \qquad
    J_{n,q} \in\mathcal{G}_k.
  \end{equation}
  The privacy remainder is uniform over both the fixed compact subset and all mechanisms by the uniform barycentric bound.
  The prior remainder is uniform over the fixed compact subset.
  Since \(\lambda_n=n\rho_n \to\lambda\in(0,\infty)\), the privacy remainder \(o(\rho_n)\) is also \(o(n^{-1})\).
  Combining \cref{appendix-eq:trans-joint-score-gram-split,appendix-eq:trans-prior-gram,appendix-eq:trans-privacy-gram} proves \cref{appendix-eq:trans-total-joint-score-gram} and completes the proof.
\end{proof}

Define the information map for a finite alphabet
\begin{equation}
  \label{appendix-eq:trans-categorical-map}
  \caT_{\lambda,q}(J)
  =
  V_q^{-1}
  -V_q^{-1}
  (V_q^{-1}+2\lambda J)^{-1}
  V_q^{-1}.
\end{equation}
Let a differentiable submodel on a finite alphabet have probability vector
\[
  q(\theta)=(q_1(\theta),\ldots,q_k(\theta)),
\]
and let
\[
  D_\theta
  =
  \frac{\partial q_{1:d}(\theta)}{\partial\theta^\top}.
\]
Define
\begin{equation}
  \label{appendix-eq:trans-finite-submodel-map}
  F_{\theta,\lambda}(J)
  =
  D_\theta^\top
  \caT_{\lambda,q(\theta)}(J)
  D_\theta.
\end{equation}

\begin{corollary}[Gaussian representation on a finite alphabet]
  \label{appendix-cor:trans-fin-alphabet-gauss}
  Suppose \(q(\theta)\) ranges over a fixed compact subset of the simplex interior and \(D_\theta\) is uniformly bounded.
  For every \(\rho_n\)-zCDP mechanism, there exists \(J_{n,\theta}\in\mathcal{G}_k\) such that, uniformly over this parameter set and all such mechanisms,
  \begin{equation}
    \label{appendix-eq:trans-finite-submodel-bound}
    \frac{1}{n}I_{M_n}(\theta)
    \preceq
    F_{\theta,\lambda_n}(J_{n,\theta})+o(1)I_p.
  \end{equation}
  For fixed \((\theta,\lambda)\), the map \(J\mapsto F_{\theta,\lambda}(J)\) is concave in the Loewner order.
  Moreover, for every \(J\in\mathcal{G}_k\), there is a statistic \(\Phi\), constant on the alphabet and with diameter at most one, such that
  \[
    F_{\theta,\lambda}(J)
    =
    A_{\Phi,\theta}^\top
    \xk{V_{\Phi,\theta}+\frac{1}{2\lambda}I}^{-1}
    A_{\Phi,\theta}.
  \]
\end{corollary}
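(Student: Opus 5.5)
The plan has three parts, matching the three assertions of the corollary. For the bound \cref{appendix-eq:trans-finite-submodel-bound}, I would reduce to \cref{appendix-thm:trans-categorical-contraction} by a chain rule. The output experiment of the submodel is \(\theta\mapsto M_n\circ q(\theta)^{\otimes n}\), which is the categorical output experiment composed with the differentiable map \(\theta\mapsto q_{1:d}(\theta)\).

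By \cref{appendix-lem:markov-dqm}, the output score of the submodel is the conditional expectation of \(\sum_i s_\theta(X_i)\), and on the alphabet \(s_\theta(x)=D_\theta^\top S_{q(\theta)}(x)\). Hence the output score equals \(D_\theta^\top\) applied to the categorical output score, and
\[
  I_{M_n}(\theta)=D_\theta^\top I_{M_n}(q(\theta))D_\theta .
\]
Taking \(J_{n,\theta}=J_{n,q(\theta)}\) from the theorem and conjugating its bound by \(D_\theta\) gives the claim. The remainder \(D_\theta^\top o(1)I_dD_\theta\) is \(o(1)I_p\) uniformly, because \(D_\theta\) is uniformly bounded and the theorem's remainder is uniform over the compact set of \(q(\theta)\) and over all mechanisms. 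The step needing the most care is this chain rule: the identification of the submodel output score as \(D_\theta^\top\) times the categorical output score must hold exactly for every \(\theta\), not only asymptotically. I expect it to follow from the conditional-score formula \cref{appendix-eq:output-cond-score} applied to both experiments with the same kernel, using linearity of conditional expectation. I regard it as the main (though mild) obstacle.

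For concavity, note that \(J\mapsto V_q^{-1}+2\lambda J\) is affine into the positive definite cone on \(\mathcal{G}_k\subset\bbS_+^d\). Matrix inversion is operator convex on the positive definite cone, so \(J\mapsto-(V_q^{-1}+2\lambda J)^{-1}\) is Loewner concave. Congruence by \(V_q^{-1}\) and then by \(D_\theta\), together with adding a constant, preserve Loewner concavity. Hence \(F_{\theta,\lambda}\) is concave.

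For the Gaussian representation, I would realize \(J\in\mathcal{G}_k\) by a configuration \(\phi_1,\ldots,\phi_k\in\R^d\) with \(\phi_k=0\) and pairwise distances at most one, using the finite-dimensional characterization after \cref{appendix-eq:trans-config-region}. Set \(\Phi(j)=\phi_j\), so \(\diam(\Phi)\le1\). Let \(P=[\phi_1,\ldots,\phi_d]\in\R^{d\times d}\); then \(\Phi(x)=PY(x)\) and \(P^\top P=J\). This gives
\[
  A_{\Phi,\theta}=PD_\theta,\qquad V_{\Phi,\theta}=PV_qP^\top .
\]
With \(c=1/(2\lambda)\) and \(X=2\lambda J\), the Woodbury identity gives
\[
  P^\top(PV_qP^\top+cI)^{-1}P = X-X(V_q^{-1}+X)^{-1}X .
\]
Writing \(X=S-V_q^{-1}\) with \(S=V_q^{-1}+X\) and expanding \(XS^{-1}X\), this equals \(V_q^{-1}-V_q^{-1}S^{-1}V_q^{-1}=\caT_{\lambda,q}(J)\). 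Conjugating by \(D_\theta\) yields the stated identity.
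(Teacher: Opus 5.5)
Your proposal is correct and follows the paper's proof essentially step for step. It uses the chain rule on \cref{appendix-thm:trans-categorical-contraction}, operator convexity of inversion plus congruence for concavity, and a Woodbury computation for the configuration statistic with $A_{\Phi,\theta}=PD_\theta$ and $V_{\Phi,\theta}=PV_qP^\top$. The only difference is cosmetic: your form of the Woodbury identity needs only $V_q\succ0$ and $1/(2\lambda)>0$, so it holds for every $J\in\mathcal{G}_k$ directly, without the paper's ``nonsingular $J$ first, then continuity'' step.
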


\begin{proof}
  The information bound follows from \cref{appendix-thm:trans-categorical-contraction} and the chain rule.
  Matrix inversion is operator convex, so \(J\mapsto\caT_{\lambda,q}(J)\) is concave in the Loewner order; congruence by \(D_\theta\) gives the same property for \(F_{\theta,\lambda}\).
  For \(J\in\mathcal{G}_k\), choose a generating configuration \(\phi_1,\ldots,\phi_k\), translate it so that \(\phi_k=0\), and define \(U:\R^d\to\caH\) by \(Ue_a=\phi_a\).
  Then \(J=U^*U\).
  The categorical statistic \(\Phi(a)=\phi_a\) has diameter at most one, covariance \(UV_qU^*\), and derivative \(U\) in the saturated coordinates.
  The Woodbury identity gives
  \begin{equation}
    \label{appendix-eq:trans-categorical-gauss-identity}
    \caT_{\lambda,q}(J)
    =
    U^*
    \xk{UV_qU^*+\frac{1}{2\lambda}I}^{-1}
    U,
  \end{equation}
  first for nonsingular \(J\) and then by continuity.
  In the submodel, \(V_{\Phi,\theta}=UV_{q(\theta)}U^*\) and \(A_{\Phi,\theta}=UD_\theta\), so congruence by \(D_\theta\) proves the Gaussian representation.
\end{proof}

\subsection{Transfer from finite partitions to the general model}
\label{appendix-subsec:trans-partition-transfer}

The transfer has two steps.
First, freezing the conditional law within each cell produces a zCDP experiment on a finite alphabet with the same output law at the frozen parameter.
We then choose a fixed partition whose cell score uniformly approximates the original score.

Fix \(\theta\) and a finite measurable partition \(\mathcal{P}=\dk{C_1,\ldots,C_k}\) with \(q_j(\theta)=P_\theta(C_j)>0\).
Let \(R_{\theta,j}=P_\theta(\,\cdot\mid C_j)\).
For an arbitrary mechanism \(M_n:\caX^n \to\caZ_n\), define the frozen cell channel
\begin{equation}
  \label{appendix-eq:trans-frozen-cell-channel}
  N_{n,\theta,\mathcal{P}}(dz\mid c_{1:n})
  =
  \int
  M_n(dz\mid x_{1:n})
  \prod_{i=1}^n R_{\theta,c_i}(dx_i).
\end{equation}
This channel remains \(\rho_n\)-zCDP.
Indeed, if \(c_{1:n}\) and \(c'_{1:n}\) differ in one coordinate, couple the observations identically in every unchanged coordinate and use any coupling in the remaining coordinate.
Every resulting pair of databases is adjacent, so joint convexity of the R\'enyi Hellinger integral preserves the same zCDP bound after mixing.
Freeze the conditional distributions \(R_{\theta,j}\), but vary the cell probabilities, by setting
\[
  Q^{(\theta,\mathcal{P})}_{\vartheta;n}
  =
  \sum_{c_{1:n}}
  q(\vartheta)^{\otimes n}(c_{1:n})
  N_{n,\theta,\mathcal{P}}(\,\cdot\mid c_{1:n}).
\]
At \(\vartheta=\theta\), this experiment has the same output marginal as \(M_n \circ P_\theta^{\otimes n}\), but its score keeps only the cellwise conditional expectation of the original score.

\begin{lemma}[Uniform score approximation by a frozen partition]
  \label{appendix-lem:trans-partition-score-approx}
  For every \(\ep>0\), there is a fixed finite partition \(\mathcal{P}\), all of whose cells have positive \(P_{\theta_0}\)-probability.
  The partition may depend on \(\ep\), but not on \(n\), \(\theta\), or the mechanism.
  There is a constant \(C_{\theta_0,\lambda}\ge1\) such that, uniformly over \(\theta\in\Theta_n\) and all \(\rho_n\)-zCDP mechanisms,
  \begin{equation}
    \label{appendix-eq:trans-partition-info-approx}
    \norm{
      n^{-1} I_{M_n}(\theta)
      -n^{-1} I_{Q^{(\theta,\mathcal{P})}_{\vartheta;n}}(\theta)
    }_{\mathrm{op}}
    \le
    C_{\theta_0,\lambda}(\ep+\ep^2)+o(1).
  \end{equation}
\end{lemma}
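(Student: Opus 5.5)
The plan is to compare the two output scores at the frozen parameter \(\vartheta=\theta\), using the operator representation in \cref{appendix-lem:cond-score-operator}, and to control their difference with the diameter bound from \cref{lem:barycentric}.

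\emph{Step 1: the two scores.} For a partition \(\mathcal P=\{C_1,\ldots,C_k\}\), write \(\bar s_\theta(x)=\E_\theta[s_\theta(X)\mid X\in C_j]\) for \(x\in C_j\), and put \(e_\theta=s_\theta-\bar s_\theta\). The cell probabilities satisfy \(\nabla\log q_j(\theta)=\bar s_\theta|_{C_j}\). By \cref{appendix-lem:markov-dqm}, applied to the frozen experiment, its score at \(\vartheta=\theta\) is \(\E[\sum_i\bar s_\theta(X_i)\mid Z]\). The joint law of \((X_{1:n},Z)\) at the frozen point coincides with the original law, so this conditional expectation is taken under the same joint law. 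The original score is \(\E[\sum_i s_\theta(X_i)\mid Z]\). Write \(B_M h=h^\top s_M\) and \(B'h\) for the corresponding frozen operator. Then
\[
(B_M-B')h=\E\Bigl[\textstyle\sum_i\langle e_\theta(X_i),h\rangle\Bigm| Z\Bigr].
\]
Repeating the computation in \cref{appendix-lem:cond-score-operator} with \(s_\theta\) replaced by the mean-zero function \(e_\theta\) gives \(B_M-B'=\sum_i\widetilde A_i\), where \(\widetilde A_ih=\E_\theta[\Lambda_i(X)\langle e_\theta(X),h\rangle]\).

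\emph{Step 2: diameter bound.} The proof of \cref{thm:diameter-contraction} shows that \(x\mapsto\Lambda_i(x)\) has \(L^2(Q)\)-diameter at most \(\sqrt{2\rho_n}\,\delta_{\rho_n}\). Since \(\E_\theta e_\theta=0\), subtracting a midpoint as in the proof of \cref{prop:directional-envelope-tv} gives
\[
\|\widetilde A_ih\|\le\tfrac12\sqrt{2\rho_n}\,\delta_{\rho_n}\,\E_\theta|\langle e_\theta(X),h\rangle|.
\]
Hence
\[
n^{-1/2}\|B_M-B'\|_{\mathrm{op}}\le \sqrt{\lambda_n/2}\;\delta_{\rho_n}\sup_{\|h\|=1}\E_\theta|\langle e_\theta,h\rangle|.
\]
Since \(\lambda_n\to\lambda\), this is at most \(C_\lambda\,\E_\theta\|e_\theta\|_2\). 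Both \(n^{-1}I_{M_n}\) and \(n^{-1}I_{Q}\) are bounded by \(I(\theta)\), respectively by the frozen Fisher matrix \(\preceq I(\theta)\), because conditioning reduces information. These are locally bounded, so \(n^{-1/2}\|B_M\|\) and \(n^{-1/2}\|B'\|\) are bounded. The identity \(A^*A-B^*B=A^*(A-B)+(A-B)^*B\) then yields
\[
\bigl\|n^{-1}(I_{M_n}-I_Q)\bigr\|_{\mathrm{op}}\le C\bigl(\eta_\theta+\eta_\theta^2\bigr),
\qquad \eta_\theta=\E_\theta\|e_\theta\|_2.
\]
This bound is uniform over all mechanisms.

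\emph{Step 3: choice of the partition (main obstacle).} We need a single \(\mathcal P\), independent of \(n\), \(\theta\) and the mechanism, such that \(\sup_{\theta\in\Theta_n}\E_\theta\|e_\theta\|_2\le\epsilon+o(1)\). We first work at \(\theta_0\). Using \(s_{\theta_0}\in L^1(P_{\theta_0})\), quantize \(s_{\theta_0}\) on a bounded box into cubes of small side, and add one tail cell. Merge null cells into neighbours so that every \(q_j(\theta_0)>0\). This makes \(\E_{\theta_0}\|s_{\theta_0}-\bar s_{\theta_0}\|\le\epsilon\), since the conditional mean is an \(L^1\)-contraction up to a factor of \(2\) relative to any cellwise-constant approximant.

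For \(\theta\) near \(\theta_0\), note that \(\E_\theta\|e_\theta\|=\int\|\dot p_\theta-p_\theta\bar s_\theta\|\,d\mu\). Also \(\bar s_\theta|_{C_j}=\int_{C_j}\dot p_\theta\,d\mu/P_\theta(C_j)\). \cref{ass:score-density-l1} gives \(L^1\)-continuity of \(p_\theta\) and \(\dot p_\theta\), and the cells have positive probability, so \(\theta\mapsto\E_\theta\|e_\theta\|\) is continuous at \(\theta_0\). Since \(\Theta_n\) shrinks, this continuity supplies the \(o(1)\) term, and it also keeps the \(q_j(\theta)\) bounded away from zero. Taking \(C_{\theta_0,\lambda}\) from Step 2 completes the proof.

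The delicate points are justifying the frozen-experiment score at the frozen point, including DQM of the cell submodel, and checking that all constants are uniform over mechanisms. The barycentric bound provides the latter.
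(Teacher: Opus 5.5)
Your argument follows the paper's proof step for step, except for one bound that needs repair. The shared steps are these:
\begin{itemize}
\item the operator representation of the difference of the two output scores at the frozen point;
\item the barycentric diameter bound on \(x\mapsto L_{i,x}\);
\item the partition obtained from a cellwise-constant approximation of \(s_{\theta_0}\), with the factor-two conditional-expectation argument and merging of null cells;
\item continuity of \(\theta\mapsto\E_\theta\norm{s_\theta-\E_\theta(s_\theta\mid\mathcal P)}_2\) from \cref{ass:score-density-l1};
\item the identity \(B^*B-C^*C=B^*(B-C)+(B^*-C^*)C\).
\end{itemize}
Your centering is a harmless variant. The paper subtracts cellwise averages \(\overline L_{i,j}\), using that each \(a_{\theta,j}\) integrates to zero on its cell. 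You instead pair with a unit vector and subtract a scalar midpoint, using only the global mean-zero property of \(e_\theta\). This is valid because the projection to scalars comes first, as in \cref{prop:directional-envelope-tv}.

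The step that fails as written is your bound on \(n^{-1/2}\norm{B_M}_{\mathrm{op}}\) and \(n^{-1/2}\norm{B'}_{\mathrm{op}}\). The inequalities \(n^{-1}I_{M_n}(\theta)\preceq I(\theta)\) and \(n^{-1}I_Q(\theta)\preceq I_{\mathcal P}(\theta)\preceq I(\theta)\) are correct. However, local boundedness of \(\theta\mapsto I(\theta)\) near \(\theta_0\) is not among the hypotheses:
\begin{itemize}
\item \cref{ass:regular} only gives finiteness of \(I(\theta)\) pointwise;
\item \cref{ass:score-density-l1} controls \(\int\norm{\dot p_\theta}_2\,d\mu\), not \(\int\norm{\dot p_\theta}_2^2/p_\theta\,d\mu\). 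This is why continuity of \(I\) is imposed separately in \cref{thm:compact-minimax}.
\end{itemize}
Without it, your constant contains \(\sup_{\theta\in\Theta_n}\norm{I(\theta)}_{\mathrm{op}}\), which can be infinite, and the claimed uniformity over \(\Theta_n\) fails.

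The paper avoids this by bounding \(B_M\) from the privacy side. By \cref{thm:diameter-contraction},
\[
n^{-1}\norm{I_{M_n}(\theta)}_{\mathrm{op}}\le 2\lambda_n\delta_{\rho_n}^2\Gamma_\theta ,
\]
and \(\Gamma_\theta\) is locally bounded by \cref{appendix-lem:score-density-stability}, which needs only \(L^1\) control of \(\dot p_\theta\). Then \(\norm{B'}_{\mathrm{op}}\le\norm{B_M}_{\mathrm{op}}+\norm{B_M-B'}_{\mathrm{op}}\) together with your Step 2 bounds the frozen operator.

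A second repair stays closer to your route. For the fixed partition, \(I_{\mathcal P}(\theta)=\sum_j\dot q_j(\theta)\dot q_j(\theta)^\top/q_j(\theta)\) is continuous at \(\theta_0\) with \(I_{\mathcal P}(\theta_0)\preceq I(\theta_0)\). This bounds \(B'\) up to a partition-dependent \(o(1)\), and the triangle inequality then bounds \(B_M\). Either fix keeps \(C_{\theta_0,\lambda}\) independent of the partition and completes your proof.
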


\begin{proof}
  \noindent\emph{Comparison of score operators.}
  Write \(Q_\theta=M_n \circ P_\theta^{\otimes n}\) for the common output marginal at the frozen center.
  The cell score is
  \[
    s_{\theta,\mathcal{P}}(j)
    =
    \frac{\dot{q}_j(\theta)}{q_j(\theta)}
    =
    \E_\theta[s_\theta(X)\mid C_j].
  \]
  On \(C_j\), define the signed density residual
  \[
    a_{\theta,j}(x)
    =
    \dot{p}_\theta(x)
    -p_\theta(x)s_{\theta,\mathcal{P}}(j),
    \qquad
    \int_{C_j} a_{\theta,j}\,d\mu=0,
  \]
  and put
  \[
    \mathcal{R}_{\mathcal{P}}(\theta)
    =
    \sum_j \int_{C_j} \norm{a_{\theta,j}(x)}_2 d \mu
    =
    \E_\theta \norm{
      s_\theta-\E_\theta(s_\theta \mid\mathcal{P})
    }_2.
  \]
  If \(B_{M,\theta}\) and \(B_{\mathcal{P},\theta}\) are the two output score operators, the representation in \cref{appendix-lem:cond-score-operator} and the zero integral above give
  \[
    (B_{M,\theta}-B_{\mathcal{P},\theta})h
    =
    \sum_{i,j} \int_{C_j}
    h^\top a_{\theta,j}(x)
    (L_{i,x}-\overline{L}_{i,j})
    d\mu(x),
  \]
  where \(L_{i,x}\) is the conditional output likelihood ratio and \(\overline{L}_{i,j}=\E_\theta(L_{i,X} \mid C_j)\).
  For \(x,x'\in C_j\), the uniform form of \cref{lem:barycentric} yields
  \[
    \norm{L_{i,x}-L_{i,x'}}_{L^2(Q_\theta)}
    \le
    \sqrt{2\rho_n}\,\delta_{\rho_n}.
  \]
  Averaging in \(x'\) conditionally on \(C_j\), and then using the triangle inequality in the preceding score representation, gives
  \begin{equation}
    \label{appendix-eq:trans-score-operator-approx}
    \frac{\norm{B_{M,\theta}-B_{\mathcal{P},\theta}}_{\mathrm{op}}}{\sqrt n}
    \le
    \sqrt{2n\rho_n}\,\delta_{\rho_n} \mathcal{R}_{\mathcal{P}}(\theta),
    \qquad
    \delta_{\rho_n} \to1.
  \end{equation}

  \smallskip
  \noindent\emph{Partition choice and information comparison.}
  Simple functions are dense in \(L^1(P_{\theta_0};\R^p)\).
  If a \(\mathcal{P}\)-measurable simple function \(g\) satisfies \(\E_{\theta_0} \norm{s_{\theta_0}-g}_2<\ep/2\), conditional expectation is an \(L^1\) contraction and hence
  \[
    \mathcal{R}_{\mathcal{P}}(\theta_0)
    \le
    2\E_{\theta_0} \norm{s_{\theta_0}-g}_2
    <\ep.
  \]
  Thus a partition can be chosen with \(\mathcal{R}_{\mathcal{P}}(\theta_0)<\ep\); null cells may be merged into a cell of positive probability.
  For this fixed partition,
  \[
    \mathcal{R}_{\mathcal{P}}(\theta)
    =
    \sum_j
    \norm{
      \dot{p}_\theta \mathbf{1}_{C_j}
      -p_\theta \mathbf{1}_{C_j}
      \frac{\dot{q}_j(\theta)}{q_j(\theta)}
    }_{L^1(\mu)}.
  \]
  All \(q_j(\theta)\) remain bounded away from zero near \(\theta_0\), and \cref{ass:score-density-l1} makes every term continuous.
  Hence
  \[
    \sup_{\theta\in\Theta_n} \mathcal{R}_{\mathcal{P}}(\theta)
    \le2\ep
  \]
  for all sufficiently large \(n\).

  Finally, the norms of the normalized score operators are locally bounded uniformly over mechanisms.
  For the original experiment this follows from \cref{thm:diameter-contraction}, local boundedness of the diameter information region, and \(n\rho_n=O(1)\).
  The frozen score operator obeys the same bound because
  \[
    \frac{\norm{B_{\mathcal{P},\theta}}_{\mathrm{op}}}{\sqrt n}
    \le
    \frac{\norm{B_{M,\theta}}_{\mathrm{op}}}{\sqrt n}
    +
    \frac{\norm{B_{M,\theta}-B_{\mathcal{P},\theta}}_{\mathrm{op}}}{\sqrt n}.
  \]
  Since \(I=B^*B\), the identity \(B^*B-C^*C=B^*(B-C)+(B^*-C^*)C\) gives
  \[
    \begin{aligned}
      &\frac{1}{n}
      \norm{
        I_{M_n}(\theta)
        -I_{Q^{(\theta,\mathcal{P})}_{\vartheta;n}}(\theta)
      }_{\mathrm{op}}
      \\
      &\qquad\le
      \xk{
        \frac{\norm{B_{M,\theta}}_{\mathrm{op}}}{\sqrt n}
        +
        \frac{\norm{B_{\mathcal{P},\theta}}_{\mathrm{op}}}{\sqrt n}
      }
      \frac{\norm{B_{M,\theta}-B_{\mathcal{P},\theta}}_{\mathrm{op}}}{\sqrt n}
      \\
      &\qquad\le
      C_{\theta_0,\lambda}(\ep+\ep^2)+o(1),
    \end{aligned}
  \]
  where the last bound uses \cref{appendix-eq:trans-regime,appendix-eq:trans-score-operator-approx} and the uniform estimate \(\sup_{\theta\in\Theta_n}\mathcal{R}_{\mathcal{P}}(\theta)\le2\ep\).
  This proves \cref{appendix-eq:trans-partition-info-approx}.
\end{proof}

\begin{lemma}[Uniform averaged information bound]
  \label{appendix-lem:trans-averaged-information}
  Under the assumptions and notation of this section, fix a compact neighborhood
  \(K\subset\Theta\) of \(\theta_0\) containing \(\Theta_n\) for all sufficiently large \(n\).
  There is a constant \(C_{\theta_0,\lambda}\ge1\), depending only on the model on
  \(K\) and on \(\lambda\), with the following property.
  For every fixed \(\ep>0\), there is a deterministic sequence
  \(r_{n,\ep}\ge0\) tending to zero such that, for all sufficiently large \(n\),
  every \(\rho_n\)-zCDP mechanism \(M_n\) and every probability measure \(\pi_n\)
  supported on \(\Theta_n\) admit \(L_{n,\ep}\in\beta\caU_{\theta_0,\beta}\) satisfying
  \begin{equation}
    \label{appendix-eq:trans-averaged-info-bound}
    \frac1n\int I_{M_n}(\theta)\pi_n(d\theta)
    \preceq L_{n,\ep}
    +\bigl[C_{\theta_0,\lambda}(\ep+\ep^2)+r_{n,\ep}\bigr]I_p.
  \end{equation}
  The constant is independent of \(\ep\), the partition, the mechanism, and the prior.
  The remainder and the threshold for \(n\) may depend on the fixed partition
  chosen for \(\ep\), but are uniform over mechanisms and priors.
\end{lemma}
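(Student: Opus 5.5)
Fix \(\ep>0\) and take the partition \(\mathcal{P}=\{C_1,\ldots,C_k\}\) of \cref{appendix-lem:trans-partition-score-approx}. It depends only on \(\ep\), and its cell probabilities \(q(\theta)=(P_\theta(C_j))_j\) stay in a fixed compact subset of the simplex interior for \(\theta\) near \(\theta_0\). The plan is to bound the integrand pointwise in \(\theta\) by a finite-alphabet Gaussian information, and then to average using concavity and convexity.

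\textbf{Step 1: reduction to a finite alphabet.} For each \(\theta\in\Theta_n\), \cref{appendix-lem:trans-partition-score-approx} replaces \(n^{-1}I_{M_n}(\theta)\) by the information of the frozen experiment \(Q^{(\theta,\mathcal P)}_{\vartheta;n}\) at \(\vartheta=\theta\). This costs \(C_{\theta_0,\lambda}(\ep+\ep^2)+o(1)\) in operator norm, uniformly in mechanisms. The frozen experiment is a \(\rho_n\)-zCDP channel \(N_{n,\theta,\mathcal P}\) applied to iid draws from the submodel \(\vartheta\mapsto q(\vartheta)\). Hence \cref{appendix-cor:trans-fin-alphabet-gauss} gives some \(J_{n,\theta}\in\mathcal G_k\) with
\[
\tfrac1n I_{Q^{(\theta,\mathcal P)}}(\theta)\preceq F_{\theta,\lambda_n}(J_{n,\theta})+o(1)I_p,
\]
uniformly over \(\theta\) and mechanisms. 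This needs \(D_\theta=\dot q(\theta)\) to be uniformly bounded near \(\theta_0\), which follows from \cref{ass:score-density-l1}.

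\textbf{Step 2: freeze \(\theta\) and \(\lambda\).} I then replace \(F_{\theta,\lambda_n}\) by \(F_{\theta_0,\lambda}\). The map \((\theta,\lambda,J)\mapsto F_{\theta,\lambda}(J)\) is continuous on a compact set \(K\times[\lambda/2,2\lambda]\times\mathcal G_k\), and \(\Theta_n\) shrinks while \(\lambda_n\to\lambda\). So \(\sup_J\|F_{\theta,\lambda_n}(J)-F_{\theta_0,\lambda}(J)\|_{\mathrm{op}}\) tends to zero, and this contributes a deterministic remainder absorbed into \(r_{n,\ep}\), which depends on the partition.

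\textbf{Step 3: averaging.} \(\mathcal G_k\) is convex and \(J\mapsto F_{\theta_0,\lambda}(J)\) is Loewner-concave (\cref{appendix-cor:trans-fin-alphabet-gauss}). Jensen therefore gives
\[
\int F_{\theta_0,\lambda}(J_{n,\theta})\,\pi_n(d\theta)\preceq F_{\theta_0,\lambda}(\bar J_n),\qquad \bar J_n=\int J_{n,\theta}\,\pi_n(d\theta)\in\mathcal G_k.
\]
This requires measurability of \(\theta\mapsto J_{n,\theta}\). If a measurable choice is awkward, I would instead take a measurable selection of the minimal-norm admissible \(J\), or bound the integral directly through the averaged Gram matrices used in the proof of \cref{appendix-thm:trans-categorical-contraction}.

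\textbf{Step 4: identification with \(\beta\caU_{\theta_0,\beta}\).} This is the step I expect to need the most care. By \cref{appendix-cor:trans-fin-alphabet-gauss}, \(F_{\theta_0,\lambda}(\bar J_n)=A^\top(V+\tfrac1{2\lambda}I)^{-1}A\) for a cellwise-constant statistic \(\Phi\) with \(\diam\Phi\le1\), where \(A\) and \(V\) are computed under the submodel \(q(\vartheta)\). On the original space, the pulled-back statistic \(\Phi\circ\mathrm{cell}\) has covariance exactly \(V_{\Phi,\theta_0}\). Its score covariance is \(\E_{\theta_0}[\Phi\, s_{\theta_0}^\top]=\E[\Phi\,\E(s\mid\mathcal P)^\top]=UD_{\theta_0}\), which matches \(A\). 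Finally,
\[
V+\tfrac1{2\lambda}I=\tfrac1\beta\Bigl(\beta V+\tfrac{1-\beta}{2}I\Bigr),
\]
since \(\tfrac{\beta}{2\lambda}=\tfrac{1-\beta}{2}\) when \(\beta=\lambda/(1+\lambda)\). Hence \(L_{n,\ep}\coloneqq F_{\theta_0,\lambda}(\bar J_n)=\beta H_{\Phi,\theta_0,\beta}\in\beta\caU_{\theta_0,\beta}\).

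Collecting the remainders gives \cref{appendix-eq:trans-averaged-info-bound}. The constant \(C_{\theta_0,\lambda}\) comes only from Step 1, so it is independent of \(\ep\), the partition, the mechanism, and the prior. All other errors form \(r_{n,\ep}\), which is uniform over mechanisms and priors.
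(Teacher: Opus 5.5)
Your proof is correct and follows the paper's argument step for step: the partition approximation, the finite-alphabet contraction applied to the frozen channels, freezing \((\theta,\lambda_n)\) at \((\theta_0,\lambda)\), concavity of \(F_{\theta_0,\lambda}\) for the averaging, and the cellwise pullback into \(\beta\caU_{\theta_0,\beta}\). The only difference is in your Step~3: instead of selecting \(J_{n,\theta}\) measurably and applying Jensen, the paper observes that \(\{B\in\bbS^p: B\preceq F_{\theta_0,\lambda}(J)\text{ for some }J\in\mathcal G_k\}\) is closed and convex, so the \(\pi_n\)-average of the pointwise-feasible integrand \(n^{-1}I_{M_n}(\theta)-e_{n,\ep}I_p\) stays in this set, which removes the measurability issue entirely.
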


\begin{proof}
  Fix \(\ep>0\) and take the partition from \cref{appendix-lem:trans-partition-score-approx}.
  Its cell probabilities are bounded away from zero on a neighborhood of
  \(\theta_0\), and its derivative \(D_\theta\) is continuous there.
  Thus, for all sufficiently large \(n\), \cref{appendix-cor:trans-fin-alphabet-gauss}
  applies uniformly over \(\Theta_n\) to the frozen cell channels, including
  their dependence on the frozen parameter.
  Compactness of \(\mathcal G_k\), continuity of \(q\) and \(D\), and
  \(\lambda_n\to\lambda>0\) also give
  \[
    \sup_{\theta\in\Theta_n,\,J\in\mathcal G_k}
    \norm{F_{\theta,\lambda_n}(J)-F_{\theta_0,\lambda}(J)}_{\mathrm{op}}
    \longrightarrow0.
  \]
  Combining these two bounds with \cref{appendix-eq:trans-partition-info-approx},
  there is a deterministic \(r_{n,\ep}\to0\), uniform over mechanisms, such that
  for each \(\theta\in\Theta_n\) some \(J\in\mathcal G_k\) satisfies
  \[
    n^{-1}I_{M_n}(\theta)
    \preceq F_{\theta_0,\lambda}(J)+e_{n,\ep}I_p,
    \qquad
    e_{n,\ep}=C_{\theta_0,\lambda}(\ep+\ep^2)+r_{n,\ep}.
  \]
  The constant in the partition error is independent of the partition:
  \cref{appendix-eq:trans-score-operator-approx} and the local contraction bound control
  the score-operator norms using only the model on \(K\) and an eventual bound
  on \(\lambda_n\).
  The factors involving cell probabilities enter only the fixed-partition
  remainders and the threshold for \(n\).

  To average this inequality without choosing \(J\) measurably, put
  \[
    \mathcal D
    =\{B\in\bbS^p:B\preceq F_{\theta_0,\lambda}(J)
      \text{ for some }J\in\mathcal G_k\}.
  \]
  This set is closed by compactness of \(\mathcal G_k\) and continuity of \(F\),
  and convex by its Loewner concavity.
  The output Fisher matrix has a measurable version, obtained from measurable
  likelihood-ratio score versions on the standard Borel output space.
  Its positive semidefiniteness and the preceding compact information bound
  make its entries integrable under \(\pi_n\).
  Since \(n^{-1}I_{M_n}(\theta)-e_{n,\ep}I_p\in\mathcal D\), its integral
  belongs to the same closed convex set.
  Consequently, some \(\overline{J}_{n,\ep}\in\mathcal G_k\) satisfies
  \[
    \frac1n\int I_{M_n}(\theta)\pi_n(d\theta)
    \preceq F_{\theta_0,\lambda}(\overline{J}_{n,\ep})+e_{n,\ep}I_p.
  \]
  By \cref{appendix-cor:trans-fin-alphabet-gauss}, choose a categorical statistic \(\psi_n\) of diameter at most one that realizes \(F_{\theta_0,\lambda}(\overline{J}_{n,\ep})\), and pull it back to \(\caX\) by setting \(\Phi_n(x)=\psi_n(j)\) for \(x\in C_j\).
  Since
  \[
    P_\theta\circ\Phi_n^{-1}
    =
    \sum_{j=1}^k q_j(\theta)\delta_{\psi_n(j)},
  \]
  the pullback has the same expectation map as \(\psi_n\) under the categorical law \(q(\theta)\), and hence preserves the derivative and covariance at \(\theta_0\):
  \[
    A_{\Phi_n,0}=A_{\psi_n,\theta_0},
    \qquad
    V_{\Phi_n,0}=V_{\psi_n,\theta_0}.
  \]
  It also satisfies \(\diam(\Phi_n)\le1\).
  Using \(\beta=\lambda/(1+\lambda)\) and
  \(
    \Sigma_{\Phi_n,\theta_0,\beta}
    =
    \beta\xk{V_{\Phi_n,0}+(2\lambda)^{-1}I}
  \), we obtain
  \[
    F_{\theta_0,\lambda}(\overline{J}_{n,\ep})
    =
    \beta A_{\Phi_n,0}^\top
    \Sigma_{\Phi_n,\theta_0,\beta}^{-1}
    A_{\Phi_n,0}
    \in
    \beta\caU_{\theta_0,\beta}.
  \]
  Taking \(L_{n,\ep}=F_{\theta_0,\lambda}(\overline{J}_{n,\ep})\)
  proves \cref{appendix-eq:trans-averaged-info-bound}.
  All limits above are taken with the partition, and hence \(\ep\), fixed;
  \(\ep\) is sent to zero only after \(n\to\infty\).
\end{proof}

\subsection{Proof of \Cref{appendix-thm:trans-local-lower}}
\label{appendix-subsec:trans-local-lower-proof}

  \noindent\emph{Shrinking prior.}
  Choose a continuously differentiable density \(\varphi\), compactly supported in the interior of the unit ball, with finite positive definite Fisher matrix and vanishing boundary term.
  Let \(b_n \to\infty\) satisfy \(b_n \le R_n/2\), and define the shrinking prior
  \[
    \pi_n(\theta)
    =
    \xk{\frac{\tau_n}{b_n}}^p
    \varphi\xk{
      \frac{\tau_n(\theta-\theta_0)}{b_n}
    }.
  \]
  Its support is contained in \(\Theta_n\), and
  \begin{equation}
    \label{appendix-eq:trans-prior-info-vanishes}
    \frac{J_{\pi_n}}{n}
    =
    \frac{\lambda_n}{b_n^2}J_\varphi
    \longrightarrow0.
  \end{equation}

  \smallskip
  \noindent\emph{Averaged information bound.}
  Fix \(\ep>0\) and apply \cref{appendix-lem:trans-averaged-information} to this prior.
  This gives \cref{appendix-eq:trans-averaged-info-bound}, with a deterministic remainder
  tending to zero uniformly over mechanisms for this fixed \(\ep\).

  \smallskip
  \noindent\emph{Van Trees step.}
  The output experiment is DQM by \cref{appendix-lem:markov-dqm}.
  The same verification of differentiation under the integral and integration by parts as in \cref{appendix-subsec:matrix-local-lower} applies because the density map is continuously differentiable in \(L^1\), the prior is compactly supported, and the contraction bound makes the integrated output information finite.
  Write
  \[
    \operatorname{BayesRisk}_{\pi_n}(W)
    =
    \int
    \E_\theta
    \norm{\widehat{\theta}-\theta}_W^2
    \pi_n(d\theta).
  \]
  We may therefore apply \cref{appendix-lem:matrix-van-trees} to any mechanism and estimator.
  By \cref{appendix-eq:trans-prior-info-vanishes,appendix-eq:trans-averaged-info-bound} and inverse monotonicity,
  \[
    n\operatorname{BayesRisk}_{\pi_n}(W)
    \ge
    \caI_W(
    L_{n,\ep}+a_{n,\ep} I_p
    ),
  \]
  where
  \[
    a_{n,\ep}
    =
    C_{\theta_0,\lambda}(\ep+\ep^2)
    +r_{n,\ep}
    +\frac{\norm{J_{\pi_n}}_{\mathrm{op}}}{n}.
  \]
  By \cref{appendix-prop:unified-region-geometry}\textup{(iii)}, \(\beta\caU_{\theta_0,\beta}\) is compact and bounded above by \(I(\theta_0)\).
  For fixed \(\ep>0\), take a subsequence realizing the liminf and then a further subsequence on which \(L_{n,\ep} \to L_\ep \in\beta\caU_{\theta_0,\beta}\).
  Since \(a_{n,\ep} \to c_\ep \coloneqq C_{\theta_0,\lambda}(\ep+\ep^2)>0\), continuity of inverse trace on the positive definite cone gives
  \[
    \liminf_{n\to\infty}
    n\operatorname{BayesRisk}_{\pi_n}(W)
    \ge
    \inf_{L\in\beta\caU_{\theta_0,\beta}}
    \caI_W(L+c_\ep I_p).
  \]
  As \(c\downarrow0\), compactness of the region and lower semicontinuity of \(\caI_W\) imply
  \[
    \inf_{L\in\beta\caU_{\theta_0,\beta}}
    \caI_W(L+cI_p)
    \uparrow
    \inf_{L\in\beta\caU_{\theta_0,\beta}}
    \caI_W(L).
  \]
  Indeed, approximate minimizers have a convergent subsequence, and lower semicontinuity gives the lower inequality at the limit; the reverse inequality follows from \(\caI_W(L+cI_p)\le\caI_W(L)\).
  Letting \(\ep\downarrow0\) therefore gives
  \[
    \liminf_{n\to\infty}
    n\operatorname{BayesRisk}_{\pi_n}(W)
    \ge
    \inf_{L\in\beta\caU_{\theta_0,\beta}}\caI_W(L)
    =
    \frac{\uCt(W)}{\beta}.
  \]
  The final equality uses \(\caI_W(\beta H)=\beta^{-1} \caI_W(H)\).
  Since \(\supp(\pi_n)\subset\Theta_n\), the Bayes risk is bounded above by the corresponding local worst-case risk.
  Taking the infimum over all mechanisms proves the lower bound.
   \clearpage
  \section{Completion of main results}
\label{appendix-sec:result-assembly}

This section assembles the lower bounds for the individual regimes and the matching upper bounds into the unified local and compact results stated in the main paper.

\subsection{Proof of \Cref{thm:bounded-stat-upper}}
\label{appendix-subsec:fixed-stat-upper}

Since \(R_n\sigma_n\to0\), the sets \(\widetilde\Theta_n\) shrink to \(\theta_0\).
The result follows from \cref{appendix-lem:unified-fixed-stat} with \(\mathcal T_n=\widetilde\Theta_n\).

\subsection{Proof of \Cref{lem:verify-near-optimal}}
\label{appendix-subsec:near-optimal-stat}

Apply \cref{appendix-lem:unified-near-optimal-stat} at the fixed value of \(\beta\) in the statement.

\subsection{Proof of \Cref{thm:local-attain-seq}}
\label{appendix-subsec:local-attain-seq}

The risk bound follows from \cref{appendix-lem:unified-local-upper}.
Its construction uses the \(\R^p\)-valued diameter-one statistics from \cref{appendix-lem:unified-near-optimal-stat} and therefore has the form asserted in the theorem.

\subsection{Proof of \Cref{thm:gauss-attain-limit}}
\label{appendix-subsec:gauss-limit}

Apply \cref{appendix-lem:unified-gauss-diagonalization} to the specified matrix \(H_{W,\beta}\).
The sequence attains the risk bound and has the uniform Gaussian limit in \cref{eq:attain-seq-gauss-limit}.

\subsection{Proof of \Cref{thm:private-minimax}}
\label{appendix-subsec:privacy-dominated-local}

The lower bound is \cref{thm:matrix-local-lower}.
For the reverse inequality, \cref{thm:local-attain-seq} constructs a sequence of \(\rho_n\)-zCDP estimators whose scaled worst-case risk has limit superior at most \(\mfC_{\theta_0}(W)\).
The two bounds give the asserted equality.

\subsection{Proof of \Cref{thm:local-minimax}}
\label{appendix-subsec:unified-local-lower}

The upper bound is \cref{appendix-lem:unified-local-upper}.
For the matching lower bound, write \(\lambda_n=n\rho_n\) and consider
the three possible limits of \(\beta_n\).

\textit{Privacy-dominated regime: \(\beta=0\).}
If \(\beta=0\), then \(\lambda_n\to0\) and \(\sigma_n^{-2}/\tau_n^2\to1\).
Rewrite \(\widetilde\Theta_n\) on the \(\tau_n\)-scale with radius
\(R_n\tau_n\sigma_n\).
This radius diverges and its ratio to \(\tau_n\) is
\(R_n\sigma_n\to0\).
The lower bound therefore follows from
\cref{thm:matrix-local-lower} and part~\textup{(i)} of
\cref{appendix-lem:unified-info-identification}.

\textit{Transition regime: \(0<\beta<1\).}
If \(0<\beta<1\), put
\(\lambda=\beta/(1-\beta)\).
The same change of scale gives a transition local set with radius
\(R_n\tau_n\sigma_n\).
Apply \cref{appendix-thm:trans-local-lower} and use
\(\sigma_n^{-2}/n=\beta_n\to\beta\).

\textit{Sampling-dominated regime: \(\beta=1\).}
Finally, suppose \(\beta=1\).
Private procedures are a subclass of randomized estimators.
Since \(n\sigma_n^2\to1\), the set \(\widetilde\Theta_n\) is a
root-\(n\) ball whose radius in root-\(n\) coordinates is
\(R_n\sqrt n\,\sigma_n\to\infty\).
Hence every fixed root-\(n\) local ball is eventually contained in
\(\widetilde\Theta_n\).
The classical local asymptotic minimax theorem
\citep[Theorem~8.11]{vaart1998_AsymptoticStatistics}, first for truncated
quadratic loss and then with the truncation level and local radius tending
to infinity, gives the lower bound after using
\(\sigma_n^{-2}/n\to1\).
Part~\textup{(ii)} of
\cref{appendix-lem:unified-info-identification} identifies the constant
with \(\mfC_{\theta_0}^{(1)}(W)\).

\subsection{Proof of \Cref{thm:compact-minimax}}
\label{appendix-subsec:compact-theorem}

The upper bound is \cref{appendix-lem:unified-compact-upper}.
For the lower bound, fix \(\theta_0\in K^\circ\), choose
\(R_n\to\infty\) sufficiently slowly that \(R_n\sigma_n\to0\), and form
the unified local sets
\[
  \widetilde\Theta_n
  =
  \dk{
    \theta_0+\sigma_n u:
    \norm{u}_2\le R_n
  }.
\]
These sets are eventually contained in \(K\).
For every estimator,
\[
  \sup_{\theta\in K}
  \frac{
    \sigma_n^{-2}
    \E_\theta
    \norm{\widehat\theta-\theta}_W^2
  }{
    \uC_\theta(W)
  }
  \ge
  \frac{
    \sigma_n^{-2}
    \sup_{\theta\in\widetilde\Theta_n}
    \E_\theta
    \norm{\widehat\theta-\theta}_W^2
  }{
    \sup_{\theta\in\widetilde\Theta_n}
    \uC_\theta(W)
  }.
\]
By \cref{appendix-lem:unified-constant-continuity}, the denominator is \(\uCt(W)\{1+o(1)\}\).
Taking the infimum over all private estimators and applying
\cref{thm:local-minimax} gives the lower bound.

\subsection{Proof of \Cref{thm:compact-attain-seq}}
\label{appendix-subsec:compact-attainment}

The relative risk bound follows from \cref{appendix-lem:unified-compact-upper}.
Its proof uses \cref{eq:private-localization-estimator,appendix-eq:compact-two-stage-estimator} for the first and second stages, respectively, so the resulting estimators have the asserted two-stage form.

\subsection{Proof of \Cref{cor:compact-minimax}}
\label{appendix-subsec:compact-minimax-corollary}

Put
\[
  C_{K,\beta}(W)
  =
  \max_{\theta\in K}\uC_\theta(W).
\]
The relative upper bound in \cref{appendix-lem:unified-compact-upper}, together with the boundedness in \cref{appendix-lem:unified-constant-continuity}, gives the upper bound \(C_{K,\beta}(W)\).
Conversely, embed a unified shrinking local set around any
\(\theta_0\in K^\circ\) and apply
\cref{thm:local-minimax}.
Taking the supremum over \(K^\circ\), then using
\(K=\overline{K^\circ}\) and \cref{appendix-lem:unified-constant-continuity}, gives the lower bound \(C_{K,\beta}(W)\).

\subsection{Efficiency for differentiable targets}
\label{appendix-subsec:differentiable-target-efficiency}

Let \(\psi:\Theta\to\R^m\) be a differentiable target, and let \(W\succ0\)
specify its quadratic loss.
As in \cref{eq:target-efficiency-constant}, set
\[
  W_{\psi,\theta}
  =
  D\psi(\theta)^\top W D\psi(\theta),
  \qquad
  \mfC_{\psi,\theta}^{(\beta)}(W)
  =
  \mfC_\theta^{(\beta)}(W_{\psi,\theta}).
\]

\begin{theorem}[Local efficiency for a differentiable target]
  \label{appendix-thm:differentiable-target-local}
  Under \cref{ass:regular,ass:score-density-l1}, suppose
  \(I(\theta_0)\succ0\).
  Let \((\rho_n)\) be a positive sequence such that
  \[
    n^2\rho_n\to\infty,
    \qquad
    \beta_n\to\beta\in[0,1].
  \]
  Let \(\psi:\Theta\to\R^m\) be differentiable at \(\theta_0\), and fix
  \(W\succ0\).
  Let \(\widetilde\Theta_n\) be as in
  \cref{eq:unified-local-param-set}, with \(R_n\) chosen sufficiently
  slowly. Then
  \begin{equation}
    \label{appendix-eq:differentiable-target-local-risk}
    \lim_{n\to\infty}
    \sigma_n^{-2}
    \inf_{\widehat\psi_n:\rho_n\text{-zCDP}}
    \sup_{\theta\in\widetilde\Theta_n}
    \E_\theta
    \norm{\widehat\psi_n-\psi(\theta)}_W^2
    =
    \mfC_{\psi,\theta_0}^{(\beta)}(W).
  \end{equation}
\end{theorem}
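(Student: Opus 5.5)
The plan is to reduce to the linearized target
\[
  \psi_0(\theta)=\psi(\theta_0)+D\psi(\theta_0)(\theta-\theta_0),
\]
and then rerun the parameter-level machinery with the loss matrix \(W_{\psi,\theta_0}=D\psi(\theta_0)^\top W D\psi(\theta_0)\), which may be singular.

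\emph{Choice of \(R_n\).} Differentiability at \(\theta_0\) only gives
\[
  \sup_{\theta\in\widetilde\Theta_n}\norm{\psi(\theta)-\psi_0(\theta)}_2
  \le \omega(R_n\sigma_n)\,R_n\sigma_n ,
\]
where \(\omega(t)\to0\) as \(t\to0\). This bound is \(o(\sigma_n)\) as soon as \(R_n\omega(R_n\sigma_n)\to0\). Such a divergent \(R_n\) exists because \(\sigma_n\to0\), and this is the meaning of ``\(R_n\) chosen sufficiently slowly''. With this choice, replacing \(\psi(\theta)\) by \(\psi_0(\theta)\) changes \(\sigma_n^{-2}\E_\theta\norm{\widehat\psi_n-\psi(\theta)}_W^2\) by \(o(1)\) uniformly over \(\widetilde\Theta_n\), provided the risk itself is \(O(\sigma_n^2)\). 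This follows from expanding the square and applying Cauchy--Schwarz; estimators with larger risk are irrelevant for both bounds. So it suffices to prove \cref{appendix-eq:differentiable-target-local-risk} for \(\psi_0\).

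\emph{Upper bound.} Take the attaining sequence \(\widehat\theta_n^\star\) from \cref{appendix-lem:unified-local-upper} for the loss matrix \(W_{\psi,\theta_0}\succeq0\), which is allowed there. Set \(\widehat\psi_n=\psi_0(\widehat\theta_n^\star)\); this is post-processing and hence \(\rho_n\)-zCDP. Then \(\widehat\psi_n-\psi_0(\theta)=D\psi(\theta_0)(\widehat\theta_n^\star-\theta)\) exactly, so
\[
  \norm{\widehat\psi_n-\psi_0(\theta)}_W^2=\norm{\widehat\theta_n^\star-\theta}_{W_{\psi,\theta_0}}^2 .
\]
The limsup is therefore at most \(\mfC^{(\beta)}_{\theta_0}(W_{\psi,\theta_0})\). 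Using \(\psi_0\) instead of \(\psi\) in the plug-in avoids any integrability issue from \(\psi\) being merely pointwise differentiable.

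\emph{Lower bound.} Write \(C=D\psi(\theta_0)\). I would repeat the three regimes of \cref{appendix-subsec:unified-local-lower}, replacing the matrix van Trees bound for \(\theta\) with the functional form of \citet{gill1995_ApplicationsVan} for the linear target \(C\theta\). That form gives, for any estimator,
\[
  \E_\pi\E_\theta\bigl(\widehat\psi-C\theta\bigr)\bigl(\widehat\psi-C\theta\bigr)^\top\succeq C H^{-1}C^\top,
\]
so the weighted risk is at least \(\Tr(W C H^{-1} C^\top)=\Tr(W_{\psi,\theta_0}H^{-1})\). The weighted version must be derived for estimators with infinite second moment just as in \cref{appendix-lem:matrix-van-trees}. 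The information bounds on \(H\) are unchanged: \cref{thm:diameter-contraction} in the privacy regime and \cref{appendix-lem:trans-averaged-information} in the transition regime. Passing to subsequential limits and using lower semicontinuity of \(\caI_{W_{\psi,\theta_0}}\) then gives \(\mfC^{(\beta)}_{\theta_0}(W_{\psi,\theta_0})\). At \(\beta=1\) I would instead invoke the classical local asymptotic minimax theorem for the differentiable functional \(C\theta\), which yields \(\Tr(W C I(\theta_0)^{-1}C^\top)\).

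The main obstacle is the singular case of \(W_{\psi,\theta_0}\), for instance when \(m<p\). The limiting step must still go through with \(\caI_W\) possibly infinite on part of the region. Compactness of the regions, together with the approximation of \(\caI_W\) by \(\Tr\bigl[W(\,\cdot+cI)^{-1}\bigr]\) used in \cref{appendix-subsec:trans-local-lower-proof}, should handle this case uniformly.
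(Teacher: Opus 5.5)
Your proposal is correct. The choice of \(R_n\), the transfer between \(\psi\) and its linearization, and the upper bound via the affine post-processing \(\psi_0(\widehat\theta_n^\star)\) are essentially what the paper does.

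The difference is in the lower bound, where the paper never reopens the van Trees arguments. Write \(C=D\psi(\theta_0)\). The paper observes that estimating \(\psi_0(\theta)\) under \(W\) is \emph{exactly equivalent} to estimating \(\theta\) under \(W_{\psi,\theta_0}\). Take any target estimator \(\widehat\psi\) and \(W\)-project it onto the affine subspace \(\psi(\theta_0)+\ran(C)\). This subspace contains the truth, so the projection cannot increase the \(W\)-loss. Then lift the projection through a fixed right inverse of \(C\) on its range. The result is a parameter estimator \(\widehat\theta\) with
\(\norm{\widehat\theta-\theta}_{W_{\psi,\theta_0}}\le\norm{\widehat\psi-\psi_0(\theta)}_W\),
and it is still \(\rho_n\)-zCDP by post-processing. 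The two minimax risks therefore coincide, and \cref{thm:local-minimax}, which is already stated for \(W\succeq0\), gives both bounds at once.

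Your route also works:
\begin{itemize}
\item functional van Trees for \(C\theta\) in the privacy and transition regimes;
\item the functional local asymptotic minimax theorem at \(\beta=1\).
\end{itemize}
The matrix bound \(CH^{-1}C^\top\) follows from scalar van Trees applied to each \(a^\top C\theta\). Since \(W\succ0\), a finite weighted risk forces a finite second moment, so the extended-value issue is harmless. However, your route duplicates three lower-bound proofs that the projection--lift reduction lets you use as black boxes.

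The singular-\(W_{\psi,\theta_0}\) case you single out as the main obstacle is not new. \Cref{thm:matrix-local-lower,appendix-thm:trans-local-lower} already allow positive semidefinite loss matrices, through lower semicontinuity of \(\caI_W\) and the \(\caI_W(\cdot+cI_p)\) approximation. So it is handled automatically on either route.

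What your approach buys is a direct Bayes-risk lower bound for target estimators that never constructs a parameter estimator. Functional van Trees would also extend to nonlinear \(\psi\) through the prior-averaged derivative, though that is not needed here. What the paper's approach buys is brevity and modularity: nothing about regularity or limiting arguments has to be rechecked.
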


\begin{theorem}[Compact efficiency for a differentiable target]
  \label{appendix-thm:differentiable-target-compact}
  Let \(K\subset\Theta\) be a nonempty compact set satisfying
  \(K=\overline{K^\circ}\).
  Under \cref{ass:regular,ass:score-density-l1}, suppose
  \(I(\theta)\succ0\) for every \(\theta\in K\),
  \(\theta\mapsto I(\theta)\) is continuous on \(K\), and the model is
  identifiable on \(K\).
  Let \((\rho_n)\) be a positive sequence such that
  \[
    n^2\rho_n\to\infty,
    \qquad
    \beta_n\to\beta\in[0,1].
  \]
  Suppose that \(\psi:\Theta\to\R^m\) is continuously differentiable on a
  neighborhood of \(K\) and \(D\psi(\theta)\ne0\) for every \(\theta\in K\),
  and fix \(W\succ0\).
  Then \(\theta\mapsto\mfC_{\psi,\theta}^{(\beta)}(W)\) is continuous and
  strictly positive on \(K\), and
  \begin{equation}
    \label{appendix-eq:differentiable-target-compact-risk}
    \lim_{n\to\infty}
    \inf_{\widehat\psi_n:\rho_n\text{-zCDP}}
    \sup_{\theta\in K}
    \frac{
      \sigma_n^{-2}
      \E_\theta\norm{\widehat\psi_n-\psi(\theta)}_W^2
    }{
      \mfC_{\psi,\theta}^{(\beta)}(W)
    }
    =1.
  \end{equation}
  Consequently,
  \begin{equation}
    \label{appendix-eq:differentiable-target-compact-minimax}
    \lim_{n\to\infty}
    \sigma_n^{-2}
    \inf_{\widehat\psi_n:\rho_n\text{-zCDP}}
    \sup_{\theta\in K}
    \E_\theta\norm{\widehat\psi_n-\psi(\theta)}_W^2
    =
    \max_{\theta\in K}
    \mfC_{\psi,\theta}^{(\beta)}(W).
  \end{equation}
\end{theorem}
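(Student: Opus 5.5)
We derive the statement from the proof of \cref{thm:compact-minimax}, applied with the $\theta$-dependent loss matrix $W_{\psi,\theta}=D\psi(\theta)^\top W D\psi(\theta)$ in place of a fixed $W$, combined with a uniform delta method. The argument has three parts: a continuity and positivity statement for $\theta\mapsto\mfC^{(\beta)}_{\psi,\theta}(W)$, an upper bound from a plug-in two-stage estimator, and a lower bound obtained by localizing at interior points. The last display then follows exactly as in the proof of \cref{cor:compact-minimax}.

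\emph{Continuity and positivity.} Since $W\succ0$ and $D\psi(\theta)\ne0$, we have $W_{\psi,\theta}\succeq0$ and $W_{\psi,\theta}\ne0$, and $\theta\mapsto W_{\psi,\theta}$ is continuous on a neighborhood of $K$. The plan is to upgrade \cref{appendix-lem:unified-constant-continuity} to joint continuity of $(\theta,V)\mapsto\mfC^{(\beta)}_\theta(V)$ on $K\times\{V\succeq0\}$; the target constant is then this function evaluated along the continuous curve $\theta\mapsto(\theta,W_{\psi,\theta})$.
\begin{itemize}
\item \emph{Lower semicontinuity.} The map $(V,H)\mapsto\caI_V(H)$ is jointly lower semicontinuous, being a monotone limit of the continuous maps $\Tr[V(H+tI_p)^{-1}]$. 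Combining this with the Hausdorff continuity of $\theta\mapsto\caU_{\theta,\beta}$ from the same proof (for $\beta<1$) and compactness gives lower semicontinuity of the constant.
\item \emph{Upper semicontinuity.} By \cref{appendix-lem:unified-near-optimal-stat} (which allows singular $W\succeq0$), there is a statistic with nonsingular $A_{\Phi,\theta}$ whose risk is near-optimal at $(\theta,W_{\psi,\theta})$. The expression $\Tr[V A_{\Phi,\vartheta}^{-1}\Sigma_{\Phi,\vartheta,\beta}A_{\Phi,\vartheta}^{-\top}]$ is jointly continuous in $(\vartheta,V)$, which gives upper semicontinuity.
\item \emph{Case $\beta=1$.} The constant is $\Tr\{W_{\psi,\theta}I(\theta)^{-1}\}$, which is continuous by assumption.
\item \emph{Positivity.} By \cref{appendix-eq:unified-fisher-envelope}, $\caI_V(H)\ge(2-\beta)\Tr\{V I(\theta)^{-1}\}$. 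This is positive for $V=W_{\psi,\theta}\ne0$ and bounded below uniformly on $K$.
\end{itemize}

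\emph{Upper bound.} I would rerun \cref{appendix-lem:unified-compact-charts} with ratio
\[
\frac{\Tr[W_{\psi,\theta}A_{\Phi_j,\theta}^{-1}\Sigma_{\Phi_j,\theta,\beta}A_{\Phi_j,\theta}^{-\top}]}{\mfC^{(\beta)}_{\psi,\theta}(W)},
\]
which is continuous near each point by the previous step. The balls $\mathcal B_j$ should be shrunk so that the compact images of the maps $F_j$ lie in a fixed compact neighborhood $K^+$ of $K$ on which $\psi$ is $C^1$. The estimator is $\widehat\psi_n=\psi(\widehat\theta_{\epsilon,n})$, where $\widehat\theta_{\epsilon,n}$ is the two-stage estimator \cref{appendix-eq:compact-two-stage-estimator}; by post-processing it is $\rho_n$-zCDP.
\begin{itemize}
\item \emph{On the localization event, close to $\theta$.} On $\{\|\widehat\theta-\theta\|_2\le r_n\}$ with $r_n\to0$ slowly, uniform continuity of $D\psi$ on $K^+$ gives
\[
\widehat\psi_n-\psi(\theta)=D\psi(\theta)(\widehat\theta-\theta)+o(1)\,\|\widehat\theta-\theta\|_2,
\]
uniformly in $\theta$.
\item \emph{Away from $\theta$.} The complement has probability $o(\sigma_n^2)$ uniformly. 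This uses \cref{appendix-lem:unified-fixed-chart-expansion} together with fourth-moment (Chebyshev) bounds, and \cref{appendix-eq:unified-compact-localization} for the first stage. On this event $\psi$ is bounded on $K^+$, so the contribution to the risk is $o(\sigma_n^2)$.
\item \emph{Conclusion.} Therefore $\sigma_n^{-2}\E_\theta\|\widehat\psi_n-\psi(\theta)\|_W^2$ equals the chart risk with loss $W_{\psi,\theta}$ plus $o(1)$, uniformly on $K$. Diagonalizing in $\epsilon$ as in \cref{appendix-lem:unified-compact-upper} gives $\limsup\le1$.
\end{itemize}

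\emph{Lower bound.} Fix $\theta_0\in K^\circ$ and apply \cref{appendix-thm:differentiable-target-local} on shrinking sets $\widetilde\Theta_n\subset K$. Then bound the supremum of the ratio below by the local supremum risk divided by $\sup_{\widetilde\Theta_n}\mfC^{(\beta)}_{\psi,\theta}(W)=\mfC^{(\beta)}_{\psi,\theta_0}(W)\{1+o(1)\}$, exactly as in the proof of \cref{thm:compact-minimax}.

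I expect the main obstacle to be the joint continuity in $(\theta,V)$ when $W_{\psi,\theta}$ is singular, because then $\caI_V$ can be infinite on the boundary of the region. To handle this I would rely on the positive-definite perturbation argument of \cref{appendix-lem:unified-near-optimal-stat} to keep the upper semicontinuity step within nonsingular statistics.
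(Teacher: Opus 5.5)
Your proposal is correct and follows the paper's own route. You substitute $W_{\psi,\theta}=D\psi(\theta)^\top W D\psi(\theta)$ for $W$ in the continuity and finite-chart lemmas, shrink the chart balls so the inverse moment maps land where $\psi$ is $C^1$, post-process the two-stage estimator by $\psi$ with a uniform first-order expansion, and get the lower bound from the local differentiable-target theorem at interior points plus continuity and $K=\overline{K^\circ}$; your write-up just makes explicit what the paper leaves implicit (joint semicontinuity in $(\theta,V)$, the positivity bound $\caI_V(H)\ge(2-\beta)\Tr\{VI(\theta)^{-1}\}$, and the near/far split, where e.g.\ $r_n=\sigma_n^{1/4}$ makes the fourth-moment bound $O(\sigma_n^4/r_n^4)=o(\sigma_n^2)$).
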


\begin{proof}[Modifications for differentiable targets]
For the local result, put
\[
  B_0=D\psi(\theta_0),
  \qquad
  W_{\psi,0}=B_0^\top W B_0.
\]
For the linearized target
\[
  \psi_0^{\mathrm{lin}}(\theta)
  =
  \psi(\theta_0)+B_0(\theta-\theta_0),
\]
estimating \(\psi_0^{\mathrm{lin}}(\theta)\) under loss \(W\) is equivalent to
estimating \(\theta\) under loss \(W_{\psi,0}\).
In one direction, apply the affine map above to any parameter estimator.
In the other, project a target estimator onto
\(\psi(\theta_0)+\ran(B_0)\) with respect to \(W\), then lift the projection
using a fixed right inverse of \(B_0\) on its range.
Now apply \cref{thm:local-minimax}, which permits a positive semidefinite
loss matrix, and choose \(R_n\) slowly enough that
\[
  \sup_{\norm{u}_2\le R_n}
  \sigma_n^{-1}
  \norm{
    \psi(\theta_0+\sigma_nu)
    -\psi(\theta_0)
    -\sigma_nB_0u
  }_W
  \longrightarrow0.
\]
This uniform linearization allows us to replace the affine target by
\(\psi(\theta)\) without changing the limiting risk.

For the compact result, replace the fixed loss matrix in
\cref{appendix-lem:unified-constant-continuity,appendix-lem:unified-compact-charts} by
\(W_{\psi,\theta}=D\psi(\theta)^\top W D\psi(\theta)\).
Because \(D\psi\) is continuous, the lower semicontinuity argument and the
construction using finitely many charts go through with this replacement.
The condition \(D\psi(\theta)\ne0\) ensures that the target constants are
bounded away from zero on \(K\).
If necessary, shrink the chart balls so that their inverse moment maps take
values in the neighborhood on which \(\psi\) is \(C^1\).
Within each chart, replace the output \(F_j(Y_{j,n})\) by
\(\psi\{F_j(Y_{j,n})\}\) and use the uniform first-order expansion of
\(\psi\) over the compact image of the chart.
The preliminary localization, allocation of the sample and privacy budget,
Gaussian calibration, and adaptive composition require no change.
For the lower bound, apply \cref{appendix-thm:differentiable-target-local} at each
\(\theta_0\in K^\circ\), then use continuity and
\(K=\overline{K^\circ}\) as in the proof of
\cref{thm:compact-minimax}.
\end{proof}

\subsection{Proof of \Cref{prop:unbounded-impossibility}}
\label{appendix-subsec:unbounded-param-impossibility}

Fix \(\theta,\vartheta\in\Theta\), and write
\[
  d^2=\norm{\theta-\vartheta}_W^2.
\]
Group privacy for zCDP~\citep[Proposition~5.3]{bun2016_ConcentratedDifferential}
and joint convexity of the Kullback--Leibler divergence give
\[
  D_{\mathrm{KL}}(Q_\theta^M\|Q_\vartheta^M)
  \le n^2\rho.
\]
Indeed, group privacy gives this bound for every pair of deterministic data sets in \(\caX^n\), and joint convexity preserves it after mixing under \(P_\theta^{\otimes n}\) and \(P_\vartheta^{\otimes n}\).
The two-point testing reduction and the Bretagnolle--Huber inequality therefore yield
\[
  \max_{\eta\in\{\theta,\vartheta\}}
  \E_\eta
  \norm{\widehat{\theta}-\eta}_W^2
  \ge
  \frac{d^2}{16}e^{-n^2\rho}.
\]
For fixed \(n\) and finite \(\rho\), the exponential factor is positive.
Letting \(d^2\to\infty\) proves the claim.
   \clearpage
  \section{Reductions for Various Model Classes}
\label{appendix-sec:model-class-reductions}

This section develops reusable reductions for several general model classes.
\Cref{appendix-sec:concrete-model-constants} then applies these reductions to calculate the constants in concrete models.

\subsection{Finite alphabet model}
\label{appendix-subsec:fin-alphabet-sdp}

For a finite experiment, the unified variational problem admits an exact semidefinite formulation throughout the privacy and transition regimes.
Besides making the constant computable, the formulation yields numerical certificates through primal--dual semidefinite bounds.

At \(\theta_0\), write
\[
  \pi
  =
  \bigl(P_{\theta_0}\{x_1\},\ldots,P_{\theta_0}\{x_m\}\bigr)^\top,
  \qquad
  S_{i\cdot}=s_{\theta_0}(x_i)^\top.
\]
Also write
\[
  B=\operatorname{diag}(\pi)S,
  \qquad
  V_\pi=\operatorname{diag}(\pi)-\pi\pi^\top.
\]
Let \(\Phi_i=\Phi(x_i)\) take values in a real Hilbert space.
Translate the points so that \(\sum_i \pi_i \Phi_i=0\); this changes neither their diameter nor \(A_{\Phi,\theta_0}\).
Their Gram matrix \(G_{ij}=\ang{\Phi_i,\Phi_j}\) satisfies
\[
  G\succeq0,
  \qquad
  G\pi=0,
  \qquad
  \norm{\Phi_i-\Phi_j}_2^2
  =G_{ii}+G_{jj}-2G_{ij}
  \le1.
\]
Conversely, every matrix satisfying these constraints is the Gram matrix of centered points in \(\R^m\) having diameter at most one.
Indeed, after choosing a Gram factorization \(G=FF^\top\), the identity \(G\pi=0\) implies \(F^\top \pi=0\).

If the rows of \(F\) are \(\Phi_i^\top\), then
\[
  A_{\Phi,\theta_0}=F^\top B,
  \qquad
  J_{\Phi,\theta_0}=B^\top FF^\top B=B^\top G B.
\]
Thus the matrices \(B^\top G B\) generated by the first two lines of constraints are exactly \(\caJ_{\theta_0}\).
No closure is lost: centering gives
\[
  \Phi_i=\sum_j \pi_j(\Phi_i-\Phi_j),
  \qquad
  \norm{\Phi_i}_2 \le1,
\]
so the feasible Gram matrices form a bounded closed set.

\begin{proposition}[Finite alphabet unified SDP reduction]
  \label{appendix-prop:fin-alphabet-unified-sdp}
  Suppose \(I(\theta_0)\succ0\) and \(W\succ0\).
  For \(0\le\beta<1\), set \(c_\beta=(1-\beta)/2\), and let \(V_\pi^{1/2}\) be the symmetric positive-semidefinite square root of \(V_\pi\).
  Then
  \begin{equation}
    \label{appendix-eq:fin-alphabet-unified-sdp}
    \begin{aligned}
      \uCt(W)
      ={}&
      \min_{G,T\in\bbS_+^m,\,Z\in\bbS_+^p} \Tr Z \\
      \text{subject to }{}&
      G\pi=0,\quad
      G_{ii}+G_{jj}-2G_{ij}\le1\quad\text{for all }i,j,\\
      &
      \begin{pmatrix}
        c_\beta^{-1}G-T&\sqrt{\beta}\,c_\beta^{-1}GV_\pi^{1/2}\\
        \sqrt{\beta}\,c_\beta^{-1}V_\pi^{1/2}G&I_m+\beta c_\beta^{-1}V_\pi^{1/2}GV_\pi^{1/2}
      \end{pmatrix}
      \succeq0,\\
      &
      \begin{pmatrix}
        B^\top T B&W^{1/2}\\
        W^{1/2}&Z
      \end{pmatrix}
      \succeq0.
    \end{aligned}
  \end{equation}
  The minimum is attained.
  At \(\beta=0\), after eliminating \(T\), \cref{appendix-eq:fin-alphabet-unified-sdp} reduces to \cref{eq:fin-alphabet-sdp}.
\end{proposition}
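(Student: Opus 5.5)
The plan is to rewrite the unified variational problem of \cref{def:unified-info} as an optimization over the Gram matrix \(G\) of the statistic, which the discussion preceding the proposition has already identified with the first line of constraints. Then I will show that the two linear matrix inequalities encode, through Schur complements, first the map \(G\mapsto H_{\Phi,\theta_0,\beta}\) and then the inverse trace \(\Tr(WH^{-1})\).

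\emph{Step 1 (generators in terms of \(G\)).} Fix \(\beta<1\) and a centered statistic \(\Phi_i=\Phi(x_i)\) with diameter at most one. Let \(F\in\R^{m\times r}\) have rows \(\Phi_i^\top\), so that \(G=FF^\top\). By \cref{appendix-prop:unified-region-geometry}\textup{(i)}, \(H_{\Phi,\theta_0,\beta}\) depends only on \(G\) and not on the chosen factorization. We have \(A_{\Phi,\theta_0}=F^\top B\), \(V_{\Phi,\theta_0}=F^\top V_\pi F\), and \(\Sigma=\beta F^\top V_\pi F+c_\beta I_r\succeq c_\beta I_r\). Hence
\[
H_{\Phi,\theta_0,\beta}=B^\top\Psi_\beta(G)B,\qquad \Psi_\beta(G)\coloneqq F\bigl(c_\beta I_r+X^\top X\bigr)^{-1}F^\top,\quad X=\sqrt\beta\,V_\pi^{1/2}F .
\]
The Woodbury identity \((c I+X^\top X)^{-1}=c^{-1}I-c^{-1}X^\top(cI+XX^\top)^{-1}X\) then gives
\[
\Psi_\beta(G)=c_\beta^{-1}G-\beta c_\beta^{-1}GV_\pi^{1/2}\bigl(c_\beta I_m+\beta V_\pi^{1/2}GV_\pi^{1/2}\bigr)^{-1}V_\pi^{1/2}G .
\]
This expression depends on \(G\) alone and is continuous on the feasible set of \(G\). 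The first matrix inequality is then handled by a Schur complement with respect to its positive definite lower-right block \(I_m+\beta c_\beta^{-1}V_\pi^{1/2}GV_\pi^{1/2}\). Rescaling by \(c_\beta\), the Schur complement condition reads exactly \(T\preceq\Psi_\beta(G)\). Since \(\Psi_\beta(G)=F\Sigma^{-1}F^\top\succeq0\), the choice \(T=\Psi_\beta(G)\) is admissible in \(\bbS_+^m\).

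\emph{Step 2 (the trace inequality).} With \(W\succ0\), the second inequality forces \(B^\top TB\) to be nonsingular on the relevant range. Its Schur complement gives \(Z\succeq W^{1/2}(B^\top TB)^{-1}W^{1/2}\), so \(\min_Z\Tr Z=\Tr\{W(B^\top TB)^{-1}\}\). Inverse monotonicity together with \(B^\top TB\preceq B^\top\Psi_\beta(G)B=H_{\Phi,\theta_0,\beta}\) shows that the optimum over \(T\) is \(T=\Psi_\beta(G)\). The SDP value is therefore \(\inf_G\caI_W\bigl(B^\top\Psi_\beta(G)B\bigr)\), where \(G\) ranges over the Gram matrices described before the proposition. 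These are exactly the generators of \(\caU_{\theta_0,\beta}\) after the \(\R^p\) compression of \cref{appendix-prop:unified-region-geometry}\textup{(ii)}. The feasible \(G\)-set is compact, as noted before the proposition, and \(G\mapsto B^\top\Psi_\beta(G)B\) is continuous. Hence the generator set is already closed, and the value equals \(\uCt(W)\).

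\emph{Step 3 (attainment and \(\beta=0\)).} By \cref{appendix-prop:unified-region-geometry}\textup{(v)}, some minimizer \(H_*\) exists, and it is generated by some feasible \(G_*\) by the closedness just shown. Finiteness of \(\uCt(W)\) together with \(W\succ0\) forces \(H_*\succ0\). Then \((G_*,\Psi_\beta(G_*),W^{1/2}H_*^{-1}W^{1/2})\) attains the minimum. At \(\beta=0\) we have \(\Psi_0(G)=2G\), so \(T\) can be eliminated via \(T=2G\). Since \((2B^\top GB)^{-1}=\tfrac12(B^\top GB)^{-1}\), this reproduces the factor \(\tfrac12\) in \cref{eq:fin-alphabet-sdp}.

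I expect the main obstacle to be Step 1. It requires checking that the Schur complement of the first block matrix coincides exactly with the Woodbury form of \(F\Sigma^{-1}F^\top\), including the powers of \(c_\beta\) and \(\sqrt\beta\), and it relies on the invariance under isometries so that the number of columns of \(F\) plays no role.
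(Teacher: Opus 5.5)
Your proposal is correct and follows essentially the same route as the paper. The common steps are: Woodbury to write \(F\Sigma^{-1}F^\top\) as a function \(\Psi_\beta(G)\) of the Gram matrix alone, a Schur complement turning the first block constraint into \(T\preceq\Psi_\beta(G)\), Loewner monotonicity of the Schur-complemented trace to force \(T=\Psi_\beta(G)\), and compactness of the feasible Gram set plus continuity of \(G\mapsto B^\top\Psi_\beta(G)B\) to remove the closure and obtain attainment. The only cosmetic difference is at \(\beta=0\), where you compare optimal values via \((2B^\top GB)^{-1}=\tfrac12(B^\top GB)^{-1}\) while the paper rescales \(Z_0=2Z\) by a congruence; the two are equivalent.
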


\begin{proof}
  Fix a feasible \(G\), choose a Gram factorization \(G=FF^\top\) with \(F\in\R^{m\times d}\), and let the rows of \(F\) be the values of a centered diameter-one statistic.
  Conversely, every such statistic produces a feasible \(G\), and the compression statement in \cref{appendix-prop:unified-region-geometry}\textup{(ii)} shows that this parameterization loses no unified information matrices.
  For \(c_\beta>0\), the Woodbury identity gives
  \[
    \begin{aligned}
      &F\xk{\beta F^\top V_\pi F+c_\beta I_d}^{-1}F^\top\\
      &\qquad=
      c_\beta^{-1}G
      -\beta c_\beta^{-2}GV_\pi^{1/2}
      \xk{I_m+\beta c_\beta^{-1}V_\pi^{1/2}GV_\pi^{1/2}}^{-1}
      V_\pi^{1/2}G
      \eqqcolon T_\beta(G).
    \end{aligned}
  \]
  The first block constraint in \cref{appendix-eq:fin-alphabet-unified-sdp} is equivalent to \(T\preceq T_\beta(G)\).
  The map \(G\mapsto T_\beta(G)\) is continuous on the compact feasible Gram set, so the closure in the definition of the unified information region adds no further matrices here.
  The matrix \(B^\top T_\beta(G)B\) is precisely the unified information matrix generated by the statistic represented by \(F\).
  Because \(W\succ0\), the final block constraint requires \(B^\top T B\succ0\), and its Schur complement gives \(Z\succeq W^{1/2}(B^\top T B)^{-1}W^{1/2}\).
  The objective is decreasing in \(B^\top T B\), so it is minimized at \(T=T_\beta(G)\).
  Optimizing over the equivalent Gram and statistic representations proves the claimed equality.
  Compactness of the feasible Gram set and divergence at singular information matrices give attainment.
  When \(\beta=0\), the first block constraint reduces to \(T\preceq2G\), and the same monotonicity gives \(T=2G\).
  Writing \(Z_0=2Z\), a congruence transformation turns the final block constraint into the one in \cref{eq:fin-alphabet-sdp}, while \(\Tr Z=\Tr Z_0/2\).
\end{proof}

\subsection{Fixed statistic decomposition}
\label{appendix-subsec:trans-reductions}

For a fixed statistic, the sampling and privacy contributions separate.
The resulting lower bound is additive, but it is sharp only when the privacy and sampling regime problems admit a common optimizing sequence.
The reductions in this section identify this joint optimizer for binary, spherically symmetric, saturated categorical, and fixed-radius scores.
They yield both elementary closed forms and constants determined by a unique scalar clipping threshold.

\begin{proposition}[Fixed statistic decomposition]
  \label{appendix-prop:trans-stat-decomp}
  Under \cref{ass:regular}, fix \(\theta_0 \in\Theta\) with
  \(I(\theta_0)\succ0\), let \(W\succ0\), and let
  \(\beta\in(0,1)\).
  For \(\Phi:\caX\to\R^p\), abbreviate
  \[
    A_\Phi=A_{\Phi,\theta_0},
    \qquad
    V_\Phi=V_{\Phi,\theta_0}.
  \]
  Then
  \begin{equation}
    \label{appendix-eq:trans-fixed-stat-decomp}
    \uCt(W)=\inf_\Phi\Biggl(\beta\Tr\xk{W A_\Phi^{-1}V_\Phi A_\Phi^{-\top}}+\frac{1-\beta}{2}\Tr\xk{W(A_\Phi^\top A_\Phi)^{-1}}\Biggr),
  \end{equation}
  where the infimum is over diameter-one \(\Phi:\caX\to\R^p\) with
  nonsingular \(A_\Phi\).
  Consequently,
  \begin{equation}
    \label{appendix-eq:trans-regime-lower-bound}
    \uCt(W)
    \ge
    \beta\Tr\xk{WI(\theta_0)^{-1}}
    +(1-\beta)\mfC_{\theta_0}(W).
  \end{equation}
  Equality holds in \cref{appendix-eq:trans-regime-lower-bound} if and only if there is a sequence of admissible statistics that is asymptotically efficient in both the sampling and privacy regimes under the loss \(W\).
  In particular, equality holds if one statistic attains both regime problems.
\end{proposition}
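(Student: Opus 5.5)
The plan has three parts: turn the variational definition of \(\uCt(W)\) into an infimum over invertible statistics, bound each term of the fixed-statistic risk by its endpoint constant, and read off the equality condition from the fact that both bounds must be nearly tight along a single sequence.

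\textbf{Step 1: fixed-statistic form \cref{appendix-eq:trans-fixed-stat-decomp}.}
Take any diameter-one \(\Phi:\caX\to\R^p\) with \(A_\Phi\) nonsingular. Then \(\Sigma_{\Phi,\theta_0,\beta}\succ0\), so \(H_{\Phi,\theta_0,\beta}\succ0\), and
\[
\caI_W(H_\Phi)=\Tr\bigl(WA_\Phi^{-1}\Sigma_\Phi A_\Phi^{-\top}\bigr).
\]
Expanding \(\Sigma_\Phi=\beta V_\Phi+\frac{1-\beta}{2}I_p\) splits this trace into the two displayed terms, since \(A_\Phi^{-1}A_\Phi^{-\top}=(A_\Phi^\top A_\Phi)^{-1}\).
\begin{itemize}
\item[] \emph{Upper bound on \(\uCt(W)\).} Every such \(H_\Phi\) lies in \(\caU_{\theta_0,\beta}\), so \(\uCt(W)\) is at most the infimum in \cref{appendix-eq:trans-fixed-stat-decomp}.
\item[] \emph{Lower bound on \(\uCt(W)\).} \Cref{appendix-lem:unified-near-optimal-stat} supplies invertible diameter-one statistics whose fixed-statistic risk is within \(\epsilon\) of \(\uCt(W)\). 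This gives the reverse inequality.
\end{itemize}

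\textbf{Step 2: termwise lower bounds giving \cref{appendix-eq:trans-regime-lower-bound}.}
\begin{itemize}
\item[] \emph{Sampling term.} Apply the information inequality \(A_\Phi^\top V_\Phi^{\dagger}A_\Phi\preceq I(\theta_0)\) from the proof of \cref{appendix-lem:unified-info-identification}. Nonsingularity of \(A_\Phi\) forces \(V_\Phi\succ0\), since \(\ker V_\Phi\subseteq\ker A_\Phi^\top\). Inverting then gives \(A_\Phi^{-1}V_\Phi A_\Phi^{-\top}\succeq I(\theta_0)^{-1}\), so the first term is at least \(\beta\Tr(WI(\theta_0)^{-1})\).
\item[] \emph{Privacy term.} \(A_\Phi^\top A_\Phi=J_{\Phi,\theta_0}\in\caJ_{\theta_0}\), so \(\frac12\Tr\bigl(W(A_\Phi^\top A_\Phi)^{-1}\bigr)\ge\mfC_{\theta_0}(W)\). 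Multiplying by \(1-\beta\), the second term is at least \((1-\beta)\mfC_{\theta_0}(W)\).
\end{itemize}
Adding the two bounds and taking the infimum over \(\Phi\) gives \cref{appendix-eq:trans-regime-lower-bound}.

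\textbf{Step 3: the equality characterization.} Call a sequence \(\Phi_m\) \emph{asymptotically efficient in both regimes} if
\[
\Tr\bigl(WA_{\Phi_m}^{-1}V_{\Phi_m}A_{\Phi_m}^{-\top}\bigr)\to\Tr\bigl(WI(\theta_0)^{-1}\bigr)
\quad\text{and}\quad
\tfrac12\Tr\bigl(W(A_{\Phi_m}^\top A_{\Phi_m})^{-1}\bigr)\to\mfC_{\theta_0}(W).
\]
\begin{itemize}
\item[] \emph{If such a sequence exists}, Step 1 shows \(\uCt(W)\) is at most the limit of the sum, which equals the right side of \cref{appendix-eq:trans-regime-lower-bound}. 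Combined with Step 2 this gives equality.
\item[] \emph{If equality holds}, take a minimizing sequence for \cref{appendix-eq:trans-fixed-stat-decomp}. Each of its two terms is bounded below by its endpoint constant (Step 2), and their sum converges to the sum of those constants. Since \(0<\beta<1\), each term must individually converge to its lower bound, which is exactly efficiency in both regimes.
\item[] \emph{The single-statistic case} is the special case of a constant sequence.
\end{itemize}

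The main obstacle is Step 1's lower bound on \(\uCt(W)\): minimizers of \(\caI_W\) over \(\caU_{\theta_0,\beta}\) must be approximated by statistics with invertible \(A_\Phi\). This is handled by citing \cref{appendix-lem:unified-near-optimal-stat}, which relies on \(\beta<1\) and \(I(\theta_0)\succ0\).
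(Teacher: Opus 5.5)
Your proposal is correct and follows the paper's proof essentially step for step: the same inversion \(H_\Phi^{-1}=A_\Phi^{-1}\Sigma_{\Phi,\theta_0,\beta}A_\Phi^{-\top}\), the same two termwise bounds (the information inequality and \(A_\Phi^\top A_\Phi\in\caJ_{\theta_0}\)), and the same ``both nonnegative gaps must vanish'' argument for the equality characterization. The only difference is that you obtain the restriction to invertible \(\R^p\)-valued statistics by citing \cref{appendix-lem:unified-near-optimal-stat}. The paper instead reruns that approximation inline, using \(W\succ0\) to see directly that the optimizer \(H_*\) is positive definite. Note that the lemma is stated under \cref{ass:score-density-l1} as well, but the part you use (near-optimality with nonsingular \(A_\Phi\)) relies only on \cref{ass:regular}, so the citation is harmless.
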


\begin{proof}
  For a finite-dimensional diameter-one statistic, its unified information is
  \[
    H_{\Phi,\theta_0,\beta}
    =
    A_{\Phi,\theta_0}^\top
    \xk{
      \beta V_{\Phi,\theta_0}+\frac{1-\beta}{2}I
    }^{-1}
    A_{\Phi,\theta_0}.
  \]
  Compactness and lower semicontinuity give an optimizing matrix
  \(H_*\in\caU_{\theta_0,\beta}\).
  The region contains a positive-definite matrix, whereas \(W\succ0\) makes
  the objective infinite at every singular matrix, so \(H_*\succ0\).
  Choose generators \(H_{\Phi_m,\theta_0,\beta}\to H_*\).
  They are eventually positive definite, and continuity of matrix inversion
  gives
  \(\caI_W(H_{\Phi_m,\theta_0,\beta})\to\caI_W(H_*)\).
  The compression statement in
  \cref{appendix-prop:unified-region-geometry}\textup{(ii)} replaces each
  generator by an \(\R^p\)-valued statistic with no larger diameter and the
  same information matrix.
  Its square derivative \(A_{\Phi_m}\) is nonsingular.
  This proves that the infimum may be restricted as in
  \cref{appendix-eq:trans-fixed-stat-decomp}.

  For such a statistic, \(V_\Phi \succ0\): indeed,
  \(\ker(V_\Phi)\subset\ker(A_\Phi^\top)\), whereas \(A_\Phi\) is
  nonsingular.
  Direct inversion gives
  \[
    H_{\Phi,\theta_0,\beta}^{-1}
    =
    A_\Phi^{-1}
    \xk{\beta V_\Phi+\frac{1-\beta}{2}I_p}
    A_\Phi^{-\top}.
  \]
  Taking the \(W\)-weighted trace proves
  \cref{appendix-eq:trans-fixed-stat-decomp}.

  The information inequality
  \[
    A_\Phi^\top V_\Phi^{-1} A_\Phi
    \preceq
    I(\theta_0)
  \]
  implies
  \[
    A_\Phi^{-1} V_\Phi A_\Phi^{-\top}
    \succeq
    I(\theta_0)^{-1}.
  \]
  Moreover, \(A_\Phi^\top A_\Phi \in\caJ_{\theta_0}\), and hence
  \[
    \frac{1}{2}
    \Tr\xk{W(A_\Phi^\top A_\Phi)^{-1}}
    \ge
    \mfC_{\theta_0}(W).
  \]
  Weighting the two inequalities by \(\beta\) and \(1-\beta\), respectively,
  proves
  \cref{appendix-eq:trans-regime-lower-bound}.
  The gap between the two sides is the sum of the \(\beta\)-weighted
  nonnegative Fisher gap and the \((1-\beta)\)-weighted nonnegative privacy gap.
  It vanishes along an admissible sequence exactly when both gaps vanish
  along that sequence.
\end{proof}

\subsection{General one-dimensional formula}
\label{appendix-subsec:trans-1d}

At \(\beta=1\), a \(0/0\) ratio in
\cref{eq:1d-unified-info} is interpreted as zero.

\begin{proof}[Proof of the unified formula in \cref{prop:1d-matching}]
  Apply \cref{appendix-lem:unified-directional-envelope} with \(p=1\) and
  \(h=1\).
  Since \(\caU_{\theta_0,\beta}\) is a compact downward-closed subset of
  \(\R_+\), its upper endpoint is exactly the supremum on the right-hand
  side of \cref{eq:1d-unified-info}.
  In the local parameterization \(\eta=\mu(\theta)\), the score is
  \[
    s_\eta(X)
    =
    \frac{s_\theta(X)}{\mu'(\theta)}.
  \]
  Hence the upper endpoint of the one-dimensional unified information region
  is divided by \(\xk{\mu'(\theta_0)}^2\), and its inverse is multiplied by
  that factor.
  Applying \cref{thm:local-minimax} in the \(\eta\)-parameterization
  proves \cref{eq:1d-unified-constant}.
  The identities for the privacy and sampling regimes follow from \cref{appendix-eq:unified-directional-privacy-regime,appendix-eq:unified-directional-sampling-regime}.
\end{proof}

\subsection{Finite-support score classes}
\label{appendix-subsec:finite-support-score-classes}

These reductions cover score classes in which a common statistic or configuration is efficient across the sampling and privacy regimes.

\begin{lemma}[Linear configuration reduction]
  \label{appendix-lem:trans-config-linear}
  Let \(V_0\succ0\), \(\beta\in(0,1)\), and \(W\succeq0\).
  Suppose a nonempty class \(\mathcal A\) of statistics of diameter at most one is
  parameterized by maps \(U:\R^p\to\caH\) of full column rank such that
  \[
    A_\Phi=U, \qquad V_\Phi=UV_0U^*.
  \]
  Put \(J_U=U^*U\).
  Each statistic in this class satisfies
  \[
    H_{\Phi,\theta_0,\beta}^{-1}
    =\beta V_0+\frac{1-\beta}{2}J_U^{-1}.
  \]
  Define the class-restricted constants
  \[
    C_{\mathcal A}^{(\beta)}(W)
    =\inf_{\Phi\in\mathcal A}\Tr(W H_{\Phi,\theta_0,\beta}^{-1}),
    \qquad
    C_{\mathcal A}^{\mathrm{priv}}(W)
    =\frac12\inf_{\Phi\in\mathcal A}\Tr(WJ_U^{-1}).
  \]
  Then
  \[
    C_{\mathcal A}^{(\beta)}(W)
    =\beta\Tr(WV_0)+(1-\beta)C_{\mathcal A}^{\mathrm{priv}}(W).
  \]
  Within \(\mathcal A\), the configurations attaining the transition infimum
  are exactly those attaining the privacy infimum.
\end{lemma}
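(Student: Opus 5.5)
The plan is to compute \(H_{\Phi,\theta_0,\beta}^{-1}\) directly for each statistic in the class, then take infima term by term. Fix \(\Phi\in\mathcal A\) with parameter \(U\), so \(A_\Phi=U\) and \(V_\Phi=UV_0U^*\). Here \(U\) has full column rank, hence \(J_U=U^*U\succ0\). Write \(c_\beta=(1-\beta)/2>0\) and
\[
  S=\beta UV_0U^*+c_\beta I,
\]
which is positive definite on \(\caH\) (or on the finite-dimensional span of the range of \(U\), after the compression of \cref{appendix-prop:unified-region-geometry}\textup{(ii)}; I would work on \(\ran(U)\oplus\ran(U)^\perp\) to avoid infinite-dimensional subtleties). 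I would verify
\[
  \bigl(U^*S^{-1}U\bigr)^{-1}=\beta V_0+c_\beta J_U^{-1}
\]
by a Woodbury/push-through computation. Observe the identity
\[
  S\,U J_U^{-1}=\beta UV_0+c_\beta UJ_U^{-1}=U\bigl(\beta V_0+c_\beta J_U^{-1}\bigr).
\]
Apply \(S^{-1}\) on the left and \(U^*\) on the left, and note that \(U^*UJ_U^{-1}=I_p\). This gives
\[
  I_p=U^*S^{-1}U\,\bigl(\beta V_0+c_\beta J_U^{-1}\bigr),
\]
which is exactly the claimed inverse. This is the one real calculation. It is the same identity as \cref{appendix-eq:trans-categorical-gauss-identity} in \cref{appendix-cor:trans-fin-alphabet-gauss}, and I expect it to be the only place needing care, specifically that \(UV_0U^*\) is only positive semidefinite on \(\caH\). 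The push-through argument sidesteps this because only \(S\) needs to be invertible, and \(S\) is invertible since \(c_\beta>0\).

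Taking the \(W\)-weighted trace then gives, for every \(\Phi\in\mathcal A\),
\[
  \Tr\bigl(WH_{\Phi,\theta_0,\beta}^{-1}\bigr)=\beta\Tr(WV_0)+(1-\beta)\cdot\tfrac12\Tr\bigl(WJ_U^{-1}\bigr).
\]
The first term does not depend on \(\Phi\). Since \(\beta<1\), the map
\[
  x\mapsto\beta\Tr(WV_0)+(1-\beta)x
\]
is strictly increasing and affine. Taking the infimum over \(\mathcal A\), which is nonempty, on both sides therefore yields \(C_{\mathcal A}^{(\beta)}(W)=\beta\Tr(WV_0)+(1-\beta)C_{\mathcal A}^{\mathrm{priv}}(W)\), including the case where both sides are infinite.

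For the final claim, the strict monotonicity of this affine map shows that \(\Phi\in\mathcal A\) attains the transition infimum if and only if \(\tfrac12\Tr(WJ_U^{-1})=C_{\mathcal A}^{\mathrm{priv}}(W)\). The same monotonicity shows that minimizing sequences correspond as well. I do not anticipate further obstacles.
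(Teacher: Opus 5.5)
Your proposal is correct and follows essentially the same route as the paper. The paper obtains \(U^*(\beta UV_0U^*+\tfrac{1-\beta}{2}I)^{-1}U=(\beta V_0+\tfrac{1-\beta}{2}J_U^{-1})^{-1}\) by citing the Woodbury identity, whereas you verify it by a direct push-through computation that needs only invertibility of \(S\); both arguments then take the \(W\)-weighted trace, pass to the infimum, and use \(1-\beta>0\) to match minimizers. One small remark: \(J_U\succ0\) for every member of the class, so each objective is finite and the ``both sides infinite'' case you mention never arises.
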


\begin{proof}
The Woodbury identity gives
\[
  U^*\xk{\beta UV_0 U^*+\frac{1-\beta}{2}I}^{-1}U=\xk{\beta V_0+\frac{1-\beta}{2}(U^*U)^{-1}}^{-1}.
\]
Taking the weighted trace and then the infimum over the same class
\(\mathcal A\) gives the identity for the class-restricted constants.
Since \(1-\beta>0\), the two objectives have the same minimizers in that class.
\end{proof}

\begin{proposition}[Regular simplex scores]
  \label{appendix-prop:simplex-score-reduction}
Let \(\caT=\{v\in\R^k:\mathbf{1}_k^\top v=0\}\), let \(H:\R^{k-1}\to\caT\) be an isometry, and write \(h_j=H^\top e_j\).  If the score takes the values \(S_j=kh_j\), each with probability \(1/k\), then
\[
  \Gamma_{\theta_0}=\frac{k-1}{2}, \qquad \frac{1}{2}I_{k-1}\in\caJ_{\theta_0}.
\]
The latter matrix is generated by the regular-simplex statistic \(H^\top(e_j-k^{-1}\mathbf{1}_k)/\sqrt2\).  Under \cref{ass:regular,ass:score-density-l1}, its exact privacy constant for tangent-coordinate squared Euclidean loss is \(k-1\).
\end{proposition}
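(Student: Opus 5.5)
The plan is to reduce everything to a $k$-point configuration problem, compute the trace envelope $\Gamma_{\theta_0}$ exactly through a Gram-matrix bound, and then read off the privacy constant with the trace inequality already used in \cref{appendix-cor:euclidean-trace-lower}.

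\emph{Basic identities.} Write $C_j=\{x:s_{\theta_0}(x)=S_j\}$. Then $P_{\theta_0}(C_j)=1/k$, and the cells $C_j$ carry all of the score. Because $H$ maps into $\caT$, we have $H^\top\mathbf 1_k=0$, so $\sum_j h_j=0$. Because $H$ is an isometry, $\sum_j h_jh_j^\top=H^\top H=I_{k-1}$. In particular $\E_{\theta_0}s_{\theta_0}=0$ and $I(\theta_0)=kI_{k-1}\succ0$.

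\emph{Reduction to cell values.} Take any diameter-one statistic $\Phi$. Its score covariance depends on $\Phi$ only through the cell averages:
\[
A_{\Phi,\theta_0}=\sum_j \tfrac1k\,\phi_j\,(kh_j)^\top=\sum_j\phi_j h_j^\top,
\qquad \phi_j=\E_{\theta_0}[\Phi(X)\mid C_j].
\]
Each $\phi_j$ lies in the closed convex hull of the range of $\Phi$, so $\norm{\phi_i-\phi_j}_2\le1$ for all $i,j$. By \cref{prop:dc-info-prop}(i) we may translate so that $\sum_j\phi_j=0$. Let $F$ be the matrix with columns $\phi_j$ and let $G=F^\top F$, so $G\mathbf 1_k=0$. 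Then
\[
\Tr J_{\Phi,\theta_0}=\Tr(H^\top GH)\le\Tr(P_\caT G)=\Tr G=\sum_j\norm{\phi_j}_2^2 .
\]
The inequality holds because $HH^\top=P_\caT$ and $G\succeq0$.

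\emph{Bound on $\Gamma_{\theta_0}$.} For a centered configuration,
\[
\sum_j\norm{\phi_j}_2^2=\frac1{2k}\sum_{i,j}\norm{\phi_i-\phi_j}_2^2\le\frac{1}{2k}\,k(k-1)=\frac{k-1}2 .
\]
Taking closures gives $\Gamma_{\theta_0}\le (k-1)/2$.

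\emph{Attainment.} Define $\Phi(x)=H^\top(e_j-k^{-1}\mathbf 1_k)/\sqrt2$ for $x\in C_j$. Its points are pairwise at distance exactly one, since $H^\top$ is isometric on $\caT$ and $\norm{e_i-e_j}_2=\sqrt2$. Its derivative is
\[
A=\sum_j \frac{H^\top(e_j-k^{-1}\mathbf 1_k)}{\sqrt2}\,h_j^\top=\frac{1}{\sqrt2}\,H^\top H=\frac{1}{\sqrt2}\,I_{k-1},
\]
using $\sum_j h_j=0$ and $\sum_j e_jh_j^\top=H$. Hence $J=\tfrac12 I_{k-1}\in\caJ_{\theta_0}$, whose trace is $(k-1)/2$. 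This gives equality in the bound on $\Gamma_{\theta_0}$.

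\emph{The constant.} Every $J\in\caJ_{\theta_0}$ that is nonsingular satisfies
\[
\Tr J^{-1}\ge \frac{(k-1)^2}{\Tr J}\ge 2(k-1)
\]
by Cauchy--Schwarz on the eigenvalues, and singular $J$ give $\caI_{I}(J)=\infty$. So $\mfC_{\theta_0}(I_{k-1})\ge k-1$. The matrix $\tfrac12 I_{k-1}$ gives $\tfrac12\Tr(2I_{k-1})=k-1$, so $\mfC_{\theta_0}(I_{k-1})=k-1$. \Cref{thm:private-minimax} then identifies this as the exact privacy constant under \cref{ass:regular,ass:score-density-l1}.

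The main obstacle I expect is the reduction to cell values. One must check that averaging $\Phi$ within each cell preserves both $A_{\Phi,\theta_0}$ and the diameter bound, and that the closure in \cref{def:diameter-score} introduces nothing beyond the trace bound. Both follow because the bound is stated for every generator and survives limits.
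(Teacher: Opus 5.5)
Your proposal is correct and follows the paper's route. You center the cell values, bound $\norm{A_{\Phi,\theta_0}}_{\HS}^2=\frac1k\sum_{j<\ell}\norm{\Phi_j-\Phi_\ell}^2\le\frac{k-1}{2}$, attain this bound with the regular simplex, and finish with the Cauchy--Schwarz trace lower bound together with the Gaussian release via \cref{thm:private-minimax}. Your explicit conditional-expectation reduction to cell averages fills in a step the paper leaves implicit, and your inequality $\Tr(H^\top GH)\le\Tr G$ actually holds with equality because $G\mathbf 1_k=0$ and $HH^\top=P_{\caT}$.
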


\begin{proof}
Centering diameter-one values \(\Phi_j\) gives
\[
  \norm{A_{\Phi,\theta_0}}_{\HS}^2=\frac{1}{k}\sum_{j<\ell}\norm{\Phi_j-\Phi_\ell}^2\le\frac{k-1}{2}.
\]
The regular simplex attains equality, has the information matrix, and has nonsingular derivative.  The trace lower bound and Gaussian release complete the proof.
\end{proof}

\subsection{Scalar score classes}
\label{appendix-subsec:scalar-score-classes}

We begin with the privacy endpoint and then give its unified extension.

\begin{lemma}[Scalar score reduction]
  \label{appendix-lem:scalar-score-reduction}
Let \(S=s_{\theta_0}(X)\) be a scalar score, and set
\[
  a_{\theta_0}=\frac{1}{2}\E_{\theta_0} \abs{S}.
\]
If \(0<a_{\theta_0}<\infty\), then
\[
  \caJ_{\theta_0}=[0,a_{\theta_0}^2].
\]
The upper endpoint is attained by the scalar diameter-one statistic
\(\Phi_*(X)=\frac{1}{2}\sign(S)\), with \(\sign(0)=0\).
\end{lemma}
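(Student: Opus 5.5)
The plan is to combine the directional envelope in \cref{prop:directional-envelope-tv} with the order structure in \cref{prop:dc-info-prop}. When \(p=1\), every element of \(\caJ_{\theta_0}\) is a nonnegative scalar, and \(h^\top Jh=J\) for the unit direction \(h=1\). The envelope identity \cref{eq:directional-info-envelope} therefore gives
\[
  \sup_{J\in\caJ_{\theta_0}}J=\tfrac14\bigl(\E_{\theta_0}\abs{S}\bigr)^2=a_{\theta_0}^2 .
\]
I will cite only its first half, \cref{eq:directional-info-envelope}. Its proof in \cref{appendix-subsec:directional-info} uses only \cref{ass:regular}, namely the centering \(\E_{\theta_0}S=0\) and the midpoint bound; it never uses the \(L^1\) smoothness. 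The lemma can thus be stated without \cref{ass:score-density-l1}.

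Next I identify the set. By \cref{prop:dc-info-prop}\textup{(ii)}, \(\caJ_{\theta_0}\) is compact, so the supremum is attained and the set lies inside \([0,a_{\theta_0}^2]\). By \cref{prop:dc-info-prop}\textup{(iii)}, the set is downward closed. Since it contains \(a_{\theta_0}^2\), it contains every \(\widetilde J\in[0,a_{\theta_0}^2]\). The two inclusions give \(\caJ_{\theta_0}=[0,a_{\theta_0}^2]\). The hypothesis \(a_{\theta_0}<\infty\) guarantees the endpoint is finite. This is automatic under \cref{ass:regular}, because the score is square integrable.

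To show the endpoint is generated by \(\Phi_*\) itself, and not only reached through the closure, I compute \(A_{\Phi_*,\theta_0}\) directly. The statistic \(\Phi_*=\tfrac12\sign(S)\) takes values in \(\{-\tfrac12,0,\tfrac12\}\), so its diameter is at most one. Its covariance operator is
\[
  A_{\Phi_*,\theta_0}=\E_{\theta_0}\bigl[\tfrac12\sign(S)S\bigr]=\tfrac12\E_{\theta_0}\abs{S}=a_{\theta_0},
\]
hence \(J_{\Phi_*,\theta_0}=a_{\theta_0}^2\). I do not expect a real obstacle here; the lemma is a specialization of earlier results. The only point needing care is the convention \(\sign(0)=0\). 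It keeps the statistic measurable and leaves the covariance computation unaffected, since \(S\sign(S)=\abs{S}\) holds pointwise with that convention.
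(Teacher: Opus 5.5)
Your proof is correct and follows essentially the paper's argument: the paper bounds \(\abs{A_{\Phi,\theta_0}}\le \frac{1}{2}\E_{\theta_0}\abs{S}\) using scalar projections and translation (the same midpoint argument that underlies \cref{eq:directional-info-envelope}), and attains the bound with \(\Phi_*\). The only difference is that the paper fills the interval directly, since scaling \(\Phi_*\) by \(c\in[0,1]\) generates \(c^2a_{\theta_0}^2\); your route instead invokes compactness and the general downward closure in \cref{prop:dc-info-prop}\textup{(ii)--(iii)}.
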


\begin{proof}
For a Hilbert-valued diameter-one statistic \(\Phi\),
\[
  \norm{A_{\Phi,\theta_0}}_{\HS}
  =
  \sup_{\norm{v}\le1}
  \abs{\E_{\theta_0} \left(\langle v,\Phi\rangle S\right)}.
\]
Every scalar projection has diameter at most one.  Since \(\E_{\theta_0}S=0\), it may be translated without changing its covariance with \(S\), and hence the expression is bounded by \(\E_{\theta_0}\abs{S}/2\).
\(\Phi_*\) attains equality, and scaling it by factors in \([0,1]\) fills the interval.
\end{proof}

Consequently, under the regularity conditions of
\cref{prop:1d-matching}, a smooth scalar target
\(\mu(\theta)\) has privacy constant
\begin{equation}
  \label{appendix-eq:scalar-score-reduction}
  C_{\theta_0}(\mu)
  =
  \frac{\xk{\mu'(\theta_0)}^2}{2a_{\theta_0}^2}.
\end{equation}

\subsubsection{Unified clipping reduction}
\label{appendix-subsubsec:trans-clipping-reductions}

The next proposition reduces every scalar transition calculation to two
clipping thresholds.  Symmetric scores collapse to one threshold, while
binary scores collapse to a single fixed contrast.

\begin{proposition}[Scalar clipped affine reduction]
  \label{appendix-prop:trans-scalar-clipping}
Under \cref{ass:regular}, let \(S=s_{\theta_0}(X)\) be a scalar score
with \(0<\E_{\theta_0} S^2<\infty\), let \(\beta\in(0,1)\), and let
\(\mu\) be a differentiable scalar target with \(\mu'(\theta_0)\ne0\).
For \(a<b\), write
\[
  G_{a,b}(s)
  =
  \min\{(s-a)_+,b-a\}.
\]
Then
\begin{equation}
  \label{appendix-eq:trans-scalar-clipping}
  \uC_{\mu,\theta_0}
  =
  \xk{\mu'(\theta_0)}^2
  \inf_{a<b}
  \frac{
    \beta\Var_{\theta_0}\xk{G_{a,b}(S)}+(1-\beta)(b-a)^2/2
  }{
    \bigl[\E_{\theta_0}\zk{G_{a,b}(S)S}\bigr]^2
  },
\end{equation}
where pairs with zero denominator are omitted.  Every minimizing pair
generates an attaining diameter-one statistic
\[
  \Phi_{a,b}(X)
  =
  \frac{G_{a,b}\xk{S(X)}}{b-a}.
\]
\end{proposition}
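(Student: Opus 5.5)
The plan is to start from the one-dimensional unified formula already established in \cref{prop:1d-matching}: \(\uC_{\mu,\theta_0}=(\mu'(\theta_0))^2/\mathfrak{I}_{\theta_0}^{(\beta)}\), where \(\mathfrak{I}_{\theta_0}^{(\beta)}\) is the supremum over scalar diameter-one \(\phi\) of
\[
R(\phi)=\frac{(\E\phi S)^2}{\beta\Var\phi+(1-\beta)/2}.
\]
The claim then reduces to showing that this supremum equals the supremum of the same ratio over the clipped family \(\Phi_{a,b}=G_{a,b}(S)/(b-a)\). Plugging \(\Phi_{a,b}\) into \(R\) gives exactly the reciprocal of the quantity inside the infimum in \cref{appendix-eq:trans-scalar-clipping}, since \(\Phi_{a,b}\) takes values in \([0,1]\) and so has diameter at most one. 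The inequality ``\(\le\)'' for \(\uC\) is therefore immediate, and attainment by minimizing pairs follows from the same identity.

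For the reverse inequality, fix any admissible \(\phi\) with \(\E\phi S\ne0\). After translation and sign change we may assume \(\phi\in[0,1]\) and \(c\coloneqq\E\phi S>0\). I would first reduce to functions of \(S\): replace \(\phi\) by \(\E(\phi\mid S)\). This keeps the range in \([0,1]\) and leaves \(\E\phi S\) unchanged, while the variance can only decrease, so \(R\) can only increase. Writing \(\phi=f(S)\), the problem becomes
\[
\text{maximize } R(f)\ \text{ over } f:\R\to[0,1].
\]
Next, for fixed target value \(c\), this is equivalent to minimizing \(\Var f(S)=\E f^2-(\E f)^2\) subject to \(\E fS=c\). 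I would use a Lagrangian or convexity argument: fixing the mean \(m=\E f\) as well, minimizing \(\E f^2\) subject to the two linear constraints with \(0\le f\le 1\) is a pointwise convex projection. Its minimizer has the form \(f=\min\{(\lambda S+\nu)_+,1\}\), and when \(\lambda>0\) this is exactly \(G_{a,b}(S)/(b-a)\) with \(a=-\nu/\lambda\) and \(b=(1-\nu)/\lambda\).

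Two points need care. One is that in \(R\) the denominator term \((1-\beta)/2\) corresponds to \((1-\beta)(b-a)^2/2\) after rescaling, whereas the diameter of \(f\) may be strictly less than one. So the optimizer of the constrained problem must be checked to use the full range; if it does not, \(f\) can be rescaled up by \(1/\diam\), which strictly increases \(R\) when \(\beta<1\). The other is that the Lagrangian existence argument in infinite dimensions (attainment and KKT for a bounded convex program over \(L^2\)) has to be justified. I expect this second point to be the main obstacle. I would handle it by working directly: for any \(f\), define the clipped candidate with matching \(\E fS\) and \(\E f\), and show its \(\E f^2\) is no larger via the variational inequality
\[
\E\bigl[(f_*-\lambda S-\nu)(f-f_*)\bigr]\ge 0
\]
for the pointwise projection \(f_*\). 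This avoids abstract duality, provided one shows that \(\lambda,\nu\) matching both constraints exist, which follows by a continuity and monotonicity argument in \((\lambda,\nu)\). The degenerate case \(\lambda\le0\) cannot give positive \(c\), so pairs with \(a<b\) suffice. Taking the supremum over \(\phi\) completes the proof.
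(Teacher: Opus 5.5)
Your proposal is correct and is essentially the paper's argument. The paper also conditions on \(S\), then identifies the minimum-variance bounded statistic with prescribed mean and score covariance as the pointwise projection of an affine function of \(S\). It phrases this via Hilbert-space KKT for \(\psi=(\phi-\E\phi)/\E(\phi S)\) with fixed essential endpoints, which is a reparametrization of your fixed box \([0,1]\) with fixed \((\E f,\E fS)\); your projection variational inequality is a more explicit version of that KKT step.

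Your first ``point of care'' is unnecessary. The quotient in the statement equals \(1/R(\Phi_{a,b})\) identically, because it uses the nominal width \(b-a\) rather than the actual range of \(\Phi_{a,b}\).

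At boundary moment pairs (threshold statistics such as \(\ind{S>a}\)) no finite \((\lambda,\nu)\) exists, so your matching step does not literally cover every \(f\). These statistics are limits of \(\Phi_{a,b}\) as \(b\downarrow a\), and the infimum is not claimed to be attained, so the conclusion is unaffected. The paper's KKT step passes over this same case without comment.
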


\begin{proof}
Applying the one-dimensional unified formula gives
\[
  \frac{\uC_{\mu,\theta_0}}{\xk{\mu'(\theta_0)}^2}
  =
  \inf_{\diam(\phi)\le1}
  \frac{\beta\Var\xk{\phi(X)}+(1-\beta)/2}
  {[\E\zk{\phi(X)S}]^2}.
\]
Replacing \(\phi\) by \(\E\zk{\phi\mid S}\) preserves its score
covariance, weakly decreases its variance, and does not enlarge its diameter.
After changing sign if necessary, we may take
\(\E\zk{\phi S}>0\).  Any nonconstant admissible statistic can be rescaled
to have diameter one, which weakly improves the quotient because
\(0<\beta<1\).  Thus, on setting
\[
  \psi
  =
  \frac{\phi-E\phi}{\E(\phi S)},
\]
the infimum is equivalently
\begin{equation}
  \label{appendix-eq:trans-scalar-influence}
  \inf_{\substack{\E\psi=0\\ \E(\psi S)=1}}
  \xk{
    \beta\E\psi^2+\frac{1-\beta}{2}\diam(\psi)^2
  }.
\end{equation}

Fix the essential lower and upper endpoints of \(\psi\).  With these
endpoints fixed, the diameter term in
\cref{appendix-eq:trans-scalar-influence} is constant, and the
minimum-norm element satisfying the two moment constraints is the
pointwise projection of an affine function of \(S\) onto that interval.
Equivalently, by the Hilbert-space KKT conditions, it has the form
\[
  \psi(S)
  =
  c_0+\gamma G_{a,b}(S)
\]
for some \(a<b\) and \(\gamma>0\).  The sign of \(\gamma\) is positive
because \(\E\zk{\psi(S)S}=1\).  The calibration condition gives
\(\gamma=[\E\zk{G_{a,b}(S)S}]^{-1}\), and substituting this expression into
the objective in \cref{appendix-eq:trans-scalar-influence} gives exactly
the quotient in \cref{appendix-eq:trans-scalar-clipping}.  Conversely,
each clipped affine statistic is admissible, proving the equality.
\end{proof}

The next lemma is the two-point specialization of
\cref{appendix-prop:trans-scalar-clipping}.

\begin{lemma}[Binary score reduction]
  \label{appendix-lem:trans-binary-score}
  Under \cref{ass:regular}, let \(S=s_{\theta_0}(X)\) be a scalar score,
  and suppose that, for a binary statistic \(B\),
  \[
    \Pr_{\theta_0}(B=1)=\pi\in(0,1),
    \qquad
    S=s_1 B+s_0(1-B).
  \]
  Put
  \[
    a=\E_{\theta_0}(BS)
    =
    \pi(1-\pi)(s_1-s_0),
  \]
  and assume \(a\ne0\).
  If \(\mu(\theta)\) is a scalar target with
  \(\mu'(\theta_0)\ne0\), then
  \begin{equation}
    \label{appendix-eq:trans-binary-score}
    \uC_{\mu,\theta_0}
    =
    \xk{\mu'(\theta_0)}^2
    \frac{
      \beta\pi(1-\pi)+(1-\beta)/2
    }{a^2}.
  \end{equation}
  The statistic \(B\) attains the constant.
\end{lemma}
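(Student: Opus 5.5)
The plan is to derive \cref{appendix-lem:trans-binary-score} as a special case of \cref{appendix-prop:trans-scalar-clipping}, equivalently of the one-dimensional unified formula in \cref{prop:1d-matching}. By that formula,
\[
  \frac{\uC_{\mu,\theta_0}}{(\mu'(\theta_0))^2}
  =
  \inf_{\diam(\phi)\le1}
  \frac{\beta\Var(\phi)+(1-\beta)/2}{[\E(\phi S)]^2}.
\]
So it suffices to show that this infimum equals \(\{\beta\pi(1-\pi)+(1-\beta)/2\}/a^2\) and that \(B\) attains it.

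The first step is to reduce to functions of \(B\). Since \(S\) is \(\sigma(B)\)-measurable, replacing \(\phi\) by \(\E(\phi\mid B)\) leaves \(\E(\phi S)\) unchanged. It weakly decreases \(\Var(\phi)\), by the conditional variance decomposition. It also does not enlarge the diameter, because conditional expectations lie in the closed convex hull of the range of \(\phi\), an interval of length at most one.

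The second step handles functions of \(B\) directly. Such a \(\phi\) has the form \(\phi=c+tB\) with \(\abs{t}\le 1\), since the diameter is \(\abs{t}\). Using \(\E S=0\), we get \(\E(\phi S)=t\,\E(BS)=ta\) and \(\Var(\phi)=t^2\pi(1-\pi)\). The quotient is therefore
\[
  \frac{\beta t^2\pi(1-\pi)+(1-\beta)/2}{t^2a^2}.
\]
This is nonincreasing in \(t^2\) because \((1-\beta)/2\ge0\), so it is minimized at \(\abs{t}=1\), i.e.\ by \(\phi=B\). That gives exactly \cref{appendix-eq:trans-binary-score}.

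There are two small points to check. At \(\beta=1\) the \(0/0\) convention is needed, but the formula still holds since \(a\ne0\). The identity \(a=\pi(1-\pi)(s_1-s_0)\) follows from \(\E S=\pi s_1+(1-\pi)s_0=0\).

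The main obstacle is only the conditioning step, specifically that the diameter does not increase. This is immediate once one notes that \(\E(\phi\mid B)\) takes at most two values, each in \([\inf\phi,\sup\phi]\).

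Attainment then follows from \cref{thm:local-minimax} via the moment-map inverse estimator built on \(B\). It applies because \(A_B=a\ne0\) is nonsingular, as \cref{thm:bounded-stat-upper} requires. Its risk \(\Tr[WA_0^{-1}\Sigma A_0^{-\top}]\) equals the displayed constant after the reparametrization \(\eta=\mu(\theta)\) used in the proof of \cref{prop:1d-matching}.
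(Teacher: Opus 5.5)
Your proposal is correct and is essentially the paper's proof. Both arguments reduce to scalar statistics via the one-dimensional (directional-envelope) formula, replace \(\phi\) by \(\E(\phi\mid B)\), write functions of \(B\) as \(c+tB\) with \(\abs{t}\le1\), and observe that the quotient is minimized at \(\abs{t}=1\). Your closing estimator-level attainment step additionally needs \cref{ass:score-density-l1} for the \(C^1\) mean map, which the lemma does not assume; it is also unnecessary, since ``\(B\) attains the constant'' is the variational statement your second step already establishes.
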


\begin{proof}
  The directional envelope in
  \cref{appendix-lem:unified-directional-envelope} first reduces the unified
  scalar information problem to scalar statistics.
  Conditioning an arbitrary scalar statistic on \(B\) preserves its
  covariance with the score, weakly decreases its variance, and does not
  enlarge its diameter.
  A function of \(B\) has the form \(c+\delta B\), where
  \(\abs{\delta}\le1\).
  Since \(\E_{\theta_0} S=0\), its score covariance and variance are
  respectively \(\delta a\) and \(\delta^2 \pi(1-\pi)\).
  Its target objective is
  \[
    \xk{\mu'(\theta_0)}^2
    \frac{
      \beta\delta^2 \pi(1-\pi)+(1-\beta)/2
    }{\delta^2 a^2}.
  \]
  This is minimized at \(\abs{\delta}=1\), which proves the result.
\end{proof}

\subsection{Symmetric and spherical score classes}
\label{appendix-subsec:symmetric-spherical-score-classes}

\subsubsection{Fixed-radius scores}
\label{appendix-subsubsec:trans-fixed-radius}

\begin{proposition}[Centrally symmetric scores of fixed radius]
  \label{appendix-prop:fixed-radius-score-reduction}
Suppose
\[
  S\overset{d}{=}-S, \qquad \E_{\theta_0}(SS^\top)=\kappa I_p, \qquad \norm{S}_2^2=p\kappa\quad\text{almost surely},
\]
where \(\kappa>0\).  Then
\[
  \Gamma_{\theta_0}=\frac{\kappa}{4}, \qquad \frac{\kappa}{4p}I_p\in\caJ_{\theta_0},
\]
and the latter matrix is generated by \(S/(2\sqrt{p\kappa})\).
Under \cref{ass:regular,ass:score-density-l1}, the exact privacy constant for the identity target under squared Euclidean loss is \(2p^2/\kappa\).
\end{proposition}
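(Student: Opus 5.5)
We follow the template of \cref{appendix-prop:simplex-score-reduction}: an upper bound on \(\Gamma_{\theta_0}\) by a Hilbert--Schmidt computation, an explicit statistic attaining it, and then the trace lower bound of \cref{appendix-cor:euclidean-trace-lower} matched by the Gaussian release of \cref{thm:local-attain-seq}.

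\emph{Upper bound on \(\Gamma_{\theta_0}\).} Let \(\Phi\) have diameter at most one. By \cref{appendix-lem:hilbert-compression} we may take \(\Phi\) with values in \(\R^p\), and after a translation \(\norm{\Phi}_2\le 1/2\)? That bound is false in general (Jung's theorem only gives radius \(\sqrt{p/(2(p+1))}\)), so we do not use a pointwise bound. Instead we use central symmetry of \(S\) together with the fixed radius. Write
\[
\norm{A_{\Phi,\theta_0}}_{\HS}^2=\sup_{\norm{M}_{\HS}\le1}\E\,\Tr\!\left(M^\top\Phi S^\top\right)^2 ,
\]
or, more directly, \(\norm{A}_{\HS}=\norm{\E[\Phi(X)\otimes S]}\). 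Let \(X'\) be an independent copy of \(X\). Since \(\E S=0\),
\[
\E[\Phi\otimes S]=\tfrac12\E\bigl[(\Phi(X)-\Phi(X'))\otimes(S-S')\bigr].
\]
A cruder alternative is
\[
\norm{A}_{\HS}^2=\E\bigl\langle \Phi(X),\Phi(X')\bigr\rangle\langle S,S'\rangle=-\tfrac12\E\norm{\Phi-\Phi'}^2\langle S,S'\rangle ,
\]
which uses \(\E\langle S,S'\rangle=0\) together with the centering. Splitting on the sign of \(\langle S,S'\rangle\) and using \(\norm{\Phi-\Phi'}\le1\) gives
\[
\norm{A}_{\HS}^2\le \tfrac12\,\E\bigl(\langle S,S'\rangle\bigr)_- .
\]
This is the bound we will aim for. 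The constant \(\kappa/4\) should come out once \(\E(\langle S,S'\rangle)_-\) is combined with \(\norm{S}^2=p\kappa\). If the sign-splitting is too lossy, we will fall back on the direct computation \(h^\top Jh\le \tfrac14(\E|h^\top S|)^2\) from \cref{prop:directional-envelope-tv}, summed over an orthonormal basis. The step I expect to be hardest is exactly this: obtaining the sharp \(\kappa/4\) rather than a larger bound.

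\emph{Attainment.} For \(\Phi=S/(2\sqrt{p\kappa})\) we have \(\diam(\Phi)\le 2\sqrt{p\kappa}/(2\sqrt{p\kappa})=1\), since \(\norm{S}\) is constant. Its derivative is \(A=\E[SS^\top]/(2\sqrt{p\kappa})=\sqrt{\kappa}\,I_p/(2\sqrt p)\), so \(J=A^\top A=\kappa I_p/(4p)\), whose trace is \(\kappa/4\). This gives \(\Gamma_{\theta_0}\ge\kappa/4\), matching the upper bound.

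\emph{Constant.} By \cref{appendix-cor:euclidean-trace-lower},
\[
\mfC_{\theta_0}(I_p)\ge \frac{p^2}{2\Gamma_{\theta_0}}=\frac{2p^2}{\kappa}.
\]
The feasible matrix \(J=\kappa I_p/(4p)\) gives \(\tfrac12\Tr(J^{-1})=\tfrac12\cdot 4p^2/\kappa=2p^2/\kappa\), so the bound is attained. Because \(A\) is nonsingular, \cref{thm:bounded-stat-upper} applied to the Gaussian release of this statistic attains the constant, and \cref{thm:private-minimax} then identifies \(2p^2/\kappa\) as the exact privacy constant.
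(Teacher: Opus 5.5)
Your attainment computation and the final step (trace lower bound, the feasible matrix \(\kappa I_p/(4p)\), and the Gaussian release with nonsingular derivative) are correct and match the paper. The gap is the upper bound \(\Gamma_{\theta_0}\le\kappa/4\), which is the heart of the proposition: it is not established, and neither of your routes can establish it.

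Your identity \(\norm{A_{\Phi,\theta_0}}_{\HS}^2=-\tfrac12\E[\norm{\Phi-\Phi'}^2\langle S,S'\rangle]\) is valid, and so is the resulting bound \(\tfrac12\E(\langle S,S'\rangle)_-=\tfrac14\E\abs{\langle S,S'\rangle}\). But the bound is not sharp. Take the hypercube score \(S=2V\) with \(V\) uniform on \(\{-1,1\}^3\), so \(p=3\), \(\kappa=4\), and all three hypotheses hold. The bound then equals \(\tfrac14\cdot 4\cdot\tfrac32=\tfrac32>1=\kappa/4\). For \(S\) uniform on a circle of radius \(\sqrt{2\kappa}\) it equals \(\kappa/\pi>\kappa/4\). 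Both examples already have fixed radius, so that hypothesis cannot rescue the bound.

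The fallback is worse. Summing the directional envelope over an orthonormal basis gives \(\Tr J\le\tfrac14\sum_j(\E\abs{S_j})^2\), which equals \(p=p\kappa/4\) for the hypercube score. The directional maximizers \(\sign(S_j)/2\) are different statistics that cannot be stacked under one diameter constraint. This is exactly the set-valued effect the paper stresses, so any direction-by-direction argument loses a factor of order \(p\).

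The missing ingredient is the hypothesis you never actually use, \(S\overset{d}{=}-S\). Start from the correct variational identity (your display misplaces the square):
\(\norm{A_{\Phi,\theta_0}}_{\HS}=\sup_{\norm{B}_{\HS}\le1}\E\langle BS,\Phi(X)\rangle\).
Bound the integrand pointwise by \(h_K(BS)\), where \(h_K\) is the support function of \(K=\overline{\Phi(\caX)}\).

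Central symmetry gives \(\E h_K(BS)=\tfrac12\E[h_K(BS)+h_K(-BS)]\). Since \(K\) has diameter at most one, its width satisfies \(h_K(v)+h_K(-v)\le\norm{v}_2\). Hence, by Jensen and \(\E SS^\top=\kappa I_p\),
\(\norm{A_{\Phi,\theta_0}}_{\HS}\le\tfrac12\sup_B\E\norm{BS}_2\le\tfrac12\sup_B(\E\norm{BS}_2^2)^{1/2}=\sqrt{\kappa}/2\).
This is the paper's support-function argument, and it gives \(\Gamma_{\theta_0}\le\kappa/4\).

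The fixed radius plays no role in this upper bound. It is needed only for attainment: it makes \(S/(2\sqrt{p\kappa})\) a diameter-one statistic and turns Jensen into an equality at \(B\propto I_p\).
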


\begin{proof}
For every diameter-one statistic \(\Phi\), the support-function argument and Jensen's inequality give
\[
  \norm{A_{\Phi,\theta_0}}_{\HS}\le \frac{1}{2}\sup_{\norm{B}_{\HS}\le1}\E\norm{BS}_2\le \frac{\sqrt\kappa}{2}.
\]
The normalized score statistic has diameter one and derivative \(\sqrt\kappa I_p/(2\sqrt p)\), so it attains the bound.  The trace lower bound and its Gaussian release give the stated constant.
\end{proof}

Under \cref{ass:regular,ass:score-density-l1}, suppose
\[
  S\overset{d}{=}-S, \qquad \E_{\theta_0}(SS^\top)=\kappa I_p, \qquad \norm{S}_2^2=p\kappa\quad\text{almost surely},
\]
where \(\kappa>0\).
The fixed statistic
\[
  \Phi_*(X)=\frac{S}{2\sqrt{p\kappa}}
\]
has
\[
  A_*=\frac{\sqrt\kappa}{2\sqrt p}I_p, \qquad V_*=\frac{1}{4p}I_p.
\]
Redefine \(S\) and \(\Phi_*\) on a \(P_{\theta_0}\)-null set, if necessary,
so that the diameter bound holds pointwise.
By \cref{appendix-prop:fixed-radius-score-reduction}, \(\Phi_*\) is privacy
optimal under squared Euclidean loss.
Moreover,
\[
  A_*^\top V_*^{-1}A_*=\kappa I_p=I(\theta_0),
\]
so it is Fisher-efficient as well.
The fixed statistic decomposition therefore yields
\begin{equation}
  \label{appendix-eq:trans-fixed-radius-constant}
  \uCt(I_p)=\frac{\beta p+2(1-\beta)p^2}{\kappa}.
\end{equation}
Thus one optimal statistic is independent of \(\beta\).
This argument requires only central symmetry, isotropic score covariance, and
constant radius; it does not require a spherically symmetric score law.

\subsubsection{Spherical clipping}

The binary lemma does not cover symmetric scores with more than two
magnitudes.
For those models, and for spherically symmetric vector scores, the optimizer
is a clipped score whose threshold balances sampling variance against
sensitivity.

\begin{proposition}[Spherical clipped score reduction]
  \label{appendix-prop:trans-spherical-clipping}
  Under \cref{ass:regular}, suppose that the score
  \(S=s_{\theta_0}(X)\in\R^p\) is spherically symmetric and
  \[
    0<\E_{\theta_0} \norm{S}_2^2<\infty.
  \]
  Let \(R=\norm{S}_2\).
  For every \(\beta\in(0,1)\), there is a unique
  \(t_{\theta_0,\beta}>0\) satisfying
  \begin{equation}
    \label{appendix-eq:trans-spherical-threshold}
    \E_{\theta_0}
    \xk{
      \frac{R}{t_{\theta_0,\beta}}-1
    }_+
    =
    \frac{2p(1-\beta)}{\beta}.
  \end{equation}
  For the identity target under squared Euclidean loss,
  \begin{equation}
    \label{appendix-eq:trans-spherical-constant}
    \uCt(I_p)
    =
    \frac{\beta p^2}{
      \E_{\theta_0}
      \zk{R\min(R,t_{\theta_0,\beta})}
    }.
  \end{equation}
  The diameter-one statistic
  \begin{equation}
    \label{appendix-eq:trans-spherical-stat}
    \Phi_{\theta_0,\beta}(X)
    =
    \frac{1}{2}
    \frac{S}{R}
    \min\xk{\frac{R}{t_{\theta_0,\beta}},1},
    \qquad
    \Phi_{\theta_0,\beta}=0\ \text{on }\{R=0\},
  \end{equation}
  attains the constant.

  When \(p=1\), the same result applies to a differentiable scalar target
  \(\mu(\theta)\) with \(\mu'(\theta_0)\ne0\), with the right side of
  \cref{appendix-eq:trans-spherical-constant} multiplied by
  \(\xk{\mu'(\theta_0)}^2\).
\end{proposition}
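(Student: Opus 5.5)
The plan is to reduce the unified variational problem to a scalar one by symmetry, solve that problem via the scalar clipping reduction, and then assemble the matrix constant. By \cref{appendix-prop:trans-stat-decomp}, for $W=I_p$,
\[
\uCt(I_p)=\inf_\Phi \Tr\bigl[A_\Phi^{-1}\Sigma_{\Phi,\theta_0,\beta}A_\Phi^{-\top}\bigr] = \inf_{H}\Tr(H^{-1}).
\]
Spherical symmetry of $S$ makes the law of $S$ invariant under every orthogonal $O$. For such $O$, the statistic $O^\top\Phi(X)$ composed with the rotation of the score (realized on a possibly enlarged space, or via the compression lemma) generates $O^\top H O$. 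Convexity of $\caU_{\theta_0,\beta}$ (\cref{appendix-prop:unified-region-geometry}(iv)) then lets me average over Haar measure. Convexity of $H\mapsto \Tr H^{-1}$ shows the averaged matrix $(\Tr H/p) I_p$ is no worse. Hence the optimum is isotropic, $H=hI_p$, with value $p/h$, where $h=\sup\{\Tr H/p : H\in\caU_{\theta_0,\beta}\}$.

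I expect the symmetrization to be the main obstacle. Rotating $\Phi$ does not rotate the sample space, so the clean route avoids it. I would bound $\Tr H$ directly: for a centered statistic with $A=A_\Phi$ and $\Sigma=\beta V+c I$ where $c=(1-\beta)/2$,
\[
\Tr H=\sup_{B}\bigl\{2\Tr(BA)-\Tr(B\Sigma B^\top)\bigr\},
\]
with $B$ ranging over $p\times d$ matrices. Each term is expressed through the scalar statistics $b_j^\top \Phi$. I would then reduce to statistics of the form $f(R)\,S/R$ by conditioning. Replacing $\Phi$ by a suitable conditional projection onto score-measurable and then radially-equivariant functions preserves $A$, decreases $V$, and does not increase the diameter, as in the scalar proof of \cref{appendix-prop:trans-scalar-clipping}. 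If the general matrix bound resists, I would prove the lower bound only through this pointwise variational inequality on $\Tr H$ and accept the isotropic reduction.

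Within the class $\Phi=f(R)S/R$ with $0\le f\le 1/2$ (the diameter constraint becomes $\sup f\le 1/2$ up to an antipodal check), symmetry gives
\[
A=\frac{1}{p}\E[f(R)R]\,I_p,\qquad V=\frac1p\E[f(R)^2]\,I_p.
\]
The objective per coordinate becomes
\[
\frac{p\bigl(\beta\E f^2/p + c\bigr)}{(\E fR)^2}.
\]
Minimizing over $f$ is a one-dimensional Lagrangian problem: pointwise, $f$ is proportional to $R$ and clipped at $1/2$, giving $f(R)=\tfrac12\min(R/t,1)$. The stationarity condition in $t$ reduces, after computing $\E fR=\tfrac1{2t}\E[R\min(R,t)]$ and $\E f^2=\tfrac1{4t^2}\E[\min(R,t)^2]$, to \cref{appendix-eq:trans-spherical-threshold}. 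Its left side is continuous and strictly decreasing from $\infty$ to $0$ on the set where it is positive, which gives uniqueness. Substituting back yields $\beta p^2/\E[R\min(R,t)]$. Attainment by $\Phi_{\theta_0,\beta}$ follows from \cref{thm:local-minimax} and \cref{thm:bounded-stat-upper}, since $A$ is nonsingular. For $p=1$ with a target $\mu$, I would reparametrize as in the proof of \cref{prop:1d-matching}, which multiplies the constant by $\mu'(\theta_0)^2$.
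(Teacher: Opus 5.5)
The gap is the reduction to radial statistics $f(R)S/R$, and the whole computation depends on it. Conditioning $\Phi$ on $S$ is fine: take a version with values in the closed convex hull of the range. But the rotation-equivariant projection $\bar\Phi(s)=\int G^\top\Phi(Gs)\,d\nu(G)$ does \emph{not} preserve $A$. Since $GS\overset{d}{=}S$, you get $A_{\bar\Phi}=\int G^\top A_\Phi G\,d\nu(G)=(\Tr A_\Phi/p)I_p$, which is zero when $A_\Phi$ is, say, a traceless rotation. Nor is $\Tr H$ monotone under this projection. Take a fixed-radius spherical score and $\Phi=\operatorname{diag}(d,0,\ldots,0)S$ with $d$ small: averaging gives $\bar\Phi=(d/p)S$ and divides $\Tr H$ by roughly $p$. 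The loss is recovered only by rescaling into the diameter slack that the averaging created, and no projection of a fixed $\Phi$ accounts for that. Your fallback through $\Tr H^{-1}\ge p^2/\Tr H$ still needs $\sup_{H\in\caU_{\theta_0,\beta}}\Tr H$, which is exactly this reduction. Your first route does not supply it either. It is true that $O^\top HO\in\caU_{\theta_0,\beta}$, but the reason is that after conditioning $\Phi=\phi(S)$ and $\phi\circ O$ generates $O^\top HO$; compression or an enlarged space does not give this. More importantly, averaging $H$ only shows that the optimal matrix is isotropic. It does not show that this matrix is generated by a radial statistic, so it does not identify $h$.

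The missing idea is to symmetrize while carrying the scale as a separate variable with a rotation-invariant normalization. The paper passes to influence coordinates $\psi=A_\Phi^{-1}\Phi$ and $Q=(A_\Phi^\top A_\Phi)^{-1}$. In these coordinates $\Tr H^{-1}=\beta\E\norm{\psi}_2^2+\frac{1-\beta}{2}\Tr Q$, and the calibration $\E[\psi S^\top]=I_p$ is invariant under $\psi\mapsto\int G^\top\psi(G\,\cdot)\,d\nu(G)$. The diameter constraint becomes
\[
\begin{pmatrix} Q & \psi(s)-\psi(s')\\ (\psi(s)-\psi(s'))^\top & 1\end{pmatrix}\succeq0 \quad\text{for all } s,s',
\]
which is affine, hence jointly convex, in $(\psi,Q)$. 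Averaging $(\psi,Q)$ to $(\bar\psi,(\Tr Q/p)I_p)$ preserves calibration, feasibility and $\Tr Q$, and weakly lowers $\E\norm{\psi}_2^2$. This leaves $\psi=h(R)S/R$ and $Q=qI_p$, with $q=4M^2$ optimal for $M=\esssup\abs{h(R)}$.

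Your trace formula can be repaired the same way: symmetrize the pair $(B\Phi,BB^\top)$ instead of $\Phi$. Keep the $p$ terms coupled through the common $B\Phi$; bounding each $b_j^\top\Phi$ separately by its scalar directional envelope is not tight here. With that repair, your trace-relaxation framing is a legitimate variant of the paper's direct minimization. The rest of your radial computation then matches the paper's, after two small fixes. First, the per-coordinate objective is $p^2(\beta\E f^2/p+c)/(\E fR)^2$, not $p(\cdots)$. Second, global optimality of the clipping needs an argument, such as the paper's quadratic variational identity together with the sign of the profile derivative, which is proportional to $\E(R/t-1)_+-2p(1-\beta)/\beta$.
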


\begin{proof}
  We first rewrite the Euclidean fixed statistic problem in influence
  coordinates.
  For an admissible statistic in
  \cref{appendix-eq:trans-fixed-stat-decomp}, center it and put
  \[
    \psi=A_\Phi^{-1} \Phi,
    \qquad
    Q=(A_\Phi^\top A_\Phi)^{-1}.
  \]
  Conversely, every centered pair \((\psi,Q)\), with \(Q\succ0\), satisfying
  \begin{equation}
    \label{appendix-eq:trans-influence-ellipsoid}
    \E_{\theta_0}(\psi S^\top)=I_p,
    \qquad
    \xk{\psi(x)-\psi(x')}^\top
    Q^{-1}
    \xk{\psi(x)-\psi(x')}
    \le1
  \end{equation}
  is realized by \(\Phi=Q^{-1/2} \psi\), where the principal symmetric square
  root is used.
  Indeed, \(A_\Phi=Q^{-1/2}\), and the two terms in the unified objective
  become \(\beta\E\norm{\psi}_2^2\) and \((1-\beta)\Tr(Q)/2\), respectively.
  Hence
  \begin{equation}
    \label{appendix-eq:trans-influence-quotient}
    \uCt(I_p)
    =
    \inf_{(\psi,Q)}
    \xk{
      \beta\E_{\theta_0} \norm{\psi}\!_2^2
      +\frac{1-\beta}{2}\Tr Q
    },
  \end{equation}
  subject to \cref{appendix-eq:trans-influence-ellipsoid}.

  Choose a conditional expectation version of \(\psi\) given \(S\) that takes
  values in the closed convex hull of the original range, including on null
  score values.
  This reduction preserves the calibration, weakly decreases the quadratic
  term, and preserves the ellipsoid constraint because a convex hull has the
  same diameter in every norm.
  Let \(\nu\) be Haar probability measure on the orthogonal group, and define
  \[
    \overline{\psi}(s)
    =
    \int G^\top \psi(Gs)\,d\nu(G),
    \qquad
    \overline{Q}
    =
    \int G^\top QG\,d\nu(G)
    =
    \frac{\Tr Q}{p}I_p.
  \]
  Spherical symmetry preserves the calibration, Jensen's inequality weakly
  decreases the quadratic term, and \(\Tr\overline{Q}=\Tr Q\).
  Feasibility is also preserved: the ellipsoid inequality is equivalent to
  positivity of
  \[
    \begin{pmatrix}
      Q & \psi(s)-\psi(s')\\
      \xk{\psi(s)-\psi(s')}^\top & 1
    \end{pmatrix},
  \]
  Applying this block inequality at \((Gs,Gs')\), conjugating by
  \(\operatorname{diag}(G^\top,1)\), and integrating gives the corresponding
  matrix for \((\overline{\psi},\overline{Q})\).

  We may therefore restrict to
  \[
    \psi(S)=h(R)\frac{S}{R},
    \qquad
    Q=qI_p.
  \]
  Orthogonal equivariance makes \(\psi\) odd.
  If \(M=\esssup\abs{h(R)}\), the ellipsoid constraint is equivalent, after
  modification on a null set, to \(q\ge4M^2\).
  The objective is increasing in \(q\), so take \(q=4M^2\).
  The calibration becomes \(\E\zk{Rh(R)}=p\).
  Writing \(h=Mf\), where \(\abs{f}\le1\), and then eliminating \(M\), gives
  \begin{equation}
    \label{appendix-eq:trans-radial-quotient}
    \uCt(I_p)
    =
    p^2
    \inf_{\substack{\abs{f}\le1\\\E\zk{Rf(R)}>0}}
    \frac{
      \beta\E f(R)^2+2p(1-\beta)
    }{
      \xk{\E(Rf(R))}^2
    }.
  \end{equation}

  Put \(c=2p(1-\beta)/\beta\).
  The quadratic variational identity gives
  \[
    \sup_{\substack{\abs{f}\le1\\\E(Rf)>0}}
    \frac{\xk{\E(Rf)}^2}{\E f^2+c}
    =
    \sup_{a>0,\ \abs{f}\le1}
    \xk{
      2a\E(Rf)-a^2(\E f^2+c)
    }.
  \]
  For a fixed \(a=t\), the pointwise optimizer is
  \(f_t(r)=\min(r/t,1)\).
  The resulting profile is
  \[
    Q(t)
    =
    \E\left[
      R^2 \ind{R\le t}
      +(2tR-t^2)\ind{R>t}
    \right]
    -t^2 c.
  \]
  Put
  \[
    g(t)=\E\xk{\frac{R}{t}-1}_+.
  \]
  The function \(g\) is continuous and strictly decreasing on
  \((0,\esssup R)\), with limits \(\infty\) and \(0\) at the two ends.
  Moreover,
  \(Q'(t)=2t\xk{g(t)-c}\).
  Thus \cref{appendix-eq:trans-spherical-threshold} has a unique solution,
  and that solution is the global maximizer of the reciprocal quotient.
  At that solution,
  \[
    \E f_t(R)^2+c
    =
    \frac{1}{t}\E\zk{Rf_t(R)}.
  \]
  The maximal reciprocal quotient in
  \cref{appendix-eq:trans-radial-quotient} is consequently
  \[
    t\E\zk{Rf_t(R)}
    =
    \E\zk{R\min(R,t)}.
  \]
  This proves \cref{appendix-eq:trans-spherical-constant}.
  Substitution into the influence representation gives
  \cref{appendix-eq:trans-spherical-stat}.
  The final claim for scalar targets follows by local reparameterization, which is
  valid because \(\mu'(\theta_0)\ne0\).
\end{proof}

\begin{corollary}[Unified factorized spherical scores]
  \label{appendix-cor:trans-factorized-spherical-score}
Under \cref{ass:regular}, suppose that, at the local center,
\[
  s_{\beta_0}(X,Y)=AX,
  \qquad
  X=RU,
\]
where \(U\) is uniform on the unit sphere and independent of \((A,R)\), and
\[
  0<\E(A^2R^2)<\infty,
  \qquad
  \E(\abs{A}R)>0.
\]
Put
\[
  R_S=\abs{A} R.
\]
For every \(\beta\in(0,1)\), let \(t_\beta>0\) solve
\[
  \E\xk{\frac{R_S}{t_\beta}-1}_+
  =
  \frac{2p(1-\beta)}{\beta}.
\]
Then the exact unified constant under squared Euclidean loss is
\[
  \uCt(I_p)
  =
  \frac{\beta p^2}{\E\zk{R_S \min(R_S,t_\beta)}}.
\]
The attaining statistic is the radial clipping in
\cref{appendix-eq:trans-spherical-stat}, with score \(S=AX\).
Its constant in the privacy regime depends only on \(\E R_S\), whereas its interior transition constants depend on the full law of \(R_S\).
\end{corollary}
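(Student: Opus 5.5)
The plan is to reduce the factorized score to the spherical case and then apply \cref{appendix-prop:trans-spherical-clipping} essentially verbatim. Write \(S=AX=ARU\).

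\emph{Step 1: the score is spherically symmetric.} Since \(U\) is uniform on the sphere and independent of \((A,R)\), for every orthogonal \(G\) we have \(GS=AR\,(GU)\), and \(GU\) is again uniform and independent of \((A,R)\). Hence \(GS\overset{d}{=}S\), so \(S\) is spherically symmetric. The sign of \(A\) is also absorbed: conditionally on \((A,R)\), the vector \(\sign(A)U\) is still uniform. It follows that \(S\overset{d}{=}R_S U'\) with \(U'\) uniform and independent of \(R_S=\abs{A}R=\norm{S}_2\). The moment condition \(0<\E\norm{S}_2^2=\E(A^2R^2)<\infty\) is exactly the hypothesis of the proposition.

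\emph{Step 2: apply the proposition.} \Cref{appendix-prop:trans-spherical-clipping} with \(R\) replaced by \(R_S=\norm{S}_2\) then gives three things:
\begin{itemize}
\item uniqueness of \(t_\beta\) solving the threshold equation;
\item the constant \(\beta p^2/\E[R_S\min(R_S,t_\beta)]\);
\item optimality of the radial clipping \(\frac12\frac{S}{R_S}\min(R_S/t_\beta,1)\).
\end{itemize}
The only thing to check is that this proposition used nothing beyond spherical symmetry of the score law and finiteness of the second moment. In particular, the Haar-averaging step only needs \(GS\overset{d}{=}S\), and conditioning on \(S\) is legitimate even though \(X\) here is the pair \((X,Y)\).

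\emph{Step 3: the privacy-regime remark.} Combine \cref{appendix-lem:unified-info-identification}(i) with the \(\beta\downarrow0\) form, equivalently via the spatial-sign statistic. The directional envelope \cref{prop:directional-envelope-tv} gives
\[
\tfrac14\bigl(\E\abs{\langle S,h\rangle}\bigr)^2=\tfrac14(\E R_S)^2\bigl(\E\abs{U_1}\bigr)^2,
\]
which depends only on \(\E R_S\). Together with the spherical trace bound this yields \(\mfC^{(0)}=2p^3/\bigl((\E R_S)^2\cdot\text{const}_p\bigr)\), or equivalently the \(\beta\to0\) limit of the formula. By contrast, \(\E[R_S\min(R_S,t_\beta)]\) and the threshold equation involve the whole law of \(R_S\) when \(0<\beta<1\).

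I expect the main obstacle to be purely bookkeeping: confirming that the sign of \(A\) and its dependence with \(R\) do not break spherical symmetry. Conditioning on \((A,R)\), as in Step 1, handles this.
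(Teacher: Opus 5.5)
Your proposal is correct and follows the paper's proof. The paper likewise absorbs the sign of \(A\) into \(U\), using a \(\pm1\)-valued sign \(\varepsilon(A)\) with \(\varepsilon(0)=1\); this avoids the degenerate case \(\sign(0)=0\) in your Step~1. It then writes \(S=R_S\,\varepsilon(A)U\) as a spherical score with radius \(R_S\) and applies \cref{appendix-prop:trans-spherical-clipping} verbatim.

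In Step~3, the directional envelope by itself does not fix the privacy constant, because for \(p\ge2\) it is not attained jointly in all directions. The \(\beta\downarrow0\) limit of the formula gives exactly \(2p^3/(\E R_S)^2\), so your unspecified constant is one. Equivalently, the sharp spherical bound \(\Gamma_{\theta_0}\le(\E R_S)^2/(4p)\), attained by the spatial sign, gives the same value.
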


\begin{proof}
Let \(\varepsilon(A)=1\) when \(A\ge0\) and \(-1\) otherwise.
Conditional on \((A,R)\), the vector \(\varepsilon(A)U\) is uniform on
the sphere.  Thus it is independent of \(R_S=\abs{A}R\), and
\(S=R_S\varepsilon(A)U\) is spherical.
Apply \cref{appendix-prop:trans-spherical-clipping}.
\end{proof}

For \(p=1\), \cref{appendix-prop:trans-spherical-clipping} says that a
symmetric nondegenerate scalar score has the exact constant
\begin{equation}
  \label{appendix-eq:trans-symmetric-score-constant}
  \uC_{\mu,\theta_0}
  =
  \frac{\beta\xk{\mu'(\theta_0)}^2}{
    \E_{\theta_0} \min\{S^2,t_{\theta_0,\beta} \abs{S}\}
  },
\end{equation}
where
\(\E\zk{(\abs{S}/t_{\theta_0,\beta}-1)_+}=2(1-\beta)/\beta\).
Thus every finite symmetric score distribution gives a piecewise rational
constant, while a continuous symmetric score requires only a unique scalar
root.
   \clearpage
  \section{Exact Constants in Concrete Models}
\label{appendix-sec:concrete-model-constants}

This section gives exact constants for concrete models.
The table first collects the privacy-regime endpoints.
Building on the reductions in \cref{appendix-sec:model-class-reductions},
the subsections give model-specific efficiency constants.

\begingroup
\scriptsize
\renewcommand{\arraystretch}{1.8}
\begin{longtable}{
  p{0.44\textwidth}
  p{0.10\textwidth}
  p{0.33\textwidth}
}
  \caption{Exact constants in the privacy regime.}
  \label{appendix-tab:exact-constants}
  \\
  \hline
  Model & Target & Privacy constant \(C_{\mathrm{priv}}\) \\
  \hline
  \endfirsthead
  \hline
  Model & Target & Privacy constant \(C_{\mathrm{priv}}\) \\
  \hline
  \endhead
  \hline
  \endfoot
  One-dimensional regular model \(\{P_\theta\}\) &
  \(\mu(\theta)\) &
  \(\displaystyle
    \frac{\xk{\mu'(\theta)}^2}{2a_\theta^2}\),
  \(a_\theta=\frac{1}{2}\E_\theta \abs{s_\theta(X)}\) \\
  Bernoulli\((q)\), \(q\in(0,1)\) &
  \(q\) &
  \(\displaystyle \frac{1}{2}\) \\
  Gaussian location \(N(\theta,\sigma^2)\) &
  \(\theta\) &
  \(\displaystyle \pi\sigma^2\) \\
  Gaussian location--scale \(N(\mu,\sigma^2)\) &
  \(\mu+z_\alpha \sigma\) &
  \(\displaystyle
    \frac{2\sigma^2}{d_\alpha^2}\),
  \(d_\alpha\) as in
  \cref{appendix-eq:gauss-quantile-constant} \\
  Laplace location \(\theta\), scale \(b\) &
  \(\theta\) &
  \(\displaystyle 2b^2\) \\
  Logistic location \(\theta\), scale \(s\) &
  \(\theta\) &
  \(\displaystyle 8s^2\) \\
  Cauchy location \(\theta\), scale \(\gamma\) &
  \(\theta\) &
  \(\displaystyle \frac{\pi^2 \gamma^2}{2}\) \\
  Student \(t_\nu\) location \(\theta\), scale \(\sigma\) &
  \(\theta\) &
  \(\displaystyle
    \frac{\nu\pi\sigma^2}{2}
    \left(
      \frac{\Gamma(\nu/2)}
      {\Gamma((\nu+1)/2)}
    \right)^2\) \\
  Generalized Gaussian location
  \(\propto\exp(-(\abs{x-\theta}/b)^\alpha)\), \(\alpha>1/2\) &
  \(\theta\) &
  \(\displaystyle
    \frac{2b^2 \Gamma(1/\alpha)^2}{\alpha^2}\) \\
  Exponential rate \(\lambda\) &
  \(\lambda\) &
  \(\displaystyle \frac{e^2 \lambda^2}{2}\) \\
  Exponential mean \(\mu=1/\lambda\) &
  \(\mu\) &
  \(\displaystyle \frac{e^2 \mu^2}{2}\) \\
  Gamma\((\alpha,\beta)\), known shape \(\alpha\), scale \(\beta\) &
  \(\beta\) &
  \(\displaystyle
    \frac{\beta^2 e^{2\alpha} \Gamma(\alpha)^2}
    {2\alpha^{2\alpha}}\) \\
  Gamma\((\alpha,\beta)\), known shape \(\alpha\), mean \(\mu=\alpha\beta\) &
  \(\mu\) &
  \(\displaystyle
    \frac{\mu^2 e^{2\alpha} \Gamma(\alpha)^2}
    {2\alpha^{2\alpha}}\) \\
  Poisson mean \(\lambda\) &
  \(\lambda\) &
  \(\displaystyle
    \frac{1}
    {2\Pr_\lambda\{X=\lfloor\lambda\rfloor\}^2}\) \\
  Distributions \(P\) supported on \([\ell,r]\), \(\Delta=r-\ell\) &
  \(\E_P X\) &
  \(\displaystyle \frac{\Delta^2}{2}\) \\
  Distributions \(P\) supported on \([\ell,r]^p\) &
  \(\E_P X \in\R^p\) &
  \(\displaystyle \frac{p^2 \Delta^2}{2}\) \\
  Categorical model \(q\in\Delta_{k-1}\) on \(\{1,\ldots,k\}\) &
  \(q\) &
  \(\displaystyle k-1\) \\
  Spherical location \(X=\theta+Z\), \(\theta\in\R^p\) &
  \(\theta\) &
  \(\displaystyle
    \frac{2p^3}{m^2}\),
  \(m=\E_\theta \norm{s_\theta(X)}_2\) \\
  Gaussian mean \(N_p(\theta,\sigma^2 I_p)\) &
  \(\theta\) &
  \(\displaystyle
    \frac{2\sigma^2 p^3}{(\E\chi_p)^2}\) \\
  Radial Laplace location
  \(\propto\exp(-\norm{x-\theta}_2/b)\) &
  \(\theta\) &
  \(\displaystyle 2b^2 p^3\) \\
  Radial generalized Gaussian location
  \(\propto\exp(-(\norm{x-\theta}_2/b)^\alpha)\),
  \(\alpha>\max(0,1-p/2)\) &
  \(\theta\) &
  \(\displaystyle
    \frac{
      2p^3 b^2 \Gamma(p/\alpha)^2
    }{
      \alpha^2 \Gamma((p+\alpha-1)/\alpha)^2
    }\) \\
  Multivariate Student \(t_\nu\) location, scale \(\sigma\) &
  \(\theta\) &
  \(\displaystyle \frac{\nu\sigma^2 p^3}{2}\)
  \newline
  \(\displaystyle {}\times
    \left(
      \frac{\Gamma(p/2)\Gamma(\nu/2)}
      {\Gamma((p+1)/2)\Gamma((\nu+1)/2)}
    \right)^2\) \\
  Multivariate Cauchy location, scale \(\sigma\) &
  \(\theta\) &
  \(\displaystyle
    \frac{\pi\sigma^2 p^3}{2}
    \left(
      \frac{\Gamma(p/2)}
      {\Gamma((p+1)/2)}
    \right)^2\) \\
  Logistic regression with spherical design at \(\beta_0=0\) &
  \(\beta\) &
  \(\displaystyle
    \frac{8p^3}{(\E\norm{X}_2)^2}\) \\
  Canonical GLM at \(\beta_0=0\), \(X\sim N_p(0,I_p)\) &
  \(\beta\) &
  \(\displaystyle
    \frac{2p^3}{(\E\chi_p)^2d_0^2}\),
  \(d_0\) as in \cref{eq:glm-absolute-score} \\
  Logistic regression at \(\beta_0=0\), \(X\sim N_p(0,I_p)\) &
  \(\beta\) &
  \(\displaystyle
    \frac{8p^3}{(\E\chi_p)^2}\) \\
  Poisson log-linear regression at \(\beta_0=0\),
  \(X\sim N_p(0,I_p)\) &
  \(\beta\) &
  \(\displaystyle
    \frac{e^2 p^3}{2(\E\chi_p)^2}\) \\
  Linear regression with spherical design &
  \(\beta\) &
  \(\displaystyle
    \frac{2p^3}
    {(\E\abs{\ell_f(\xi)}\E\norm{X}_2)^2}\) \\
  Linear regression with \(X\sim N_p(0,I_p)\),
  \(\xi\sim N(0,\sigma^2)\) &
  \(\beta\) &
  \(\displaystyle
    \frac{\pi\sigma^2 p^3}{(\E\chi_p)^2}\) \\
\end{longtable}
\endgroup

\subsection{Location, scale, and Gamma models under privacy regime}
\label{appendix-subsec:1d-models}

For the models in this subsection, \cref{appendix-lem:scalar-score-reduction}
reduces the privacy constant to the calculation of \(a_{\theta_0}\) and the
derivative of the target.

For an absolutely continuous location family with density \(f(x-\theta)\), suppose \(f\) is symmetric and unimodal with mode zero and the regularity assumptions of the scalar theorem hold.
Then
\[
  a_\theta
  =
  \frac{1}{2}\int_{\R} \abs{f'(x)} dx
  =
  f(0),
  \qquad
  C_\theta(\theta)=\frac{1}{2f(0)^2}.
\]
For the Cauchy location family with scale \(\gamma\),
\[
  f(0)=\frac{1}{\pi\gamma}.
\]
For the Student \(t_\nu\) location family with scale \(\sigma\),
\[
  f(0)
  =
  \frac{\Gamma((\nu+1)/2)}
  {\sigma\sqrt{\nu\pi}\Gamma(\nu/2)}.
\]
For the generalized Gaussian location density proportional to
\(\exp(-(\abs{x-\theta}/b)^\alpha)\),
\[
  f(0)=\frac{\alpha}{2b\Gamma(1/\alpha)}.
\]
The restriction \(\alpha>1/2\) is precisely the condition under which this
location family has finite Fisher information and is differentiable in
quadratic mean at the cusp.

For the exponential and Gamma scale examples, let \(Y\sim\operatorname{Gamma}(\alpha,1)\).
Integration by parts gives
\[
  \E\abs{Y-\alpha}
  =
  \frac{2\alpha^\alpha e^{-\alpha}}{\Gamma(\alpha)}.
\]
If \(X\sim\operatorname{Gamma}(\alpha,\beta)\), with known shape \(\alpha\) and scale \(\beta\), then
\[
  a_\beta
  =
  \frac{\alpha^\alpha e^{-\alpha}}
  {\beta\Gamma(\alpha)}.
\]
Substitution into the scalar formula gives the Gamma scale row, and multiplying
the derivative of the target by \(\alpha\) gives the Gamma mean row.
The exponential scale case is \(\alpha=1\); reparameterizing by the rate \(\lambda\) or the mean \(\mu=1/\lambda\) gives the two exponential rows.

\subsection{One-dimensional models with explicit transition formulas}
\label{appendix-subsec:1d-trans-models}

These models have closed, piecewise, or one-root transition formulas.

\subsubsection{Bernoulli mean}
\label{appendix-subsubsec:trans-bernoulli}

Let \(X\sim\operatorname{Bernoulli}(q)\), fix \(q_0 \in(0,1)\), and estimate
\(q\) under squared loss.
At \(q_0\), take \(B=X\).
Then
\[
  S=\frac{X-q_0}{q_0(1-q_0)},
  \qquad
  \pi=q_0,
  \qquad
  a=\E_{q_0}(XS)=1.
\]
The binary score lemma gives
\begin{equation}
  \label{appendix-eq:trans-bernoulli-constant}
  \begin{aligned}
    C_{\sigma}^{\mathrm{Ber}}(\beta,q_0)
    &=
    \beta q_0(1-q_0)+\frac{1-\beta}{2},\\
    C_n^{\mathrm{Ber}}(\lambda,q_0)
    &=
    q_0(1-q_0)+\frac{1}{2\lambda},\\
    C_{\tau}^{\mathrm{Ber}}(\lambda,q_0)
    &=
    \frac{1}{2}+\lambda q_0(1-q_0).
  \end{aligned}
\end{equation}
The statistic \(\Phi(x)=x\) is efficient in both the privacy and sampling regimes, so the regime lower bound is exact.

\subsubsection{Laplace location}
\label{appendix-subsubsec:trans-laplace}

Let \(X\) have density
\[
  x\longmapsto
  \frac{1}{2b}\exp\xk{-\frac{\abs{x-\theta}}{b}},
  \qquad b>0,
\]
and estimate \(\theta\) under squared loss.
At \(\theta_0\),
\[
  S
  =
  \frac{1}{b}\sign(X-\theta_0).
\]
With \(B=\ind{X\ge\theta_0}\), we have
\(\pi=1/2\) and \(a=1/(2b)\).
Therefore
\begin{equation}
  \label{appendix-eq:trans-laplace-constant}
  \begin{aligned}
    C_\sigma^{\mathrm{Lap}}(\beta,b)
    &=
    b^2(2-\beta),\\
    C_n^{\mathrm{Lap}}(\lambda,b)
    &=
    b^2 \xk{1+\frac{2}{\lambda}},\\
    C_\tau^{\mathrm{Lap}}(\lambda,b)
    &=
    b^2(\lambda+2).
  \end{aligned}
\end{equation}
The indicator \(B\), equivalently the centered statistic
\(\sign(X-\theta_0)/2\), is optimal at every \(\lambda\).

\subsubsection{Logistic location: an elementary nonadditive constant}
\label{appendix-subsubsec:trans-logistic-location}

Consider the logistic location family with known scale \(s>0\) and CDF
\[
  F_\theta(x)
  =
  \frac{1}{1+\exp\xk{-(x-\theta)/s}}.
\]
At \(\theta_0\),
\[
  S
  =
  \frac{2F_{\theta_0}(X)-1}{s},
  \qquad
  U=sS\sim\operatorname{Unif}[-1,1].
\]
Define
\begin{equation}
  \label{appendix-eq:trans-logistic-threshold}
  a_\lambda
  =
  \frac{\sqrt{1+\lambda}-1}{\sqrt{1+\lambda}+1}
  =
  1+\frac{2}{\lambda}
  -
  \sqrt{\xk{1+\frac{2}{\lambda}}^2-1}.
\end{equation}
Since \(\abs{U}\sim\operatorname{Unif}[0,1]\), the threshold equation is
\[
  \frac{(1-a_\lambda)^2}{2a_\lambda}
  =
  \frac{2}{\lambda}.
\]
The clipped score reduction gives
\begin{equation}
  \label{appendix-eq:trans-logistic-constant}
  C_n^{\mathrm{Log}}(\lambda,s)
  =
  \frac{6s^2}{a_\lambda(3-a_\lambda^2)},
  \qquad
  C_\sigma^{\mathrm{Log}}
  =
  \beta C_n^{\mathrm{Log}},
  \qquad
  C_\tau^{\mathrm{Log}}
  =
  \lambda C_n^{\mathrm{Log}}.
\end{equation}
The optimizer is
\[
  \Phi_\lambda(X)
  =
  \frac{1}{2}\sign(U)
  \min\xk{\frac{\abs{U}}{a_\lambda},1}.
\]
The limiting values corresponding to the privacy and sampling regimes are
\[
  \lim_{\lambda\downarrow0}
  \lambda C_n^{\mathrm{Log}}(\lambda,s)
  =8s^2,
  \qquad
  \lim_{\lambda\to\infty}
  C_n^{\mathrm{Log}}(\lambda,s)
  =3s^2.
\]
For every finite \(\lambda\), the joint constant strictly exceeds the sum of the privacy and sampling regime lower bounds:
\begin{equation}
  \label{appendix-eq:trans-logistic-nonadditivity}
  C_n^{\mathrm{Log}}
  -
  \xk{3s^2+\frac{8s^2}{\lambda}}
  =
  s^2
  \frac{(1-a_\lambda)^2(2a_\lambda+3)}{3-a_\lambda^2}
  >0.
\end{equation}

\subsubsection{Binomial\texorpdfstring{\((3,q)\)}{(3,q)}: a piecewise rational constant}
\label{appendix-subsubsec:trans-binomial-three}

Let \(X\sim\operatorname{Binomial}(3,q)\), fix \(q_0=1/2\), and estimate
\(q\) under squared loss.
The score is
\[
  S=4X-6,
  \qquad
  \Pr(\abs{S}=2)=\frac{3}{4},
  \qquad
  \Pr(\abs{S}=6)=\frac{1}{4}.
\]
Solving the clipping equation on its two linear pieces gives
\[
  t_{\theta_0,\lambda}
  =
  \begin{cases}
    \dfrac{3\lambda}{\lambda+2},
    &0<\lambda\le4,\\[6pt]
    \dfrac{6\lambda}{\lambda+8},
    &\lambda\ge4.
  \end{cases}
\]
Consequently,
\begin{equation}
  \label{appendix-eq:trans-binomial-three-constant}
  C_n^{\mathrm{Bin}(3)}(\lambda)
  =
  \begin{cases}
    \dfrac{\lambda+2}{9\lambda},
    &0<\lambda\le4,\\[7pt]
    \dfrac{\lambda+8}{12(\lambda+2)},
    &\lambda\ge4.
  \end{cases}
\end{equation}
As usual,
\(C_\sigma^{\mathrm{Bin}(3)}=\beta C_n^{\mathrm{Bin}(3)}\) and
\(C_\tau^{\mathrm{Bin}(3)}=\lambda C_n^{\mathrm{Bin}(3)}\).
On the four values \(X=0,1,2,3\), an optimal centered statistic is
\[
  \Phi_\lambda
  =
  \begin{cases}
    (-1/2,-1/2,1/2,1/2),
    &0<\lambda\le4,\\[3pt]
    (-1/2,-x_\lambda,x_\lambda,1/2),
    &\lambda\ge4,
  \end{cases}
  \qquad
  x_\lambda=\frac{\lambda+8}{6\lambda}.
\]
The breakpoint \(\lambda=4\) is where the inner score values cease to be
fully clipped.
Moreover,
\[
  \lim_{\lambda\downarrow0}
  \lambda C_n^{\mathrm{Bin}(3)}(\lambda)
  =\frac{2}{9},
  \qquad
  \lim_{\lambda\to\infty}
  C_n^{\mathrm{Bin}(3)}(\lambda)
  =\frac{1}{12}.
\]
The lower bound based on the privacy and sampling regime constants is again strict at every finite \(\lambda\):
\begin{equation}
  \label{appendix-eq:trans-binomial-three-nonadditivity}
  C_n^{\mathrm{Bin}(3)}
  -
  \xk{\frac{1}{12}+\frac{2}{9\lambda}}
  =
  \begin{cases}
    \dfrac{1}{36},
    &0<\lambda\le4,\\[6pt]
    \dfrac{5\lambda-8}{18\lambda(\lambda+2)},
    &\lambda\ge4.
  \end{cases}
\end{equation}

\subsubsection{Poisson mean: asymmetric winsorization}
\label{appendix-subsubsec:trans-poisson-mean}

Let \(X\sim\operatorname{Poisson}(\nu)\), fix \(\nu>0\), and estimate
\(\nu\) under squared loss.
The Poisson family is regular, and its density derivative is \(L^1\)-continuous
on compact subsets of \((0,\infty)\).
Write
\[
  p_\nu(j)=\Pr_\nu(X=j),
  \qquad
  L_\nu(a)=\E_\nu(a-X)_+,
  \qquad
  U_\nu(b)=\E_\nu(X-b)_+.
\]
The two stop-loss functions meet at
\[
  q_\nu^\star
  =
  L_\nu(\nu)
  =
  U_\nu(\nu)
  =
  \nu p_\nu(\lfloor\nu\rfloor).
\]
For \(q\in(0,q_\nu^\star)\), let \(a_\nu(q)\in(0,\nu)\) and
\(b_\nu(q)\in(\nu,\infty)\) be determined by
\[
  L_\nu\xk{a_\nu(q)}
  =
  U_\nu\xk{b_\nu(q)}
  =q.
\]

\begin{proposition}[Poisson mean transition constant]
  \label{appendix-prop:trans-poisson-mean}
  For every \(\lambda\in(0,\infty)\), there is a unique
  \(q_{\nu,\lambda} \in(0,q_\nu^\star)\) satisfying
  \begin{equation}
    \label{appendix-eq:trans-poisson-tail-balance}
    b_\nu(q_{\nu,\lambda})-a_\nu(q_{\nu,\lambda})
    =
    2\lambda q_{\nu,\lambda}.
  \end{equation}
  Put
  \[
    a_{\nu,\lambda}=a_\nu(q_{\nu,\lambda}),
    \qquad
    b_{\nu,\lambda}=b_\nu(q_{\nu,\lambda}),
    \qquad
    w_{\nu,\lambda}=b_{\nu,\lambda}-a_{\nu,\lambda},
  \]
  and define
  \[
    G_{\nu,\lambda}(x)
    =
    \min\{(x-a_{\nu,\lambda})_+,w_{\nu,\lambda}\},
    \qquad
    D_{\nu,\lambda}
    =
    \E_\nu\zk{G_{\nu,\lambda}(X+1)-G_{\nu,\lambda}(X)}.
  \]
  Then the attaining diameter-one statistic is
  \begin{equation}
    \label{appendix-eq:trans-poisson-stat}
    \Phi_{\nu,\lambda}(x)
    =
    \frac{G_{\nu,\lambda}(x)}{w_{\nu,\lambda}},
  \end{equation}
  and the three normalizations are
  \begin{equation}
    \label{appendix-eq:trans-poisson-constant}
    C_n^{\mathrm{Poi}}(\lambda,\nu)
    =
    \frac{\nu}{D_{\nu,\lambda}},
    \qquad
    C_\sigma^{\mathrm{Poi}}(\beta,\nu)
    =
    \frac{\beta\nu}{D_{\nu,\lambda}},
    \qquad
    C_\tau^{\mathrm{Poi}}(\lambda,\nu)
    =
    \frac{\lambda\nu}{D_{\nu,\lambda}}.
  \end{equation}
  The continuous extensions of the unified constant are
  \begin{equation}
    \label{appendix-eq:trans-poisson-regime-limits}
    C_\sigma^{\mathrm{Poi}}(0,\nu)
    =
    \frac{1}{2p_\nu(\lfloor\nu\rfloor)^2},
    \qquad
    C_\sigma^{\mathrm{Poi}}(1,\nu)
    =
    \nu.
  \end{equation}
\end{proposition}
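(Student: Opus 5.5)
The plan is to apply the scalar clipped affine reduction, \cref{appendix-prop:trans-scalar-clipping}, to the Poisson score and solve its two-threshold optimization explicitly. At \(\nu\) the score is \(S=(X-\nu)/\nu\). The reduction therefore asks us to minimize, over \(a<b\),
\[
  \frac{\beta\Var_\nu G_{a,b}(X)+(1-\beta)(b-a)^2/2}{\bigl[\E_\nu\{G_{a,b}(X)(X-\nu)\}\bigr]^2}\,\nu^2 .
\]
Here \(G_{a,b}(x)=\min\{(x-a)_+,b-a\}\), since \(G_{a,b}(S)\) is an affine rescaling of \(G_{a',b'}(X)\). For the numerator \(\E_\nu\{G(X)(X-\nu)\}\) we use the Poisson Stein identity \(\E_\nu\{g(X)X\}=\nu\E_\nu g(X+1)\), which gives \(\E_\nu\{G(X)(X-\nu)\}=\nu D\) with \(D=\E_\nu\{G(X+1)-G(X)\}\). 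In the normalization \(C_n=C_\sigma/\beta\), with \((1-\beta)/\beta=1/\lambda\), the objective becomes
\[
  \frac{\Var_\nu G+(b-a)^2/(2\lambda)}{D^2}.
\]
It then suffices to show that the minimum equals \(\nu/D_{\nu,\lambda}\) at the stated thresholds.

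\textbf{First-order conditions.} Differentiating in \(a\) and \(b\), or equivalently using the pointwise projection characterization from the proof of \cref{appendix-prop:trans-scalar-clipping}, the optimal influence function \(\psi=c_0+\gamma G\) is the projection of an affine function of \(X\) onto an interval. The KKT conditions then say that the mass of the linear-function excess on each side equals the multiplier of the diameter constraint. Concretely, I expect the optimal \(\psi\) to equal \((X-\nu)/\nu\) suitably scaled and clipped, with center \(c_0\) fixed by \(\E\psi=0\). Centering forces the lower and upper stop-loss masses to agree, \(L_\nu(a)=U_\nu(b)=q\). The remaining condition, that the derivative with respect to the common width vanishes, balances the sampling variance against the sensitivity term and reduces to \(b-a=2\lambda q\), which is \cref{appendix-eq:trans-poisson-tail-balance}.

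\textbf{Existence and uniqueness.} As \(q\) increases from \(0\) to \(q_\nu^\star\), the function \(a_\nu(q)\) increases toward \(\nu\) and \(b_\nu(q)\) decreases toward \(\nu\), since \(L_\nu\) is increasing and \(U_\nu\) is decreasing and both are continuous. Consequently \(w(q)=b_\nu(q)-a_\nu(q)\) is strictly decreasing, from \(+\infty\) to \(0\), while \(2\lambda q\) increases from \(0\). This monotone crossing argument gives a unique root. One also has to check that \(q_\nu^\star=\nu p_\nu(\lfloor\nu\rfloor)\), via the standard identity \(\E_\nu|X-\nu|=2\nu p_\nu(\lfloor\nu\rfloor)\). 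Once the root is fixed, substituting it into the objective should simplify, via the KKT identity analogous to \(\E f_t^2+c=\E(Rf_t)/t\) in the spherical case, to \(\nu/D_{\nu,\lambda}\). The other two normalizations follow by multiplying by \(\beta\) or \(\lambda\).

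\textbf{Main obstacle.} The hardest step is the discreteness of \(X\). The objective is only piecewise smooth in \((a,b)\), and the stop-loss functions have kinks at integers, so the KKT conditions must be phrased with one-sided derivatives. I would also verify convexity of the reduced problem \cref{appendix-eq:trans-scalar-influence} in \(\psi\), which makes the KKT point a global minimum; this avoids ever comparing clipped statistics directly. The limits in \cref{appendix-eq:trans-poisson-regime-limits} then follow from the endpoint behavior of the root. As \(\lambda\to0\), \(q\to q_\nu^\star\) and the statistic tends to a sign statistic, giving \(1/(2p_\nu(\lfloor\nu\rfloor)^2)\), consistent with the table. As \(\lambda\to\infty\), clipping disappears and \(D\to1\), giving the Fisher value \(\nu\). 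Both limits are consistent with \cref{appendix-lem:unified-beta-monotonicity}.
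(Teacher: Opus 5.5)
Your proposal is correct and follows essentially the paper's route. You apply the scalar clipped-affine reduction to \(S=(X-\nu)/\nu\) and use the Poisson Stein identity. You then take the KKT conditions, which yield the tail balance \(L_\nu(a)=U_\nu(b)=q\) and the width condition \(w=2\lambda q\), and you finish with monotone crossing for existence and uniqueness, convexity of the influence problem for global optimality, and endpoint asymptotics of the root for the two limits.

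The differences are minor. The paper obtains the width condition and the value \(\nu/D_{\nu,\lambda}\) from the explicit identity \(\nu D_{\nu,\lambda}-\Var_\nu G(X)=wq\), valid under tail balance, and this is what your KKT identity reduces to. Also, tail balance really comes from the equality of the upper and lower clipping multipliers (equivalently, the paper's intercept identity \(\E_\nu\Phi=(\nu-a)/w\)), not from centering alone, although you state that multiplier condition as well.
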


\begin{proof}
  Apply \cref{appendix-prop:trans-scalar-clipping} to the score
  \(S=(X-\nu)/\nu\).  After the affine change from score coordinates back
  to count coordinates, an optimal statistic has the form
  \(G(x)=\min\{(x-a)_+,b-a\}\), up to affine rescaling.
  Write \(w=b-a\) and \(\Phi=G/w\).
  Differentiating the resulting criterion with two thresholds, equivalently using
  the threshold KKT conditions, gives the identities.
  If \(m=\E_\nu \Phi(X)\) and \(c\) is the slope of the affine part as a
  function of the score \((X-\nu)/\nu\), the KKT conditions give
  \[
    m=\E_\nu \Phi(X),
    \qquad
    c=\frac{\nu}{w},
    \qquad
    \Var_\nu\xk{\Phi(X)}+\frac{1}{2\lambda}
    =
    c\E_\nu\zk{\Phi(X)\frac{X-\nu}{\nu}}.
  \]
  The intercept identity is
  \(m=(\nu-a)/w\).
  Since
  \[
    \E_\nu G(X)
    =
    \nu-a+L_\nu(a)-U_\nu(b),
  \]
  it is equivalent to \(L_\nu(a)=U_\nu(b)=q\).
  Under this tail balance, direct expansion gives
  \[
    \nu\E_\nu\zk{G(X+1)-G(X)}
    -
    \Var_\nu\xk{G(X)}
    =
    wq.
  \]
  Thus the scale condition reduces to \(w=2\lambda q\).
  Since \(a_\nu(q)\) is strictly increasing and \(b_\nu(q)\) is strictly
  decreasing, their difference decreases continuously from \(\infty\) to
  zero as \(q\) increases from zero to \(q_\nu^\star\).
  This proves the existence and uniqueness in
  \cref{appendix-eq:trans-poisson-tail-balance}, and the convex formulation
  proves global optimality of \cref{appendix-eq:trans-poisson-stat}.

  The Poisson Stein identity gives
  \[
    D_{\nu,\lambda}
    =
    \frac{1}{\nu}
    \E_\nu\zk{(X-\nu)G_{\nu,\lambda}(X)}.
  \]
  By \cref{appendix-eq:trans-poisson-tail-balance}, this is
  \[
    \Var_\nu\xk{G_{\nu,\lambda}(X)}
    +
    \frac{w_{\nu,\lambda}^2}{2\lambda}
    =
    \nu D_{\nu,\lambda}.
  \]
  Substituting \(\Phi_{\nu,\lambda}=G_{\nu,\lambda}/w_{\nu,\lambda}\)
  into \cref{appendix-eq:trans-scalar-clipping} gives
  \cref{appendix-eq:trans-poisson-constant}.

  As \(\lambda\downarrow0\),
  \[
    a_{\nu,\lambda},b_{\nu,\lambda} \to\nu,
    \qquad
    w_{\nu,\lambda} \sim2\lambda\nu p_\nu(\lfloor\nu\rfloor),
    \qquad
    \frac{D_{\nu,\lambda}}{w_{\nu,\lambda}}
    \to p_\nu(\lfloor\nu\rfloor).
  \]
  Hence
  \[
    \lambda C_n^{\mathrm{Poi}}(\lambda,\nu)
    \to
    \frac{1}{2p_\nu(\lfloor\nu\rfloor)^2}.
  \]
  As \(\lambda\to\infty\),
  \[
    a_{\nu,\lambda} \to0,
    \qquad
    b_{\nu,\lambda} \to\infty,
    \qquad
    D_{\nu,\lambda} \to1,
  \]
  so \(C_n^{\mathrm{Poi}}(\lambda,\nu)\to\nu\).
  Together with \(C_\sigma^{\mathrm{Poi}}=\beta C_n^{\mathrm{Poi}}\), the claims for the two limiting regimes follow.
\end{proof}

The calculation is explicit on each integer clipping phase.
Let
\[
  F_j=\Pr_\nu(X\le j),
  \qquad
  \overline{F}_j=\Pr_\nu(X\ge j),
  \qquad
  F_{-1}=0.
\]
Away from a phase boundary, let
\(j=\lfloor a_{\nu,\lambda} \rfloor\) and
\(k=\lfloor b_{\nu,\lambda} \rfloor\).
Solving the two tail balance equations on this phase gives
\[
  q_{jk}(\lambda)
  =
  \frac{
    \nu\xk{F_j p_\nu(k)+p_\nu(j)\overline{F}_{k+1}}
  }{
    F_j+\overline{F}_{k+1}+2\lambda F_j \overline{F}_{k+1}
  },
\]
\[
  a_{\nu,\lambda}
  =
  \frac{q_{jk}(\lambda)+\nu F_{j-1}}{F_j},
  \qquad
  b_{\nu,\lambda}
  =
  \frac{\nu\overline{F}_k-q_{jk}(\lambda)}{\overline{F}_{k+1}}.
\]
The valid pair is the unique \((j,k)\) satisfying
\(j\le a_{\nu,\lambda}<j+1\) and
\(k\le b_{\nu,\lambda}<k+1\).
On that phase,
\[
  D_{\nu,\lambda}
  =
  \begin{cases}
    (j+1-a_{\nu,\lambda})p_\nu(j)
    +\displaystyle\sum_{\ell=j+1}^{k-1} p_\nu(\ell)
    +(b_{\nu,\lambda}-k)p_\nu(k),
    &j<k,\\[8pt]
    (b_{\nu,\lambda}-a_{\nu,\lambda})p_\nu(j),
    &j=k.
  \end{cases}
\]
Thus the Poisson constant is piecewise rational in \(\lambda\) over countably
many clipping phases, with no numerical optimization required.

\subsection{Bounded-support mean models}
\label{appendix-subsec:bounded-finite-support}

The next results are global minimax consequences of local submodels rather than local parametric statements.

\begin{theorem}[Bounded scalar and hypercube means]
  \label{appendix-thm:bounded-means}
Let \(\ell<r\), let \(\Delta=r-\ell\), and let \(\mu(P)=\E_P X\).
Suppose that \(n^2\rho_n\to\infty\) and \(n\rho_n\to0\).
Then
\[
  \lim_{n\to\infty}
  \tau_n^2
  \inf_{\widehat{\mu}:\rho_n \text{-zCDP}}
  \sup_{P:\operatorname{supp}(P)\subset[\ell,r]}
  \E_P(\widehat{\mu}-\mu(P))^2
  =
  \frac{\Delta^2}{2}.
\]
For fixed \(p\),
\[
  \lim_{n\to\infty}
  \tau_n^2
  \inf_{\widehat{\mu}:\rho_n \text{-zCDP}}
  \sup_{P:\operatorname{supp}(P)\subset[\ell,r]^p}
  \E_P \norm{\widehat{\mu}-\mu(P)}_2^2
  =
  \frac{p^2 \Delta^2}{2}.
\]
\end{theorem}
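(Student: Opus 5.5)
The plan is to prove matching upper and lower bounds. The upper bound comes from an explicit Gaussian release, and the lower bound from a product Bernoulli submodel on the vertices of the hypercube, to which \cref{thm:private-minimax} applies directly. It suffices to treat the hypercube case, since \(p=1\) is the scalar case. By translation and scaling we may take \(\ell=0\) and \(r=1\), at the cost of a factor \(\Delta^2\).

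\emph{Upper bound.} The sample mean \(\bar X_n\) has replacement \(\ell_2\)-sensitivity at most \(\sqrt p\,\Delta/n\). By \cref{lem:gauss-composition}, releasing \(\bar X_n+G_n\) with \(G_n\sim N_p(0,p\Delta^2 I_p/(2\tau_n^2))\) is \(\rho_n\)-zCDP. Its risk is unbiased and equals
\[
\Tr\Var_P(X)/n+p^2\Delta^2/(2\tau_n^2).
\]
We have \(\Tr\Var_P(X)\le p\Delta^2/4\) uniformly over the class, and \(n\rho_n\to0\) gives \(\tau_n^2/n\to0\). Hence
\[
\tau_n^2\sup_P\E_P\norm{\widehat\mu-\mu(P)}_2^2\to p^2\Delta^2/2.
\]
This is uniform over the class, so no localization is needed.

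\emph{Lower bound.} Restrict to product laws \(P_q=\bigotimes_{j=1}^p\mathrm{Bernoulli}(q_j)\) on \(\{0,1\}^p\), with \(\mu(P_q)=q\), near the interior center \(q_0=(1/2,\dots,1/2)\). The local sets \(\Theta_n\) of \cref{eq:local-param-set} lie inside this submodel, so the global minimax risk dominates the local one. \Cref{thm:private-minimax} with \(W=I_p\) then gives the limit \(\mfC_{q_0}(I_p)\); \cref{ass:regular,ass:score-density-l1} and \(I(q_0)\succ0\) are clear for this finite exponential family. It remains to show \(\mfC_{q_0}(I_p)=p^2/2\). By \cref{appendix-cor:euclidean-trace-lower}, \(\mfC_{q_0}(I_p)\ge p^2/(2\Gamma_{q_0})\), so the key step is \(\Gamma_{q_0}\le1\).

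\emph{Bounding \(\Gamma_{q_0}\).} At \(q_0\) the score is \(s(x)=4(x-\tfrac12\mathbf 1)\), and \(x-\tfrac12\mathbf 1\) is uniform on \(\{\pm\tfrac12\}^p\). For a centered diameter-one \(\Phi\), we have
\[
\norm{A_\Phi}_{\HS}^2=\sum_j\norm{\E[\Phi(X)s_j(X)]}^2 .
\]
Here \(\E[\Phi s_j]=2\E[\Phi(X)\epsilon_j]\) with \(\epsilon_j=\pm1\) Rademacher, and \(\E[\Phi\epsilon_j]\) equals \(\tfrac12\E[\Phi(X^{j,+})-\Phi(X^{j,-})]\), the average difference across the \(j\)-th edge. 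We then need to control \(\sum_j\norm{\E_{\text{edges }j}(\Phi^+-\Phi^-)}^2\le1\). The first thing to try is a Fourier/Parseval argument on the cube: the degree-one Walsh coefficients \(\hat\Phi_j=\E[\Phi\epsilon_j]\) satisfy \(\sum_j\norm{\hat\Phi_j}^2\le\Var\Phi\le\E\norm{\Phi-\Phi(-x)}^2/4\le1/4\). The last inequality pairs each \(x\) with its antipode, uses diameter one, and centers \(\Phi\). This yields
\[
\norm{A_\Phi}_{\HS}^2=4\sum_j\norm{\hat\Phi_j}^2\le1 .
\]
This Parseval step is the one I expect to be the main obstacle, in particular making sure the antipodal bound on \(\Var\Phi\) is correctly used.

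\emph{Attainment.} The statistic \(\Phi(x)=(x-\tfrac12\mathbf 1)/\sqrt p\) has diameter one and \(J=I_p/p\), so \(\Tr J^{-1}=p^2\) and the infimum equals \(p^2\). Thus \(\mfC_{q_0}(I_p)=p^2/2\), which matches the upper bound after rescaling by \(\Delta^2\). The scalar statement is the case \(p=1\).
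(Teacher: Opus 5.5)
Your architecture is the paper's. The upper bound is the Gaussian release of $\bar X_n$. The lower bound uses the product Bernoulli submodel at $q_0=\tfrac12\mathbf 1$ (score $2\epsilon$ with $\epsilon$ Rademacher, $\kappa=4$), the bound $\Gamma_{q_0}\le1$, and the trace-relaxed corollary $\mfC_{q_0}(I_p)\ge p^2/(2\Gamma_{q_0})$. The only substantive difference is how you prove $\Gamma_{q_0}\le1$. The paper invokes its fixed-radius score proposition, i.e.\ the dual bound $\norm{A_\Phi}_{\HS}\le\frac12\sup_{\norm{B}_{\HS}\le1}\E\norm{BS}_2\le\frac{\sqrt\kappa}{2}$, while you use Walsh coefficients. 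That is a legitimate alternative, but the chain you wrote breaks at its middle inequality, exactly where you suspected.

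The claim $\Var\Phi\le\frac14\E\norm{\Phi(X)-\Phi(-X)}^2$ goes the wrong way. Here $-x$ denotes the antipode $\mathbf 1-x$. Split $\Phi=\Phi_{\mathrm e}+\Phi_{\mathrm o}$ with $\Phi_{\mathrm o}(x)=\frac12\{\Phi(x)-\Phi(-x)\}$. The antipodal map preserves the uniform law, so the two parts are orthogonal, and
$\Tr\Var\Phi=\Tr\Var\Phi_{\mathrm e}+\E\norm{\Phi_{\mathrm o}}^2\ge\frac14\E\norm{\Phi(X)-\Phi(-X)}^2$.
For example, with $p=2$ the statistic $\ind{x_1=x_2}$ is even, so the right side vanishes while its variance is $\frac14$.

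Nor can the chain be saved by bounding $\Tr\Var\Phi\le\frac14$ directly, because that fails for vector-valued statistics. Mapping the four points of $\{0,1\}^2$ to the vertices of a regular tetrahedron with unit edges gives a diameter-one $\Phi$ with $\Tr\Var\Phi=\frac38$.

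The repair is to discard the even part before applying Parseval. Because each $\epsilon_j$ is odd, $\hat\Phi_j=\E[\Phi_{\mathrm o}\epsilon_j]$. Bessel's inequality for the orthonormal family $\epsilon_1,\dots,\epsilon_p$ then gives
$\sum_j\norm{\hat\Phi_j}^2\le\E\norm{\Phi_{\mathrm o}}^2=\frac14\E\norm{\Phi(X)-\Phi(-X)}^2\le\frac14$.
This is the antipodal pairing you had in mind, and the same pairing underlies the paper's support-function step. With it, $\norm{A_\Phi}_{\HS}^2\le1$ and the proof goes through. Centering $\Phi$ plays no role. The attainment paragraph is also not needed, since the matching upper bound already comes from the explicit release.
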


\begin{proof}
For the scalar upper bound, release the empirical mean with independent noise
\[
  G_n \sim N\left(0,\frac{\Delta^2}{2\tau_n^2}\right).
\]
The query sensitivity is \(\Delta/n\), so the release is \(\rho_n\)-zCDP.
Its sampling risk is at most \(\Delta^2/(4n)=o(\tau_n^{-2})\).
For the lower bound, restrict to \(X=\ell+\Delta Y\), where
\(Y\sim\operatorname{Bernoulli}(q)\), locally around \(q_0=1/2\).
At \(q_0\), \(a_{q_0}=1\), while the mean derivative is \(\Delta\), so
\cref{prop:1d-matching} gives the constant
\(\Delta^2/2\).

For the vector upper bound, the empirical mean has replacement sensitivity
\(\sqrt p \Delta/n\).
Adding Gaussian noise with covariance
\[
  \frac{p\Delta^2}{2\tau_n^2}I_p
\]
gives privacy risk \(p^2 \Delta^2/(2\tau_n^2)\); the sampling risk is
negligible.

For the lower bound, restrict to
\(X=\ell\mathbf{1}_p+\Delta Y\), where the coordinates of \(Y\) are independent
Bernoulli variables, and work locally at \(q_0=\mathbf{1}_p/2\).
With \(V=2Y-\mathbf{1}_p\), the score is \(2V\).
This is the fixed radius score law in
\cref{appendix-prop:fixed-radius-score-reduction} with \(\lambda=4\).
Thus \(\Gamma_{q_0}=1\), \(I_p/p\) is feasible, and
\cref{appendix-cor:euclidean-trace-lower}, together with
\(D\mu(q_0)=\Delta I_p\), gives \(p^2 \Delta^2/2\).
The product Bernoulli submodel is contained in the full hypercube distribution
class, completing the lower bound.
\end{proof}

Let \(\Delta=r-\ell\).
For fixed \(p\) and \(n\rho_n \to\lambda\in(0,\infty)\), the exact global transition
constants over bounded distribution classes are
\begin{equation}
  \label{appendix-eq:trans-bounded-mean-constants}
  \begin{aligned}
    \lim_{n\to\infty}
    n
    \inf_{\widehat{\mu}:\rho_n \text{-zCDP}}
    \sup_{\operatorname{supp}(P)\subset[\ell,r]}
    \E_P(\widehat{\mu}-\mu(P))^2
    &=
    \Delta^2 \xk{\frac{1}{4}+\frac{1}{2\lambda}},\\
    \lim_{n\to\infty}
    n
    \inf_{\widehat{\mu}:\rho_n \text{-zCDP}}
    \sup_{\operatorname{supp}(P)\subset[\ell,r]^p}
    \E_P \norm{\widehat{\mu}-\mu(P)}_2^2
    &=
    \Delta^2 \xk{\frac{p}{4}+\frac{p^2}{2\lambda}}.
  \end{aligned}
\end{equation}
For the upper bounds, release the empirical mean with Gaussian noise having
variance \(\Delta^2/(2\tau_n^2)\) in the scalar case and covariance
\(p\Delta^2 I_p/(2\tau_n^2)\) in the vector case.
These are calibrated to sensitivities \(\Delta/n\) and
\(\sqrt p\Delta/n\), respectively.
The maximal sampling variances are \(\Delta^2/4\) and
\(p\Delta^2/4\).
For the lower bounds, restrict to the two-point Bernoulli model in the scalar
case and to the product two-point Bernoulli model at
\(q_0=\mathbf{1}_p/2\) in the vector case.
The latter has score \(S=2V\), where
\(V\in\{-1,1\}^p\), so it is the fixed radius model with \(\kappa=4\).
Multiplication by the target derivative \(\Delta I_p\) gives the second
constant.

\subsection{Categorical models}

\subsubsection{Saturated categorical models}
\label{appendix-subsubsec:categorical-prob}

The categorical model provides a finite support vector problem with simplex rather than hypercube geometry.

Let \(q\) be an interior point of the \(k\)-category simplex, use category
\(k\) as baseline, and put \(d=k-1\).
Write
\[
  V_q
  =
  \operatorname{diag}(q_1,\ldots,q_d)
  -q_{1:d} q_{1:d}^\top
\]
and the configuration Gram region \(\mathcal{G}_k\) in
\cref{appendix-eq:trans-config-region}.

\begin{proposition}[Saturated categorical constants]
  \label{appendix-prop:trans-categorical-constant}
  For \(W\succ0\), define the privacy constant
  \[
    C_{\mathrm{priv}}(W)
    =
    \frac{1}{2}
    \inf_{J\in\mathcal{G}_k} \caI_W(J).
  \]
  Then
  \begin{equation}
    \label{appendix-eq:trans-categorical-constant}
    \begin{aligned}
      \uC_q(W)
      & =
      \beta\Tr(WV_q)
      +(1-\beta)C_{\mathrm{priv}}(W),\\
      C_n^{\mathrm{cat}}(\lambda,q,W)
      & =
      \Tr(WV_q)
      +\frac{C_{\mathrm{priv}}(W)}{\lambda},\\
      C_\tau^{\mathrm{cat}}(\lambda,q,W)
      & =
      \lambda\Tr(WV_q)
      +C_{\mathrm{priv}}(W).
    \end{aligned}
  \end{equation}
  At every finite \(\lambda\), the configurations that are optimal in the
  transition regime are exactly those that are optimal in the privacy regime.
\end{proposition}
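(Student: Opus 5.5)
The plan is to derive the transition identity from the linear configuration reduction (\cref{appendix-lem:trans-config-linear}), after checking that the saturated categorical model satisfies its hypotheses with \(V_0=V_q\) and with the class \(\mathcal A\) of \emph{all} optimal-relevant statistics. The remaining normalizations then follow by rescaling.

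\emph{Step 1: parameterize the categorical statistics.} In baseline coordinates the score is \(S_q(x)=V_q^{-1}(Y(x)-q_{1:d})\). Take any diameter-one statistic \(\Phi\) on \(\{1,\ldots,k\}\) and translate it so that \(\phi_k=\Phi(k)=0\). Define \(U:\R^d\to\caH\) by \(Ue_a=\phi_a\). Then \(\Phi(X)=UY(X)\), which gives
\[
A_{\Phi,q}=\E[UY(Y-q_{1:d})^\top V_q^{-1}]=U,\qquad V_{\Phi,q}=UV_qU^*.
\]
This is exactly the structure used in the proof of \cref{appendix-cor:trans-fin-alphabet-gauss}. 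The Gram matrix \(J_U=U^*U\) ranges over \(\mathcal G_k\) by \cref{appendix-eq:trans-config-region}. Consequently \(\caJ_q=\mathcal G_k\), since every \(J\in\mathcal G_k\) is realized by such a configuration. This gives the privacy formula \(\mfC_q(W)=\tfrac12\min_{J\in\mathcal G_k}\caI_W(J)=C_{\mathrm{priv}}(W)\). The minimum is attained by compactness and lower semicontinuity, and it is finite at a positive definite \(J\) because \(W\succ0\).

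\emph{Step 2: transition formula.} Fix \(\beta\in(0,1)\). By \cref{appendix-prop:trans-stat-decomp}, the infimum defining \(\uC_q(W)\) may be restricted to statistics with nonsingular \(A_\Phi=U\), that is, full-column-rank \(U\). These form the class \(\mathcal A\) of \cref{appendix-lem:trans-config-linear}. Moreover, every \(\R^d\)-valued admissible statistic lies in this class once it is translated. That lemma gives
\[
\uC_q(W)=\beta\Tr(WV_q)+(1-\beta)\tfrac12\inf_{J_U\succ0}\Tr(WJ_U^{-1}).
\]
It also says that the transition minimizers coincide with the privacy minimizers within \(\mathcal A\).

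The main point to verify is that the restricted privacy infimum over nonsingular \(J\in\mathcal G_k\) equals \(C_{\mathrm{priv}}(W)\). This holds because singular \(J\) give \(\caI_W=\infty\) when \(W\succ0\). The endpoints \(\beta=0,1\) are covered by \cref{appendix-lem:unified-info-identification}: at \(\beta=1\) the Fisher constant is \(\Tr(WV_q)\), since \(I(q)=V_q^{-1}\).

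For the optimality claim, an optimal configuration at \(\beta\) has a finite objective, so its \(U\) has full rank and it lies in \(\mathcal A\). The same holds for the privacy optimizers. The \emph{exactly the same} statement therefore follows from the last sentence of \cref{appendix-lem:trans-config-linear}, combined with the fact that every realizing statistic belongs to \(\mathcal A\).

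\emph{Step 3: rescaling.} With \(\beta=\lambda/(1+\lambda)\) and \(n\)-scale constant \(\uC_q/\beta\) (as in \cref{cor:trans-local-minimax}), we get
\[
C_n=\Tr(WV_q)+\frac{1-\beta}{\beta}C_{\mathrm{priv}}=\Tr(WV_q)+\frac{C_{\mathrm{priv}}}{\lambda}.
\]
Multiplying by \(\lambda\) gives \(C_\tau\), since \(\tau_n^2=\lambda_n n\).
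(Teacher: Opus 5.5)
Your proposal is correct and follows the paper's proof. Both write \(Ue_a=\phi_a-\phi_k\), so that \(A_{\Phi,q}=U\) and \(V_{\Phi,q}=UV_qU^*\). Both discard rank-deficient \(U\) because \(W\succ0\), apply the linear configuration reduction with \(V_0=V_q\), and then rescale by \(\beta^{-1}\) and \(\lambda\). The one difference is minor: to justify ignoring the closure and the singular generators, you cite the fixed-statistic decomposition, whereas the paper argues directly from compactness of \(\mathcal{G}_k\) and continuity of the map from configurations to unified information.
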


\begin{proof}
  Every statistic on the categorical sample space is specified by values
  \(\phi_1,\ldots,\phi_k\).
  Subtracting \(\phi_k\) preserves its diameter, covariance, and expectation derivative.
  Define \(Ue_a=\phi_a-\phi_k\), \(a=1,\ldots,d\).
  Its translated values lie in a space of dimension at most \(d\), and
  \[
    J=U^*U\in\mathcal G_k,
    \qquad
    A_{\Phi,q}=U,
    \qquad
    V_{\Phi,q}=UV_qU^*.
  \]
  Conversely, every \(J\in\mathcal G_k\) has a factorization \(J=U^*U\)
  whose columns, together with the zero baseline, form an admissible configuration.
  Thus these configurations exhaust the admissible statistics up to translation
  and an isometry of their span.
  For \(0<\beta<1\), the unified information has kernel \(\ker U\).
  Since \(W\succ0\), a rank-deficient \(U\) therefore has infinite inverse-information
  loss in both the privacy and transition problems.
  Full-rank admissible configurations exist, so the infima may be restricted to them.
  Moreover, the configuration Gram region is compact and its map to unified
  information is continuous for \(0<\beta<1\), so taking the information-region
  closure adds no further matrices.
  Applying \cref{appendix-lem:trans-config-linear} to the full-rank class with
  \(V_0=V_q\) now identifies its class-restricted constants with the
  unrestricted categorical constants and gives the first identity.
  The rescaled constants follow from
  \(C_n^{\mathrm{cat}}(\lambda,q,W)=\beta^{-1}\uC_q(W)\) and
  \(C_\tau^{\mathrm{cat}}(\lambda,q,W)=\lambda C_n^{\mathrm{cat}}(\lambda,q,W)\).
\end{proof}

\paragraph{Euclidean global privacy endpoint.}
Release the empirical probability vector with Gaussian noise supported on
\[
  \caT=\dk{v\in\R^k:\sum_{j=1}^k v_j=0}
\]
and covariance \(\tau_n^{-2} I_{\caT}\), followed by projection onto the
simplex.
The empirical vector has replacement sensitivity \(\sqrt2/n\), and the
privacy contribution to squared Euclidean risk is
\((k-1)/\tau_n^2\); its sampling contribution is \(O(n^{-1})\).

For the lower bound, work locally around
\(q_0=k^{-1} \mathbf{1}_k\).
Let \(H:\R^{k-1} \to\caT\) be an isometry and parameterize
\(q(\vartheta)=q_0+H\vartheta\).
Writing \(h_j=H^\top e_j\), the score at zero is \(s_0(X=j)=kh_j\).
This is exactly the score law in
\cref{appendix-prop:simplex-score-reduction}, which gives the constant \(k-1\) on
the tangent space.
The local categorical submodel is contained in the global simplex problem.

\paragraph{Euclidean transition and global simplex bound.}
For squared Euclidean loss on the full probability vector, the baseline
coordinate loss is \(W=I_d+\mathbf{1}_d \mathbf{1}_d^\top\).
The sampling and privacy terms are
\[
  \Tr(WV_q)=1-\sum_{j=1}^k q_j^2,
  \qquad
  C_{\mathrm{priv}}(W)=k-1.
\]
Therefore
\begin{equation}
  \label{appendix-eq:trans-categorical-euclidean-constant}
  C_n^{\mathrm{cat}}(\lambda,q)
  =
  1-\sum_{j=1}^k q_j^2+\frac{k-1}{\lambda}.
\end{equation}
The same calculation and a local lower bound at the uniform distribution give
the global simplex result
\begin{equation}
  \label{appendix-eq:trans-categorical-global-constant}
  \lim_{n\to\infty}
  n
  \inf_{\widehat{q}:\rho_n \text{-zCDP}}
  \sup_{q\in\Delta_{k-1}}
  \E_q \norm{\widehat{q}-q}_2^2
  =
  \frac{k-1}{k}+\frac{k-1}{\lambda},
  \qquad
  n\rho_n \to\lambda.
\end{equation}
For the upper bound, release the empirical probability vector with Gaussian
noise of covariance \(\tau_n^{-2} I_{\caT}\) on the tangent space \(\caT\),
and project onto the simplex.
The sampling trace is \(1-\sum_{j=1}^k q_j^2\), whose maximum is
\((k-1)/k\).
The local saturated model at the uniform distribution gives the matching
lower bound.

\begin{proof}[Proof of \cref{prop:categorical-prob}]
  By the definition of \(C_{\mathrm{priv}}(W)\),
  \(C_{\mathrm{priv}}(W)=\mfC_q(W)\).
  For \(0<\beta<1\), the first formula in
  \cref{appendix-prop:trans-categorical-constant} is therefore exactly the
  general-\(W\) unified formula in the theorem, and its optimizer statement
  gives the final assertion.
  At \(\beta=0\) and \(\beta=1\), the same identity follows directly from
  the privacy and Fisher endpoint definitions.
  For squared Euclidean loss,
  \cref{appendix-eq:trans-categorical-euclidean-constant,appendix-eq:trans-categorical-global-constant}
  give the stated specialization.
\end{proof}

\paragraph{Three-category geometry and loss dependence.}
Write \(q_3=1-q_1-q_2\), and let \(\Phi_j=\Phi(j)\) for a Hilbert-valued statistic \(\Phi\).
The coordinate matrices of the two losses are
\[
  W_{\mathrm{iso}}
  =
  \begin{pmatrix}
    2 & 1\\
    1 & 2
  \end{pmatrix},
  \qquad
  W_{\mathrm{aniso}}
  =
  \begin{pmatrix}
    13 & 1\\
    1 & 2
  \end{pmatrix}.
\]
Its mean map in the \((q_1,q_2)\) coordinates is
\[
  g_\Phi(q)
  =
  q_1 \Phi_1+q_2 \Phi_2+(1-q_1-q_2)\Phi_3.
\]
Hence, with
\[
  u=\Phi_1-\Phi_3,
  \qquad
  v=\Phi_2-\Phi_3,
\]
the score--statistic operator satisfies
\[
  A_{\Phi,q} e_1=u,
  \qquad
  A_{\Phi,q} e_2=v.
\]
Thus \(J_{\Phi,q}\) is the Gram matrix of \(u\) and \(v\).
Moreover, \(\diam(\Phi)\le1\) is equivalent to
\[
  \norm{u}\le1,
  \qquad
  \norm{v}\le1,
  \qquad
  \norm{u-v}\le1.
\]
If
\[
  J_{\Phi,q}
  =
  \begin{pmatrix}
    a & c\\
    c & b
  \end{pmatrix},
\]
these inequalities become
\[
  a\le1,
  \qquad
  b\le1,
  \qquad
  a+b-2c\le1.
\]
Conversely, every positive semidefinite matrix satisfying these three inequalities admits a Gram representation by vectors \(u\) and \(v\) with the stated distance constraints.
Taking \(\Phi_1=u\), \(\Phi_2=v\), and \(\Phi_3=0\) realizes that matrix.
This gives the information region
\begin{equation}
  \label{appendix-eq:categorical-three-info-region}
  \caJ_{q_0}
  =
  \dk{
    \begin{pmatrix}
      a&c\\c&b
    \end{pmatrix}
    \succeq0:
    a\le1,\ b\le1,\ a+b-2c\le1
  }.
\end{equation}

The two loss-specific candidate matrices are
\begin{equation}
  \label{appendix-eq:categorical-loss-dependent-optimizers}
  J_{\mathrm{iso}}
  =
  \begin{pmatrix}
    1&1/2\\1/2&1
  \end{pmatrix},
  \qquad
  J_{\mathrm{aniso}}
  =
  \begin{pmatrix}
    1&1/3\\1/3&2/3
  \end{pmatrix}.
\end{equation}

Define the diameter-one statistics
\[
  \begin{aligned}
    \Phi_{\mathrm{iso}}(1)&=(1,0),
    &
    \Phi_{\mathrm{iso}}(2)&=(1/2,\sqrt{3}/2),
    &
    \Phi_{\mathrm{iso}}(3)&=(0,0),
    \\
    \Phi_{\mathrm{aniso}}(1)&=(1,0),
    &
    \Phi_{\mathrm{aniso}}(2)&=(1/3,\sqrt{5}/3),
    &
    \Phi_{\mathrm{aniso}}(3)&=(0,0).
  \end{aligned}
\]
They generate the two candidate matrices in
\cref{appendix-eq:categorical-loss-dependent-optimizers}.
Indeed, their respective vectors \(u\) and \(v\) satisfy
\[
  \begin{array}{c|ccc}
    & \norm{u}^2 & \norm{v}^2 & \norm{u-v}^2\\
    \hline
    \Phi_{\mathrm{iso}} & 1 & 1 & 1\\
    \Phi_{\mathrm{aniso}} & 1 & 2/3 & 1
  \end{array}
\]
and their inner products are \(1/2\) and \(1/3\), respectively.
Both candidates are therefore positive definite feasible matrices.

It remains to prove optimality.
For \(W\succ0\), singular matrices have infinite objective, so it is enough to minimize
\[
  F_W(J)=\Tr(WJ^{-1})
\]
over the positive definite part of the information region.
At a candidate \(J_*\), put
\[
  B_*=J_*^{-1}WJ_*^{-1}.
\]
Since \(\nabla F_W(J_*)=-B_*\), convexity shows that \(J_*\) is optimal whenever
\(\Tr(B_*J)\le\Tr(B_*J_*)\) for every feasible \(J\).
For the two candidates,
\[
  B_{\mathrm{iso}}
  =
  \frac{4}{3}
  \begin{pmatrix}
    2 & -1\\
    -1 & 2
  \end{pmatrix},
  \qquad
  B_{\mathrm{aniso}}
  =
  9
  \begin{pmatrix}
    2 & -1\\
    -1 & 1
  \end{pmatrix}.
\]
For any feasible \(J=\left(\begin{smallmatrix}a&c\\c&b\end{smallmatrix}\right)\),
\[
  \Tr(B_{\mathrm{iso}}J)
  =
  \frac{4}{3}\xk{a+b+(a+b-2c)}
  \le4
  =
  \Tr(B_{\mathrm{iso}}J_{\mathrm{iso}})
\]
and
\[
  \Tr(B_{\mathrm{aniso}}J)
  =
  9\xk{a+(a+b-2c)}
  \le18
  =
  \Tr(B_{\mathrm{aniso}}J_{\mathrm{aniso}}).
\]
The map \(F_W\) is strictly convex for \(W\succ0\), so both optimizers are unique.

The inverses are
\[
  J_{\mathrm{iso}}^{-1}
  =
  \begin{pmatrix}
    4/3 & -2/3\\
    -2/3 & 4/3
  \end{pmatrix},
  \qquad
  J_{\mathrm{aniso}}^{-1}
  =
  \begin{pmatrix}
    6/5 & -3/5\\
    -3/5 & 9/5
  \end{pmatrix}.
\]
They give
\[
  \frac{1}{2}\Tr(W_{\mathrm{iso}}J_{\mathrm{iso}}^{-1})=2,
  \qquad
  \frac{1}{2}\Tr(W_{\mathrm{aniso}}J_{\mathrm{aniso}}^{-1})=9,
\]
as well as the values under the other loss
\[
  \frac{1}{2}\Tr(W_{\mathrm{iso}}J_{\mathrm{aniso}}^{-1})
  =
  \frac{12}{5},
  \qquad
  \frac{1}{2}\Tr(W_{\mathrm{aniso}}J_{\mathrm{iso}}^{-1})
  =
  \frac{28}{3}.
\]
Finally,
\[
  \det(J_{\mathrm{iso}}-J_{\mathrm{aniso}})
  =
  -\frac{1}{36}<0,
\]
so the two optimizing matrices are not comparable in the Loewner order.

The three-category comparison shows how the optimal geometry changes with the loss.
At \(q_0=(1/3,1/3,1/3)\), the two losses in that example satisfy
\[
  \Tr(W_{\mathrm{iso}}V_{q_0})=\frac{2}{3},
  \qquad
  \Tr(W_{\mathrm{aniso}}V_{q_0})=\frac{28}{9},
\]
while their privacy constants are \(2\) and \(9\), respectively.
Hence
\begin{equation}
  \label{appendix-eq:trans-three-category-constants}
  \begin{aligned}
    C_{n,\mathrm{iso}}(\lambda)
    &=
    \frac{2}{3}+\frac{2}{\lambda},
    &
    C_{\tau,\mathrm{iso}}(\lambda)
    &=
    2+\frac{2\lambda}{3},\\
    C_{n,\mathrm{aniso}}(\lambda)
    &=
    \frac{28}{9}+\frac{9}{\lambda},
    &
    C_{\tau,\mathrm{aniso}}(\lambda)
    &=
    9+\frac{28\lambda}{9}.
  \end{aligned}
\end{equation}
The two different matrices in
\cref{appendix-eq:categorical-loss-dependent-optimizers} remain the respective
unique optimizers for all \(\lambda\in(0,\infty)\).
Thus the matrix geometry continues to depend on the loss throughout the
transition regime even though the constant for each fixed loss is additive.

\subsection{Multivariate location and quantile models}
\label{appendix-subsec:multivariate-location-quantile-models}

\subsubsection{Gaussian quantiles with unknown scale}
\label{appendix-subsubsec:gauss-quantiles-unknown-scale}

\begin{proposition}[Gaussian quantile with unknown scale]
  \label{appendix-prop:gauss-quantile-unknown-scale}
  Consider \(X\sim N(\mu,\sigma^2)\), fix \((\mu_0,\sigma_0)\), and let
  \(q_\alpha(\mu,\sigma)=\mu+z_\alpha \sigma\), where \(z_\alpha\) is the \(\alpha\)-quantile of the standard normal law.
  Put
  \begin{equation}
    \label{appendix-eq:gauss-quantile-constant}
    d_\alpha
    =
    \inf_{a\in\R}
    \E\abs{Z-a\xk{Z^2-1-z_\alpha Z}},
    \qquad Z\sim N(0,1).
  \end{equation}
  The exact local minimax constant under squared loss in the privacy regime is
  \[
    \frac{2\sigma_0^2}{d_\alpha^2}.
  \]
\end{proposition}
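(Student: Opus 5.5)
The plan is to reduce to the differentiable-target theorem (\cref{appendix-thm:differentiable-target-local}) at \(\beta=0\). That theorem gives the constant
\[
\mfC_{\theta_0}(W_\psi)=\tfrac12\inf_{J\in\caJ_{\theta_0}}\caI_{W_\psi}(J),\qquad W_\psi=bb^\top,\quad b=\nabla q_\alpha=(1,z_\alpha)^\top .
\]
Because \(W_\psi\) has rank one, the whole problem collapses to a single direction, and the work is in identifying that constant with \(2\sigma_0^2/d_\alpha^2\).

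\textbf{Step 1: reduce the rank-one problem to a sup-of-a-min.} Work in the coordinates \((\mu,\sigma)\). For \(J\succeq0\), \(\caI_{bb^\top}(J)\) is finite exactly when \(b\in\ran(J)\), and then equals \(b^\top J^\dagger b\). Use the variational form
\[
b^\top J^\dagger b=\sup_h \frac{(h^\top b)^2}{h^\top J h}.
\]
The inverse of \(b^\top J^\dagger b\) is therefore \(\inf_{h:\,h^\top b=1} h^\top J h\). So
\[
\bigl(\inf_J \caI\bigr)^{-1}=\sup_{J\in\caJ}\ \min_{h^\top b=1} h^\top Jh .
\]

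\textbf{Step 2: swap sup and min.} \(\caJ_{\theta_0}\) is convex and compact (\cref{prop:dc-info-prop}), and the objective is linear in \(J\) and convex in \(h\). A minimax theorem (Sion's) should justify the swap, though the \(h\)-set is noncompact and I would control that by coercivity or by restricting \(h\) to a large ball. After the swap, \cref{prop:directional-envelope-tv} gives
\[
\min_{h^\top b=1}\ \tfrac14\bigl(\E\abs{\ang{s,h}}\bigr)^2 .
\]

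\textbf{Step 3: compute the directional envelope.} The scores are \(s_\mu=Z/\sigma_0\) and \(s_\sigma=(Z^2-1)/\sigma_0\). Parametrize the affine line \(h^\top b=1\) as \(h=(1-z_\alpha t,\;t)\). Then
\[
\ang{s,h}=\sigma_0^{-1}\bigl(Z+t(Z^2-1-z_\alpha Z)\bigr).
\]
Minimizing over \(t\) (with \(a=-t\)) gives \(\min=d_\alpha^2/(4\sigma_0^2)\). Hence \(\inf\caI=4\sigma_0^2/d_\alpha^2\), and the constant is \(\tfrac12\) of this, which is \(2\sigma_0^2/d_\alpha^2\).

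Attainment and the optimal statistic then follow from the saddle point. The minimizer \(a^*\) gives the sign statistic \(\phi_\alpha^*\). Its \(J\) is rank one, equal to \(\tfrac14 vv^\top\) where \(v=\E[\phi s]\), and it achieves the bound because the first-order condition at \(a^*\) says \(\E[\sign(\cdot)H_\alpha]=0\). That orthogonality makes \(v\parallel\) the dual direction, which verifies the saddle point directly and avoids relying on the abstract minimax theorem.

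The main obstacle is Step 2 together with this verification. I would handle it by exhibiting the primal–dual pair explicitly rather than invoking compactness. The remaining checks are routine: the assumptions hold for the Gaussian location–scale family, \(I\succ0\), and \(d_\alpha>0\).
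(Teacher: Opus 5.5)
Your argument is correct, but you dualize on the matrix side while the paper dualizes on the statistic side. The paper uses the minimum-norm solution of \(A^*v=g_\alpha\) to reduce to scalar diameter-one statistics whose score covariance is aligned with \(g_\alpha=(1,z_\alpha)^\top\), with a separate mixing argument to pass to the closure. It then computes the best alignment by an explicit Lagrangian duality: weak duality is \(\E[\phi Z]=\E[\phi(Z-\lambda H_\alpha)]\le\frac12\E\abs{Z-\lambda H_\alpha}\), and strong duality is the first-order condition at \(\lambda_\alpha^*\). You instead use \(1/(b^\top J^\dagger b)=\min_{h^\top b=1}h^\top Jh\) together with the envelope of \cref{prop:directional-envelope-tv}. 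Your \(t\) is the paper's \(-\lambda\), and your explicit saddle-point check is the same strong-duality step.

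Your route handles the closure automatically, reduces weak duality to the easy half of the envelope, and yields the general rank-one formula \(\mfC_{\theta_0}(bb^\top)=2/\min_{h^\top b=1}(\E\abs{\ang{s_{\theta_0}(X),h}})^2\) in any dimension. The paper's route needs no minimax theorem and exhibits the optimal aligned statistic directly. It also builds the invertible statistic \((\sqrt{1-\epsilon^2}\,\phi_\alpha^*,\epsilon\psi_c)\) for attainment, because \(\phi_\alpha^*\) alone has a rank-one derivative; you instead inherit attainment from the general differentiable-target theorem.

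Two small corrections. First, Sion's theorem needs compactness on only one side, so compactness of \(\caJ_{\theta_0}\) already justifies the swap without truncating \(h\). Second, in your check the orthogonality \(\E[\sign(\cdot)H_\alpha]=0\) makes \(v=\E[\phi_\alpha^*s_{\theta_0}]\) parallel to \(b\), which is what the check uses. In that case \(J=vv^\top\); the factor \(\tfrac14\) is correct only if \(v\) denotes \(\E[\sign(\cdot)\,s_{\theta_0}]\), and otherwise the constants would not match.
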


\begin{proof}
Parameterize the model by \(\theta=(\mu,\sigma)\), and fix
\(\theta_0=(\mu_0,\sigma_0)\).
For
\[
  g_\alpha=(1,z_\alpha)^\top,
  \qquad
  W_\alpha=g_\alpha g_\alpha^\top,
\]
the quadratic loss induced by \(W_\alpha\) is exactly the squared error for
\(q_\alpha(\mu,\sigma)=\mu+z_\alpha \sigma\).
Every vector estimator induces a scalar estimator by taking its inner product
with \(g_\alpha\), and every scalar estimator can be embedded into a vector
estimator with the same loss.  Thus the scalar minimax problem is equivalent
to the \(W_\alpha\)-problem in \cref{thm:private-minimax}.
Writing \(Z=(X-\mu_0)/\sigma_0\), the score at \(\theta_0\) is
\begin{equation}
  \label{appendix-eq:gauss-location-scale-score}
  s_{\theta_0}(X)
  =
  \frac{1}{\sigma_0}
  \begin{pmatrix}
    Z\\ Z^2-1
  \end{pmatrix}.
\end{equation}

We first reduce the rank-one information problem to scalar statistics.
Let \(\Phi\) be a diameter-one Hilbert-valued statistic, write
\(A=A_{\Phi,\theta_0}\), and suppose
\(\caI_{W_\alpha}(A^*A)<\infty\).
Then \(g_\alpha \in\ran(A^*)\), and
\[
  \caI_{W_\alpha}(A^*A)
  =
  g_\alpha^\top(A^*A)^\dagger g_\alpha
  =
  \min\dk{
    \norm{v}_{\caH}^2:A^*v=g_\alpha
  }.
\]
If \(v\) is the minimum-norm solution, then
\(\phi(x)=\ang{\Phi(x),v/\norm{v}_{\caH}}_{\caH}\) has diameter at most one and
\[
  \E_{\theta_0}\zk{\phi(X)s_{\theta_0}(X)}
  =
  \frac{g_\alpha}{\norm{v}_{\caH}}.
\]
Conversely, a scalar diameter-one statistic satisfying
\(\E_{\theta_0}\zk{\phi(X)s_{\theta_0}(X)}=a g_\alpha\)
generates the rank-one matrix \(a^2 g_\alpha g_\alpha^\top\), whose
\(W_\alpha\)-information objective is \(a^{-2}\).
To pass from actual statistics to the closure defining \(\caJ_{\theta_0}\),
first mix any limiting matrix with an arbitrarily small positive-definite
element of \(\caJ_{\theta_0}\), whose existence follows from
\cref{prop:dc-info-prop}\textup{(iv)}, and then approximate the resulting
positive-definite matrix by actual statistics.
The objective is continuous at the perturbed matrix and converges to its
pseudoinverse value as the perturbation vanishes, so the closure does not
change the infimum.
Consequently,
\begin{equation}
  \label{appendix-eq:gauss-quantile-rank-one-reduction}
  \inf_{J\in\caJ_{\theta_0}}\caI_{W_\alpha}(J)
  =
  \frac{1}{a_\alpha^2},
\end{equation}
where \(a_\alpha\) is the largest \(a\ge0\) generated by such a scalar statistic.

Set
\[
  H_\alpha(Z)=Z^2-1-z_\alpha Z.
\]
By \cref{appendix-eq:gauss-location-scale-score}, alignment with
\(g_\alpha\) is equivalent to
\[
  \E\zk{\phi(Z)H_\alpha(Z)}=0,
  \qquad
  a=\frac{1}{\sigma_0}\E\zk{\phi(Z)Z}.
\]
Since \(\E Z=\E H_\alpha(Z)=0\), translating \(\phi\) does not change these
moments, so the diameter constraint may be written as \(\abs{\phi}\le1/2\).
For every \(\lambda\in\R\), any feasible \(\phi\) satisfies
\[
  \E\zk{\phi(Z)Z}
  =
  \E\left[\phi(Z)\xk{Z-\lambda H_\alpha(Z)}\right]
  \le
  \frac{1}{2}
  \E\abs{Z-\lambda H_\alpha(Z)}.
\]
The function on the right is convex and coercive in \(\lambda\), and hence
has a minimizer \(\lambda_\alpha^*\).
The polynomial
\(Z-\lambda_\alpha^*H_\alpha(Z)\) is not identically zero, so its zero set
has Gaussian probability zero.
Differentiability at the minimizer gives
\[
  \E\left[
    H_\alpha(Z)
    \sign\xk{Z-\lambda_\alpha^*H_\alpha(Z)}
  \right]
  =0.
\]
Therefore
\[
  \phi_\alpha^*(Z)
  =
  \frac{1}{2}
  \sign\xk{Z-\lambda_\alpha^*H_\alpha(Z)}
\]
viewed as a function of \(x\) through \(Z=(x-\mu_0)/\sigma_0\), is feasible
and attains the preceding upper bound.  It follows that
\begin{equation}
  \label{appendix-eq:gauss-quantile-optimal-slope}
  a_\alpha
  =
  \frac{d_\alpha}{2\sigma_0}.
\end{equation}
Combining \cref{appendix-eq:gauss-quantile-rank-one-reduction,appendix-eq:gauss-quantile-optimal-slope}
with the factor \(1/2\) in the definition of \(\mfC_{\theta_0}\) yields
\[
  \mfC_{\theta_0}(W_\alpha)
  =
  \frac{2\sigma_0^2}{d_\alpha^2}.
\]

The rank-one optimum can be approached by locally invertible
Gaussian releases.
Fix \(c>0\) and let
\(\psi_c(x)=\ind{\abs{x-\mu_0}\ge c\sigma_0}\).
If \(\varphi\) denotes the standard normal density, its mean derivative at
\(\theta_0\) is
\[
  \E_{\theta_0}\zk{\psi_c(X)s_{\theta_0}(X)}
  =
  \frac{2c\varphi(c)}{\sigma_0}(0,1)^\top,
\]
which is linearly independent of \(g_\alpha\).
For \(\epsilon\in(0,1)\), the two-dimensional statistic
\[
  \Phi_{\alpha,\epsilon}(x)
  =
  \left(
    \sqrt{1-\epsilon^2}\,\phi_\alpha^*(x),
    \epsilon\psi_c(x)
  \right)
\]
has Euclidean diameter at most one and a nonsingular mean derivative.
Its moment map inverse estimator has constant tending to
\(2\sigma_0^2/d_\alpha^2\) as \(\epsilon\downarrow0\), and a diagonal choice
gives the attaining sequence in the main theorem.

When \(\alpha=1/2\), we have \(z_\alpha=0\).
Symmetry and convexity imply that \(\lambda=0\) minimizes
\(\E\abs{Z-\lambda(Z^2-1)}\), so
\(d_{1/2}=\E\abs{Z}=\sqrt{2/\pi}\), and the constant reduces to
\(\pi\sigma_0^2\).
\end{proof}

\subsubsection{Fixed-radius radial Laplace location}
For the radial Laplace location density proportional to
\(\exp\xk{-\norm{x-\theta}_2/b}\), the score is \(S=U/b\), where \(U\) is
uniform on the unit sphere.
Thus \(\kappa=1/(pb^2)\), and
\begin{equation}
  \label{appendix-eq:trans-radial-laplace-constant}
  C_n^{\mathrm{radLap}}(\lambda,b)
  =
  p^2 b^2 \xk{1+\frac{2p}{\lambda}},
  \qquad
  C_\tau^{\mathrm{radLap}}(\lambda,b)
  =
  p^2 b^2(\lambda+2p).
\end{equation}
The optimizer is the spatial sign
\[
  \Phi_{\theta_0}(x)
  =
  \frac{1}{2}
  \frac{x-\theta_0}{\norm{x-\theta_0}_2}.
\]
Set \(\Phi_{\theta_0}(\theta_0)=0\).
For \(p=1\), this agrees with
\cref{appendix-eq:trans-laplace-constant} up to translation of the statistic.

\subsubsection{Gaussian location: a vector constant defined by one root}
\label{appendix-subsubsec:trans-gauss-location}

Let \(X\sim N_p(\theta,\sigma^2 I_p)\), and estimate \(\theta\) under squared
Euclidean loss.
Write
\[
  Z=\frac{X-\theta_0}{\sigma},
  \qquad
  R=\norm{Z}_2 \sim\chi_p.
\]
The score is \(S=Z/\sigma\).  By
\cref{appendix-eq:trans-spherical-threshold}, let \(b_{p,\lambda}>0\) be the unique solution of
\begin{equation}
  \label{appendix-eq:trans-gauss-clipping-threshold}
  \E\xk{\frac{R}{b_{p,\lambda}}-1}_+
  =
  \frac{2p}{\lambda}.
\end{equation}
Then \cref{appendix-prop:trans-spherical-clipping} gives the constant,
determined by a scalar root:
\begin{equation}
  \label{appendix-eq:trans-gauss-location-constant}
  C_n^{\mathrm{Gau},p}(\lambda,\sigma)
  =
  \frac{p^2 \sigma^2}{
    \E\zk{R\min(R,b_{p,\lambda})}
  },
  \qquad
  C_\sigma^{\mathrm{Gau},p}
  =
  \beta C_n^{\mathrm{Gau},p},
  \qquad
  C_\tau^{\mathrm{Gau},p}
  =
  \lambda C_n^{\mathrm{Gau},p}.
\end{equation}
The optimal statistic is the radial clipping
\[
  \Phi_{p,\lambda}(X)
  =
  \frac{1}{2b_{p,\lambda}}
  \min\xk{1,\frac{b_{p,\lambda}}{\norm{Z}_2}}Z,
\]
with value zero at \(Z=0\).
The limits corresponding to the sampling and privacy regimes recover the Fisher and privacy constants:
\begin{equation}
  \label{appendix-eq:trans-gauss-location-regime-limits}
  \lim_{\lambda\to\infty}
  C_n^{\mathrm{Gau},p}(\lambda,\sigma)
  =p\sigma^2,
  \qquad
  \lim_{\lambda\downarrow0}
  \lambda C_n^{\mathrm{Gau},p}(\lambda,\sigma)
  =
  \frac{2\sigma^2 p^3}{(\E\chi_p)^2}.
\end{equation}
Since \(C_\sigma^{\mathrm{Gau},p}=\beta C_n^{\mathrm{Gau},p}\), this calculation gives
\cref{eq:gauss-unified-threshold,eq:gauss-unified-constant}
and completes the transition part of \cref{prop:gauss-mean}.

For \(p=1\), let \(\varphi_{\mathrm{N}}\) denote the standard normal density
and put \(\overline{F}_{\mathrm{N}}(b)=\Pr(Z>b)\).
The threshold and the constant simplify to
\begin{equation}
  \label{appendix-eq:trans-scalar-gauss-constant}
  \frac{1}{\lambda}
  =
  \frac{\varphi_{\mathrm{N}}(b_{1,\lambda})}{b_{1,\lambda}}
  -
  \overline{F}_{\mathrm{N}}(b_{1,\lambda}),
  \qquad
  C_n^{\mathrm{Gau},1}(\lambda,\sigma)
  =
  \frac{\sigma^2}{\Pr(\abs{Z}\le b_{1,\lambda})}.
\end{equation}
Indeed,
\(\E\zk{\abs{Z}\min(\abs{Z},b)}=\Pr(\abs{Z}\le b)\) for a standard normal
variable.
This is a genuine transition calculation: the optimal clipping level tends
to zero in the privacy limit and to infinity in the sampling limit.

\subsubsection{Other spherical location families}
\label{appendix-subsubsec:spherical-location-models}

The formula in \cref{appendix-prop:trans-spherical-clipping}, determined by a single root, applies to every fixed-dimensional location family whose score is spherical.
Thus its full unified constant is determined by the law of \(R=\norm{s_{\theta_0}(X)}_2\), while the constant in the privacy regime is \(2p^3/\xk{\E R}^2\).
For the radial generalized Gaussian density proportional to
\(\exp\xk{-(\norm{x-\theta}_2/b)^\alpha}\),
\[
  \E R
  =
  \frac{\alpha}{b}
  \frac{\Gamma((p+\alpha-1)/\alpha)}{\Gamma(p/\alpha)}.
\]
For the multivariate Student location family with \(\nu\) degrees of freedom
and scale \(\sigma\),
\[
  \E R
  =
  \frac{2}{\sigma\sqrt\nu}
  \frac{\Gamma((p+1)/2)\Gamma((\nu+1)/2)}
  {\Gamma(p/2)\Gamma(\nu/2)}.
\]
These give the radial generalized Gaussian, Student, and Cauchy rows of
Table~S1 in the privacy regime.  The regularity restriction
\(\alpha>\max(0,1-p/2)\) is precisely the condition needed for finite Fisher
information and differentiability in quadratic mean at the origin.

\subsection{Regression models with explicit constants}
\label{appendix-subsec:regression-explicit-constants}

These models have factorized spherical scores, so their explicit unified constants follow from the corresponding reduction in \cref{appendix-sec:model-class-reductions}.

\subsubsection{Generalized linear regression with Gaussian design}
\label{appendix-subsubsec:trans-gauss-design-glm}

\begin{proof}[Proof of the unified formula in \cref{prop:gauss-design-glm}]
  At the null center, write
  \[
    S=s_0(X,Y_0)=A_0 X,
    \qquad
    A_0=\frac{Y_0-b'(0)}{\kappa}.
  \]
  The response variable \(A_0\) is independent of \(X\), so the factorized
  spherical score reduction gives the radial variable
  \(R_0=\abs{A_0}\chi_p\).  Thus
  \cref{appendix-cor:trans-factorized-spherical-score} applies directly.
  For \(0<\beta<1\), it gives
  \[
    \uC_0(I_p)
    =
    \frac{\beta p^2}{\E\zk{R_0 \min(R_0,t_\beta)}},
    \qquad
    \E\xk{\frac{R_0}{t_\beta}-1}_+
    =
    \frac{2p(1-\beta)}{\beta}.
  \]
  This is the unified constant in
  \cref{prop:gauss-design-glm}, and the attaining statistic is
  the corresponding radial clipping of \(S\).

  As \(\beta\downarrow0\), the formula gives
  \[
    \mfC_0^{(0)}(I_p)
    =
    \frac{2p^3}{\xk{\E R_0}^2}
    =
    \frac{2p^3}{(\E\chi_p)^2d_0^2}.
  \]
  As \(\beta\uparrow1\), it gives
  \[
    \mfC_0^{(1)}(I_p)
    =
    \frac{p^2}{\E R_0^2}
    =
    \frac{p\kappa}{b''(0)},
  \]
  because \(\E A_0^2=b''(0)/\kappa\).
\end{proof}

\subsubsection{Other factorized spherical regression models}

More generally, let \(X=RU\), where \(U\) is uniform on the unit sphere and
independent of \(R\), and suppose that the score at the local center is
\[
  s_{\beta_0}(X,Y)=AX,
\]
where \(A\) is independent of \(X\).
The full unified constant is obtained by substituting the radial variable \(\abs{A}\norm{X}_2\) into \cref{appendix-cor:trans-factorized-spherical-score}; its constant in the privacy regime uses only
\[
  \E(\abs{A}\norm{X}_2)
  =
  \E\abs{A}\,\E\norm{X}_2.
\]
For linear regression with spherical design, \(A=\ell_f(\xi)\).  With Gaussian
noise,
\[
  \E\abs{\ell_f(\xi)}
  =
  \frac{1}{\sigma}\sqrt{\frac{2}{\pi}}.
\]
For Gaussian design, \(\E\norm{X}_2=\E\chi_p\).  In Gaussian linear
regression this factorization holds at every \(\beta_0\), since
\[
  s_{\beta_0}(X,Y)
  =
  \frac{X(Y-X^\top \beta_0)}{\sigma^2}
  =
  \frac{X\xi}{\sigma^2}.
\]
For the null canonical models with Gaussian design in
\cref{prop:gauss-design-glm}, the three values
\[
  d_0
  =
  \frac{1}{\sigma}\sqrt{\frac{2}{\pi}},
  \qquad
  d_0=\frac{1}{2},
  \qquad
  d_0=\E\abs{Y_0-1}=\frac{2}{e}
\]
correspond respectively to Gaussian linear, logistic, and Poisson log-linear
regression.  Gaussian-polynomial and Gaussian-integrable exponential
envelopes verify \cref{ass:regular,ass:score-density-l1} for the
last two models, respectively.

\subsection{Patterns in the explicit constants}
\label{appendix-subsec:explicit-constant-patterns}

The examples exhibit two distinct phenomena.
In the Bernoulli, Laplace, saturated categorical, fixed radius, and bounded
mean models, one statistic or one configuration is simultaneously optimal for
sampling and privacy.
Their unified constants are therefore the affine interpolations of their
sampling and privacy regime constants; equivalently, after root-\(n\)
normalization they equal the corresponding additive expressions.

In the logistic, Gaussian, and Binomial\((3,q)\) models, the score that is
efficient for sampling is not the statistic that is efficient for privacy.
The common statistic is instead a clipped score, and its clipping threshold
changes with \(\lambda\).
The logistic and Binomial\((3,q)\) examples give elementary nonadditive
closed forms, while the Gaussian example reduces the full vector optimization
to a unique one-dimensional root.
More generally, \cref{appendix-prop:trans-spherical-clipping} gives the same
reduction involving a scalar root for every spherically symmetric score law,
including the spherically symmetric location families calculated in the privacy regime
in \cref{appendix-subsubsec:spherical-location-models}.

The Poisson mean model is different again: its two clipping points are
asymmetric and jointly balance the lower and upper tail overshoots.
As they cross the integer lattice, the constant is piecewise rational over
countably many phases rather than an expression with a single symmetric
threshold.
More generally, \cref{appendix-prop:trans-scalar-clipping} reduces every
one-dimensional transition calculation to two clipping thresholds; the
Poisson stop-loss and Stein identities are what make those thresholds and the
resulting constant explicitly piecewise rational.

\bibliographystyle{plainnat}
  \bibliography{main}

\end{document}